\def\crulefill{\leavevmode\leaders\hrule height 1pt\hfill\kern 0pt}
\long\def\QUERY#1{%
\leavevmode\newline%
\noindent$\star\star\star$\thinspace\textsf{Comment/Query}\crulefill\newline%
   \space #1\newline\hbox to 120mm{\crulefill}$\star\star\star$\newline}
\newtheorem{Theorem}{Theorem}[section]
\newtheorem{lemma}[Theorem]{Lemma}
\newtheorem{cor}[Theorem]{Corollary}
\newtheorem{Prop}[Theorem]{Proposition}
\theoremstyle{definition}
\newtheorem{example}[Theorem]{Example}
\newtheorem{Defn}[Theorem]{Definition}
\newtheorem{rem}[Theorem]{Remark}
\newtheorem{Assumption}[Theorem]{Assumption}
\numberwithin{equation}{section}
\theoremstyle{definition}
\def\enumerate{\begingroup\ifnum\@enumdepth>3\@toodeep\else
      \advance\@enumdepth\@ne
      \edef\@enumctr{enum\romannumeral\the\@enumdepth}%
      \topsep\z@\parskip\z@
      \list{\csname label\@enumctr\endcsname}
        {\@nmbrlisttrue\let\@listctr\@enumctr
         \parsep\z@\itemsep\z@\topsep\z@
         \setcounter{\@enumctr}{0}
         \def\makelabel##1{\hss\llap{\rm ##1}}
       }\fi}
\let\bar=\overline
\let\epsilon=\varepsilon
\def\({\big(}
\def\){\big)}
\def\0{\underline{0}}
\def\H{\mathscr H}
\DeclareMathOperator{\End}{End}
\def\Std{\mathscr{T}^{std}}
\def\nb{\underline{n}}
\def\s{\mathfrak s}
\def\t{\mathfrak t}
\def\bft{\t}
\def\Hom{\text{Hom}}
\def\U{\mathbf U}
  \gdef\set#1{\mathinner{\lbrace\,{\mathcode`\|"8000%
                                   \let|\midvert #1}\,\rbrace}}
  \gdef\seT#1{\mathinner{\Big\lbrace\,{\mathcode`\|"8000%
                                   \let|\midverT #1}\,\Big\rbrace}}
\def\midvert{\egroup\mid\bgroup}
\def\midverT{\egroup\,\Big|\,\bgroup}
\def\Set[#1]#2|#3|{\Big\{\ #2\ \Big| \
           \vcenter{\hsize #1mm\centering #3}\Big\}}
\begin{document}
\baselineskip18pt
\title[Mixed Schur Weyl duality]{ Mixed Schur-Weyl duality between general linear Lie algebras and cyclotomic walled Brauer algebras}

\author{ Hebing Rui and Linliang Song }
\address{H.~Rui: School of Natural Sciences and Humanities, Harbin Institute of Technology, Shenzhen 508155, China} \email{hbrui@hitsz.edu.cn}
\address{L.S. Mathematics and Science College,  Shanghai Normal University,  Shanghai, 200234, China}\email{song51090601020@163.com}
\thanks{H. R was  partially  supported  by NSFC in China.}

\begin{abstract} Motivated by Brundan-Kleshchev's work on higher Schur-Weyl duality, we establish  mixed Schur-Weyl duality between general linear Lie algebras and cyclotomic walled Brauer algebras in an arbitrary level.
Using weakly cellular bases of cyclotomic walled Brauer algebras, we classify highest weight vectors of certain mixed tensor modules of general linear Lie algebras. This leads to an efficient way to
 compute decomposition matrices of cyclotomic walled Brauer algebras arising  from  mixed Schur-Weyl duality, which  generalizes early results on level two walled Brauer algebras.
\end{abstract}

\sloppy \maketitle

\section{Introduction}  The classical Schur-Weyl  duality sets up a closed relationship between polynomial representations of
general linear groups  and representations of  symmetric groups~\cite{Gr}. In \cite{BS4}, Brundan and Stroppel established  higher super Schur-Weyl duality between general linear Lie superalgebras $\mathfrak{gl}_{m|n}$ and level two degenerate
Hecke algebras. In order to generalize their results in mixed cases, affine walled Brauer algebras and their cyclotomic quotients were introduced in \cite{RSu}. See also \cite{BCNR, Sat}.  Moreover,
using  weakly cellular bases of  level two walled Brauer algebras,  highest weight vectors of some mixed tensor modules have been classified in \cite{RSu1}. This leads to   an efficient way to
compute decomposition matrices of such level two walled Brauer algebras via  the structures of indecomposable tilting modules in the category of finite dimensional rational  representations for $\mathfrak{gl}_{m|n}$. In particular, such level two walled Brauer algebras are multiplicity free in the sense that their decomposition numbers are  either $1$ or $0$.

Motivated by Brundan-Kleshchev's remarkable  work on higher Schur-Weyl duality between general linear Lie algebras and degenerate cyclotomic Hecke algebras \cite{BK}, we will  extend the  mixed Schur-Weyl duality in \cite{Koi, BCHLLJ, RSu, RSu1, Sat} to an arbitrary level as follows.

Let $R$ be a commutative ring contains $1$, $\omega_a, \bar \omega_a$, $a\in \mathbb N$ such that $\bar \omega_a$  are determined by $\omega_b$'s  via \cite[Corollary~4.3]{RSu}.
The affine walled algebra $\mathscr B_{r, t}^{\rm {aff}}$  can be realized as the
free $R$-module
$R[\mathbf {x}_r]\otimes\mathscr B_{r, t}(\omega_0)\otimes R[\bar {\mathbf {x}}_t]$,  the tensor product of the walled Brauer algebra $\mathscr B_{r, t}(\omega_0)$ with
 two polynomial algebras $R[\mathbf{x}_r]:=\!R[x_1, x_2, \cdots, x_r]$ and  $R[\bar{ \mathbf{x}}_t]\!:=\!R[\bar x_1, \bar x_2, \cdots, \bar x_t]$, such that
$R[\mathbf {x}_r]\otimes  R{\mathfrak S}_r$ and $ R{\bar{\mathfrak S}}_t\otimes R[\bar {\mathbf {x}}_t]$ are isomorphic to the degenerate affine Hecke algebras
$\mathscr H_r^{\rm{aff}}$ and  $\mathscr H_t^{\rm{aff}}$ respectively (where ${\mathfrak S}_r$ and ${\bar{\mathfrak S}}_t$ are symmetric groups contained in $\mathscr B_{r, t}(\omega_0)$
generated by $s_i$'s and $\bar s_j$'s,  respectively), and further, the following relations are satisfied (cf.~Definition~\ref{wbmw})
\begin{enumerate}
\item  $e_1(x_1+\bar x_1)=(x_1+\bar x_1) e_1=0$, $s_1e_1 s_1x_1=x_1s_1e_1s_1$, $\bar s_1 e_1 \bar s_1\bar x_1=\bar x_1\bar s_1e_1\bar s_1$,
\item $s_i\bar x_1=\bar x_1 s_i$, $\bar s_i x_1= x_1 \bar s_i$, $x_1 (e_1+\bar x_1)= (e_1+\bar x_1)x_1$,
\item $e_1x_1^ae_1=\omega_a e_1$, $e_1\bar x_1^ae_1=\bar \omega_a e_1$,   $\forall a\in \mathbb N$\vspace*{-7pt}.
\end{enumerate}
\medskip

Throughout, unless otherwise stated, we will work over the ground field $\mathbb C$.  Let $\mathfrak g$ be the general linear Lie algebra $\mathfrak{gl}_n$. Let $W$ be the linear dual of the  natural $\mathfrak g$-module $V$. We  consider the mixed tensor product $M^{r, t}:=M\otimes V^{\otimes r}\otimes W^{\otimes t}$ for various positive integers $r$ and $t$, where $M$ is any highest weight $\mathfrak g$-module.
Let $\Omega=\sum_{i,j=1}^n e_{i,j}\otimes e_{j, i}$, where $e_{i, j}\in \mathfrak g$ is  a matrix unit.  Via $\Omega$,  there is a well-defined right action of $\mathscr B_{r, t}^{\rm aff}$ on $M^{r, t}$, commuting with the left action of $\mathfrak g$ such that $x_1, \bar x_1, e_1$, $s_i$ and $\bar s_j$ act as certain endomorphisms of $M^{r, t}$ in Definition~\ref{casm}.  See also \cite{Sat} for some special $\mathfrak g$-module $M$.  In order to get an action of a cyclotomic walled Brauer algebra on $M^{r, t}$ in arbitrary level, we need to pick $M$ as a  highest weight $\mathfrak{g}$-module as follows. This is motivated by Brundan and Kleshchev's  work in \cite{BK}.

Suppose that $(q_1\ge q_2\ge \cdots\ge q_k)$ is a partition of $n$. Let $\mathfrak p$ be the parabolic subalgebra of $\mathfrak g$  such that  the corresponding  Levi subalgebra $\mathfrak l=\mathfrak{gl}_{q_1}\oplus\mathfrak{gl}_{q_2}\oplus \cdots \oplus \mathfrak{gl}_{q_k}$. Let $\mathcal O^{\mathfrak p}$ be the parabolic category $\mathcal O$ with respect to $\mathfrak p$. Then  $\mathcal{O}^{\mathfrak p}$ is a  full subcategory of $\mathcal O$ such that  $M$ is semisimple as an  $\U(\mathfrak l)$-module and   locally $\mathfrak u$-finite, where $\mathfrak u$ is the nilradical of $\mathfrak p$ and $\U(\mathfrak l)$ is the universal envelopping algebra of $\mathfrak l$.
For each  $c=(c_1,c_2,\cdots,c_k)\in \mathbb C^k$ in Assumption~\ref{qset}, consider  $\mathfrak p$-dominant weight $\delta_c:=\sum_{i=1}^kc_i(\epsilon_{p_{i-1}+1}+\epsilon_{p_{i-1}+2}+\cdots+\epsilon_{p_i})$,
where $ p_i=\sum_{j=1}^iq_j$, $1\le i\le k$ and $p_0=0$.  As a left $\mathfrak g$-module, the corresponding parabolic Verma module $M^{\mathfrak p} (\delta_c)$ is irreducible, projective and injective. If  $M_c^{r, t}=M^{\mathfrak p}(\delta_c)\otimes V^{\otimes r}\otimes W^{\otimes t}$, then $M_c^{r, t}$ turns out to be a $(\U, \mathscr B_{k, r, t})$-bimodule where $\U$ is the universal enveloping algebra of $\mathfrak g$ and $\mathscr B_{k, r, t}$ is the cyclotomic walled Brauer algebra whose parameters are determined by Definition~\ref{fgpolyv} and  \eqref{omegaa}. One of the main results of  this paper  is that there is an algebra epimorphism \begin{equation} \label{main0} \varphi: \mathscr B_{k, r, t}\rightarrow {\rm End}_{\mathcal O}(M_c^{r, t})^{\rm op}.\end{equation}
 If  $r+t\le q_k$, then $\varphi$ is an algebra isomorphism. 

We remark that affine and cyclotomic walled Brauer algebras were defined in \cite{BCNR} via affine oriented Brauer category and cyclotomic oriented Brauer category.  See also \cite{Sat}. In~\cite[\S 5.5]{BCNR} Brundan et.~al show that affine and cyclotomic walled Brauer algebras in \cite{BCNR} are isomorphic to those considered in   \cite{RSu}. When  \eqref{omegaa} holds,   the homomorphism $\varphi$ in \eqref{main0} has been observed in  \cite[\S 4]{BCNR} and a proof of surjectivity is sketched in \cite[Remark~4.14]{BCNR}.

If we allow $t=0$, then $\mathscr B_{k, r, t}$ turns out to be the level $k$ degenerate Hecke algebra $\mathscr H_{k, r}$ and  the surjectivity in ~\eqref{main0} has been proved in \cite{BK}. Motivated by \cite{RS, RSu1}, we use  weakly cellular bases of $\mathscr B_{k, r, t}$ to classify  highest weight vectors of $M_c^{r, t}$ under the assumption $r+t\le q_k$. Such a result is enough for us to  establish an explicit relationship between  multiplicities  of parabolic Verma modules in any indecomposable direct summand of $M_c^{r, t}$, (which is in fact an indecomposable  tilting module in $\mathcal O^{\mathfrak p}$)  and  multiplicities of simple modules in any cell module of  $\mathscr B_{k, r, t}$. This  determines decomposition matrices of $\mathscr B_{k, r, t}$ which arise from  mixed  Schur-Weyl duality (see Theorem~\ref{main2} and Remark~\ref{general}).
Motivated by our works on Birman-Murakami-Wenzl algebras in \cite{RSong}, we conjecture that decomposition numbers of cyclotomic walled Brauer algebras over $\mathbb C$ can be determined by those in Theorem~7.19 together with some results on Morita equivalences. We hope to settle this problem in the future.

We organize the paper as follows. In Section~2, we recall some of results on affine walled Brauer algebras and their  cyclotomic quotients in \cite{RSu, RSu1}.  In Section~3, we prove the surjectivity in ~\eqref{main0} under the assumption that $r+t\le q_k$.
In order to deal with the general case,  we need to state some of results on the duality between  finite $W$ algebras (resp., its associated graded algebras) and cyclotomic walled Brauer algebras (resp., its associated graded algebras) in sections~4--5. Such observations heavily depend on Brundan and Kleshchev's influential  work~\cite{BK}. In section~6, following Brundan-Kleshchev's idea in \cite{BK},  we prove the surjectivity of $\varphi$ in \eqref{main0} in general cases. Finally,  we classify highest weight vectors of $M_c^{r, t}$ under the assumption $r+t\le q_k$ and hence to  compute decomposition matrices of  cyclotomic walled Brauer algebras arising from mixed Schur-Weyl duality.

\section{Affine and cyclotomic  walled Brauer algebras}

Throughout this section,   $R$ is a commutative ring containing $1$, $\omega_a$ and $\bar\omega_a$ for all $ a\in \mathbb N$ such that $\bar \omega_a$'s are determined by $\omega_b$'s via   \cite[Corollary~4.3]{RSu}.
\begin{Defn}\label{wbmw}\cite{RSu}
Fix $r,t \in \mathbb Z^{>0}$. The {\it affine  walled Brauer algebra}
$\mathscr B_{r,t}^{\text{aff}}$ is the associative $R$-algebra generated by $e_1,  x_1, \bar x_1, s_i\,(1\!\le\! i\!\le\! r\!-\!1),\,
\bar s_j\,(1\!\le\! j\!\le\! t\!-\!1)$,
 subject to the following relations
\begin{multicols}{2}
\begin{enumerate}
\item [(1)] $s_i^2=1$,  $1\le i< r$,
\item[(2)] $s_is_j=s_js_i$,  $|i-j|>1$,
\item[(3)] $s_is_{i+1}s_i\!=\!s_{i+1}s_is_{i+1}$, $1\!\le\! i\!<\!r\!-\!1$,
\item[(4)] $s_ie_1=e_1s_i$,  $2\le i<r$,
\item[(5)]  $e_1s_1e_1 = e_1$,
\item[(6)]  $e_1^2=\omega_0 e_1$,
\item[(7)]  $s_i\bar s_j=\bar s_j s_i$,
\item [(8)]  $e_1(x_1+\bar x_1)=(x_1+\bar x_1) e_1=0$,
\item[(9)]  $e_1 s_1x_1s_1=s_1x_1s_1e_1$,
\item[(10)]  $s_ix_1=x_1s_i$, $2\le i< r$,
\item[(11)]  $s_i\bar x_1=\bar x_1 s_i$, $1\le i< r$,
\item [(12)]  $e_1x_1^ke_1=\omega_k e_1$, $\forall k\in \mathbb Z^{\ge0}$,
\item [(13)]  $x_1(s_1x_1s_1-s_1)=(s_1x_1s_1-s_1)x_1$,
\item [(14)]  $\bar s_i^2=1$,  $1\le i< t$,
\item [(15)]  $\bar s_i\bar s_j=\bar s_j\bar s_i$, $|i-j|>1$,
\item [(16)]  $\bar s_i\bar s_{i+1}\bar s_i\!=\!\bar s_{i+1}\bar s_i\bar s_{i+1}$, $1\!\le\! i\!< \!t\!-\!1$,
\item [(17)]  $\bar s_ie_1=e_1\bar s_i$, $2\le i<t$,
\item [(18)]  $e_1\bar s_1e_1= e_1$,
\item [(19)]  $e_1s_1\bar s_{1} e_1s_{1} = e_1s_1\bar s_1 e_1\bar s_{1}$,
\item [(20)]  $s_1e_1s_1\bar s_1 e_1 = \bar s_1 e_1s_1 \bar s_1 e_1$,
\item [(21)]  $x_1 (e_1+\bar x_1)= (e_1+\bar x_1)x_1$,
\item [(22)]  $e_1 \bar s_1\bar x_1\bar s_1=\bar s_1\bar x_1\bar s_1e_1$,
\item [(23)]  $\bar s_i\bar x_1=\bar x_1\bar s_i$, $2\le i<t$,
\item [(24)]  $\bar s_i x_1= x_1 \bar s_i$, $1\le i<t$,
\item[(25)]  $e_1\bar x_1^ke_1=\bar \omega_k e_1$, $\forall k\in \mathbb Z^{\ge0}$,
\item [(26)]  $\bar x_1(\bar s_1\bar x_1\bar s_1-\bar s_1)=(\bar s_1\bar x_1 \bar s_1-\bar s_1)\bar x_1$.
\end{enumerate}
\end{multicols}
\end{Defn}

In \cite{Sat}, Sartori defined affine walled Brauer algebras over $\mathbb C$. See also \cite{BCNR}. For convenience, write $\underline 0=\emptyset$ and $\nb=\{1, 2, \cdots, n\}$ for any positive integer $n$. The following result follows from Definition~\ref{wbmw}, immediately.
\begin{lemma} \label{inv} There is an $R$-linear anti-involution $\sigma$  on $\mathscr B_{r, t}^{\rm aff} $ which fixes all generators $x_1, \bar x_1, e_1$, $ s_i, \bar s_j$, $i\in \underline{r-1}$ and $j\in \underline{t-1}$.\end{lemma}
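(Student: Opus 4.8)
The plan is to construct $\sigma$ directly from the presentation in Definition~\ref{wbmw}. Set $\sigma$ to fix each generator $x_1,\bar x_1,e_1,s_i\ (i\in\underline{r-1})$ and $\bar s_j\ (j\in\underline{t-1})$, extend $R$-linearly, and require $\sigma(ab)=\sigma(b)\sigma(a)$; concretely, for a word $w=g_{i_1}g_{i_2}\cdots g_{i_m}$ in the generators this forces $\sigma(w)=w^{\mathrm{rev}}:=g_{i_m}\cdots g_{i_2}g_{i_1}$. By the universal property of an associative algebra given by generators and relations, this rule determines a well-defined algebra anti-homomorphism $\sigma\colon\mathscr B_{r,t}^{\mathrm{aff}}\to\mathscr B_{r,t}^{\mathrm{aff}}$ as soon as one checks that, for every defining relation $w_1=w_2$ of Definition~\ref{wbmw}, the reversed identity $w_1^{\mathrm{rev}}=w_2^{\mathrm{rev}}$ is again a consequence of relations (1)--(26). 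Granting this, $\sigma$ exists; since $\sigma$ fixes a generating set, $\sigma^2$ is an \emph{algebra} homomorphism fixing all generators, hence $\sigma^2=\mathrm{id}$, so $\sigma$ is an anti-involution.

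The remaining work is the finite check that the relation list is stable under word reversal, and almost all of it is immediate. Relations (1), (3), (5), (6), (14), (16), (18) have palindromic sides (the scalar $\omega_0$ in (6) being untouched by $\sigma$) and hence reverse to themselves. Relations (2), (4), (7), (8), (10), (11), (15), (17), (21), (23), (24) have the form $uv=vu$ (with $u,v$ sums of generators in (8) and (21)) and reverse to the same identity with its two sides interchanged. Relations (12) and (25), namely $e_1x_1^ke_1=\omega_ke_1$ and $e_1\bar x_1^ke_1=\bar\omega_ke_1$ for all $k\ge 0$, are fixed by $\sigma$ because $e_1x_1^ke_1$ and $e_1\bar x_1^ke_1$ are palindromes and $\sigma$ acts trivially on the scalars. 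Relations (9) and (22), e.g.\ $e_1s_1x_1s_1=s_1x_1s_1e_1$, also reverse to themselves with sides swapped. For (13), writing it as $x_1s_1x_1s_1-x_1s_1=s_1x_1s_1x_1-s_1x_1$ and reversing each monomial yields $s_1x_1s_1x_1-s_1x_1=x_1s_1x_1s_1-x_1s_1$, which is once more relation (13); relation (26) is handled identically.

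The only relations that require invoking another relation are (19) and (20), and these are taken care of by (7). Reversing (19), $e_1s_1\bar s_1e_1s_1=e_1s_1\bar s_1e_1\bar s_1$, produces $s_1e_1\bar s_1s_1e_1=\bar s_1e_1\bar s_1s_1e_1$; substituting $\bar s_1s_1=s_1\bar s_1$ (relation (7) with $i=j=1$) on both sides turns this into $s_1e_1s_1\bar s_1e_1=\bar s_1e_1s_1\bar s_1e_1$, which is precisely relation (20). Symmetrically, reversing (20) and applying (7) recovers (19). This exhausts the list, so $\sigma$ is a well-defined $R$-linear anti-homomorphism fixing all generators, hence an anti-involution by the argument of the first paragraph. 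I anticipate no real obstacle: the entire content is the bookkeeping of relations (1)--(26), the only mildly delicate point being the mutual interchange of (19) and (20) under the commutation $s_1\bar s_1=\bar s_1s_1$.
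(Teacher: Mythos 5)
Your proof is correct and follows the same line the paper takes, namely that the claim is a routine consequence of the presentation in Definition~\ref{wbmw}: the paper simply asserts it follows immediately from the defining relations, and you have supplied the missing bookkeeping, correctly identifying that the only non-palindromic pair is relations (19)--(20), which are interchanged under reversal once one invokes the commutation (7) to rewrite $\bar s_1 s_1 = s_1\bar s_1$.
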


The affine walled Brauer algebra $\mathscr B^{\rm aff}_{r, t}$ contains two subalgebras  generated by $\{x_1, s_i\mid  i\in \underline{r-1}\}$  and  $\{\bar x_1, \bar s_j\mid  j\in \underline{t-1}\}$, which are isomorphic to  the
 degenerate affine Hecke algebras $\mathscr H_r^{\rm aff}$ and
 $\mathscr H_t^{\rm aff}$, respectively.  Also,  the subalgebra of  $\mathscr B^{\rm aff}_{r, t}$ generated by
 $\{e_1, s_i, \bar  s_j\mid i\in \underline{r-1}, j\in \underline{t-1}\}$ is isomorphic to the walled Brauer algebra $\mathscr B_{r, t}(\omega_0)$ with respect to the parameter $\omega_0$ in
 \cite{Koi, Tur}.
 Later on, we will need another definition of  $\mathscr B_{r, t}(\omega_0)$ via walled Brauer diagrams so as to describe the actions of  $\mathscr B_{r, t}^{\rm aff}$ and its cyclotomic quotients on mixed tensor product of certain modules in parabolic category  $\mathcal O$ for general linear Lie algebras.

Recall that $r$ and $t$ are  two positive integers. A {\it walled  $(r, t)$-Brauer diagram} $D$ is a diagram with $(r\!+\!t)$ vertices on the top and bottom rows, and vertices on both rows are labeled  from left to right
by $r, \cdots,2, 1, \bar 1, \bar 2, \cdots, \bar t$.
Each vertex $i\!\in\!\{1, 2, \cdots, r\}$ (resp., $\bar i\!\in $ $\{\bar 1, \bar 2, \cdots, \bar t\}$) on a
row has to  be connected to a unique vertex, say $\bar j$ (resp., $j$) on the same row or a unique vertex
$j$ (resp., $\bar j$) on the other row. There are four types of pairs $[i,j],$ $[i,\bar j]$, $[\bar i,j]$ and $[\bar i,\bar j]$ such that
the pairs $[i, j]$ and $[\bar i, \bar j]$ are called
{\it vertical edges}, and the pairs    $[\bar i, j]$ and $[i, \bar j]$ are called
{\it horizontal edges}. If we imagine that  there is a wall which separates the vertices $1, \bar 1$ on both top and bottom rows,
then a walled $(r,t)$-Brauer diagram is a diagram    with $(r+t)$ vertices on both rows such that each vertical edge can not cross the wall and each horizontal edge
has to cross the wall.  For convenience,  we  call a walled  $(r, t)$-Brauer diagram a {\it walled Brauer diagram} if there is no confusion.

Let $D_1\circ D_2$ be the    {\it composition} $D_1\circ D_2$ of
two walled  Brauer diagrams $D_1$ and $D_2$. Then  $D_1\circ D_2$  can be  obtained by putting $D_1$ above $D_2$ and connecting each
 vertex on the bottom row of $D_1$ to the corresponding vertex on the top row of $D_2$.
Removing all circles of $D_1\circ D_2$ yields a walled Brauer diagram, say $D_3$. Let $n(D_1, D_2)$ be the number of circles appearing in $D_1\circ D_2$.
Then the {\it product} $D_1 D_2$ of $D_1$ and $D_2$ is defined to be $\omega_0^{n(D_1, D_2)} D_3$, where $\omega_0\in R$.
The \textit{walled Brauer algebra} $\mathscr B_{r, t}(\omega_0)$ with respect to the parameter $\omega_0$ is the
associative algebra over $R$ spanned by all walled $(r, t)$-Brauer diagrams with product defined as above.

Nikitin~\cite{Ni} proved that two previous definitions of walled Brauer algebras are isomorphic. The corresponding isomorphism sends $e_1$ (resp., $ s_i$ resp.,  $\bar s_j$ ) to the  walled Brauer diagram
whose  edges  are  of form $[k, k]$ or $[\bar k, \bar k]$
except two horizontal  edges  $[1, \bar 1]$ on both top and bottom rows (resp.,
 two vertical  edges  $[i, i+1]$ and $[i+1, i]$ resp., $[\bar j, \overline {j+1}]$ and $[\overline {j+1}, \bar j]$).  The walled Brauer algebra defined via walled Brauer diagrams in \cite{BS} is isomorphic to the opposite of that defined as above.

 Suppose $u_i, \bar u_i\in R$, $i\in \underline{k}$. Let  $I$ be  the two-sided ideal
of $\mathscr B_{r, t}^{\rm aff}$ generated by $f(x_1)$ and $g(\bar x_1)$, where
 $ f(x_1)= \prod_{i=1}^k (x_1-u_i)$ and  $ g(\bar x_1)= \prod_{i=1}^k (\bar x_1-\bar u_i)$ such that
 $ e_1 f(x_1)=(-1)^k e_1 g(\bar x_1)$.
  The cyclotomic (or level $k$)  walled Brauer algebra $\mathscr B_{k, r, t}$
 is the quotient algebra $\mathscr B_{r, t}^{\rm aff} /I$.
Rewrite $f(x_1)=0$ as $x_1^k+\sum_{i=0}^{k-1} a_{k-i} x^{i}=0$.
If \begin{equation}\label{adda}  \omega_\ell=-(a_1\omega_{\ell-1}+\cdots +a_k\omega_{\ell-k}), \text{ for all $\ell\ge k$,}\end{equation}  then $\mathscr B_{k, r, t}$ is called {\it admissible}.
Let  $\mathbb N_k=\{0,1, \cdots, k-1\}$. If  $(\alpha, \beta)\in \mathbb N_k^r \times \mathbb N_k^t$,  write
   $x^\alpha=x_1^{\alpha_1}x_2^{\alpha_2}\cdots x_r^{\alpha_r}$, and
$\bar x^\beta=\bar x_1^{\beta_1}\bar x_2^{\beta_2}\cdots \bar x_t^{\beta_t}$, where
$x_{i+1}=s_ix_is_i-s_i$,  $i\in \underline{r-1}$, and  $\bar x_{j+1}=\bar s_j\bar x_j\bar s_j-\bar s_j$, $j\in \underline{t-1}$. We call  $x^{\alpha} D  \bar x^{\beta} $ a \textit{regular monomial} of $\mathscr B_{k, r, t}$, where $D$ is  a walled Brauer diagram.

\begin{Theorem}\label{level-k-walled} \cite[Theorem~2.12]{RSu1} Let  $\mathscr B_{k, r, t}$
be defined  over $R$. \begin{enumerate} \item As an $R$-module, $\mathscr B_{k, r, t}$ is spanned by all regular monomials.\item      $\mathscr B_{k, r, t}$ is free over $R$ with rank  $k^{r+t}(r+t)!$ if and only if
 $\mathscr B_{k, r, t}$ is admissible. In this case, all regular  monomials of $\mathscr B_{k, r, t}$ consist of an $R$-basis of $\mathscr B_{k, r, t}$.\end{enumerate}
\end{Theorem}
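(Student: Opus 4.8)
The plan is to deduce Theorem~\ref{level-k-walled} from the PBW-type basis of the affine walled Brauer algebra recalled above, namely that the monomials $x^\alpha D\bar x^\beta$ with $\alpha\in\mathbb N^r$, $\beta\in\mathbb N^t$ and $D$ a walled Brauer diagram form an $R$-basis of $\mathscr B^{\rm aff}_{r,t}$ (this is the realization $\mathscr B^{\rm aff}_{r,t}\cong R[\mathbf x_r]\otimes\mathscr B_{r,t}(\omega_0)\otimes R[\bar{\mathbf x}_t]$). For part~(1), the first step is the reduction lemma $f(x_i)\in I$ for all $i\in\underline{r}$ and $g(\bar x_j)\in I$ for all $j\in\underline{t}$. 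This follows from the corresponding fact for degenerate cyclotomic Hecke algebras: the subalgebra of $\mathscr B^{\rm aff}_{r,t}$ generated by $x_1,s_1,\dots,s_{r-1}$ is isomorphic to $\mathscr H^{\rm aff}_r$, its image in $\mathscr B_{k,r,t}$ is a quotient of $\mathscr H^{\rm aff}_r/\langle f(x_1)\rangle=\mathscr H_{k,r}$, and in $\mathscr H_{k,r}$ one has $f(x_m)=0$ for all $m$ (a standard consequence of the relations among the $x$'s and $s$'s in Definition~\ref{wbmw}, notably $x_1x_2=x_2x_1$ together with $x_{m+1}=s_mx_ms_m-s_m$); the argument for $g(\bar x_j)$ is identical on the other side. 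Granting this, I take a PBW monomial $x^\alpha D\bar x^\beta$ and argue by induction on $\sum_{i=1}^r\alpha_i$: if some $\alpha_i\ge k$, then since $x_1,\dots,x_r$ commute pairwise and $x_i^k\equiv-\sum_{l=1}^k a_l x_i^{k-l}$ modulo $I$, the monomial is congruent modulo $I$ to an $R$-combination of PBW monomials with the same $D$ and $\bar x^\beta$ but strictly smaller $\sum_i\alpha_i$; otherwise all $\alpha_i<k$ already. The same argument applied to $\bar x^\beta$ on the right of $D$ (which does not disturb $x^\alpha$ or $D$) forces all $\beta_j<k$. Hence every element of $\mathscr B_{k,r,t}$ lies in the $R$-span of the regular monomials, proving~(1).

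For~(2), the upper bound is now immediate: there are exactly $k^r\cdot(r+t)!\cdot k^t=k^{r+t}(r+t)!$ regular monomials (using that there are $(r+t)!$ walled $(r,t)$-Brauer diagrams, i.e. $\dim_R\mathscr B_{r,t}(\omega_0)=(r+t)!$), and by~(1) they span, so $\mathscr B_{k,r,t}$ is a quotient of a free $R$-module of rank $k^{r+t}(r+t)!$. Therefore $\mathscr B_{k,r,t}$ is free of rank $k^{r+t}(r+t)!$ if and only if the regular monomials are $R$-linearly independent, in which case they form an $R$-basis (a surjection between free $R$-modules of the same finite rank is an isomorphism). For the ``only if'' direction, assume the regular monomials are independent; then $e_1$ — the regular monomial with $\alpha=\beta=0$ and $D$ the diagram of $e_1$ — spans a free rank-one $R$-summand, and since $f(x_1)=0$ in $\mathscr B_{k,r,t}$, relation~(12) gives $0=e_1x_1^{\ell-k}f(x_1)e_1=\big(\omega_\ell+a_1\omega_{\ell-1}+\cdots+a_k\omega_{\ell-k}\big)e_1$ for every $\ell\ge k$; freeness of $Re_1$ forces $\omega_\ell=-(a_1\omega_{\ell-1}+\cdots+a_k\omega_{\ell-k})$, i.e. \eqref{adda}, so $\mathscr B_{k,r,t}$ is admissible.

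The substantive content is the converse: admissibility implies that the regular monomials are $R$-linearly independent. I would first reduce to a field by a universal-parameter argument. Let $R_0$ be the polynomial ring over $\mathbb Z$ in the free parameters $u_1,\dots,u_k,\omega_0,\dots,\omega_{k-1}$, over which the remaining $\omega_a$, $\bar\omega_a$ and the coefficients of $g$ are the prescribed polynomials; $R_0$ is the ``generic admissible'' ground ring, every admissible $R$ is an $R_0$-algebra, and the formation of $\mathscr B_{k,r,t}$ commutes with base change, so it suffices to treat $R=R_0$. Since $R_0$ is a domain with fraction field $F_0$, and the spanning surjection $R_0^N\twoheadrightarrow\mathscr B_{k,r,t}$ (with $N=k^{r+t}(r+t)!$) becomes an isomorphism after $\otimes_{R_0}F_0$ provided $\dim_{F_0}(\mathscr B_{k,r,t}\otimes_{R_0}F_0)\ge N$ (its kernel is then torsion inside the torsion-free module $R_0^N$, hence zero), it remains only to prove that dimension bound over the field $F_0$. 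For this I would exhibit an explicit $\mathscr B_{k,r,t}$-module over $F_0$ of the expected size on which the $N$ regular monomials act as linearly independent operators — the natural candidate being the mixed tensor module $M^{\mathfrak p}(\delta_c)\otimes V^{\otimes r}\otimes W^{\otimes t}$ for $\mathfrak{gl}_n$ with $n\ge r+t$ and a generic choice of highest weight, via the action of Definition~\ref{casm} — or, alternatively, develop enough seminormal representation theory to show that $\mathscr B_{k,r,t}\otimes_{R_0}F_0$ is split semisimple with Wedderburn components of total dimension $N$. Verifying that such a module respects all the defining relations and that no nonzero combination of regular monomials annihilates it is where essentially all the work lies; it is precisely here that admissibility \eqref{adda} enters, since it is exactly the condition making the contraction relations $e_1x_1^\ell e_1=\omega_\ell e_1$ for $\ell\ge k$ consistent with $f(x_1)=0$.
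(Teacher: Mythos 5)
The paper does not prove Theorem~\ref{level-k-walled}; the statement is imported from \cite[Theorem~2.12]{RSu1} with no argument supplied, so there is no in-text proof to compare your proposal against.

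On its own merits, the proposal has two genuine gaps. In part~(1) the reduction step rests on the claim $x_i^k\equiv-\sum_{l}a_lx_i^{k-l}\pmod I$, i.e.\ $f(x_i)\in I$ for all $i$, and this is false for $i>1$. Already with $k=1$ and $r=2$, modulo $I=\langle x_1-u_1\rangle$ one has $x_2=s_1x_1s_1-s_1\equiv u_1-s_1$, so $f(x_2)\equiv -s_1$, which does not lie in $I$: otherwise $1=s_1^2\in I$ and $\mathscr H_{1,2}$ would vanish, contradicting $\mathscr H_{1,2}\cong R\mathfrak S_2$. Consequently the termwise substitution you describe is not available, and it would not in any case keep $D$ and $\bar x^\beta$ fixed, because expanding a high power of $x_i$ in lower-degree terms introduces group elements that merge into $D$. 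The correct spanning argument works with the associated graded algebra of the degree filtration \eqref{filtr}: there the recursion degenerates to $x_{i+1}=s_ix_is_i$, so $x_1^k=0$ (the leading term of $f(x_1)$) conjugates to $x_i^k=\bar x_j^k=0$ for all $i,j$ — precisely Lemma~\ref{grof12}(d) — and the regular monomials then span by downward induction on degree. In part~(2), the ``only if'' direction via $e_1x_1^{\ell-k}f(x_1)e_1$ and the dimension count are correct, but the ``if'' direction — admissibility forces linear independence of the regular monomials, which is the sole substantive content of the theorem — is only sketched: you propose a generic-parameter base change together with the existence of a sufficiently large faithful module, while explicitly conceding that ``essentially all the work lies'' there. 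The plan (faithfulness on a mixed tensor module, in the spirit of the labeled-diagram argument in the proof of Theorem~\ref{isomorphism}) is reasonable, but as written it remains a plan, not a proof.
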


\section{Cyclotomic walled Brauer algebras and parabolic category  $\mathcal O$ }\label{iso}
Throughout, let $\mathfrak{g}$ be  the general linear Lie algebra $\mathfrak {gl}_n$  over $\mathbb C$.  Then
$\mathfrak g=\mathfrak{n}^+\oplus\mathfrak{h}\oplus \mathfrak{n}^-$ such that the Cartan subalgebra $\mathfrak h$ consists of all  diagonal $n\times n $ matrices, and $\mathfrak n^+$ (resp., $\mathfrak n^{-}$)
consists of all strictly upper (resp., lower) triangle $n\times n $ matrices. For any  $ i, j\in \underline{n}$, let $e_{i,j}$ be the usual matrix unit. Then $\{e_{i,i}\mid  i\in \nb \}$ consists of  a basis of $\mathfrak h$.
Let  $\{\varepsilon_i\in \mathfrak h^*\mid  i\in\nb\}$  be the dual  basis of  $\{e_{i,i}\mid i\in \nb\}$  in the sense that $\varepsilon_i(e_{j,j})=\delta_{i,j}$. Then any  $\lambda \in\mathfrak h^*$, called a {\it weight} of $\mathfrak{g}$, can be written as $$\lambda=\lambda_1\varepsilon_1+\lambda_2 \varepsilon_2+\cdots +\lambda_n \varepsilon_n, \text{  $\lambda_i\in \mathbb C$.}$$
In this paper,  a $\mathfrak{g}$-module $M$ is always a  left $\mathfrak{g}$-module. A non-zero vector  $m\in M$ is of weight $\lambda$ if $e_{i,i} m=\lambda_i m$, for any $i\in \nb$. If $\mathfrak n^+ m=0$, then  $m$ is called a highest weight vector of $M$ with highest weight $\lambda$. A  highest weight module  is a $\mathfrak g$-module generated by a highest weight vector.

Throughout, $V$ is the natural module of $\mathfrak g$ with a basis $\{v_i\mid i\in \nb\}$.  Let $W=\Hom_{\mathbb C}(V, \mathbb C)$ be its linear dual with a basis $\{v_i^*\mid i\in \nb\}$ such that $v_i^*(v_j)=\delta_{i,j}$. Then
\begin{equation}\label{naturalaction}
e_{i,j}v_k=\delta_{j,k}v_i,\text{ and\ \  }  e_{i,j}v^*_k=-\delta_{i,k}v_j^*.
\end{equation}
  So, $V$ is a highest weight $\mathfrak g$-module with the highest weight $\varepsilon_1$ and $W$ is a highest weight $\mathfrak g$-module with the highest weight $-\varepsilon_n$.

 \begin{Defn}\label{mrt}  For  two positive integers $r$ and $t$ and  any  highest weight $\mathfrak {g}$-module $M$,  define  $ M^{r,t}=M\otimes V^{\otimes r}\otimes W^{\otimes t}$.
   \end{Defn}

   We order the positions of tensor factors of  $M^{r,t}$ according to the total ordered set $(J, \prec)$ such that  $J=\{0\}\cup J_1\cup J_2$ with $J_1=\{r,...,2,1\}$ and $J_{2}=\{\bar 1, \bar 2, ..., \bar t\}$ and \begin{equation}\label{tot12}0\prec  r\prec r-1\prec \cdots\prec 1 \prec \bar 1\prec\cdots\prec \bar t.\end{equation}

   \begin{Defn}\label{ilr}  According to the total ordered set $J$,  we define  $I(n, r+t)$ to be the set of all maps $ J_1\cup J_2\rightarrow \nb$.
   So, each $\mathbf i\in I(n, r+t)$ is of form $(i_r, i_{r-1}, \cdots, i_1, i_{\bar 1}, i_{\bar 2}, \cdots, i_{\bar t})$. Define
    $\mathbf i^L=(i_r, i_{r-1}, \cdots, i_1)$ and $\mathbf i^R= ( i_{\bar 1}, i_{\bar 2}, \cdots, i_{\bar t}) $.\end{Defn}

   \begin{lemma}\label{basismix} For $\mathbf i\in I(n, r+t)$, define  $v_{\mathbf i}=v_{\mathbf i^L} \otimes v^*_{\mathbf i^R}$ where $v_{\mathbf i^L}= v_{i_r}\otimes v_{i_{r-1}}\otimes \cdots \otimes v_{i_1}$ and $v^*_{\mathbf i^R}=v^*_{i_{\bar 1}}\otimes v^*_{i_{\bar 2}}\otimes \cdots\otimes v^*_{i_{\bar t}}$.  Then  $\{v\otimes v_{\mathbf i}\mid v\in S, \mathbf i\in I(n, r+t)\}$ is a basis of
    $M^{r,t}$, where $S$ is a basis of $M$. \end{lemma}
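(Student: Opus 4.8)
The plan is to reduce the statement to the standard fact that, over a field, a tensor product of bases of the tensor factors is a basis of the tensor product, and then to apply this fact iteratively to the $r+t+1$ factors of $M^{r,t}=M\otimes V^{\otimes r}\otimes W^{\otimes t}$.

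First I would record that $\{v_i\mid i\in\nb\}$ is a basis of $V$ and, as set up just before Definition~\ref{mrt}, $\{v_i^*\mid i\in\nb\}$ is a basis of $W$. Iterating the tensor-product-of-bases fact, the pure tensors $v_{i_r}\otimes\cdots\otimes v_{i_1}$, indexed by all maps $J_1\rightarrow\nb$, form a basis of $V^{\otimes r}$, and likewise the pure tensors $v^*_{i_{\bar1}}\otimes\cdots\otimes v^*_{i_{\bar t}}$, indexed by all maps $J_2\rightarrow\nb$, form a basis of $W^{\otimes t}$. In the notation of Definition~\ref{ilr} this says precisely that $\{v_{\mathbf i^L}\}$ is a basis of $V^{\otimes r}$ as $\mathbf i^L$ ranges over all maps $J_1\rightarrow\nb$, and $\{v^*_{\mathbf i^R}\}$ is a basis of $W^{\otimes t}$ as $\mathbf i^R$ ranges over all maps $J_2\rightarrow\nb$. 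Since an element $\mathbf i\in I(n,r+t)$ is by definition the same data as a pair $(\mathbf i^L,\mathbf i^R)$ of such maps, one more application of the fact gives that $\{v_{\mathbf i^L}\otimes v^*_{\mathbf i^R}=v_{\mathbf i}\mid \mathbf i\in I(n,r+t)\}$ is a basis of $V^{\otimes r}\otimes W^{\otimes t}$.

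Finally I would apply the same fact once more to $M\otimes\bigl(V^{\otimes r}\otimes W^{\otimes t}\bigr)$: as $S$ is a basis of $M$ and the $v_{\mathbf i}$ form a basis of $V^{\otimes r}\otimes W^{\otimes t}$, the set $\{v\otimes v_{\mathbf i}\mid v\in S,\ \mathbf i\in I(n,r+t)\}$ is a basis of $M^{r,t}$, as claimed. The total order $\prec$ on $J$ in \eqref{tot12} plays no mathematical role here beyond pinning down, once and for all, the order of the tensor factors, so that the index $\mathbf i$ unambiguously labels a pure tensor.

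There is essentially no obstacle in this argument; it is pure bookkeeping. The only point worth a remark is that $M$, being only assumed to be a highest weight module, need not be finite dimensional, but the fact that a tensor product over the field $\mathbb C$ of bases is again a basis holds for vector spaces of arbitrary dimension, so no finiteness hypothesis on the factors is needed.
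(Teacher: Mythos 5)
Your proof is correct and is the standard argument; the paper states Lemma~\ref{basismix} without proof, treating it as an immediate consequence of the fact that a tensor product of bases (over a field) is a basis, which is exactly what you spell out.
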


Let  $C$ be  the quadratic Casimir element of the universal enveloping algebra  $\U$ with respect to $\mathfrak g$.  Then $C=\sum_{i, j\in \nb}  e_{i,j}e_{j,i}$.
Let \begin{equation} \label{omega} \Omega=\frac12(\Delta(C)-C\otimes1-1\otimes C)=\sum_{ i,j\in \nb} e_{i,j}\otimes e_{j,i},\end{equation}  where
$\Delta$ is the co-multiplication of $\U$. For any  $a,b\in J$ with $a\prec b$,  define
$\pi_{a,b}:\U^{\otimes2}\to \U^{\otimes(r+t+1)}$ by \begin{equation}\label{pi-ab}
\pi_{a,b}(x\otimes y)=1\otimes\cdots\otimes 1\otimes \overset{a\text{th}} {x}\otimes 1\otimes\cdots\otimes1\otimes\overset{b\text{th}} y\otimes 1\otimes\cdots\otimes1. \end{equation}
Since $C$ is a central element of $\U$,  $\pi_{a,b}(\Omega)|_{M^{r,t}}\in \text{End}_{\U}(M^{r,t})$.

\begin{Defn}\label{casm} We define some  elements of ${\rm End}_{\U} (M^{r,t})$ as follows:
\begin{eqnarray}\label{operator--1}&\!\!\!\!&
s_i=\pi_{i+1,i}(\Omega)|_{M^{r,t}}\ ( i\in \underline{r-1}),\ \ \ \
\bar s_j=\pi_{\bar j,\overline{j+1}}(\Omega)|_{M^{r,t}}\ ( j\in \underline{t-1}),\nonumber\\&\!\!\!\!&
x_1=-\pi_{0, 1}(\Omega)|_{M^{r,t}},\ \ \ \ \ \ \ \bar x_1=-\pi_{0, \bar1}(\Omega)|_{M^{r,t}},\ \ \ \ \ \ \
  e_1=-\pi_{1,\bar1}(\Omega)|_{M^{r,t}}.
\end{eqnarray}
\end{Defn}

We always assume that  ${\rm End}_{\U}(M^{r,t})$ acts on the left of $M^{r,t}$.

 \begin{Prop}\label{affinere} Suppose that $M$ is a highest weight module for $\mathfrak g$.
There is an affine walled Brauer algebra $\mathscr B_{r,t}^{\rm aff}$ with some special parameters $\omega_0=n$ and $\omega_i, i\in \mathbb Z^{>0}$, such that
there is a well-defined right action of $\mathscr B_{r, t}^{\rm aff}$  on $M^{r, t}$, which gives  an algebra homomorphism $\varphi: \mathscr B_{r,t}^{\rm aff}\rightarrow \text{\rm End}_{\U}(M^{r,t})^{op}$
sending $e_1, x_1, \bar x_1$,  $s_i$, and $\bar s_j$ to the same symbols in Definition~\ref{casm} for all $ i\in \underline{ r-1}$ and $ j\in \underline{t-1}$.
 \end{Prop}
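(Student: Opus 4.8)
The plan is to verify directly that the endomorphisms $e_1, x_1, \bar x_1, s_i, \bar s_j$ of Definition~\ref{casm} satisfy the $26$ defining relations of $\mathscr B_{r,t}^{\rm aff}$ in Definition~\ref{wbmw}, with $\omega_0 = n$ and suitable higher parameters $\omega_i$; since these endomorphisms are elements of $\mathrm{End}_{\U}(M^{r,t})$ by the centrality of the Casimir $C$, checking the relations will produce the algebra homomorphism $\varphi$ into $\mathrm{End}_{\U}(M^{r,t})^{\rm op}$ automatically (the ``op'' accounting for the fact that the $\mathscr B_{r,t}^{\rm aff}$-action is a right action). First I would record the basic commutation identities for the $\pi_{a,b}(\Omega)$: operators $\pi_{a,b}(\Omega)$ and $\pi_{c,d}(\Omega)$ commute when $\{a,b\}\cap\{c,d\}=\emptyset$ (this handles relations (2),(7),(15) and the ``far commuting'' relations (4),(10),(11),(17),(23),(24) once one also uses that $\Omega$ is $\U$-invariant so $\pi_{0,1}(\Omega)$ commutes with any $s_i$ acting away from position $1$, etc.), and the standard $\mathfrak{sl}_2$-type/braid identities for three overlapping tensor slots, which give the Coxeter relations (1),(3),(14),(16) and the mixed braid relations (9),(13),(19),(20),(22),(26) — these last are exactly the degenerate-affine-Hecke-type relations already known to hold on $V^{\otimes r}$ (resp.\ $W^{\otimes t}$) since $\{x_1, s_i\}$ generate a copy of $\mathscr H_r^{\rm aff}$ acting on $V^{\otimes r}\otimes(\text{rest})$ via $\Omega$, a computation going back to the classical Jimbo--Arakawa--Suzuki setup.

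The genuinely mixed relations are (5),(6),(8),(12),(18),(21),(25), i.e.\ those involving $e_1=-\pi_{1,\bar 1}(\Omega)$. For (6) one computes $\pi_{1,\bar 1}(\Omega)^2$ on a vector $u\otimes v_i\otimes v_j^*$ (abusing notation for positions $1,\bar 1$): using \eqref{naturalaction}, $\Omega(v_i\otimes v_j^*) = \sum_{k,l} e_{k,l}v_i\otimes e_{l,k}v_j^* = -\sum_{k} v_k\otimes v_k^* \cdot \delta_{i,j} \cdot$(appropriate signs), so $\Omega|_{V\otimes W}$ is (up to sign) the rank-one map $v_i\otimes v_j^*\mapsto -\delta_{i,j}\sum_k v_k\otimes v_k^*$, whence $e_1^2 = n\, e_1$, giving $\omega_0=n$. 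For (8) one checks $e_1(x_1+\bar x_1)$ annihilates $M^{r,t}$ by a direct computation showing $\pi_{0,1}(\Omega)+\pi_{0,\bar 1}(\Omega)$ acts as $(e_{k,l}\mapsto \dots)$ in a way that is killed after applying the rank-one $\pi_{1,\bar 1}(\Omega)$ — this uses that $\sum_k e_{k,k}$-type ``trace'' terms cancel between the $V$ and $W$ slots. Relations (5) and (18) ($e_1 s_1 e_1 = e_1$, $e_1\bar s_1 e_1 = e_1$) follow from composing the rank-one projector $e_1$ on slots $\{1,\bar 1\}$ with the transposition-like $s_1$ on slots $\{1,2\}$ and contracting; relation (12), $e_1 x_1^k e_1 = \omega_k e_1$, is forced — it simply \emph{defines} the parameters $\omega_k$ as the scalars by which the operator $e_1 x_1^k e_1$ acts (well-defined precisely because $e_1$ has rank-one image in the $\{1,\bar 1\}$-slots, so $e_1(\text{anything})e_1$ is automatically a scalar multiple of $e_1$); and (25) then holds with $\bar\omega_k$ determined from the $\omega_b$'s via \cite[Corollary~4.3]{RSu}, consistency of which must be checked against the fact that $x_1$ and $\bar x_1$ are defined symmetrically through $\Omega$ on the $0$-th slot. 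Relation (21), $x_1(e_1+\bar x_1)=(e_1+\bar x_1)x_1$, is the subtlest of this group and follows by expanding all three operators $\pi_{0,1}(\Omega), \pi_{1,\bar 1}(\Omega), \pi_{0,\bar 1}(\Omega)$ on slots $\{0,1,\bar 1\}$ and using the $\U$-invariance of $\Omega$ together with the Casimir being central to rearrange.

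I expect the main obstacle to be \textbf{bookkeeping rather than conceptual difficulty}: the operators live on an $(r+t+1)$-fold tensor product, the $W$-slots carry the \emph{dual} action \eqref{naturalaction} (so minus signs proliferate), and several relations — notably (8), (12), (21), (25), and the determination of the $\omega_k$ and their compatibility with the $\bar\omega_k$ of \cite[Corollary~4.3]{RSu} — require carefully tracking how ``contraction'' (the rank-one part of $\Omega|_{V\otimes W}$) interacts with the ``permutation plus Cartan'' parts. A clean way to organize this, which I would adopt, is to first reduce every relation to an identity on the three or four relevant tensor slots (all other slots being untouched), then verify that reduced identity by evaluating on the basis vectors $v_{\mathbf i}$ of Lemma~\ref{basismix}; since $\Omega$ acts ``locally'' this reduction is legitimate and turns each of the $26$ relations into a short finite computation. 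The only point where the hypothesis that $M$ is a highest weight module (rather than arbitrary) could enter is in ensuring the relevant operators are genuinely well-defined endomorphisms with the contraction scalars independent of the chosen vector; but since each $\pi_{a,b}(\Omega)$ commutes with $\U$ and $M^{r,t}$ need not even be a highest weight module for this part, I would note that Proposition~\ref{affinere} as stated in fact holds for \emph{any} $\mathfrak g$-module $M$ — the highest-weight hypothesis is only used later to cut down to cyclotomic quotients — and structure the proof to make that transparent.
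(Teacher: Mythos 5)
Your computational approach --- reducing each of the $26$ relations in Definition~\ref{wbmw} to an identity on the few tensor slots actually touched and verifying there --- is essentially the same as the paper's, which outsources the walled Brauer relations among $e_1, s_i, \bar s_j$ to \cite{BCHLLJ} and the relations involving $x_1, \bar x_1$ to \cite{Sat} rather than writing them out. Your local $\mathfrak{sl}_2$/braid identities, sign bookkeeping, and the observation that the work concentrates in the mixed relations (5), (6), (8), (12), (18), (21), (25) all look right.

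However, your closing paragraph contains a genuine error. You claim that the proposition ``in fact holds for \emph{any} $\mathfrak g$-module $M$'' and that $\omega_a$ is well defined ``precisely because $e_1$ has rank-one image in the $\{1,\bar 1\}$-slots, so $e_1(\text{anything})e_1$ is automatically a scalar multiple of $e_1$.'' This is false. Write $M^{r,t}\cong N\otimes(V\otimes W)$ with $N$ the remaining tensor factors; the rank-one property of the operator induced by $e_1$ on $V\otimes W$ gives only that $e_1 T e_1 = A\otimes\bigl(e_1|_{V\otimes W}\bigr)$ for \emph{some} $A\in\End(N)$ depending on $T$, not that $A$ is a scalar. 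For $T=x_1^a$ one finds, as recorded in the proof of Lemma~\ref{ghom123}, that $A$ acts (up to sign) on the $M$-factor as the Gelfand invariant ${\rm tr}(E^a)$ --- a central element of $\U$, not a scalar in general. For relation (12) to hold with a scalar $\omega_a$ one must have the central elements act by scalars on $M$, i.e.\ $M$ must admit a central character; that is exactly what the highest-weight hypothesis supplies. For a generic $\mathfrak g$-module (e.g.\ a direct sum of two Verma modules with distinct central characters) ${\rm tr}(E^a)$ does not act by a scalar, there is no single $\omega_a$, and the proposition as stated fails. The highest-weight hypothesis is therefore not a cosmetic restriction: it is precisely what makes the parameters $\omega_a$ (and consequently $\bar\omega_a$) well defined, so your proof must keep it and use it at exactly this step.
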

\begin{proof} It follows from \cite{BCHLLJ} that  $e_1, s_i, \bar s_j$'s satisfy the relations for  $\mathscr B_{r, t}(n)$. So, we need only verify (8)-(13) and (21)-(26) in Definition~\ref{wbmw}. One can  verify them  by arguments similar to those in \cite{Sat}.
\end{proof}

\begin{Assumption}\label{qset} Fix positive integers $q_1,q_2,\cdots,q_k$ such that $\sum_{i=1}^{k}q_i=n$. Following \cite{BK}, we consider  any $d=(d_1, d_2, \cdots, d_k)\in\mathbb C^k$ such that  $
d_i-d_j\in\mathbb Z$ if and only if $d_i=d_j$. Let $ c_i=d_i+p_i-q_1$, for all $i\in \underline{k}$, where   $ p_i=\sum_{j=1}^iq_j$, $1\le i\le k$.  Define $ p_0=0$ and
 $\delta_c=\sum_{i=1}^kc_i(\epsilon_{p_{i-1}+1}+\epsilon_{p_{i-1}+2}+\cdots+\epsilon_{p_i})$.\end{Assumption}

Let $\mathfrak p$ be the parabolic subalgebra of $\mathfrak g$  such that the corresponding Levi subalgebra $\mathfrak l$ is $\mathfrak{gl}_{q_1}\oplus\mathfrak{gl}_{q_2}\oplus \cdots \oplus \mathfrak{gl}_{q_k}$.
   Let $\Phi_{\mathfrak l}$ be the root system of $\mathfrak l$ and denote the corresponding  set of simple roots by $\Delta_{\mathfrak l} $.
Recall that the  category  $\mathcal O$ is  the category of finitely generated $\mathfrak {g}$-modules which are locally finite over $\mathfrak{n}^+$ and semi-simple over $\mathfrak h$. Let $\U(\mathfrak l)$ be the universal enveloping algebra of $\mathfrak l$.
 Then  $\mathcal{O}^{\mathfrak p}$ is a  full subcategory of $\mathcal O$ such that for each object $M$ in   $\mathcal{O}^{\mathfrak p}$,
  $M$ is both  semisimple as a $\U(\mathfrak l)$-module and  locally $\mathfrak u$-finite, where $\mathfrak u$ is the nilradical of $\mathfrak p$.
Let
 $ \Lambda^{\mathfrak p}$ be the subset of $\mathfrak h^*$ consisting of all $\lambda$ such that
 $(\lambda, \alpha)\in \mathbb N$ for any $\alpha\in \Delta_{\mathfrak l}$.
Each  $\lambda\in  \Lambda^{\mathfrak p}$ is called a $\mathfrak p$-dominant integral  weight.
For any $\lambda\in \mathfrak h^*$, let $M^{\mathfrak p}(\lambda)$ be the usual parabolic  Verma module  with respect to a  highest weight $\lambda$. Then $M^{\mathfrak p}(\lambda)$  is the maximal quotient of the ordinary Verma module $M(\lambda)$ which is locally $\mathfrak p$-finite.  So, $M^{\mathfrak p}(\lambda)= 0$ if   $\lambda$ is not  $\mathfrak p$-dominant.

Obviously, $\delta_c\in \Lambda^{\mathfrak p}$, where $\delta_c$ is given in the Assumption~\ref{qset}.
   Let $M_c:=M^{\mathfrak p}(\delta_c)$. It is well known that
 $M_c$ is irreducible,   projective,  injective in $\mathcal{O}^{\mathfrak p}$ (see e.g. \cite{BK}). In the remaining part of this paper, we always assume that
\begin{equation}\label{mrt12}
M_c^{r,t}=M_c\otimes V^{\otimes r}\otimes W^{\otimes t}.\end{equation}
So, $M_c^{r,t}\in \mathcal{O}^{\mathfrak p}$.
Recall that $\{v_i\mid i\in  \nb\}$  is a basis of $V$ and  $\{v_i^*\mid  i\in \nb\}$
 is its dual basis. Let $p_i$'s be in Assumption~\ref{qset}.

\begin{lemma} \label{xh1} Let  $m$ be  the highest weight vector of $M_c$, which is  unique up to non-zero multiple.   If  $i\in \nb $ with  $p_{\ell-1}<i\leq p_\ell$ for some   $\ell\in \underline{k}$, then
 \begin{enumerate}
\item  $  m\otimes  v_i x_1=-c_\ell m\otimes v_i-\sum_{1\leq j\leq p_{\ell-1}}e_{i,j}m \otimes  v_j $,
\item  $ m\otimes v^*_i \bar{x}_1=  c_\ell m\otimes v^*_i+\sum_{p_\ell< j\leq n} e_{j,i}m\otimes v^*_j$.
\end{enumerate}
\end{lemma}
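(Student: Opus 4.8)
The plan is to compute both actions directly from the definitions, using the explicit formula for $\Omega$ together with the action of $\mathfrak g$ on $V$ and $W$ given in \eqref{naturalaction}. Recall that by Definition~\ref{casm}, $x_1$ acts (on the right) as $-\pi_{0,1}(\Omega)|_{M_c^{r,t}}$, so on a vector of the form $m\otimes v_i$ (ignoring the tensor factors in positions $r,\dots,2$ and $\bar 1,\dots,\bar t$, which are untouched since $\pi_{0,1}$ places operators only in the $0$th and $1$st slots) we have
\[
(m\otimes v_i)x_1=-\sum_{a,b\in\nb}e_{a,b}m\otimes e_{b,a}v_i=-\sum_{a,b\in\nb}\delta_{a,i}\,e_{a,b}m\otimes v_b=-\sum_{b\in\nb}e_{i,b}m\otimes v_b,
\]
using $e_{b,a}v_i=\delta_{a,i}v_b$. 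Then I would split the sum over $b$ according to the block structure of $\mathfrak p$: the term $b=i$ contributes $-e_{i,i}m\otimes v_i$, and since $m$ is the highest weight vector of $M_c=M^{\mathfrak p}(\delta_c)$ with weight $\delta_c$ and $p_{\ell-1}<i\le p_\ell$, we have $e_{i,i}m=(\delta_c)_i m=c_\ell m$; for $b$ in a block strictly to the right of $i$ (i.e.\ $b>p_\ell$), $e_{i,b}\in\mathfrak u$ (strictly upper triangular relative to $\mathfrak l$) and $e_{i,b}m=0$ since $m$ is a highest weight vector killed by $\mathfrak n^+\supseteq\mathfrak u$; for $b$ in the same block as $i$ but $b\neq i$, either $e_{i,b}\in\mathfrak n^+$ (if $b>i$ within the block) so $e_{i,b}m=0$, or $e_{i,b}\in\mathfrak l\cap\mathfrak n^-$ (if $p_{\ell-1}<b<i$); in the latter case $e_{i,b}m=0$ as well because $m$ generates the \emph{irreducible} one-dimensional $\mathfrak l$-module of weight $\delta_c$ (all $c_\ell$ within a block are equal, so $\delta_c$ restricted to the block is a multiple of the trace and $M_c$ is one-dimensional over each $\mathfrak{gl}_{q_i}$). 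Hence the only surviving terms have $b\le p_{\ell-1}$, giving exactly $(m\otimes v_i)x_1=-c_\ell\,m\otimes v_i-\sum_{1\le j\le p_{\ell-1}}e_{i,j}m\otimes v_j$, which is (a).

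For (b) the computation is parallel but uses the dual action. By Definition~\ref{casm}, $\bar x_1$ acts as $-\pi_{0,\bar1}(\Omega)|_{M_c^{r,t}}$, so on $m\otimes v_i^*$ we get
\[
(m\otimes v_i^*)\bar x_1=-\sum_{a,b\in\nb}e_{a,b}m\otimes e_{b,a}v_i^*=-\sum_{a,b\in\nb}(-\delta_{b,i})\,e_{a,b}m\otimes v_a^*=\sum_{a\in\nb}e_{a,i}m\otimes v_a^*,
\]
using $e_{b,a}v_i^*=-\delta_{b,i}v_a^*$. Again I split over $a$: the term $a=i$ gives $e_{i,i}m\otimes v_i^*=c_\ell\,m\otimes v_i^*$; for $a<i$ within the same block or in a block strictly to the left ($a\le p_{\ell-1}$), $e_{a,i}$ is either in $\mathfrak n^+$ (blocks to the left, or $a>$... — careful: $e_{a,i}$ with $a<i$ is strictly upper triangular, hence in $\mathfrak n^+$ if $a,i$ are in different blocks, and in $\mathfrak l\cap\mathfrak n^+$ if in the same block), so $e_{a,i}m=0$; for $a$ in the same block as $i$ with $a>i$, $e_{a,i}\in\mathfrak l\cap\mathfrak n^-$ and again $e_{a,i}m=0$ by one-dimensionality of $M_c$ over $\mathfrak l$; so only $a=i$ and $a>p_\ell$ (blocks strictly to the right) survive, yielding $(m\otimes v_i^*)\bar x_1=c_\ell\,m\otimes v_i^*+\sum_{p_\ell<j\le n}e_{j,i}m\otimes v_j^*$, which is (b).

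The only genuinely delicate point — and the one I would be most careful to state precisely — is the claim that $e_{i,j}m=0$ for \emph{all} off-diagonal $e_{i,j}$ with $i,j$ in the same Levi block: it is not enough that $m$ is a highest weight vector (which only kills $\mathfrak n^+$), one must also invoke that $M_c=M^{\mathfrak p}(\delta_c)$ restricted to $\mathfrak l$ is semisimple and $m$ spans the trivial-type summand forced by $\delta_c$ being constant on each block — equivalently, that the parabolic Verma module has one-dimensional lowest $\mathfrak l$-type on which $e_{i,j}$ ($i\ne j$, same block) acts as $0$. Once this is granted, everything else is a bookkeeping matter of sorting indices by block, and the asymmetry between (a) (sum over $j\le p_{\ell-1}$, strictly left blocks) and (b) (sum over $j>p_\ell$, strictly right blocks) comes precisely from the opposite signs in the $\mathfrak g$-actions on $V$ versus $W$ in \eqref{naturalaction}. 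I would therefore present the argument for (a) in full and remark that (b) follows by the analogous computation with $v_i$ replaced by $v_i^*$ and the sign flip tracked through.
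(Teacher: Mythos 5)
Your computation is correct, and it supplies the direct verification that the paper leaves to the reader (the lemma is stated without proof). The one point genuinely worth spelling out is exactly the one you flag: killing the off-diagonal $e_{i,j}m$ within a Levi block requires not merely that $m$ is a $\mathfrak g$-highest weight vector but that $L_{\mathfrak l}(\delta_c)$, the lowest $\mathfrak l$-type of $M^{\mathfrak p}(\delta_c)$, is one-dimensional — which holds because $\delta_c$ restricted to each $\mathfrak{gl}_{q_\ell}$ is $c_\ell$ times the trace character. With that observed, the block-by-block bookkeeping and the sign flip between the $V$- and $W$-actions give (a) and (b) exactly as stated.
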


\begin{Defn}\label{B} Let $ B_q=  \cup_{l=1}^k   \cup_{h=1}^{l-1}  \mathbf p_l\times \mathbf p_h$, where  $\mathbf p_i=\{p_{i-1}+1, p_{i-1}+2,\cdots, p_i\}$ for any $i\in \underline {k} $. Let
 $\preceq $ be the lexicographic order on  $B_q$ in the sense that  $(i_1,j_1)\preceq (i_2,j_2)$ if either $i_1\le i_2$ or $i_1=i_2$ and $ j_1\leq j_2$. If
 $(i_1,j_1)\preceq (i_2,j_2)$ and $(i_1,j_1)\neq (i_2,j_2)$, we write $(i_1,j_1)\prec (i_2,j_2)$.    \end{Defn}

If $(i_\ell,j_\ell)\in B_q$, for all $\ell\in \underline{a}$, and  $a\in \mathbb Z^{>0}$,  and if $\alpha\in \mathbb N^a$,
 we write \begin{equation}\label{eijv} e_{{\mathbf i, \mathbf j}}^\alpha=e_{i_a,j_a}^{\alpha_a}e_{i_{a-1},j_{a-1}}^{\alpha_{a-1}}\cdots e_{i_1,j_1}^{\alpha_1}.\end{equation}
Abusing of notation, we identify $\mathbf i$ (resp., $\mathbf j$) with $(i_a, i_{a-1}, \cdots, i_1)$
(resp., $(j_a, j_{a-1}, \cdots, j_1)$). In this case, we say that   both $\mathbf i$ and $\mathbf j$ are   of lengthes $a$.  If $a=0$, we set $ e_{{\mathbf i, \mathbf j}}^\alpha=1$.

\begin{lemma}\label{basismct} Let $S$ be the set of all elements  $e_{\mathbf k\mathbf l}^{\alpha}m\otimes v_{\bf i}\in M_c^{r,t}$, where  \begin{enumerate} \item   $m$ is  the highest weight vector of $M_c$, and  ${\bf i} \in  I(n,r+t)$,  and  $\alpha\in \mathbb N^a$, \item  $\mathbf k, \mathbf l$ are sequence of integers in $\nb$ with all possible  lengthes $a$  such that $(k_i, l_i)\in B_q$  and  $(k_{i},l_{i})\prec (k_{i+1}, ,l_{i+1})$, for all possible $i$.\end{enumerate}
Then  $S$ is a basis of $M_c^{r,t}$.\end{lemma}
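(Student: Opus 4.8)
The plan is to first produce an explicit basis of the single factor $M_c$ and then to lift it to $M_c^{r,t}$ by means of the description of a basis of a mixed tensor module given in Lemma~\ref{basismix}. Concretely, I claim that the map $u\mapsto um$ from $\U(\mathfrak{u}^{-})$ to $M_c$, where $\mathfrak{u}^{-}$ is the nilradical opposite to $\mathfrak u$ and $m$ is the highest weight vector of $M_c$, is an isomorphism of vector spaces; granting this, the elements $e_{\mathbf k\mathbf l}^{\alpha}m$ appearing in the statement are precisely the images of a PBW basis of $\U(\mathfrak{u}^{-})$, hence a basis of $M_c$.

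To prove the claim I would use that $\delta_c$ is constant on each block $\mathbf p_i$ of the Levi subalgebra $\mathfrak l=\mathfrak{gl}_{q_1}\oplus\cdots\oplus\mathfrak{gl}_{q_k}$ (Assumption~\ref{qset}). Consequently the derived subalgebras $\mathfrak{sl}_{q_i}$ annihilate $m$, and so does $\mathfrak u$, while the centre of $\mathfrak l$ acts on $m$ by the scalars $c_i$; thus $\mathbb C m$ is a one-dimensional $\mathfrak p$-submodule and $M_c=M^{\mathfrak p}(\delta_c)=\U(\mathfrak g)\otimes_{\U(\mathfrak p)}\mathbb C m$. The triangular decomposition $\mathfrak g=\mathfrak{u}^{-}\oplus\mathfrak p$ and the PBW theorem give $\U(\mathfrak g)\cong\U(\mathfrak{u}^{-})\otimes_{\mathbb C}\U(\mathfrak p)$ as a right $\U(\mathfrak p)$-module, and tensoring with $\mathbb C m$ over $\U(\mathfrak p)$ yields $M_c\cong\U(\mathfrak{u}^{-})$, the isomorphism being $u\mapsto um$. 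Note that only the one-dimensionality of $\mathbb C m$ is used here, not the irreducibility of $M_c$.

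Next I would match the PBW basis of $\U(\mathfrak{u}^{-})$ with the combinatorics of the statement. A matrix unit $e_{i,j}$ belongs to $\mathfrak{u}^{-}$ exactly when $i\in\mathbf p_\ell$ and $j\in\mathbf p_h$ with $\ell>h$, that is, exactly when $(i,j)\in B_q$; hence $\{e_{i,j}\mid(i,j)\in B_q\}$ is a basis of $\mathfrak{u}^{-}$. Fixing the total order $\prec$ on $B_q$ from Definition~\ref{B}, the PBW theorem says that the ordered monomials $e_{\mathbf k\mathbf l}^{\alpha}$ of \eqref{eijv}, with $(k_i,l_i)\in B_q$, $(k_i,l_i)\prec(k_{i+1},l_{i+1})$ and exponents $\alpha_i$ positive, run over a basis of $\U(\mathfrak{u}^{-})$. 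By the previous paragraph $\{e_{\mathbf k\mathbf l}^{\alpha}m\}$ is a basis of $M_c$, so applying Lemma~\ref{basismix} with this basis of $M_c$ in the role of $S$ shows that $\{e_{\mathbf k\mathbf l}^{\alpha}m\otimes v_{\mathbf i}\mid\mathbf i\in I(n,r+t)\}$, which is exactly the set $S$ of the statement, is a basis of $M_c^{r,t}=M_c\otimes V^{\otimes r}\otimes W^{\otimes t}$.

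I do not expect a genuine obstacle, so the write-up should be short. The only points deserving attention are bookkeeping ones: the product in \eqref{eijv} lists its factors in decreasing order of index, so one formally applies the PBW theorem for the order opposite to $\prec$, which changes nothing; and the exponent vectors in the statement should be understood as strictly positive (equivalently, one allows zero exponents but then discards the corresponding factors) so that the monomials are listed without repetition. It is worth stressing that the particular choice of $M_c$ is what makes the statement hold: for a general highest weight $\mathfrak g$-module $M$ the analogous set would only span $M^{r,t}$ rather than be a basis.
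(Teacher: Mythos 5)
The paper states this lemma without proof, treating it as immediate from the PBW theorem and Lemma~\ref{basismix}; your write-up supplies exactly the expected argument. In particular, the observation that $\delta_c$ is scalar on each Levi block, so that $M_c=M^{\mathfrak p}(\delta_c)$ is induced from a one-dimensional $\mathfrak p$-module and is therefore free of rank one over $\U(\mathfrak u^-)$, is precisely what makes the monomials $e_{\mathbf k\mathbf l}^{\alpha}m$ a basis (and not merely a spanning set) of $M_c$, and your bookkeeping remarks about the reversed order in \eqref{eijv} and the treatment of zero exponents are correct.
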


   For each basis element $e_{\mathbf k\mathbf l}^{\alpha}m\otimes v_{\bf i}\in S$ in   Lemma~\ref{basismct}, we say that it is of degree    $\sum_{i} {\alpha_i}$.
 In fact, $e_{i,j} e_{k, l} m=e_{k, l} e_{i, j} m$ up to some terms with lower degrees.  We will use it frequently in the remaining part of this paper.

\begin{lemma}\label{actionofh} Let $m$ be the highest weight vector of $M_c$.
Suppose  $ h\in \underline{ k-1}$. \begin{enumerate} \item If $j\in \cup_{i=1}^{h} \mathbf p_i$, then   $ m\otimes  v_j x_1^h =0$ up to some terms  in $M_c\otimes V$ with  degrees $\le h-1$.
\item If $j\in \cup_{i=1}^{h}\mathbf p_{k-i+1}$, then $  m\otimes v^*_j\bar x_1^{h} =0$ up to some  terms in $M_c\otimes W$ with degrees $\le h-1$.
\item  If $j\in \mathbf p_{i}$, $h+1\le i\le k$, then  $$m\otimes v_j x_1^h  =(-1)^h \sum_{l=1}^h \sum_{j_l\in \mathbf p_{i_l}}  e_{j_{h-1},j_h}\cdots e_{j_1,j_2}e_{j,j_1}m \otimes v_{j_h}$$
up to some terms  in $M_c\otimes V$ with  degrees $\le h-1$, where    $1\leq i_h< i_{h-1}<\cdots <i_1\leq i-1$.
\item If $j\in \mathbf p_{i}$, $1\le i\le k-h$, then $$ m\otimes v^*_j \bar x_1^{h}=\sum_{l=1}^h \sum_{j_l\in \mathbf p_{i_l},} e_{j_h,j_{h-1}}\cdots e_{j_2,j_1}e_{j_1,j}m\otimes v^*_{j_h},$$ up to
some terms  of $M_c\otimes W$
    with  degrees $\le h-1$, where
$i+1\leq i_1<i_2<\cdots<i_h\leq k$.
  \end{enumerate}
\end{lemma}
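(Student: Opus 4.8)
The plan is to establish all four parts by a single induction on $h$, doing parts (1) and (3) together and then (2) and (4) by the mirror argument. First I would record how $x_1$ and $\bar x_1$ act on an arbitrary decomposable tensor. Unwinding Definition~\ref{casm} with $\Omega=\sum_{p,q}e_{p,q}\otimes e_{q,p}$ and \eqref{naturalaction} gives, for any $u\in M_c$ and $\ell\in\nb$,
\[
x_1(u\otimes v_\ell)=-\sum_{q\in\nb}e_{\ell,q}\,u\otimes v_q,\qquad
\bar x_1(u\otimes v_\ell^*)=\sum_{q\in\nb}e_{q,\ell}\,u\otimes v_q^*,
\]
and specializing $u=m$ recovers Lemma~\ref{xh1}.

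Next comes the degree bookkeeping. Because $m$ spans a one-dimensional $\mathfrak l$-module and $\mathfrak u m=0$, one has $e_{a,b}m\in\mathbb C m$ for every $(a,b)\notin B_q$ (and $e_{a,b}m=0$ unless $a=b$); combining this with the fact that each $[e_{a,b},e_{c,d}]$ is again a matrix unit (or a difference of two, or $0$), a short computation shows that left multiplication by any $e_{a,b}$ raises the degree of an element of $M_c$ — in the sense introduced after Lemma~\ref{basismct} — by at most one, and by at most zero when $(a,b)\notin B_q$. In particular $x_1$ and $\bar x_1$ raise degree by at most one.

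With these two ingredients the induction is routine. For $h=1$, parts (1) and (3) are Lemma~\ref{xh1}(1): rewrite $\{1,\dots,p_{i-1}\}=\bigsqcup_{i'<i}\mathbf p_{i'}$ and discard the degree-zero term $-c_i\,m\otimes v_j$. For the inductive step, apply $x_1$ to the level-$h$ expression via the formula above. The error term is raised by at most one, hence still has degree $\le(h+1)-1$ and is absorbed. In the new main term, the summand indexed by $q$ — of the form $e_{j_h,q}\,(e_{j_{h-1},j_h}\cdots e_{j,j_1}m)\otimes v_q$ — has degree $\le h$ whenever $(j_h,q)\notin B_q$, i.e. whenever $q=j_h$ or $q$ lies in a block of index $\ge i_h$; such summands join the new error term. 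The remaining summands have $(j_h,q)\in B_q$, forcing $q\in\mathbf p_{i_{h+1}}$ with $i_{h+1}<i_h$; relabelling $q=j_{h+1}$ turns them into exactly the level-$(h+1)$ expression of (3). If $i\le h$ there is no strictly decreasing chain $i-1\ge i_1>\dots>i_h\ge 1$, so the main term vanishes, which is (1).

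Parts (2) and (4) come out of the same induction run with $\bar x_1$ and $v_j^*$, started from Lemma~\ref{xh1}(2); here the surviving summands are those with $(q,j_h)\in B_q$, which forces $q$ into a block of index larger than $i_h$, producing the increasing chains $i_1<i_2<\dots<i_h$. The step I expect to require the most care is the degree bookkeeping of the second paragraph: checking that commutators never raise degree, so that modulo lower-degree terms exactly the summands indexed by strictly monotone chains of block indices survive; granting that, everything else is a direct computation.
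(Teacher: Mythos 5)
Your proposal is correct and follows essentially the same route as the paper's own proof: an induction on $h$ using the explicit formula for $x_1(u\otimes v_\ell)$, splitting the sum over $q$ according to whether $(j_h,q)\in B_q$, and absorbing the $(j_h,q)\notin B_q$ summands into the lower-degree error using the fact that $\mathfrak p$ preserves the PBW degree filtration on $M_c$ while $\mathfrak m$ raises it by one. You are slightly more explicit than the paper about why the degree bookkeeping works (via the commutator observation), but the structure of the argument is identical.
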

\begin{proof} We prove (a) and (c) by induction on $h$. One can check (b)-(d), similarly.
The case $h=1$ for both (a) and (c) follows from Lemma~\ref{xh1}.
 In general,  (a) follows from inductive assumption on $h-1$ for both (a) and (c).
If $j\in \mathbf p_{i}$ and  $h+1\leq i\leq k$,  then by inductive assumption on $h-1$, up to some terms with degree $<h-1$,  there are some integers $i_1, i_2, \cdots, i_{h-1}$ such that $1\leq i_{h-1}<\cdots <i_2<i_1\leq  i-1$ and
$$\begin{aligned}
v_j\otimes m x_1^h=& (-1)^{h-1}\sum_{l=1}^{h-1} \sum_{j_l\in \mathbf p_{i_l}} e_{j_{h-2},j_{h-1}}\cdots e_{j_1,j_2}e_{j,j_1}m \otimes  v_{j_{h-1}}  x_1\\
=& (-1)^h  \sum_{j_h=1}^n  \sum_{l=1}^{h-1}\sum_{j_l\in \mathbf p_{i_l}}e_{j_{h-1},j_h}e_{j_{h-2},j_{h-1}}\cdots e_{j_1, j_2}e_{j,j_1}m \otimes v_{j_h}\\ \end{aligned} $$
up to some terms with degree $\le h-1$.
Note that  $m$ is the highest weight vector of $M_c$.   If $j_h\in p_{i_{h}}$ and $i_h\ge i_{h-1}$,
$ e_{j_{h-1},j_h}e_{j_{h-2},j_{h-1}}\cdots e_{j,j_1}m$ is a linear combinations of basis elements of  $M_c$ with  degrees $\leq h-1$,  proving (c).
\end{proof}

 \begin{Defn}\label{fgpolyv} Recall that $p_i$'s and $c_j$'s are in Assumption~\ref{qset}. Define\begin{enumerate} \item  $u_i=-c_i+p_{i-1}$, and $\bar u_i=c_i+n-p_i$, $i\in \underline{k}$,
 \item   $f(x)=\prod_{i=1}^k(x-u_i) $ and
$g( x)=\prod_{i=1}^k( x-\bar u_i)$.\end{enumerate} \end{Defn}

\begin{lemma}\label{polyofx} Let $M_c=M^{\mathfrak p}(\delta_c)$ where $\delta_c$ is in the Assumption~\ref{qset}.
\begin{enumerate}
\item $M_c \otimes V$ has a parabolic Verma flag
\begin{equation}\label{filofvm}
0=M_0\subset M_1\subset M_2\subset \cdots\subset M_k=M_c\otimes V
\end{equation}
such that $M_i/M_{i-1}\cong M^{\mathfrak p}(\delta_c+\varepsilon_{p_{i-1}+1})$, where
 $M_i$ is generated by $\{m\otimes v_{{p_0}+1}, m\otimes v_{p_1+1},\cdots, m\otimes v_{p_{i-1}+1}\}$. Moreover,   $\prod_{j=1}^i (x_1-u_j)$  acts on  $M_i$ trivially.
\item  $M_c\otimes W$ has a parabolic  Verma flag
\begin{equation}\label{filofvmd}
0=N_{k+1}\subset N_k\subset  \cdots\subset N_1= M_c\otimes W
\end{equation}
such that $N_i/N_{i+1}\cong M^{\mathfrak p}(\delta_c-\varepsilon_{p_i})$, where $N_i$  is generated by $\{ m\otimes v_{p_k}^*, m\otimes v^*_{p_{k-1}},\cdots, m \otimes v^*_{p_i}\}$. Moreover,   $\prod_{j=i}^{k} (\bar x_1-\bar u_j)$ acts on
 $N_i$  trivially.
\end{enumerate}
\end{lemma}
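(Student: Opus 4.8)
The plan is to prove (a) directly: write down the filtration explicitly, identify its subquotients by a character count, and then read off the action of $x_1$ subquotient by subquotient. Part (b) is entirely parallel, with $W=V^*$ and the weights $-\varepsilon_i$ of $W$ replacing the weights $\varepsilon_i$ of $V$, so I would only indicate the changes.

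For (a), set $\mu_\ell:=\delta_c+\varepsilon_{p_{\ell-1}+1}$ and let $M_i$ be the $\mathfrak g$-submodule of $M_c\otimes V$ generated by $\{m\otimes v_{p_{h-1}+1}\mid 1\le h\le i\}$, so $0=M_0\subseteq M_1\subseteq\cdots\subseteq M_k$. Since $\delta_c$ is constant on each block $\mathbf p_\ell$, the Levi module $L_{\mathfrak l}(\delta_c)$ is one-dimensional, so $[\mathfrak l,\mathfrak l]m=0$ and $M_c=\U(\mathfrak u^-)m$ (this is Lemma~\ref{basismct}). First I would check $M_k=M_c\otimes V$: for $i\in\mathbf p_h$ the identity $e_{i,p_{h-1}+1}(m\otimes v_{p_{h-1}+1})=m\otimes v_i$ (using $e_{i,p_{h-1}+1}m=0$ when $i>p_{h-1}+1$, as then $e_{i,p_{h-1}+1}\in[\mathfrak l,\mathfrak l]$) shows $m\otimes v_i\in M_h$, and then $M_c=\U(\mathfrak u^-)m$ together with \eqref{naturalaction} gives $\U\cdot\{m\otimes v_i\mid i\in\nb\}=M_c\otimes V$. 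Next, since $M_i=M_{i-1}+\U(m\otimes v_{p_{i-1}+1})$, the module $M_i/M_{i-1}$ is generated by the image of $m\otimes v_{p_{i-1}+1}$, which has weight $\mu_i$; and for $a<b$ one computes $e_{a,b}(m\otimes v_{p_{i-1}+1})=\delta_{b,\,p_{i-1}+1}\,m\otimes v_a$ with $a\le p_{i-1}$, hence it lies in $M_{i-1}$ by the previous step. A direct check against $\Delta_{\mathfrak l}$ gives $\mu_i\in\Lambda^{\mathfrak p}$, so $M_i/M_{i-1}$ is a quotient of $M^{\mathfrak p}(\mu_i)$ and $\operatorname{ch}(M_i/M_{i-1})\le\operatorname{ch}M^{\mathfrak p}(\mu_i)$ coefficientwise.

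I would then close the argument with characters. The module $L_{\mathfrak l}(\mu_\ell)$ is $\det^{c_\ell}$ on every block except the $\ell$-th, where it is $\det^{c_\ell}$ twisted by the natural $\mathfrak{gl}_{q_\ell}$-module, so $\operatorname{ch}L_{\mathfrak l}(\mu_\ell)=e^{\delta_c}\sum_{i\in\mathbf p_\ell}e^{\varepsilon_i}$, whence $\sum_{\ell=1}^k\operatorname{ch}M^{\mathfrak p}(\mu_\ell)=\operatorname{ch}(M_c)\operatorname{ch}(V)=\operatorname{ch}(M_c\otimes V)=\sum_{i=1}^k\operatorname{ch}(M_i/M_{i-1})$. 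Subtracting the coefficientwise inequalities forces $\operatorname{ch}(M_i/M_{i-1})=\operatorname{ch}M^{\mathfrak p}(\mu_i)$ for every $i$, so each surjection $M^{\mathfrak p}(\mu_i)\twoheadrightarrow M_i/M_{i-1}$ has zero kernel and is an isomorphism; this produces the parabolic Verma flag \eqref{filofvm} with the stated subquotients and generators. For the action of $x_1$: being $\U$-linear, $x_1$ carries $M_i$ to a $\mathfrak g$-submodule; by Lemma~\ref{xh1}(a), $m\otimes v_{p_{i-1}+1}\,x_1=-c_i\,m\otimes v_{p_{i-1}+1}-\sum_{1\le j\le p_{i-1}}e_{p_{i-1}+1,j}m\otimes v_j$, and for each $j\le p_{i-1}$ the identity $e_{p_{i-1}+1,j}(m\otimes v_j)=e_{p_{i-1}+1,j}m\otimes v_j+m\otimes v_{p_{i-1}+1}$ with $m\otimes v_j\in M_{i-1}$ gives $e_{p_{i-1}+1,j}m\otimes v_j\equiv -m\otimes v_{p_{i-1}+1}\pmod{M_{i-1}}$; summing, $x_1$ acts on the generator of $M_i/M_{i-1}$ by $-c_i+p_{i-1}=u_i$, hence on all of $M_i/M_{i-1}$ by the scalar $u_i$, i.e. $(x_1-u_i)M_i\subseteq M_{i-1}$. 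Iterating over $i,i-1,\dots,1$ yields $\prod_{j=1}^i(x_1-u_j)M_i=0$. (Equivalently, since $x_1=-\pi_{0,1}(\Omega)$ and each of $\Delta(C),\ C\otimes1,\ 1\otimes C$ acts by a scalar on the subquotient $\cong M^{\mathfrak p}(\mu_i)$, a Casimir-eigenvalue computation shows $x_1$ acts there by $u_i$.)

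Part (b) goes the same way, taking $N_i$ to be the submodule of $M_c\otimes W$ generated by $\{m\otimes v^*_{p_h}\mid i\le h\le k\}$ and $N_{k+1}=0$: the $\mathfrak p$-dominant weights among $\{\delta_c-\varepsilon_j\}$ are exactly the $\delta_c-\varepsilon_{p_\ell}$, one has $\operatorname{ch}L_{\mathfrak l}(\delta_c-\varepsilon_{p_\ell})=e^{\delta_c}\sum_{i\in\mathbf p_\ell}e^{-\varepsilon_i}$ so the character count again matches $\operatorname{ch}(M_c)\operatorname{ch}(W)$, and Lemma~\ref{xh1}(b) shows $\bar x_1$ acts on $N_i/N_{i+1}$ by $\bar u_i=c_i+n-p_i$, whence $\prod_{j=i}^k(\bar x_1-\bar u_j)N_i\subseteq N_{k+1}=0$; the flag \eqref{filofvmd} is decreasing because the highest weight of $W$ is $-\varepsilon_n=-\varepsilon_{p_k}$. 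The one genuinely delicate point throughout is to be sure that the natural surjections onto the subquotients are isomorphisms and that the subquotients appear in the right order; this is precisely what the character identity settles, since once the total characters agree no subquotient can be a proper quotient.
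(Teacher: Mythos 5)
Your proof is correct. It takes the same two‐step strategy as the paper (first produce the parabolic Verma flag, then read off the eigenvalue of $x_1$ on each subquotient to get the polynomial identity), but it is carried out more explicitly and is more self‐contained. For the flag, the paper simply cites \cite[Theorem~3.6]{Hum} for existence, whereas you build the submodules $M_i$ from the stated generators and close the argument with a character count; this is essentially reproducing Humphreys' argument, but it buys you the explicit identification of $M_i$ with the generating set stated in the lemma, which the paper leaves implicit. For the action of $x_1$, the paper computes the eigenvalue of $\Omega$ on each $M^{\mathfrak p}(\delta_c\pm\varepsilon_i)$ from the Casimir formula $C\mapsto\langle\lambda,\lambda+2\rho\rangle$ and the identity $\Omega=\tfrac12(\Delta(C)-C\otimes1-1\otimes C)$, while you instead apply Lemma~\ref{xh1} to the generator of $M_i/M_{i-1}$, use the Leibniz‐rule identity $e_{p_{i-1}+1,j}(m\otimes v_j)=(e_{p_{i-1}+1,j}m)\otimes v_j+m\otimes v_{p_{i-1}+1}$ to fold the error terms into $M_{i-1}$, and read off the scalar $u_i=-c_i+p_{i-1}$; you also note the Casimir alternative in passing, which is exactly the paper's route. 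Both methods give $(x_1-u_i)M_i\subseteq M_{i-1}$ and hence $\prod_{j\le i}(x_1-u_j)M_i=0$, and your argument for (b) is the expected mirror image. The one point worth making explicit if you wrote this up: in the step where you show $m\otimes v_i\in M_h$ for $i\in\mathbf p_h$, the vanishing $e_{i,p_{h-1}+1}m=0$ for $i>p_{h-1}+1$ uses that $L_{\mathfrak l}(\delta_c)$ is one‐dimensional because $\delta_c$ is constant on each block; you said this, but it is the linchpin of the base case of your induction and deserves emphasis.
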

\begin{proof} By \cite[Theorem~3.6]{Hum}, both $ M_c \otimes V$ and $M_c\otimes W$ have parabolic Verma flags as required.
 It is well known that $C$ acts  on   $M^{\mathfrak p}(\lambda)$   as the scalar $\langle\lambda,\lambda+2\rho\rangle$, where \begin{equation} \label{rho} \rho=-\varepsilon_2-2\varepsilon_3-\cdots -(n-1)\varepsilon_n.\end{equation}
By \eqref{omega},  $\Omega$ acts
on $M^{\mathfrak {p}}(\delta_c+\varepsilon_i)$ as  the scalar $\langle\delta_c,\varepsilon_i\rangle-(i-1)$. Similarly,  it acts
on $M^{\mathfrak {p}}(\delta_c-\varepsilon_i)$ as the scalar $-\langle\delta_c,\varepsilon_i\rangle-(n-i)$.
Therefore, $\prod_{j=1}^i (x_1-u_j)$ (resp., $\prod_{j=i}^{k} (\bar x_1-\bar u_j)$) acts on  $M_i$ (resp., $N_i$) trivially.
\end{proof}

\begin{lemma}\label{ghom123}   The generating function of parameters $\omega_a$'s  in  Proposition~\ref{affinere}   satisfies   \begin{equation}\label{omegaa}
1+\sum_{a=0}^{\infty}\frac{\omega_a}{u^{a+1}}=\prod_{i=1}^k\frac {u+n-u_i}{u+\bar u_i },
\end{equation} if we use $M_c$ to replace $M$ in  Proposition~\ref{affinere}, where $u_i$ and $\bar u_j$ are defined in  Definition~\ref{fgpolyv}.
\end{lemma}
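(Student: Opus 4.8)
The plan is to reduce everything to the level $(1,1)$ module $M_c^{1,1}=M_c\otimes V\otimes W$, on which the parameters $\omega_a$ are pinned down by $e_1x_1^ae_1=\omega_ae_1$ (relation (12) of Definition~\ref{wbmw}, transported through $\varphi$). First I would make $e_1$ explicit: using \eqref{naturalaction} one checks that $e_1=-\pi_{1,\bar1}(\Omega)|_{M_c^{1,1}}$ sends $m'\otimes v_k\otimes v_l^*$ to $\delta_{kl}\,m'\otimes\vartheta$, where $\vartheta:=\sum_iv_i\otimes v_i^*$ spans the trivial $\mathfrak g$-submodule of $V\otimes W$. Hence, writing $V\otimes W=L\oplus\mathbb C\vartheta$ with $L$ the trace-zero part, $e_1$ is $n$ times the $\mathfrak g$-equivariant projection of $M_c^{1,1}$ onto $M_c\otimes\mathbb C\vartheta\cong M_c$ (and $e_1$ kills $M_c\otimes L$). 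Because $x_1$ commutes with $\mathfrak g$ and $M_c$ is irreducible, $e_1x_1^ae_1$ is automatically a scalar multiple of $e_1$; that scalar is precisely the eigenvalue by which the image of $x_1^a$ under the partial‑trace map $\operatorname{End}_{\mathfrak g}(M_c\otimes V)\to\operatorname{End}_{\mathfrak g}(M_c)=\mathbb C$ acts on $M_c$. Equivalently, if $R(u):=(u-x_1)^{-1}\in\operatorname{End}_{\mathfrak g}(M_c\otimes V)\otimes\mathbb C(u)$ (well defined since $x_1$ annihilates $f(x_1)$ on $M_c\otimes V$ by Lemma~\ref{polyofx}(a)), then the left-hand side of \eqref{omegaa} is $1$ plus the scalar eigenvalue of the partial trace of $R(u)$ over $V$.

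Next I would feed in the two parabolic Verma flags of Lemma~\ref{polyofx}. On $M_c\otimes V$ the operator $x_1$ acts by the scalar $u_i$ on the subquotient $M^{\mathfrak p}(\delta_c+\varepsilon_{p_{i-1}+1})$, and on $M_c\otimes W$ the operator $\bar x_1$ acts by $\bar u_i$ on $M^{\mathfrak p}(\delta_c-\varepsilon_{p_i})$. After replacing $d$ by a nearby parameter (still satisfying Assumption~\ref{qset}) for which the $u_i$ are pairwise distinct, the first flag splits $\mathfrak g$-equivariantly, $M_c\otimes V=\bigoplus_{i=1}^kE_i$ with $x_1|_{E_i}=u_i$; this loss of generality is harmless because both sides of \eqref{omegaa} are rational in the $c_i$, so the general identity follows by continuity. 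Then $R(u)=\sum_i(u-u_i)^{-1}Q_i$ for $Q_i$ the projection onto $E_i$, and the left-hand side of \eqref{omegaa} becomes $1+\sum_i d_i(u-u_i)^{-1}$, where $d_i\in\mathbb C$ is the eigenvalue of the partial trace of $Q_i$ on $M_c$, with $\sum_id_i=n$. The remaining task is to identify the residues $d_i$ and to recombine the partial fractions into the product $\prod_i\frac{u+n-u_i}{u+\bar u_i}$, using the numerical identity $u_i+\bar u_i=n-q_i$.

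For the evaluation of $d_i$ I would compute $[\,\operatorname{tr}_V(Q_i)\,](m)$ on the highest weight vector $m$ of $M_c$ directly: write $Q_i=\prod_{j\ne i}\frac{x_1-u_j}{u_i-u_j}$ and feed in the explicit action of powers of $x_1$ on the vectors $m\otimes v_l$ from Lemma~\ref{xh1} and its higher‑degree refinements in Lemma~\ref{actionofh}; in the sum $\sum_l\langle v_l^*,Q_i(m\otimes v_l)\rangle$ only the $v_l$-components survive, and these are read off layer by layer from the flag, the bookkeeping of which layer $m\otimes v_l$ lands in being governed by the intervals $\mathbf p_1,\dots,\mathbf p_k$. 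A cleaner variant avoids the individual $d_i$: one couples $R(u)$ with $e_1$ and uses the two structural relations $e_1(x_1+\bar x_1)=0$ (relation (8)) and the compatibility $e_1f(x_1)=(-1)^ke_1g(\bar x_1)$ built into the cyclotomic quotient, which relate $\operatorname{tr}_VR(u)$ to the analogous generating function coming from the $W$-flag; these relations are exactly what pairs the $u_i$ against the $\bar u_i$, and unwinding them produces the product. Either way, the substantive difficulty — and the step I expect to be the main obstacle — is making the product structure $\prod_i\frac{u+n-u_i}{u+\bar u_i}$ emerge rather than an unstructured rational function; I would handle this by induction on the level $k$, the base case $k=1$ being immediate because there $\mathfrak p=\mathfrak g$, $M_c$ is one-dimensional, $x_1$ acts on $M_c\otimes V$ by the scalar $u_1=-c_1$, and $1+\sum_a\omega_au^{-a-1}=1+\frac n{u-u_1}=\frac{u+n-u_1}{u+\bar u_1}$ since $\bar u_1=-u_1$ in that case.
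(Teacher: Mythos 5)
The paper's argument is quite different from yours and much shorter: it observes that $e_1x_1^ae_1$ acts on $M_c\otimes V\otimes W$ as a \emph{Gelfand invariant} (a power-trace of the $E$-matrix of matrix units) tensored with $e_1$, applies the Harish--Chandra isomorphism, and then simply cites the explicit generating-function formula for Gelfand invariants from Molev's book; evaluating at $\delta_c$ and telescoping the $n$-fold product over each block $\mathbf p_j$ immediately yields $\prod_{i=1}^k\frac{u+n-u_i}{u+\bar u_i}$. You instead compute a $V$-partial trace of the resolvent $R(u)=(u-x_1)^{-1}$ on $M_c\otimes V$ from first principles. That is a legitimate alternative route, and your base case $k=1$ is correct, but as written the proposal leaves the substantive step undone: you defer the residue computation, the inductive step, and the ``cleaner variant'' all as plans rather than arguments. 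In particular the ``cleaner variant'' cannot work as stated: the relation $e_1f(x_1)=(-1)^ke_1g(\bar x_1)$ only yields the linear recursion determining $\omega_a$ for $a\ge k$ from $\omega_0,\dots,\omega_{k-1}$; it says nothing about the initial $k$ values, so it cannot by itself pin down the full generating function.

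There is also a more serious structural gap in your plan that you should resolve before trusting either computation. By Lemma~\ref{polyofx}, $f(x_1)=\prod_{j=1}^k(x_1-u_j)$ annihilates $M_c\otimes V$, so the recursion for $\omega_a$ has characteristic roots $u_1,\dots,u_k$ and the partial-fraction form you wrote, $1+\sum_i d_i/(u-u_i)$, necessarily has its poles at the $u_i$. But the right-hand side of \eqref{omegaa} has its poles at $-\bar u_i$, and since $u_i+\bar u_i=n-q_i$ these two sets of poles coincide only when $q_i=n$, i.e.\ $k=1$ (which is exactly the only case you check). For $k\ge 2$ the ``recombination into the product'' you describe is therefore not a formal rearrangement of partial fractions but a genuine identity that moves the poles, and your proposal offers no mechanism for it. You should either exhibit the residues $d_i$ explicitly and verify that the resulting rational function really equals $\prod_i\frac{u+n-u_i}{u+\bar u_i}$ coefficient by coefficient, or locate the corresponding cancellation in the paper's Gelfand-invariant approach; as it stands this is the point at which your argument breaks down, and it is worth scrutinizing carefully, since the paper's own identification $e_1x_1^ae_1=(-1)^a\operatorname{tr}(E^a)\otimes e_1$ involves an implicit choice of ordering in the noncommutative matrix power that should be matched against the partial trace you compute.
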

\begin{proof}
  Let $E$ be the $n\times n$ matrix such that the $(i,j)$th entry is the matrix unit $e_{i,j}$.
It is well known that the \textit{Gelfand invariant} ${\rm {tr}}(E^a)$ is central in $\U$ for any $a\in \mathbb N$ (see, e.g., \cite[Corollary~7.1.4]{Mol}).
On the other hand, for any $\mathfrak g$-module $M$, $e_1x_1^ae_1$ acts on  $M\otimes V\otimes W$ as
$(-1)^a{\rm{tr}}E^a\otimes e_1$, $a\in\mathbb N$.  Let $\chi:Z(\U)\rightarrow \mathbb C[\ell_1,\ell_2,\cdots, \ell_n]^{\mathfrak S_n}$ be the Harish-Chandra isomorphism, where   $Z(\U)$ is the center of $\U$ and
 $\ell_i=e_{i,i}-i+1$, $ i\in \nb$. It follows from \cite[Corollary~7.1.4]{Mol} that
$$1+\sum_{a=0}^\infty \frac{(-1)^a\chi({\rm{tr}}E^a)}{(u-n+1)^{a+1}}=\prod_{i=1}^n \frac{u+\ell_i+1}{u+\ell_i}.$$
If $M=M_c$, then  $\omega_a = (-1)^a\chi({\rm{tr}}(E^a))(\delta_c)$ (see Assumption~\ref{qset}). Using $u$ instead of $u-n+1$ yields  \eqref{omegaa}.
\end{proof}

\begin{lemma}\label{efe122}Let $f(x_1)$ and $g(\bar x_1)$ be defined in Definition~\ref{fgpolyv}. \begin{enumerate}\item The set  $\{e_1, e_1\bar x_1, \cdots, e_1 {\bar x_1}^{k-1}\}$ is $\mathbb C$-linear independent if we consider it as a subset of $\End_{\U} (M_c^{r, t})$.   \item $e_1f(x_1)=(-1)^k e_1 g(\bar x_1)$ in $\mathscr B_{r, t}^{\rm aff}$.
\end{enumerate}\end{lemma}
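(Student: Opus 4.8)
The plan is to deduce both statements from the parabolic Verma flags of $M_c\otimes V$ and $M_c\otimes W$ established in Lemma~\ref{polyofx}, together with the explicit formulas of Lemma~\ref{xh1} and Lemma~\ref{actionofh}. For part (b), I would argue as follows. By Lemma~\ref{polyofx}(a), $f(x_1)=\prod_{j=1}^k(x_1-u_j)$ annihilates $M_k=M_c\otimes V$, hence $f(x_1)$ acts as zero on all of $M_c^{r,t}=M_c\otimes V^{\otimes r}\otimes W^{\otimes t}$ since $x_1$ is built from $\Omega$ acting in the $0$th and first tensor slots only; similarly $g(\bar x_1)=\prod_{j=1}^k(\bar x_1-\bar u_j)$ annihilates $M_c^{r,t}$ by Lemma~\ref{polyofx}(b). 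Therefore both $e_1f(x_1)$ and $e_1g(\bar x_1)$ act as zero endomorphisms of $M_c^{r,t}$, so the identity $e_1f(x_1)=(-1)^ke_1g(\bar x_1)$ holds as an operator identity on $M_c^{r,t}$. To promote this to an identity in $\mathscr B_{r,t}^{\rm aff}$ itself, I would note that both $e_1f(x_1)$ and $e_1g(\bar x_1)$ lie in the subalgebra generated by $e_1, x_1, \bar x_1$, which by relations (8), (12), (25) of Definition~\ref{wbmw} is spanned by $\{e_1\bar x_1^a : a\ge 0\}$ (using $e_1x_1 = -e_1\bar x_1$ to rewrite any $e_1 x_1^a$); then I would invoke part (a) to see that the leading $k$ of these, namely $\{e_1,e_1\bar x_1,\dots,e_1\bar x_1^{k-1}\}$, are already linearly independent as operators, and both sides of the putative identity, once reduced to this form, have $\bar x_1$-degree $<k$ (because $f$ and $g$ are monic of degree $k$, so the top terms $e_1x_1^k$ and $(-1)^ke_1\bar x_1^k$ cancel against the lower-order corrections coming from the defining relations — this is exactly the computation packaged in Lemma~\ref{ghom123}). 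Hence the operator identity forces the algebra identity. I should double-check that the reduction of $e_1 f(x_1)$ and $e_1 g(\bar x_1)$ to the spanning set is canonical, i.e.\ that relations (8), (12), (21), (25) suffice to rewrite any element of $\langle e_1,x_1,\bar x_1\rangle$ of the form $e_1 \cdot (\text{polynomial in }x_1,\bar x_1)$ into a $\mathbb C$-combination of $e_1\bar x_1^a$'s.

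For part (a), I would compute the action of $e_1\bar x_1^a$ on a well-chosen vector and extract linear independence from it. The natural choice is to apply these operators to a vector of the form $m\otimes v_i\otimes v_i^*$ (with the remaining $r-1$ and $t-1$ tensor slots filled by, say, highest-weight-compatible basis vectors so that $s_j,\bar s_j$ do not interfere) where $i\in\mathbf p_\ell$; recall $e_1 = -\pi_{1,\bar 1}(\Omega)|_{M_c^{r,t}}$ acts by contracting the $V$-slot against the $W$-slot via $\Omega=\sum_{i,j}e_{i,j}\otimes e_{j,i}$, while $\bar x_1=-\pi_{0,\bar1}(\Omega)|_{M_c^{r,t}}$ acts in slots $0$ and $\bar 1$. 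Using Lemma~\ref{actionofh}, the vector $m\otimes v_i^*\bar x_1^a$ has a computable leading term (of degree exactly $a$ when $a\le k-\ell$, by Lemma~\ref{actionofh}(d)), and these leading terms live in distinct degree-graded pieces of the basis of Lemma~\ref{basismct} for different $a$. Then applying $e_1$ (i.e.\ contracting against the $V$-slot) does not destroy this degree information, so $\sum_{a=0}^{k-1}\gamma_a\, e_1\bar x_1^a$ acting on our chosen vector produces a $\mathbb C$-combination of basis elements of $M_c^{r,t}$ whose highest-degree component is $\gamma_{k-1}$ times a nonzero basis element (assuming we pick $i\in\mathbf p_1$ so that $\ell=1$ and $a$ can run all the way to $k-1$); hence $\gamma_{k-1}=0$, and downward induction on $a$ kills all $\gamma_a$. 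This uses crucially that $M_c$ has a genuine highest weight vector and that the basis in Lemma~\ref{basismct} is graded by degree in a way compatible with the operators $e_{i,j}$ (as remarked right after that lemma).

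The main obstacle I anticipate is the bookkeeping in part (a): verifying that the leading-degree term of $m\otimes v_i^*\bar x_1^a$ survives the contraction by $e_1$ and remains a nonzero, degree-$a$ element of the Lemma~\ref{basismct} basis, rather than collapsing because the matrix units $e_{j_h,j_{h-1}}\cdots e_{j_1,j}$ produced in Lemma~\ref{actionofh}(d) might vanish on $m$ or lie in lower degree. This requires choosing the index $i$ (and the auxiliary slots) carefully so that at least one monomial in the sum of Lemma~\ref{actionofh}(d) is a strictly-upper-triangular root-vector monomial that does not annihilate $m$ and whose image under $e_1$-contraction is a bona fide degree-$a$ basis vector; concretely, taking $i=p_0+1=1\in\mathbf p_1$ and tracking the chain $1\in\mathbf p_1, j_1\in\mathbf p_{i_1},\dots$ with $1<i_1<\cdots<i_a\le k$ should work, since then every $e_{j_{l+1},j_l}$ is a genuine positive root vector of $\mathfrak l^{\perp}$ and the monomial is one of the $e_{\mathbf k\mathbf l}^\alpha$ of Lemma~\ref{basismct}. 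Once the right vector is pinned down, the argument is a routine degree count; but isolating that vector is where the real care is needed. Part (b) is comparatively routine given part (a) and Lemma~\ref{polyofx}.
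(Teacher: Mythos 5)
Your proposal is correct and follows essentially the same route as the paper: both parts rest on Lemma~\ref{polyofx} (to kill $f(x_1)$ and $g(\bar x_1)$ as operators) and on the degree filtration coming from Lemma~\ref{actionofh} (to get part~(a), which the paper dispatches with a single sentence, and to conclude $g=g_1$ in part~(b)). The one step you flag as needing verification — that $e_1 f(x_1)$ can be rewritten in $\mathscr B_{r,t}^{\rm aff}$ as $(-1)^k e_1 g_1(\bar x_1)$ for some monic $g_1$ of degree $k$ — is genuinely nontrivial, since $x_1$ and $\bar x_1$ do not commute (relation~(21)), so one cannot simply iterate $e_1x_1=-e_1\bar x_1$; the paper resolves this by citing \cite[Lemma~4.3]{RSu}, not by Lemma~\ref{ghom123} as you surmised (the latter only computes the generating function of the $\omega_a$'s). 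Your phrasing ``the subalgebra generated by $e_1,x_1,\bar x_1$ is spanned by $\{e_1\bar x_1^a\}$'' should read that the subspace $e_1\cdot\mathbb C[x_1,\bar x_1]$ is so spanned, but with that correction and the citation in hand, your argument reproduces the paper's.
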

\begin{proof} The first result follows from Lemma~\ref{actionofh}, immediately. It is proved in \cite[lemma~4.3]{RSu} that $e_1f(x_1)=(-1)^ke_1g_1(\bar x_1)$ for some monic polynomial $g_1(x)$ with degree $k$.
So, $(-1)^ke_1g_1(\bar x_1)$ acts on $M_c^{r, t}$ trivially. By Lemma~\ref{polyofx},  $(-1)^ke_1g_1(\bar x_1)=(-1)^ke_1g(\bar x_1)=0$ in $\End_{\U} (M_c^{r, t})$.  Using (a) yields $g(\bar x_1)=g_1(\bar x_1)$, proving  (b).\end{proof}

 Unless otherwise stated, we always  assume that  $\mathscr B_{r,t}^{\rm aff } $ is  the affine walled Brauer algebra over $\mathbb C$  such that the parameters $\omega_a$'s are determined via
 \eqref{omegaa}.  Let
$J=\langle f(x_1), g(\bar x_1)\rangle$ be the two sided ideal of $\mathscr B_{r,t}^{\rm aff } $ generated by $ f(x_1)$ and $g(\bar x_1)$,  where $f(x)$ and $g(x)$ are given in Definition~\ref{fgpolyv}. By Corollary~\ref{efe122}(b),  we can define the level $k$ (or cyclotomic)  walled Brauer algebra \begin{equation} \label{liewall} \mathscr B_{k, r,t}=\mathscr B_{r,t}^{\rm aff }/J.\end{equation}

\begin{Prop}\label{alghom1}   The algebra homomorphism $ \varphi$ in Proposition~\ref{affinere} factors through $\mathscr B_{k, r,t}$ in \eqref{liewall}  if we use $M_c$ to replace $M$ in  Proposition~\ref{affinere}. Moreover, $\mathscr B_{k, r, t}$ is admissible. \end{Prop}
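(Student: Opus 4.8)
The statement has two parts: first, that $\varphi$ factors through $\mathscr B_{k,r,t}$; second, that $\mathscr B_{k,r,t}$ is admissible. For the first part, recall that $\mathscr B_{k,r,t} = \mathscr B_{r,t}^{\rm aff}/J$ where $J = \langle f(x_1), g(\bar x_1)\rangle$. Since $\varphi$ is already an algebra homomorphism on $\mathscr B_{r,t}^{\rm aff}$ by Proposition~\ref{affinere} (with $M = M_c$), it suffices to check that $\varphi(f(x_1))$ and $\varphi(g(\bar x_1))$ act as zero on $M_c^{r,t}$. Now $f(x_1)$ (resp.\ $g(\bar x_1)$) acts on the first two tensor factors $M_c \otimes V$ (resp.\ $M_c \otimes W$) only, so it is enough to show $f(x_1)$ kills $M_c \otimes V$ and $g(\bar x_1)$ kills $M_c \otimes W$. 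But this is exactly the content of Lemma~\ref{polyofx}: the parabolic Verma flag of $M_c \otimes V$ has $k$ layers, and $\prod_{j=1}^i(x_1 - u_j)$ acts trivially on $M_i$, so taking $i = k$ gives $f(x_1)|_{M_c\otimes V} = 0$; dually $g(\bar x_1)|_{M_c \otimes W} = 0$. Hence $\varphi(J) = 0$ and $\varphi$ descends to $\mathscr B_{k,r,t}$.

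For the second part, I would invoke the characterization of admissibility from \eqref{adda}: writing $f(x) = x^k + \sum_{i=0}^{k-1} a_{k-i} x^i$, the algebra $\mathscr B_{k,r,t}$ is admissible precisely when $\omega_\ell = -(a_1\omega_{\ell-1} + \cdots + a_k\omega_{\ell-k})$ for all $\ell \ge k$. This is a statement purely about the generating function of the $\omega_a$'s and the polynomial $f$. The plan is to read off both from the product formula \eqref{omegaa} in Lemma~\ref{ghom123}: we have
\[
1 + \sum_{a=0}^{\infty} \frac{\omega_a}{u^{a+1}} = \prod_{i=1}^k \frac{u + n - u_i}{u + \bar u_i}.
\]
The recursion \eqref{adda} is equivalent to saying that the power series $1 + \sum_{a\ge 0} \omega_a u^{-a-1}$, when multiplied by $u^k f(1/u)\cdot(\text{suitable normalization})$ — more precisely, by the polynomial $\tilde f(u) = \prod_{i=1}^k(u - u_i)$ after the shift $u \mapsto u$ — becomes a polynomial in $u$ of degree $< k$ (equivalently, a Laurent polynomial with no negative powers beyond what a degree-$k$ numerator allows). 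Concretely, $\bar f(u) := \prod_{i=1}^k(u + \bar u_i)$ has the same roots (up to sign) as the denominators in \eqref{omegaa}; multiplying the generating function by $\bar f(u)$ clears the denominator and yields the polynomial $\prod_{i=1}^k(u + n - u_i)$, which has degree exactly $k$. Comparing coefficients of $u^{-\ell-1+k}$ for $\ell \ge k$ on both sides — the right side contributes nothing in those (negative) degrees — gives exactly the linear recursion \eqref{adda} with the $a_i$'s being the elementary symmetric functions appearing in $\bar f$; one then checks these coincide with the coefficients of $f$, using the relation $e_1 f(x_1) = (-1)^k e_1 g(\bar x_1)$ from Lemma~\ref{efe122}(b), which forces $f$ and $g$ to be linked in precisely the way that makes the recursion coefficients match.

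The main obstacle I anticipate is the bookkeeping in the second part: one must be careful that the polynomial $f$ defining the cyclotomic ideal $J$, the polynomial $g$, and the denominator $\prod(u + \bar u_i)$ in \eqref{omegaa} are matched up with the correct signs and shifts (note the $u \mapsto u - n + 1$ substitution hidden in the proof of Lemma~\ref{ghom123}, and the various appearances of $n$, $p_{i-1}$, $p_i$ in Definition~\ref{fgpolyv}). Once the generating-function identity is correctly transcribed, admissibility is a formal consequence of clearing denominators and reading off coefficients in negative degree; there is no representation-theoretic input needed beyond \eqref{omegaa} itself and the compatibility relation of Lemma~\ref{efe122}(b). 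So the structure is: (i) $\varphi(J) = 0$ via Lemma~\ref{polyofx}; (ii) rewrite \eqref{omegaa} as $\bar f(u)\cdot(1 + \sum \omega_a u^{-a-1}) = \tilde f(u)$ with both sides polynomials, where $\tilde f(u) = \prod_{i=1}^k(u + n - u_i)$; (iii) extract \eqref{adda} by comparing coefficients in degrees $\le -1$; (iv) identify the recursion coefficients with those of $f$ using Lemma~\ref{efe122}(b), concluding admissibility by Theorem~\ref{level-k-walled}.
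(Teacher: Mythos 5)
Your step (i), showing $\varphi(J)=0$ via Lemma~\ref{polyofx}, matches the paper's argument for the first assertion. However, for the admissibility claim you take a genuinely different route, and there is a gap in it.

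Your plan is to clear denominators in \eqref{omegaa} and read off a linear recursion for the $\omega_a$'s. That does give a recursion, but its coefficients are those of $\bar f(u)=\prod_{i=1}^k(u+\bar u_i)$, i.e.\ the elementary symmetric functions in the $\bar u_i$. The admissibility condition \eqref{adda}, on the other hand, requires the recursion with the coefficients of $f(x)=\prod_{i=1}^k(x-u_i)$. From Definition~\ref{fgpolyv}, $u_i=-c_i+p_{i-1}$ while $-\bar u_i=-c_i-n+p_i=u_i-(n-q_i)$, so the root multisets of $\bar f$ and $f$ differ unless $q_i=n$ for all $i$; the two polynomials are genuinely distinct, hence the two recursions are genuinely distinct conditions. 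Your step (iv) asserts they can be reconciled by Lemma~\ref{efe122}(b), but that lemma only says $e_1 f(x_1)=(-1)^k e_1 g(\bar x_1)$ as elements of $\mathscr B_{r,t}^{\rm aff}$; it does not identify the coefficients of $f$ with those of $\bar f$, and so the identification you invoke is not established. As written, the argument would prove the $\omega_a$'s satisfy a different recursion, not the one \eqref{adda} demands.

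The paper's own proof of admissibility is shorter and sidesteps the issue entirely, and you already have all the ingredients for it from step (i). Since $\varphi(f(x_1))=0$ and $\varphi$ is an algebra map, $\varphi(e_1 x_1^a f(x_1)e_1)=0$ for every $a\in\mathbb N$. On the other hand, in $\mathscr B_{r,t}^{\rm aff}$ relation (12) of Definition~\ref{wbmw} gives
\begin{equation*}
e_1 x_1^a f(x_1)e_1 = \bigl(\omega_{a+k}+a_1\omega_{a+k-1}+\cdots+a_k\omega_a\bigr)e_1,
\end{equation*}
where $f(x)=x^k+a_1x^{k-1}+\cdots+a_k$. Applying $\varphi$ and using that $\varphi(e_1)\neq 0$ (because $e_1$ acts non-trivially on $M_c^{r,t}$) forces the scalar $\omega_{a+k}+a_1\omega_{a+k-1}+\cdots+a_k\omega_a$ to vanish for all $a\in\mathbb N$, which is precisely \eqref{adda}. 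This representation-theoretic argument lands directly on the correct polynomial $f$ and requires no reconciliation between $f$ and $\bar f$. I'd recommend replacing your steps (ii)--(iv) with this.
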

\begin{proof} The first assertion   follows from  Proposition~\ref{affinere} and Lemmas~\ref{polyofx}--\ref{ghom123}. Since $e_1$ acts on $M_{c}^{r, t}$ non-trivially, $e_1\neq 0$.
So the second assertion follows from the fact that $e_1 x_1^a f(x_1)e_1=0$ for all $a\in \mathbb N$. \end{proof}

Recall that a \textit{regular monomial} of $\mathscr B_{k, r,t}$ is of form  $x^{\alpha} D\bar x^{\beta}$ where $D$ is a walled Brauer diagram and $(\alpha,
\beta)\in \mathbb N_k^r\times  \mathbb N_k^t$.
We consider $\mathscr B_{k, r,t}$ as a filtrated algebra defined as follows. Set
\begin{equation}\label{degrees}  \text{ $\text{deg}{\sc\,} s_i=$ $\text{deg}{\sc\,} \bar s_j=\text{deg}{\sc\,} e_1 =0$ \ and \
$\text{deg}{\sc\,}{x_1}=\text{deg}{\sc\,} \bar x_1=1$}, \end{equation}  $ i\in \underline{r-1}$ and $ j\in \underline{t-1}$. So, the degree of $x^{\alpha} D\bar x^{\beta}$   is $|\alpha|+|\beta|$, where $|\alpha|=\sum_{i=1}^{r}\alpha_i$,
and $|\beta|=\sum_{i=1}^t\beta_i$. We have the following filtration
  \begin{equation}\label{filtr}
\mathscr B_{k, r,t}\supset\cdots\supset (\mathscr B_{k, r,t})^{(1)}\supset(\mathscr B_{k, r,t})^{(0)}\supset (\mathscr B_{k, r,t})^{(-1)}=0.\end{equation}
where $ (\mathscr B_{k, r,t})^{(i)}$ consists of all elements of $\mathscr B_{k, r,t}$ with degree $\le i$.
Let \begin{equation}\label{grab}{\rm gr} ( \mathscr B_{k, r,t})\!=\!\bigoplus_{i\in \mathbb Z}( \mathscr B_{k, r,t})^{[i]},\end{equation} where
$( \mathscr B_{k, r,t})^{[i]}\!=\!( \mathscr B_{k, r,t})^{(i)}/( \mathscr B_{k, r,t})^{(i-1)}$. Then
  ${\rm gr} ( \mathscr B_{k, r,t})$ is a $\mathbb Z$-graded algebra associated to $\mathscr B_{k, r,t}$. We use the same symbols to denote elements in
${\rm gr} ( \mathscr B_{k, r,t})$.

\begin{rem}\label{xih}  Define  $x_1'=x_1$ and $x_i'=s_{i-1} x_{i-1}' s_{i-1}$ for $1< i\le r$. Similarly, define $\bar {x}_i'$ for $1\le i\le t$.
Since $x_i'$ (resp.,  $\bar {x}_i'$ ) acts on   $M_c^{r,t}$
as $- \pi_{0, i}(\Omega) $ (resp., $-\pi_{0,\bar j}(\Omega)$), and  $x_i=x_i'$ (resp., $\bar x_i=\bar{ x}_i')$ in
 ${\rm gr}(\mathscr B_{k, r, t})$,    up to a linear combination of some basis elements of $M_c^{r, t}$  with lower degrees, we have formulae for $m\otimes v_{\mathbf i} x_i$ (resp.,  $m\otimes v_{\mathbf i}\bar {x}_i$) similar to those in Lemma~\ref{actionofh}, where $\mathbf i\in I(n, r+t)$.
\end{rem}

\begin{Theorem}\label{isomorphism}
If $r+t\leq \min\{q_1, q_2, \cdots, q_k\}$, then the algebra homomorphism  $ \varphi: \mathscr B_{k, r,t} \rightarrow \text{End}_{\mathcal O}(M_c^{r,t})^{op}$ in Proposition~\ref{alghom1} is an algebra isomorphism.
\end{Theorem}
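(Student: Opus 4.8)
The plan is to prove the isomorphism by a dimension count, combined with the surjectivity already built into the construction. We know from Proposition~\ref{alghom1} that $\varphi\colon\mathscr B_{k,r,t}\to\operatorname{End}_{\mathcal O}(M_c^{r,t})^{\mathrm{op}}$ is an algebra homomorphism, and that $\mathscr B_{k,r,t}$ is admissible, so by Theorem~\ref{level-k-walled} it is free over $\mathbb C$ of rank $k^{r+t}(r+t)!$ with the regular monomials as a basis. It therefore suffices to show two things under the hypothesis $r+t\le\min\{q_1,\dots,q_k\}$: first, that $\varphi$ is surjective; and second, that $\dim_{\mathbb C}\operatorname{End}_{\mathcal O}(M_c^{r,t})\ge k^{r+t}(r+t)!$, equivalently (given surjectivity) that $\varphi$ is injective.

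\textbf{Step 1: surjectivity.} This is where I would invoke the result established in Section~3 of the paper: under the assumption $r+t\le q_k\le\min\{q_1,\dots,q_k\}$ (note the hypothesis here is the stronger $r+t\le\min q_i$, which in particular gives $r+t\le q_k$), the homomorphism $\varphi$ in \eqref{main0} is surjective. So surjectivity is free. (If one wanted a self-contained argument one could run the double-centralizer-style argument: since $M_c=M^{\mathfrak p}(\delta_c)$ is projective and injective in $\mathcal O^{\mathfrak p}$, the functor $-\otimes V$ and $-\otimes W$ are biadjoint, and one reduces to showing the image of $\varphi$ contains enough endomorphisms to separate the Verma subquotients in the flags of Lemma~\ref{polyofx}; but the cleanest route is to cite the Section~3 surjectivity.)

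\textbf{Step 2: injectivity via the associated graded map.} The hard part is injectivity, and the natural tool is the filtration \eqref{filtr}--\eqref{grab} on $\mathscr B_{k,r,t}$ together with the compatible filtration on $\operatorname{End}_{\mathcal O}(M_c^{r,t})$ coming from the degree function on the basis $S$ of $M_c^{r,t}$ in Lemma~\ref{basismct}. A filtered map is injective if its associated graded is injective, and $\operatorname{gr}\mathscr B_{k,r,t}$ is spanned by the images of the regular monomials $x^\alpha D\bar x^\beta$ with $(\alpha,\beta)\in\mathbb N_k^r\times\mathbb N_k^t$. So it is enough to show these $k^{r+t}(r+t)!$ elements act $\mathbb C$-linearly independently on $M_c^{r,t}$. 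I would test independence against the specific vectors $m\otimes v_{\mathbf i}$ where $\mathbf i$ ranges over tuples valued in the blocks $\mathbf p_1,\dots,\mathbf p_k$: because $r+t\le\min q_\ell$, there is enough room inside each Levi block $\mathfrak{gl}_{q_\ell}$ to choose $\mathbf i$ so that the walled-diagram part $D$ permutes/contracts distinct basis vectors without collision, and the $x_i$, $\bar x_j$ parts act — via Remark~\ref{xih} and Lemma~\ref{actionofh} — by lowering operators $e_{j',j}$ that produce genuinely new basis elements of $M_c$ of strictly increasing degree. The key combinatorial point is that the leading terms (top-degree parts) of distinct regular monomials, evaluated on a well-chosen family of $m\otimes v_{\mathbf i}$, land on distinct basis elements of the free basis $S$ of $M_c^{r,t}$, hence are linearly independent. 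This forces $\operatorname{gr}\varphi$ injective, so $\varphi$ is injective.

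\textbf{Conclusion.} Combining Steps 1 and 2, $\varphi$ is a bijective algebra homomorphism, hence an isomorphism. The main obstacle is Step 2: one must check carefully that when $r+t\le\min\{q_1,\dots,q_k\}$ the diagrammatic and polynomial parts of a regular monomial cannot conspire to collapse all leading terms, and that the lowering operators $e_{k_i,l_i}$ arising from the $x$-powers respect the degree filtration and produce the basis vectors of Lemma~\ref{basismct} as claimed — this is exactly the point where the size hypothesis on the $q_\ell$ is used, and where the structure of the parabolic Verma flags in Lemma~\ref{polyofx} (so that the $f(x_1)$, $g(\bar x_1)$ relations are the \emph{only} relations killing polynomial degree) must be matched against the free-rank formula. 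Once the leading-term independence is in hand, the rank count closes the argument immediately.
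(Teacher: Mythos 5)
Your Step~2 (injectivity via leading-term/associated-graded independence, tested against well-chosen $m\otimes v_{\mathbf i}$ using the room provided by $r+t\le\min\{q_1,\dots,q_k\}$, with the degree-zero part handled by faithfulness of $\mathscr B_{r,t}(n)$ on $V^{r,t}$) is exactly the strategy the paper uses; the paper organizes it around ``labeled walled Brauer diagrams'' and the operator $\mathcal Y$, but the idea is the same.

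The gap is Step~1. You propose to ``cite the Section~3 surjectivity,'' but Theorem~\ref{isomorphism} \emph{is} the Section~3 statement, so that citation is circular. The surjectivity of $\varphi$ in \eqref{main0} for general $r,t$ is proved only later, in Theorem~\ref{main11111} of Section~6, and it relies on the finite $W$-algebra machinery of Sections~4--6 — machinery the paper deliberately avoids in the small-rank case. You do not in fact need a separate surjectivity input at all: once injectivity is in hand, the paper closes the argument with a dimension count. By adjoint associativity there is a $\mathbb C$-linear isomorphism $\operatorname{End}_{\mathcal O}(M_c^{r,t})\cong\operatorname{End}_{\mathcal O}(M_c\otimes V^{\otimes(r+t)})$, and Brundan--Kleshchev \cite{BK} compute $\dim\operatorname{End}_{\mathcal O}(M_c\otimes V^{\otimes(r+t)})=k^{r+t}(r+t)!$ under the hypothesis $r+t\le\min\{q_1,\dots,q_k\}$. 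Since Proposition~\ref{alghom1} gives admissibility and hence $\dim\mathscr B_{k,r,t}=k^{r+t}(r+t)!$ by Theorem~\ref{level-k-walled}, the injective map $\varphi$ between spaces of the same finite dimension is automatically an isomorphism. Replace your Step~1 with this dimension count and your plan becomes the paper's proof.

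Your sketch of the double-centralizer alternative (``separate Verma subquotients in the flags of Lemma~\ref{polyofx}'') is too vague to stand on its own: surjectivity of $\varphi$ is not obviously a consequence of projectivity/injectivity of $M_c$ plus biadjointness, and making it precise in general is precisely the content of Sections~4--6. The dimension count is the short path here.
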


\begin{proof} We claim that  the images of all  regular monomials of   $\mathscr B_{k, r,t}$ are linear independent in $\text{End}_{\mathcal O}(M_c^{r,t})^{op}$. If so, by Theorem~\ref{level-k-walled}(a), $\varphi$ is injective.  On the other hand,
 by adjoint associativity, there is a $\mathbb C$-linear  isomorphism
\begin{equation}\label{dim}
\text{End}_{\mathcal O }(M_c^{r,t})\cong \text{End}_{\mathcal O}(M_c\otimes V^{\otimes r+t}).
\end{equation}
 If   $r+t\leq \min\{q_1, q_2, \cdots, q_k\}$, then  $\dim \text{End}_{\mathcal O}(M_c\otimes V^{\otimes r+t})=  k^{r+t}(r+t)!$~\cite{BK}.  By Proposition~\ref{alghom1}, $\mathscr B_{k, r, t}$ is admissible and hence   the dimension of
$\mathscr B_{k, r,t}$ is $ k^{r+t}(r+t)!$ (see Theorem~\ref{level-k-walled}),  forcing $\varphi$ to be  an  isomorphism.

It remains to prove our claim.
Recall that a  regular monomial of $\mathscr B_{k, r,t}$ is of form  $x^{\alpha} D \bar x^{\beta}$
where  $D$ is   a walled Brauer diagram and  $(\alpha, \beta)\in \mathbb N_k^r\times \mathbb N_k^t$.
For each  $x^{\alpha} D \bar x^{\beta}$, we assume that $x^{\alpha} D \bar x^{\beta}$ acts on the left of $M_c^{r, t}$. In other words, when we consider the right action of  $\mathscr B_{k, r, t}$,
 $x^{\alpha} D \bar x^{\beta}$ should be replaced by $ \sigma (x^{\alpha} D \bar x^{\beta} )$, where $\sigma$ is the $R$-linear anti-involution  in Lemma~\ref{inv}. Such elements consist of an  $\mathbb C$-basis of  $\mathscr B_{k, r,t}$.

Motivated by Brundan-Stroppel's work in \cite{BS} and Lemma~\ref{actionofh}, and Remark~\ref{xih},  we define  a \textit{labeled walled Brauer diagram} for any $x^{\alpha} D \bar x^{\beta}$ as follows. In this case, we identify an edge of $D$ as an arrow and call the starting point as a source and the endpoint as a head.
\begin{enumerate} \item  The  vertices $\{r, r-1, \cdots, 1\}$ (resp.,  $\{\bar 1, \bar 2, \cdots, \bar t\}$) on the bottom (resp., top)  row of $D$  are called  sources of corresponding arrows of $D$.  The other vertices of $D$ will be called heads of  corresponding arrows of $D$.
\item For any $ i\in \underline{r}$, there are $\alpha_i$ beads  at  the  $i$-th vertex  on the top  row of $D$.
\item For any  $ i\in\underline{ t}$, there are $\beta_i$ beads at the  $\bar i$-th vertex on the bottom row of $D$.
  \item For the $i$-th  vertex on the bottom row of $D$, we label it as $p_{k-1}+(r-i+1)$, where $p_{k-1}$ is  given in Assumption~\ref{qset}.
  \item For the $\bar i$-th vertex  on the top row of $D$, we label it as $p_{k-1}+r+i$.
  \item If there is no bead at the head of an arrow of $D$, then we  label the head  the same  labeling of the corresponding source.
\item If there are $h$ beads at the head of an arrow, and if the labeling of the source is $p$, we  label  the head  with  positive integer $p-\sum_{i=1}^hq_{k-i}$  where $q_i$'s are given in Assumption~\ref{qset}.
\end{enumerate}
Since  $r+t\leq\min\{q_1, q_2, \cdots, q_k\}$, the above setting is well-defined.  Moreover, for each  $x^\alpha D \bar x^\beta$,   we obtain two  sequences of positive integers
$(\alpha,D,\beta)^{b}$ and  $(\alpha,D,\beta)^{t}$, which are obtained by reading labeling  according to the vertices $r, r-1, \cdots, 1, \bar 1, \bar 2, \cdots, \bar t$  on the bottom  (resp., top)  row of the  labeled walled Brauer diagram.  The key point is that we always fix the labeling of the sources of $D$ as above and hence    both  $(\alpha,D,\beta)^{b}$  and  $(\alpha,D,\beta)^{t}$ are
 uniquely determined by the triple   $(\alpha, D, \beta)$ (see Example~\ref{exam12}).

Recall that $p_i$'s are positive integers in Assumption~\ref{qset}.
For $i\in\{r,r-1,\cdots,1\}$  (resp., $\bar i\in\{\bar 1,\bar 2,\cdots, \bar t\}$ ), if there is no bead on the edge which contains $i$ (resp. $\bar i$), define  $\mathcal Y_i=1$ (resp. $\mathcal Y_{\bar i}=1$); otherwise, there
are $h$ beads on the edge which contains $i$ (resp., $\bar i$) at the bottom (resp., top)  row,
define  \begin{enumerate} \item [(1)]$\mathcal Y_i=e_{p_{k-1}+i,p_{k-2}+i}e_{p_{k-2}+i,p_{k-3}+i}\cdots e_{p_{k-h}+i,p_{k-h-1}+i}$, \item [(2)]
 $\mathcal Y_{\bar i}=e_{p_{k-1}+r+i,p_{k-2}+r+i}e_{p_{k-2}+r+i,p_{k-3}+r+i}\cdots e_{p_{k-h}+r+i,p_{k-h-1}+r+i}$,
 \item [(3)]  $\mathcal Y=\mathcal Y_1 \mathcal Y_2\cdots\mathcal Y_{r}  \mathcal Y_{\bar 1} \mathcal Y_{\bar 2}\cdots \mathcal Y_{\bar t}$.\end{enumerate}
Now, we assume that  $\sum_{\alpha,D,\beta}a_{\alpha,D,\beta}\sigma (x^\alpha D  \bar x^\beta)  =0$, where $D$ ranges over all walled Brauer diagrams and $(\alpha, \beta) \in \mathbb N_k^r \times \mathbb N_k^t$.
If there is an  $ a_{\alpha,D,\beta}\neq 0$ for some $(\alpha, \beta)\in \mathbb N_k^r\times \mathbb N_k^t$, we consider  $x^\gamma D  \bar x^\delta$ among such regular monomials such that  $\sum_i \gamma_i+\sum_j\delta_j$ is maximal. If  $\sum_i \gamma_i+\sum_j\delta_j>0$, we write  \begin{enumerate} \item ${\mathbf b}=(\gamma,D,\delta)^b=(b_r,b_{r-1},\cdots, b_1; b_{\bar 1}, b_{\bar b_2},\cdots, b_{\bar t})\in I(n, r+t)$,  \item  ${\mathbf w}=(\gamma,D,\delta)^{t}=(w_r,w_{r-1},\cdots, w_1; w_{\bar 1}, w_{\bar 2}, \cdots, w_{\bar t})\in I(n, r+t)$.\end{enumerate}
By Lemma~\ref{actionofh} and  Remark~\ref{xih}, the coefficient of
$\mathcal Y m\otimes v_{\mathbf w} $ in
$( m\otimes v_{\mathbf b}) \sum_{\alpha,D,\beta}a_{\alpha,D,\beta}\sigma( x^\alpha D   \bar x^\beta ) $
is $a_{\gamma,D,\delta}$ up to a sign, forcing $a_{\gamma, D,\delta}=0$, a contradiction.   The key point is that there is a basis element $\mathcal Ym$ of $M_c$ such that the  coefficient of  the basis element $\mathcal Ym\otimes  v_{\mathbf w}$ in $( m\otimes v_{\mathbf b}) \sum_{\alpha,D,\beta}a_{\alpha,D,\beta}\sigma( x^\alpha D   \bar x^\beta ) $ is  $ a_{\gamma,D,\delta}$ up to a sign, where $\mathcal Y m$ is of the highest degree $\sum\gamma_i+\sum\delta_j$ and $\mathcal Y$ is determined  uniquely by
 both $\mathbf b$ and $\mathbf w$.
Finally, we consider regular monomials  $ x^\alpha D \bar x^{\beta}  $  with degree $0$. In this case, we consider  all walled Brauer diagrams as elements in $\text{End}_{\mathcal {O}} (V^{\otimes r}\otimes W^{t})$.  When $r+t\le n$, it is well known that $\mathscr B_{r, t}(n)$ acts faithfully on $V^{r, t}$.  So,  $a_{\alpha,D,\beta}=0$ for all regular monomials
 $x^\alpha D \bar x^{\beta} $ with degree $0$.
  \end{proof}

\begin{example}\label{exam12} We give an  example to illustrate that $(\alpha,D,\beta)^{b}$ and $(\alpha,D,\beta)^{t}$ are uniquely determined by   $x^\alpha D \bar x^{\beta}\in \mathscr B_{k, r, t}$ and the labeling of the sources of $D$. We assume $k=2$ and $r=t=3$. Fix $q_1$ and $q_2$ such that $q_1+q_2=n$. If $\alpha=(1,0, 1)$ and $\beta=(0, 1, 1)$, and $D=e_1s_1\bar s_2$,  then  $(\alpha,D,\beta)^{b} =(q_1+1, q_1+2, q_1+3; q_1+2, 6, 5)$ and  $(\alpha,D,\beta)^{t}=(1, q_1+3, 4; q_1+4, q_1+5, q_1+6)$. In this case, $\mathcal Y=e_{q_1+1, 1} e_{q_1+4, 4} e_{q_1+5, 5} e_{q_1+6, 6}$.

\unitlength 0.8mm 
\linethickness{0.4pt}
\ifx\plotpoint\undefined\newsavebox{\plotpoint}\fi 
\begin{picture}(25.164,50)(0,70)
\put(37.55,105.75){\line(0,-1){28}}
\put(49.75,104.75){\circle*{0.8}}
\multiput(49.75,104.75)(.03300313343,-.04235382309){657}{\line(0,-1){.04235382309}}
\put(109.75,104.75){\circle*{0.8}}\multiput(109.75,104.75)(.03100086517,-.04381460674){622}{\line(0,-1){.03581460674}}
\put(129.25,104.75){\circle*{0.8}}\multiput(129.25,104.75)(-.0336812144,-.0488014801){577}{\line(0,-1){.0488614801}}
\qbezier(50.5,77)(70.5,94.95)(94.95,76.5)
\put(50.5,77){\circle*{0.8}}
\qbezier(70.95,104.95)(81.25,89.75)(94.95,104.75)
\put(37.75,105.5){\circle*{0.8}}
\put(37.75,103.5){\circle*{2}}
\put(70.5,105.5){\circle*{0.8}}\put(71.5,103.75){\circle*{2}}
\put(111.5,78.75){\circle*{2}}
\put(128.25,78.75){\circle*{2}}
\put(36.75,108){$1$}
\put(44.75,108){$q_1+3$}
\put(68.95,108){$4$}
\put(80.55,116.75){\line(0,-1){52}}
\put(94.55,104.75){\circle*{0.8}}
\put(86.5,108){$q_1+4$}
\put(104.5,108){$q_1+5$}
\put(124,108){$q_1+6$}
\put(37.55,77){\circle*{0.8}}
\put(30,72){$q_1+1$}
\put(44.5,72){$q_1+2$}
\put(109.75,77){\circle*{0.8}}\put(129.25,77){\circle*{0.8}}
\put(71,77){\circle*{0.8}}
\put(64.75,72){$q_1+3$}
\put(94.55,77){\circle*{0.8}}
\put(88.5,72){$q_1+2$}
\put(108.5,72){$6$}
\put(128.5,72){$5$}
\end{picture}
\end{example}

\section{Graded cyclotomic walled Brauer algebras   }

In this section, we assume that $q=(q_1, q_2,  \cdots,  q_k)$ is a partition of $n$. Consider the tableau $\t$  with respect to  $q$ such that there are $q_i$ boxes in the $i$th column  and moreover, the numbers $1, 2, \cdots, n$ are inserted into the boxes  along the columns from left to right. For example,
 \begin{equation}\label{tla1}\t=
\ \  \young(1,25,368,479) \quad \text{ if $(q_1, q_2, q_3)=(4, 3, 2)$.} \end{equation}
 For any $i\in \nb$, following \cite{BK}, let
 $\text{row}(i)=\ell$  (resp.,  $\text{col}(i)=m$) if  the box containing  $i$ is in $\ell$th row and $m$th column of the $\t$  (see e.g.  \eqref{tla1}).
Define the nilpotent matrix  \begin{equation}\label{nilm} e=\sum_{(i,j)\in K}e_{i,j}\in \mathfrak g,\end{equation}  where
\begin{equation}\label{Ksub} K=\{(i,j)\mid 1\leq i,j\leq n, \text{row}(i)=\text{row}(j), \text{col}(i)=\text{col}(j)-1\}.
\end{equation}
It is known that there is a $\mathbb Z$-grading $\mathfrak g=\bigoplus_{i\in \mathbb Z}\mathfrak  g_i$ on $\mathfrak g$ by declaring
that $e_{i,j}$ is of degree col($j$)-col($i$). The parabolic subalgebra $\mathfrak p$  is $\mathfrak l\bigoplus \bigoplus_{i> 0}\mathfrak g_i$, where   $\mathfrak l$, which is the corresponding Levi subalgebra, is  $\mathfrak g_0$. Let $\mathfrak m=\oplus_{i<0}\mathfrak g_i$ and define $\mathfrak g_e$ to be the centralizer of $e$ in $\mathfrak g$. Then the universal enveloping algebra $\U(\mathfrak g_e)$ is a
graded subalgebra of $\U(\mathfrak p)$. In this section, we assume that $\mathscr B_{r, t}^{\rm aff}$ is an affine walled Brauer algebra  with arbitrary parameters $\omega_a$'s
such that $\omega_0=n$ and moreover, $\mathscr B_{k, r, t}$ is admissible.
 However, when we use graded cyclotomic walled Brauer algebras ${\rm gr}(\mathscr B_{k, r, t})$ next section, we will show that the parameters for  $\mathscr B_{k, r, t}$ come from  Lemma~\ref{polyofx} and  \eqref{omegaa}. By Proposition~\ref{alghom1}, $\mathscr B_{k, r, t}$ is admissible.

\begin{lemma}\label{grof12} As a $\mathbb Z$-graded algebra,  ${\rm gr} ( \mathscr B_{k, r,t})$ is generated by $e_1, x_1, \bar x_1, s_i, \bar s_j$, $1\le i\le r-1$ and $1\le j\le t-1$ such that all relations in  Definition~\ref{wbmw} hold except
(12)-(13), (25)-(26) and (21) which are replaced by the following relations:
\begin{multicols}{2}
\begin{enumerate}
\item  $s_1x_{1}s_1 x_1=x_1s_1x_1s_1$, \item $\bar s_{1}\bar x_{1}\bar s_{1}\bar x_1=\bar x_1 \bar s_1 \bar x_1 \bar s_1$, \item $x_1\bar x_1=\bar x_1 x_1$,
\item $x_1^k=\bar x_1^k=0$, \item $e_1x_1^he_1=e_1\bar x_1^he_1=0$, for $h\geq 1$.
\end{enumerate}
\end{multicols}
\end{lemma}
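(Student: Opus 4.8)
The plan is to describe $\mathrm{gr}(\mathscr B_{k,r,t})$ by the \emph{associated graded} of the defining presentation of $\mathscr B_{k,r,t}=\mathscr B_{r,t}^{\mathrm{aff}}/J$ under the degree function \eqref{degrees}, and then to identify which of the relations of Definition~\ref{wbmw} survive, which degenerate, and which new relations the ideal $J$ contributes in the top degree. First I would record that $\mathrm{gr}(\mathscr B_{k,r,t})$ is certainly generated by the images of $e_1,x_1,\bar x_1,s_i,\bar s_j$, since these generate $\mathscr B_{k,r,t}$ and generate in filtration degrees $0$ and $1$. All homogeneous relations in Definition~\ref{wbmw} — namely (1)--(11), (14)--(20), (22)--(24) — are already homogeneous for \eqref{degrees} (every monomial on each side has the same number of $x_1,\bar x_1$ factors), so they pass verbatim to $\mathrm{gr}$. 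The inhomogeneous relations are exactly (12), (13), (21), (25), (26): in each of these the two sides have different degrees, and passing to $\mathrm{gr}$ kills the lower-degree part, leaving only the top-degree component. Thus (13) $x_1(s_1x_1s_1-s_1)=(s_1x_1s_1-s_1)x_1$ becomes $x_1 s_1 x_1 s_1 = s_1 x_1 s_1 x_1$, i.e.\ relation~(a); (26) becomes (b) symmetrically; (21) $x_1(e_1+\bar x_1)=(e_1+\bar x_1)x_1$ has degree-$2$ top part $x_1\bar x_1=\bar x_1 x_1$, which is (c); (12) $e_1x_1^h e_1=\omega_h e_1$ has top part $e_1 x_1^h e_1 = 0$ for $h\ge 1$ (the $\omega_h e_1$ term sits in degree $0<h$), and similarly (25) gives $e_1\bar x_1^h e_1=0$, which together are (e). Finally the cyclotomic ideal $J=\langle f(x_1),g(\bar x_1)\rangle$ with $f,g$ monic of degree $k$ contributes, in $\mathrm{gr}$, the leading terms $x_1^k$ and $\bar x_1^k$, i.e.\ relation~(d). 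So the candidate presentation of $\mathrm{gr}(\mathscr B_{k,r,t})$ is exactly the one in the statement.

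The substance of the proof is that this list of relations is \emph{complete} — that no further relations appear in $\mathrm{gr}$. The clean way to see this is a dimension (or basis) count. Let $A$ denote the abstract $\mathbb Z$-graded algebra presented by generators $e_1,x_1,\bar x_1,s_i,\bar s_j$ and the relations (1)--(11),(14)--(20),(22)--(24) of Definition~\ref{wbmw} together with (a)--(e). There is an obvious surjective graded algebra homomorphism $A\twoheadrightarrow \mathrm{gr}(\mathscr B_{k,r,t})$. Since $\mathscr B_{k,r,t}$ is admissible (we are assuming this, and under \eqref{omegaa} it is guaranteed by Proposition~\ref{alghom1}), Theorem~\ref{level-k-walled}(b) gives $\dim_{\mathbb C}\mathscr B_{k,r,t}=\dim_{\mathbb C}\mathrm{gr}(\mathscr B_{k,r,t})=k^{r+t}(r+t)!$. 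So it suffices to show $\dim_{\mathbb C} A\le k^{r+t}(r+t)!$, and for this I would show that $A$ is spanned by the images of the regular monomials $x^\alpha D\bar x^\beta$ with $D$ a walled Brauer diagram and $(\alpha,\beta)\in\mathbb N_k^r\times\mathbb N_k^t$. Concretely: using relations (a),(b) one checks that the conjugated variables $x_{i+1}=s_ix_is_i-s_i$ — whose leading term under \eqref{degrees} is $s_ix_is_i$ — commute pairwise (this is the degenerate-affine-Hecke "polynomial part" computation, now made easy because the cross-terms in (13) have dropped out), so the $x$'s and $\bar x$'s generate a polynomial-type subalgebra; relation (d) cuts each exponent down to $\mathbb N_k$; the "straightening" relations (5),(18),(19),(20) and (e) together with (c) let one push all $e_1$'s and permutations into a single walled-Brauer-diagram factor $D$ sandwiched between $x^\alpha$ on the left and $\bar x^\beta$ on the right, exactly as in the proof of Theorem~\ref{level-k-walled}. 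This reduces the spanning set of $A$ to the $k^{r+t}(r+t)!$ regular monomials and forces $A\xrightarrow{\sim}\mathrm{gr}(\mathscr B_{k,r,t})$.

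An alternative, and perhaps cleaner, route avoids re-running the straightening argument: one can instead argue directly that the natural surjection $\mathrm{gr}(\mathscr B_{k,r,t})\twoheadrightarrow$ (algebra with the claimed relations) — no wait, the surjection goes the other way — so one really does need the upper bound on $A$. A third option is to exhibit, inside $A$, a spanning set of size $k^{r+t}(r+t)!$ by first establishing the analogue of Theorem~\ref{level-k-walled}(a) for the graded algebra directly from relations (1)--(24)$'$ plus (a)--(e): this is a purely combinatorial/diagrammatic rewriting argument, essentially identical to the one in \cite{RSu1} proving Theorem~\ref{level-k-walled}(a), with the inhomogeneous error terms suppressed, which if anything makes the bookkeeping shorter. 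I would adopt this and simply remark that the argument is the graded shadow of the cited one.

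The main obstacle I anticipate is precisely this completeness/spanning step: verifying that relations (a)--(e), together with the homogeneous relations carried over from Definition~\ref{wbmw}, already suffice to straighten an arbitrary word in the generators into a regular monomial. The non-obvious point is that the two inhomogeneous "commutation-type" relations (13) and (21) really only needed their top-degree parts: one must be sure that when reducing a word, the lower-degree correction terms that would have appeared (and that are present in $\mathscr B_{k,r,t}$ itself) genuinely vanish in $\mathrm{gr}$ and do not secretly encode a relation that is being used implicitly. This is handled by the filtration being \emph{algebra}-compatible and by the rewriting in \cite{RSu1} being degree-non-increasing, so that its leading terms give a valid rewriting system for $A$; the bulk of the verification is routine once that principle is isolated, and the admissibility hypothesis plus Theorem~\ref{level-k-walled}(b) closes the dimension count with equality.
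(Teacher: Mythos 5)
The paper states this lemma without a proof, so there is nothing to compare against; your argument is correct and is the standard way to read off ${\rm gr}$ of a filtered algebra from a presentation. The identification of which defining relations are homogeneous for \eqref{degrees} and which become their top-degree truncations, and the extraction of $x_1^k=\bar x_1^k=0$ from the leading terms of $f(x_1)$, $g(\bar x_1)$, are all accurate. You also correctly isolate the substantive step: showing the surjection $A\twoheadrightarrow {\rm gr}(\mathscr B_{k,r,t})$ from the abstractly presented graded algebra $A$ is injective, which by admissibility, Theorem~\ref{level-k-walled}(b) and the equality $\dim{\rm gr}(\mathscr B_{k,r,t})=\dim\mathscr B_{k,r,t}$ reduces to showing $A$ is spanned by the $k^{r+t}(r+t)!$ graded regular monomials $x^\alpha D\bar x^\beta$ (with the graded conjugates $x_{i+1}:=s_ix_is_i$, $\bar x_{j+1}:=\bar s_j\bar x_j\bar s_j$). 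Your claim that this is the ``graded shadow'' of the spanning argument for Theorem~\ref{level-k-walled}(a) is justified: every rewriting step used there replaces a word by a reordered word plus strictly lower-degree correction terms, so retaining only leading components gives a valid reduction in $A$, and the relations (a), (c), (8), (e) needed to push $x$'s left and $\bar x$'s right and to annihilate $x$'s trapped between $e_1$'s are precisely the top-degree parts of what is used in \cite{RSu1}. The proposal is sound and in fact supplies a justification that the paper omits.
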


For any positive integers $r$ and $t$, define
\begin{equation}\label{vrt} V^{r,t}=V^{\otimes r}\otimes W^{\otimes t},\end{equation}
where $V$ is the natural module for $\mathfrak g$ and $W$ is the linear dual of $V$. We order the positions of tensor factors of  $V^{r,t}$ according to the total ordered set $(J_1\cup J_2, \prec)$ where
 $\prec$ is given in \eqref{tot12}. So,  $$r\prec r-1\prec \cdots\prec 1\prec \bar 1\prec\cdots\prec \bar t.$$
It is easy to see that  $V^{r,t}$ has a basis which consists of all elements $v_{\mathbf i}$,  $\mathbf i\in I(n, r+t)$.   Make $V$ and $W$ into a graded  $\U(\mathfrak g_e)$-module by declaring that $\text{deg }v_i=-\text{deg }v_i^*=k-\text{col}(i),$  it leads to  gradings on $V^{r,t}$ and $\text{End}(V^{r,t})$.
Recall that $\pi_{a, b}$ in \eqref{pi-ab} and $\Omega$ in \eqref{omega}.
An graded algebra homomorphism between two graded algebras is a graded homomorphism with degree zero.

\begin{Prop}\label{grarel}  Let $e\in \mathfrak g$ be given in \eqref{nilm}.
There is a graded algebra homomorphism $\varphi: {\rm gr} ( \mathscr B_{k, r,t}) \rightarrow \text{\rm End}_{\U(\mathfrak g_e)}(V^{r,t})^{op}$ such that
 \begin{multicols}{2}
\begin{enumerate}
 \item[(1)] $\varphi(e_1)=-\pi_{1, \bar 1}(\Omega)$,
 \item [(2)] $\varphi(\bar s_j)=\pi_{\bar j, \overline{j+1}} (\Omega)$, $1\le j\le t-1$,
 \item [(3)] $\varphi(s_i)=\pi_{i+1, i}(\Omega)$, $1\le i\le r-1$,
  \item[(4)]  $\varphi(x_1)=-1^{\otimes r-1}\otimes e\otimes 1^{\otimes t}$,
\item [(5)] $\varphi(\bar x_1)=-1^{\otimes r}\otimes e\otimes 1^{\otimes t-1}$.
\end{enumerate}
 \end{multicols}

 \end{Prop}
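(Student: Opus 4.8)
The plan is to check three things about the prescribed images of the generators $e_1,x_1,\bar x_1,s_i,\bar s_j$: that they lie in $\text{End}_{\U(\mathfrak g_e)}(V^{r,t})$, that they are homogeneous of the degrees prescribed by \eqref{degrees}, and that they satisfy the defining relations of ${\rm gr}(\mathscr B_{k,r,t})$ recorded in Lemma~\ref{grof12}. Since that lemma \emph{is} a presentation of ${\rm gr}(\mathscr B_{k,r,t})$, this produces the asserted graded algebra homomorphism into the \emph{opposite} algebra, so each relation is to be checked by composing the operators in the reverse order, exactly as in Proposition~\ref{affinere}.

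First I would dispose of well-definedness and homogeneity. Each $\pi_{a,b}(\Omega)$ commutes with the diagonal action of $\U(\mathfrak g)$ on the two relevant tensor factors, hence with $\U(\mathfrak g_e)$; and $e$ commutes with $\mathfrak g_e$ by the very definition of $\mathfrak g_e$ as the centraliser of $e$, so $\varphi(x_1)$ and $\varphi(\bar x_1)$, acting as $\pm e$ on a single tensor factor, commute with the diagonal $\U(\mathfrak g_e)$-action on $V^{r,t}$. For the grading coming from $\deg v_i=-\deg v_i^*=k-\text{col}(i)$, one notes that $\Omega$ acts as the flip (tensor transposition) on $V\otimes V$ and on $W\otimes W$, and as $\pm$ the composite of evaluation $V\otimes W\to\mathbb C$ with coevaluation $\mathbb C\to V\otimes W$ on $V\otimes W$ --- all of degree $0$ --- whereas $e$ raises degree by one on both $V$ and $W$. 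Hence $s_i,\bar s_j,e_1$ map to degree-$0$ operators and $x_1,\bar x_1$ to degree-$1$ operators, in accordance with \eqref{degrees}.

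The bulk of the work is the relations. Those of Definition~\ref{wbmw} involving only $e_1,s_i,\bar s_j$, namely (1)--(7) and (14)--(20), hold because these operators realise the walled Brauer algebra $\mathscr B_{r,t}(n)$ on $V^{\otimes r}\otimes W^{\otimes t}$, the classical mixed Schur--Weyl duality already used in Proposition~\ref{affinere} (cf.~\cite{BCHLLJ}). Among the remaining relations of Definition~\ref{wbmw} that survive in ${\rm gr}$, (10),(11),(23),(24) are plain commutations of operators acting on disjoint tensor factors, and (9),(22) follow the same way once one uses $s_1^2=1$ to identify $\varphi(s_1x_1s_1)$ with $-e$ on the adjacent $V$-factor and $\varphi(\bar s_1\bar x_1\bar s_1)$ with $-e$ on the adjacent $W$-factor, neither of which meets the two factors carrying $e_1$. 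Relation (8), $e_1(x_1+\bar x_1)=(x_1+\bar x_1)e_1=0$, is a short computation on the $V\otimes W$ factor carrying $e_1$: applying $e_1$ first sends $v_k\otimes v_l^*$ to $\delta_{kl}\sum_i v_i\otimes v_i^*$, after which $-e$ on the $V$-slot and $-e$ on the $W$-slot contribute the same element of $V\otimes W$ with opposite signs (using the shape of $e$ in \eqref{nilm} and that $(l,k)\in K$ exactly when $\text{row}(l)=\text{row}(k)$ and $\text{col}(l)=\text{col}(k)-1$), so they cancel; applying $e_1$ last is symmetric. For the new relations of Lemma~\ref{grof12}: (c) is commutation on disjoint factors; (a) and (b) follow from $s_1$ (resp.~$\bar s_1$) acting as the flip on $V\otimes V$ (resp.~$W\otimes W$), since conjugating a one-factor operator by the flip yields the other one-factor operator, so both sides act as $e$ on one factor composed with $e$ on the other; (d) $x_1^k=\bar x_1^k=0$ holds because $e$ strictly lowers the column index on $V$ and strictly raises it on $W$ while $\t$ has only $k$ columns, so $e^k=0$; and (e) $e_1x_1^he_1=e_1\bar x_1^he_1=0$ for $h\ge1$ holds because $e^hv_i$ has zero $v_i$-component (and $e^hv_i^*$ zero $v_i^*$-component) once $h\ge1$, whereas $e_1$ extracts exactly such diagonal components.

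The hard part will be the sign bookkeeping in relation (8), together with keeping the conventions of Definition~\ref{casm} and the ordering of the tensor slots consistent so that $\varphi(s_1x_1s_1)$ equals $\varphi(x_2)$ rather than $-\varphi(x_2)$ and the two terms in (8) genuinely cancel; everything else is routine and parallel to Proposition~\ref{affinere}. I would also record, for clarity, that $\varphi$ does \emph{not} satisfy the relations (12),(13),(21),(25),(26) of Definition~\ref{wbmw} --- for instance $\varphi(x_1e_1)\neq\varphi(e_1x_1)$ --- which is precisely why Lemma~\ref{grof12} drops them.
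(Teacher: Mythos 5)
Your proposal is correct and follows essentially the same route as the paper: reduce to the presentation of ${\rm gr}(\mathscr B_{k,r,t})$ in Lemma~\ref{grof12}, cite Proposition~\ref{affinere} for the degree-zero walled Brauer relations among $\varphi(e_1),\varphi(s_i),\varphi(\bar s_j)$, use $e^k=0$ for the nilpotency relations $\varphi(x_1)^k=\varphi(\bar x_1)^k=0$, and verify relation (8) by a direct computation on the diagonal of the $V\otimes W$ factor. The paper only writes out relation (8) explicitly and dismisses the remaining checks as immediate or routine; your write-up simply supplies more of those routine verifications (well-definedness over $\U(\mathfrak g_e)$, homogeneity of degrees, and the conjugation of $-e$ to the adjacent slot for relations (9) and (22)), but the strategy is identical.
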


\begin{proof} It follows from the conditions (1)-(3) and  Proposition~\ref{affinere} that $\varphi(e_1)$, $\varphi(s_i)$'s and $\varphi(\bar s_j)$'s satisfy relations for the walled  Brauer algebra $\mathscr B_{r, t}(n)$, a subalgebra of $\mathscr B_{k, r, t}$.
Moreover, since $e$ is the nilpotent matrix in \eqref{nilm} and $e^k=0$, $\varphi(x_1^k)=\varphi(\bar x_1^k)=0$.
The conditions in Lemma~\ref{grof12}(a)-(c)(e)  immediately follow from the definitions.  One can verify other relations by straightforward computation. We show that   \begin{equation} \label{rst1} \varphi(e_1)(\varphi(x_1) +\varphi(\bar x_1)) =(\varphi(x_1)+\varphi(\bar x_1))\varphi(e_1)=0\end{equation}
as an example and leave the others to the reader. We have $ (v_i\otimes v_j^*) e_1=0$ if $i\neq j$. Otherwise,
$$\begin{aligned}
 (v_i\otimes v_i^*)(e_1(x_1+\bar x_1)) & = \sum_{j=1}^n v_j\otimes v_j^*   (x_1+ \bar x_1)  \\
&
=-\sum_{(i,j)\in K}v_i\otimes v_j^*-\sum_{(i,j)\in K}v_i\otimes (-v_j^*) \\ &=  0.\\
\end{aligned}$$
If $(v_i\otimes v^*_j)(x_1+ \bar x_1)e_1\neq0$, then $(j,i)\in K$. So, $ (v_i\otimes v^*_j)(x_1 +\bar x_1) e_1 =-(v_j\otimes v^*_j-v_i\otimes v^*_i)e_1=0$, and
 \eqref{rst1} follows.
\end{proof}

For the simplification of notation, we denote by ${\rm End}(M)$ the set of all linear endomorphisms for any $\mathbb C$-space $M$.
 Since $ V^{r,t}$ is a graded $ (\U(\mathfrak g_e),{\rm gr} (\mathscr B_{k, r,t}))$-bimodule, it leads to the graded   algebra homomorphism
\begin{equation} \label{psi12} \psi: \U(\mathfrak g_e) \rightarrow\text{End}_{{\rm gr} ( \mathscr B_{k, r,t})} (V^{r,t}).\end{equation}
Define the  flip map
\begin{equation}
\text{flip: } \text{\rm End}(V^{\otimes r})\otimes \text{\rm End}(V^{\otimes t})\rightarrow \text{\rm End}(V^{\otimes r})\otimes \text{\rm End}(W^{\otimes t})
\end{equation}
such that $\text{flip}(f\otimes g)=f\otimes g^*$, for any $f\in \text{\rm End}(V^{\otimes r})$  and $g\in \text{\rm End}(V^{\otimes t}) $, where  $g^*\in \text{\rm End}(W^{\otimes t})$ such that
\begin{equation} \label{gst} g^*(v^*)(w)=v^*(g(w)), \forall w\in V^{\otimes t}.\end{equation}

In this paper, we identify $I(n, r)$ with $I(n, r+0)$. Similarly, we identify $I(n, t)$ with $I(n, 0+t)$. For each $\mathbf i\in I(n, r)$ (resp., $\mathbf j\in I(n, t)$) , following Lemma~\ref{basismix},
define $v_{\mathbf i}=v_{i_r}\otimes v_{i_{r-1}}\otimes \cdots\otimes v_{i_1}$ (resp., $v^*_{\mathbf j}=v_{j_{\bar 1}}^* \otimes v_{j_{\bar 2}}^*\otimes \cdots \otimes v_{j_{\bar t}}^*$). If there is no confusion, we also write $v^*_{\mathbf j}=v_{j_{ 1}}^* \otimes v_{j_{2}}^*\otimes \cdots \otimes v_{j_{t}}^*$.

\begin{Defn}\label{eij1234}  \begin{enumerate} \item  For ${\bf i,j}\in I(n,r)$, define  $e_{\bf i,\bf j}\in \text{End}(V^{\otimes r})$ such that  $e_{\bf i,j}(v_{\bf k})=\delta_{\bf j,k}v_{\bf i}$ for any $\mathbf k\in I(n, r)$.
 \item For ${\bf {i}, \mathbf {j}}\in I(n,t)$, define $f_{\bf i,\bf j}\in \text{End}(W^{\otimes t})$ such that  $f_{\bf i,j}(v^*_{\bf k})=\delta_{\bf j,k}v^*_{\bf i}$.\end{enumerate}\end{Defn}

\begin{lemma}\label{filpwall}\cite[Lemma~7.6]{BS} Let $\phi$ be the linear  map  defined by the following commutative diagram
\begin{equation}\label{flipstar}
\begin{array}[c]{ccc}
\text{\rm End}(V^{\otimes{r+t}}) &\stackrel{\phi}{\longrightarrow}&\text{\rm End}(V^{r,t})\\
\downarrow\scriptstyle{b}&&\downarrow\scriptstyle{c}\\
\text{\rm End}(V^{\otimes r})\otimes \text{\rm End}(V^{\otimes t})&\stackrel{\rm flip}{\longrightarrow}&\text{\rm End}(V^{\otimes r} )\otimes \text{\rm End} (W^{\otimes t})
\end{array}\end{equation}
where $b,c$ are canonical linear isomorphisms.  Then $\phi$ is a $\mathfrak g_e$-module isomorphism.
\end{lemma}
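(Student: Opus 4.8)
The plan is to verify directly that the linear map $\phi$ in the diagram is a morphism of $\mathfrak{g}_e$-modules, and that it is bijective. Bijectivity is immediate once we know $b$ and $c$ are linear isomorphisms: $\phi = c^{-1}\circ\mathrm{flip}\circ b$, and $\mathrm{flip}$ is visibly invertible with inverse $f\otimes g^*\mapsto f\otimes g$ (using that $g\mapsto g^*$ is the usual contragredient isomorphism $\mathrm{End}(V^{\otimes t})\xrightarrow{\sim}\mathrm{End}(W^{\otimes t})$, which is bijective because $V^{\otimes t}$ is finite-dimensional). So the substance is the equivariance claim. Here the $\mathfrak{g}_e$-action on the source $\mathrm{End}(V^{\otimes(r+t)})$ is by conjugation via the diagonal action on $V^{\otimes(r+t)}$ (restricted from $\mathfrak{g}$ to $\mathfrak{g}_e$), and on the target $\mathrm{End}(V^{r,t})$ likewise by conjugation via the diagonal action on $V^{\otimes r}\otimes W^{\otimes t}$.

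First I would make the maps $b$ and $c$ explicit on a basis. Using Definition~\ref{eij1234}, $\mathrm{End}(V^{\otimes r})\otimes\mathrm{End}(V^{\otimes t})$ has basis $e_{\mathbf i,\mathbf j}\otimes e_{\mathbf k,\mathbf l}$ with $\mathbf i,\mathbf j\in I(n,r)$, $\mathbf k,\mathbf l\in I(n,t)$; the isomorphism $b$ identifies the elementary endomorphism of $V^{\otimes(r+t)}$ sending $v_{\mathbf j}\otimes v_{\mathbf l}\mapsto v_{\mathbf i}\otimes v_{\mathbf k}$ with $e_{\mathbf i,\mathbf j}\otimes e_{\mathbf k,\mathbf l}$, and $c$ identifies the elementary endomorphism of $V^{r,t}$ sending $v_{\mathbf j}\otimes v^*_{\mathbf l}\mapsto v_{\mathbf i}\otimes v^*_{\mathbf k}$ with $e_{\mathbf i,\mathbf j}\otimes f_{\mathbf k,\mathbf l}$. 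Since $\mathrm{flip}$ sends $e_{\mathbf i,\mathbf j}\otimes e_{\mathbf k,\mathbf l}$ to $e_{\mathbf i,\mathbf j}\otimes (e_{\mathbf k,\mathbf l})^*$, the only point needing care is to identify $(e_{\mathbf k,\mathbf l})^*$: from \eqref{gst} one computes $(e_{\mathbf k,\mathbf l})^*(v^*_{\mathbf p})(v_{\mathbf q}) = v^*_{\mathbf p}(e_{\mathbf k,\mathbf l}v_{\mathbf q}) = \delta_{\mathbf l,\mathbf q}v^*_{\mathbf p}(v_{\mathbf k}) = \delta_{\mathbf l,\mathbf q}\delta_{\mathbf p,\mathbf k}$, so $(e_{\mathbf k,\mathbf l})^* = f_{\mathbf l,\mathbf k}$ — the flip exchanges the two indices on the $W$-side. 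Hence, chasing the diagram, $\phi$ sends the elementary endomorphism $(v_{\mathbf j}\otimes v_{\mathbf l}\mapsto v_{\mathbf i}\otimes v_{\mathbf k})$ to the elementary endomorphism $(v_{\mathbf j}\otimes v^*_{\mathbf k}\mapsto v_{\mathbf i}\otimes v^*_{\mathbf l})$ of $V^{r,t}$.

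Next I would check $\mathfrak{g}_e$-equivariance on this explicit description. For $X\in\mathfrak{g}_e$ acting diagonally, conjugation on $\mathrm{End}(V^{\otimes(r+t)})$ takes an elementary endomorphism $E_{(\mathbf i,\mathbf k),(\mathbf j,\mathbf l)}$ to the commutator $[\Delta^{(r+t)}(X),\, E_{(\mathbf i,\mathbf k),(\mathbf j,\mathbf l)}]$; expanding $\Delta^{(r+t)}(X)=\sum_a 1^{\otimes(a-1)}\otimes X\otimes 1^{\otimes(r+t-a)}$, each summand either acts on one of the first $r$ tensor slots (a $V$-slot in both source and target) or on one of the last $t$ slots (a $V$-slot upstairs, a $W$-slot downstairs). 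On the $V$-slots the action of $X$ is literally the same before and after applying $\phi$, so those terms match automatically. On a $W$-slot, the action of $X$ on $W$ is minus the transpose of its action on $V$ (cf.~\eqref{naturalaction}: $e_{i,j}v_k^* = -\delta_{i,k}v_j^*$), and this is exactly the discrepancy that the index-swap $e_{\mathbf k,\mathbf l}\mapsto f_{\mathbf l,\mathbf k}$ introduced in the previous paragraph compensates: writing out $[\text{(action of }X\text{ on the }\ell\text{-th }W\text{-slot)},\,E_{\mathbf j,\mathbf i;\mathbf k,\mathbf l}]$ one gets precisely $\phi$ applied to $[\text{(action of }X\text{ on the }(r+\ell)\text{-th }V\text{-slot)},\,E_{(\mathbf i,\mathbf k),(\mathbf j,\mathbf l)}]$. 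Summing over all slots gives $\phi([\Delta^{(r+t)}(X),E]) = [\Delta_{r,t}(X),\phi(E)]$, i.e.\ $\phi$ is $\mathfrak{g}$-equivariant (hence $\mathfrak{g}_e$-equivariant by restriction).

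I expect the only real obstacle to be bookkeeping: keeping the ordering conventions of the tensor slots from \eqref{tot12} and the index conventions of Definitions~\ref{ilr}, \ref{basismix}, \ref{eij1234} consistent, and getting the single sign/transpose on the $W$-side to land correctly so that it cancels against the flip's index swap rather than doubling it. Since this is the statement of \cite[Lemma~7.6]{BS} adapted to the present notation, the cleanest write-up is probably to state the explicit formula for $\phi$ on elementary endomorphisms as the key computation, remark that bijectivity is clear from that formula, and then give the two-line equivariance check separating $V$-slots from $W$-slots as above; I would not belabour the diagram chase beyond recording the elementary-endomorphism formula.
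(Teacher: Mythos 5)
Your argument is correct, but it takes a genuinely different route from the paper: the paper does not give its own proof at all, it simply cites \cite[Lemma~7.6]{BS} and observes that the present statement is the special case $\mathfrak{gl}_{n|0}$ of the $\mathfrak{gl}_{m|n}$ result proved there. You instead give a self-contained direct verification. Your key computation $(e_{\mathbf k,\mathbf l})^*=f_{\mathbf l,\mathbf k}$ from \eqref{gst} is right, and the equivariance check is organized well: on the first $r$ slots the $\mathfrak{g}$-action is untouched by $\phi$, while on each of the last $t$ slots the index swap $e_{\mathbf k,\mathbf l}\mapsto f_{\mathbf l,\mathbf k}$ exactly absorbs the ``minus-transpose'' in $e_{i,j}v^*_k=-\delta_{i,k}v^*_j$ (one can sanity-check this on a single $W$-slot: $\phi([X,e_{k,l}])=([X,e_{k,l}])^*=-[X^*,f_{l,k}]=[-X^*,f_{l,k}]=X\cdot f_{l,k}=X\cdot\phi(e_{k,l})$). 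Note that what you actually establish is $\mathfrak{g}$-equivariance, which is strictly stronger than the stated $\mathfrak{g}_e$-equivariance and matches what \cite{BS} prove; the paper only records the $\mathfrak{g}_e$ version because that is all it needs downstream. The trade-off is that your write-up duplicates a known result, whereas the paper's citation is shorter but less transparent about the sign bookkeeping; if one wanted an included proof, yours is the right shape, and the only thing I would add is an explicit sentence fixing the conjugation actions on both $\End$-spaces before the diagram chase, since that is where a reader is most likely to lose the sign.
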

In fact, \cite[Lemma~7.6]{BS} is for general linear Lie superalgebra $\mathfrak{gl}_{m|n}$ and further,   $\phi$  is a $\mathfrak {gl}_{m|n}$-module isomorphism.
 We need its special case $n=0$. We also call $\phi$  the flip map and will denote it by $\text{flip}$, too.
Recall that $e$ is the nilpotent matrix defined in \eqref{nilm}. The following result can be verified easily. Note that $(g_1g_2)^*=g_2^*g_1^*$ for any $g_1, g_2\in \End(V^{\otimes t})$ (see \eqref{gst}).
\begin{lemma}\label{dual} Let $K$ be  given in \eqref{Ksub}. For any  $\mathbf i, \mathbf j\in I(n, t)$, and $ \beta\in \mathbb N^t$, define \begin{enumerate} \item $ e_{\mathbf i, \mathbf j} e^\beta=e_{i_1,j_1}e^{\beta_1} \otimes  e_{i_{2},j_{2}}e^{\beta_{2}} \otimes   \cdots \otimes
e_{i_t,j_t} e^{\beta_t}$, \item $(e^*)^{\beta} f_{\mathbf j, \mathbf i}=(e^*)^{\beta_1} f_{j_1,i_1} \otimes  (e^*)^{\beta_{2}} f_{j_{2}, i_{2}} \otimes  \cdots \otimes (e^*)^{\beta_t} f_{j_t,i_t}$.\end{enumerate}
Then $e^*=\sum _{(i,j)\in K}f_{j,i}$ and  $ ( e_{\mathbf i, \mathbf j} e^\beta )^*  = (e^*)^{\beta} f_{\mathbf j, \mathbf i}  $.
\end{lemma}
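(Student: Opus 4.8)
The plan is to unwind the definition of the dual map $(-)^{\ast}$ from \eqref{gst} on dual basis vectors and then bootstrap from a single tensor slot to the general statement by multiplicativity.

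First I would verify $e^{\ast}=\sum_{(i,j)\in K}f_{j,i}$ in $\End(W)$ by evaluating both sides on each $v_m^{\ast}$. By \eqref{naturalaction} and \eqref{nilm} one has $e(v_l)=\sum_{(i,l)\in K}v_i$, so \eqref{gst} gives $e^{\ast}(v_m^{\ast})(v_l)=v_m^{\ast}(e(v_l))$, which equals $1$ if $(m,l)\in K$ and $0$ otherwise; hence $e^{\ast}(v_m^{\ast})=\sum_{l:(m,l)\in K}v_l^{\ast}$. Since also $\big(\sum_{(i,j)\in K}f_{j,i}\big)(v_m^{\ast})=\sum_{(i,j)\in K}\delta_{i,m}\,v_j^{\ast}=\sum_{l:(m,l)\in K}v_l^{\ast}$, the two sides coincide. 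The identical computation with $e_{i,j}$ in place of $e$ gives $(e_{i,j})^{\ast}=f_{j,i}$ for all $i,j\in\nb$.

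Next I would record the two formal properties of $(-)^{\ast}$ needed to pass to tensor powers. From \eqref{gst}, $(g_1g_2)^{\ast}=g_2^{\ast}g_1^{\ast}$ for $g_1,g_2\in\End(V^{\otimes t})$ (as noted just before the lemma); since $e$ commutes with itself, induction gives $(e^m)^{\ast}=(e^{\ast})^m$, and therefore $(e_{i,j}e^m)^{\ast}=(e^{\ast})^m f_{j,i}$. Second, $(-)^{\ast}$ distributes over the tensor factors of $V^{\otimes t}$ without permuting them: evaluating $(g_1\otimes\cdots\otimes g_t)^{\ast}$ on $v_1^{\ast}\otimes\cdots\otimes v_t^{\ast}$ against $w_1\otimes\cdots\otimes w_t$ via \eqref{gst} shows $(g_1\otimes\cdots\otimes g_t)^{\ast}=g_1^{\ast}\otimes\cdots\otimes g_t^{\ast}$ for $g_l\in\End(V)$.

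Combining these, since $e_{\mathbf i,\mathbf j}e^{\beta}$ is by definition the tensor product $\bigotimes_{l=1}^{t}e_{i_l,j_l}e^{\beta_l}$, we obtain
\[
(e_{\mathbf i,\mathbf j}e^{\beta})^{\ast}=\bigotimes_{l=1}^{t}\big(e_{i_l,j_l}e^{\beta_l}\big)^{\ast}=\bigotimes_{l=1}^{t}(e^{\ast})^{\beta_l}f_{j_l,i_l}=(e^{\ast})^{\beta}f_{\mathbf j,\mathbf i},
\]
which is the assertion. I do not expect a genuine obstacle here; the only point requiring care is that $(-)^{\ast}$ reverses the order of composition within a single tensor slot but leaves the tensor slots in place, and this is exactly why $\mathbf i$ and $\mathbf j$ exchange roles while $(e^{\ast})^{\beta}$ remains to the left of $f_{\mathbf j,\mathbf i}$ in the final identity.
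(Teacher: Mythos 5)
Your proof is correct and is exactly the argument the paper has in mind: the paper states only that the lemma "can be verified easily" and supplies the hint $(g_1g_2)^*=g_2^*g_1^*$, and you have filled in precisely those easy steps (computing $(e_{i,j})^*=f_{j,i}$ directly from \eqref{gst} and \eqref{naturalaction}, summing over $K$, and using anti-multiplicativity within each tensor slot together with slotwise compatibility of $*$ with $\otimes$).
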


In this paper, we always denote by $\mathfrak S_{r+t}$ the symmetric group on $r+t$ letters $\{r,  \cdots, 2, 1, \bar 1, \bar 2, \cdots, \bar t\}$.
We can identify each permutation  $w\in\mathfrak S_{r+t}$ with its permutation diagram such that the vertices at the both rows are indexed by $r, r-1, \cdots, 1, \bar 1, \bar 2, \cdots, \bar t$ from left to right.
There is a linear isomorphism
\begin{equation}\label{barflip}
\bar {\text{flip}}:  \mathbb C\mathfrak S_{r+t}\rightarrow \mathscr B_{r,t}(n),
\end{equation}
sending  a permutation diagram to the corresponding  walled Brauer diagram obtained by adding an imaginary  wall between the $1$th and $\bar 1$th vertices,
and flipping the part of the diagram which is at  the right hand  of the wall.
Let $\mathscr H_{k, r+t}$ be a  level $k$ degenerate Hecke algebra. The current definition of a level $k$ degenerate  Hecke algebra is different from the usual one. Our  $x_r, x_{r-1}, \cdots, x_1, \bar x_1, \bar x_2, \cdots, \bar x_t$'s are the same as $-x_1, -x_2, \cdots, -x_{r+t}$ in usual sense.  Moreover, $s_{r-i}$ is the usual $s_i$, $ i\in \underline{r-1}$ and $\bar s_{j}$ is the usual $s_{r+j}$, $ j\in \underline{t-1}$. The special one which switches $1, \bar 1$ is the usual $s_r$. We keep this setting so as to be compatible with $\mathscr B_{k, r, t}$.
  The  associated graded algebra $\text{gr}\mathscr H_{k, r+t}$
has a basis
\begin{equation} \label{grabas} \{x^{\alpha} \bar x^{\beta} w \mid (\alpha, \beta)\in \mathbb N_k^r\times \mathbb N_k^t,  w \in \mathfrak S_{r+t}\}.  \end{equation}
It follows from \eqref{barflip} and Theorem~\ref{level-k-walled}
that ${\rm gr} ( \mathscr B_{k, r,t})$ has a basis
\begin{equation} \label{grabas1} \{x^{\alpha} \bar {\text{flip}} (w) \bar x^{\beta} \mid (\alpha, \beta)\in \mathbb N_k^r\times \mathbb N_k^t,  w \in \mathfrak S_{r+t}  \}, \end{equation}
where $\bar {\text{flip}}$ is given in \eqref{barflip}. This leads to a linear isomorphism
\begin{equation}\label{flipdiagram}
\widetilde{\text{flip}}:\text{gr}\mathscr H_{k, r+t}\rightarrow  {\rm gr} ( \mathscr B_{k, r,t})
\end{equation}
sending $x^{\alpha} \bar x^{\beta} w $ to $x^{\alpha} \bar {\text{flip}} (w) \bar x^{\beta}$.

Motivated by \cite{BS}, for  any  $w\in \mathfrak S_{r+t}$  and any ${\bf i,j}\in I(n,r+t)$,
 there is a labeled diagram
$w_{\bf i,j}$   obtained by  labeling the vertices at the bottom  (resp., top) row of $w$ according to the sequence  $i_r, i_{r-1}, \cdots, i_{1}, i_{\bar 1}, i_{\bar 2}, \cdots,  i_{\bar t}$
 (resp., $j_r, j_{r-1}, \cdots, j_{1}, j_{\bar 1}, j_{\bar 2}, \cdots,  j_{\bar t}$   ) from left to right. Similarly, for  any   walled Brauer diagram $D$ and any $\mathbf i, \mathbf j\in I(n, r+t)$, there is a labeled diagram, say,
 $ D_{\mathbf i,\mathbf j}$, obtained  by  labeling the vertices at the bottom  (resp., top) row of  $D$ according to the sequence $i_r, i_{r-1}, \cdots, i_{1}, i_{\bar 1}, i_{\bar 2}, \cdots,  i_{\bar t}$
 (resp., $j_r, j_{r-1}, \cdots, j_{1}, j_{\bar 1}, j_{\bar 2}, \cdots,  j_{\bar t}$).
 Following \cite{BS}, we call  a labeled diagram    a  consistently labeled diagram if the vertices at the ends of  each  edge are labeled with the same number.
For $ x\in \{w_{\mathbf i,\mathbf j},  D_{\mathbf i, \mathbf j}\}$, define
\begin{equation}
\text{wt}(x) =\left\{
            \begin{array}{ll}
              1, & \hbox{if $x$ is consistently;} \\
              0, & \hbox{otherwise.}
            \end{array}
          \right.
\end{equation}

The following result is the special case of \cite[Lemma~7.3--7.4]{BS} for $\mathfrak {gl}_{n\mid 0}$.
\begin{lemma}\label{actionofdiagram} Suppose $\mathbf i
\in I(n, r+t)$. 
\begin{enumerate}
\item Each $ w\in \mathfrak S_{r+t}$ acts on the right of $ V^{\otimes {(r+t)}}$ via $\sum _{{\bf i,j}}{\rm wt}(w_{\bf i,j})e_{\mathbf {i}^L, \mathbf j^L}\otimes  e_{\mathbf  {i}^R, \mathbf {j}^R}$, for all $\mathbf i, \mathbf j\in I(n,r+t) $, where  $  e_{\mathbf {i}^L, \mathbf j^L}$ and $e_{\mathbf  {i}^R, \mathbf {j}^R}$ are defined in Definition~\ref{eij1234}.
\item Each  walled Brauer diagram $D$ acts on the right of  $V^{r, t}$ as  $\sum _{{\bf i,j}}{\rm wt}(D_{\bf i,j} )e_{\mathbf {i}^L ,\mathbf {j}^L}\otimes f_{\mathbf {i}^R, \mathbf {j}^R}$, for all $\mathbf i, \mathbf j\in I(n, r+t)$.
\end{enumerate}
\end{lemma}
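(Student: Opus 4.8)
The plan is to reduce both assertions to the generators and then propagate along products by the diagram calculus. First I would record the action of the generators directly from the Casimir formulas of Definition~\ref{casm}. From \eqref{naturalaction} one checks that the element $\Omega$ of \eqref{omega} acts on $V\otimes V$ as the flip $v_a\otimes v_b\mapsto v_b\otimes v_a$ and on $W\otimes W$ as the flip $v_a^{*}\otimes v_b^{*}\mapsto v_b^{*}\otimes v_a^{*}$, so each $s_i=\pi_{i+1,i}(\Omega)$ and $\bar s_j=\pi_{\bar j,\overline{j+1}}(\Omega)$ interchanges a pair of adjacent $V$- or $W$-slots; likewise $e_1=-\pi_{1,\bar1}(\Omega)$ acts on $V\otimes W$ by $v_a\otimes v_b^{*}\mapsto\delta_{a,b}\sum_{i=1}^{n}v_i\otimes v_i^{*}$, i.e.\ as the cap-then-cup operator attached to the walled Brauer diagram $e_1$. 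A direct inspection of the consistency condition shows that each generator acts precisely by the matrix-unit sum prescribed by its one-generator labeled diagram, with $e$-units in case (a) and $f$-units in case (b).

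For (a), any $w\in\mathfrak S_{r+t}$ is a product of simple transpositions, so it acts on $V^{\otimes(r+t)}$ by the usual (right) place permutation of tensor slots; expanding this operator in the matrix-unit basis of $\End(V^{\otimes(r+t)})\cong\End(V^{\otimes r})\otimes\End(V^{\otimes t})$ yields a coefficient that is $1$ exactly when the label at each source $p$ agrees with the label at its image $w(p)$, which is precisely ${\rm wt}(w_{\mathbf i,\mathbf j})$. Splitting $e_{\mathbf i,\mathbf j}=e_{\mathbf i^L,\mathbf j^L}\otimes e_{\mathbf i^R,\mathbf j^R}$ under this isomorphism gives the stated formula.

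For (b) I see two essentially equivalent routes. One is to transport (a) through the flip maps: by \eqref{barflip} every walled Brauer diagram is $\bar{\text{flip}}(w)$ for a unique $w\in\mathfrak S_{r+t}$, the action of $\bar{\text{flip}}(w)$ on $V^{r,t}$ is the image under $\text{flip}$ of the action of $w$ on $V^{\otimes(r+t)}$ (Lemma~\ref{filpwall}), and Lemma~\ref{dual} with $\beta=0$ gives $(e_{\mathbf j^R,\mathbf i^R})^{*}=f_{\mathbf i^R,\mathbf j^R}$; since flipping the part of the diagram to the right of the wall interchanges the roles of its top and bottom labels, the formula of (a) turns into the one claimed for $D$. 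The other is to write $D$ as a product of generators and induct, using that for a composite $D=D'\circ D''$ (before deleting circles) one has $\sum_{\mathbf k}{\rm wt}(D'_{\mathbf k,\mathbf j})\,{\rm wt}(D''_{\mathbf i,\mathbf k})=n^{\,n(D',D'')}\,{\rm wt}\bigl((D'D'')_{\mathbf i,\mathbf j}\bigr)$, each deleted circle contributing a monochromatic loop summed over $\{1,\dots,n\}$; this is exactly the rule for multiplying the corresponding matrix-unit sums, so the generator computations propagate and the scalar $\omega_0=n$ coming from closed loops is accounted for automatically.

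The computation is entirely formal, so the only real obstacle is bookkeeping: keeping the opposite-algebra (right-action) conventions consistent, tracking the left/right split $(\mathbf i^L,\mathbf i^R)$ through the flip identification of Lemma~\ref{filpwall} and the dualization $(e_{\mathbf i,\mathbf j})^{*}=f_{\mathbf j,\mathbf i}$ of Lemma~\ref{dual}, and confirming that the scalar attached to a closed loop is exactly the parameter $\omega_0=n$. There is nothing deeper: the statement is the $\mathfrak{gl}_{n\mid 0}$ specialization of \cite[Lemma~7.3--7.4]{BS}, and the argument above is simply that specialization written out.
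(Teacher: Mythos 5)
The paper itself does not prove this lemma; it simply states that it is the $\mathfrak{gl}_{n\mid 0}$ specialization of \cite[Lemmas~7.3--7.4]{BS}. Your argument is a correct self-contained unpacking of exactly that specialization: the generator computations for $\Omega$ on $V\otimes V$, $W\otimes W$, $V\otimes W$ are right, the reduction of (a) to place permutation is right, and the transport to (b) via Lemma~\ref{filpwall} together with $(e_{\mathbf j^R,\mathbf i^R})^{*}=f_{\mathbf i^R,\mathbf j^R}$ from Lemma~\ref{dual} and the reindexing $\mathbf i^R\leftrightarrow\mathbf j^R$ forced by $\bar{\text{flip}}$ is the same mechanism the paper itself later exploits in \eqref{coom}.
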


\begin{lemma}\label{comofw} There is a graded algebra homomorphism  $\phi:{\rm gr}\mathscr H_{k, r+t}\rightarrow {\rm End}_{\U(\mathfrak g_e)}(V^{\otimes (r+t)})^{op}$  such that
 \begin{multicols}{2}
\begin{enumerate} \item $\phi(x_1)=-1^{\otimes r-1}\otimes e\otimes 1^{\otimes t}$,
\item $\phi(\bar x_1)=-1^{\otimes r}\otimes e\otimes 1^{\otimes t-1}$,
\item $\phi(s_r)=\pi_{1,\bar 1}(\Omega)$,
\item  $\phi(s_i)=\pi_{i+1, i}(\Omega)$, $1\le i\le r-1$, \item  $\phi(\bar s_j)=\pi_{\bar j,\overline{j+1}}(\Omega)$, $1\le j\le t-1$.
\end{enumerate}
\end{multicols}
Moreover, we have the commutative  diagram as follows,
\begin{equation}\label{comuflip}
\begin{array}[c]{ccc}
{\rm gr}\mathscr H_{k, r+t} &\stackrel{\widetilde{\text{flip}}}{\longrightarrow}&{\rm gr}\mathscr B_{k, r,t}\\
\downarrow\scriptstyle{\phi}&&\downarrow\scriptstyle{\varphi}\\
{\rm End}(V^{\otimes(r+t)}) &\stackrel{\text{flip}}{\longrightarrow}&{\rm End}(V^{r, t}),
\end{array}\end{equation}
where $\text{flip}$ (resp., $ \widetilde{\text{flip}}$)   is given in (\ref{flipstar}) (resp., (\ref{flipdiagram})) and $\varphi$ is in Proposition~\ref{grarel}.
\end{lemma}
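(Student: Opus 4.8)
The plan is to build $\phi$ directly as the composite of the known action of ${\rm gr}\mathscr H_{k,r+t}$ on $V^{\otimes(r+t)}$ and to check the two assertions separately: first that the assignment (a)--(e) does extend to a graded algebra homomorphism, and second that the square \eqref{comuflip} commutes. For the first assertion, note that $V^{\otimes(r+t)}$ is a graded $\U(\mathfrak g_e)$-module via $\deg v_i = k-\text{col}(i)$, and the elements $\pi_{i+1,i}(\Omega)$, $\pi_{\bar j,\overline{j+1}}(\Omega)$, $\pi_{1,\bar 1}(\Omega)$ commute with the $\mathfrak g$-action (hence with the $\mathfrak g_e$-action) and are homogeneous of degree $0$, while $1^{\otimes r-1}\otimes e\otimes 1^{\otimes t}$ and $1^{\otimes r}\otimes e\otimes 1^{\otimes t-1}$ are homogeneous of degree $1$, because $e$ raises the column index by one. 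So degrees match \eqref{degrees} after the identification of $x$'s with $-x$'s explained before \eqref{grabas}. That the symmetric-group generators $s_r, s_i, \bar s_j$ together satisfy the Coxeter relations for $\mathfrak S_{r+t}$ under $\pi_{\bullet,\bullet}(\Omega)$ is the classical Schur--Weyl fact (also recorded in \cite{BCHLLJ} and used in Proposition~\ref{affinere}); that $\phi(x_1)^k=0$ follows from $e^k=0$ (the tableau $\t$ has $k$ columns); and the degenerate-affine mixed relations (the analogues of $x_1(s_1x_1s_1-s_1)=(s_1x_1s_1-s_1)x_1$ and the commutations $s_i\bar x_1=\bar x_1 s_i$ etc.) are verified by the same straightforward computation as in Proposition~\ref{grarel}, since here all the relevant $x$'s act through the single nilpotent $e$ placed in distinct tensor slots. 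By the presentation of ${\rm gr}\mathscr H_{k,r+t}$ (the ungraded/graded analogue of Definition~\ref{wbmw} restricted to the ``no-$e_1$'' part, with $x_1^k=\bar x_1^k=0$), these relations suffice, so $\phi$ is a well-defined graded algebra homomorphism.

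For the commutativity of \eqref{comuflip}, I would check it on the algebra generators of ${\rm gr}\mathscr H_{k,r+t}$ and use that $\widetilde{\text{flip}}$ is, by construction \eqref{flipdiagram}, a bijection sending $x^\alpha\bar x^\beta w$ to $x^\alpha\,\bar{\text{flip}}(w)\,\bar x^\beta$. On the $x$- and $\bar x$-generators the diagram commutes trivially: $\widetilde{\text{flip}}$ fixes $x_1$ and $\bar x_1$, $\varphi$ and $\phi$ send them to the same endomorphisms of the relevant tensor slots by Proposition~\ref{grarel}(4)--(5) and the definition of $\phi$, and $\text{flip}$ carries $1^{\otimes r-1}\otimes e\otimes 1^{\otimes t}$ to itself and $1^{\otimes r}\otimes e\otimes 1^{\otimes t-1}$ to $1^{\otimes r}\otimes e^*\otimes 1^{\otimes t-1}$ — but after \eqref{gst} the latter is exactly how $\bar x_1$ acts on $W^{\otimes t}$ inside $V^{r,t}$, since $\varphi(\bar x_1)$ is built from $\Omega=\sum e_{i,j}\otimes e_{j,i}$ acting on $M\otimes W$ and, restricted to degree $1$ in the graded picture, reduces to $-e$ acting on $W$, i.e. $-e^*$ by Lemma~\ref{dual}. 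For the $\mathfrak S_{r+t}$-generators, $\widetilde{\text{flip}}$ sends $s_i\mapsto s_i$, $\bar s_j\mapsto\bar s_j$ ($1\le i\le r-1$, $1\le j\le t-1$) and $s_r\mapsto e_1$, by the description of $\bar{\text{flip}}$ in \eqref{barflip}: flipping the part of a permutation diagram to the right of the wall turns the transposition of the $1$st and $\bar 1$st strands into the cup-cap diagram $e_1$. So commutativity on these amounts to: $\text{flip}\circ\phi(s_i)=\varphi(s_i)\circ$ (identically) for $i<r$ and similarly for $\bar s_j$, which is immediate from Proposition~\ref{grarel}(2)--(3) and the definition of $\phi$; and $\text{flip}\circ\phi(s_r)=\varphi(e_1)$, i.e. $\text{flip}(\pi_{1,\bar 1}(\Omega))=-\pi_{1,\bar1}(\Omega)|_{V^{r,t}}$. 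This last identity is precisely the content of Lemma~\ref{filpwall} (the $n=0$, $\mathfrak{gl}_n$-case of \cite[Lemma~7.6]{BS}) combined with the fact, recorded just before \eqref{barflip}, that flipping the permutation $(1\ \bar1)$ yields the Brauer diagram $e_1$; concretely one compares the matrix coefficients using Lemma~\ref{actionofdiagram}, whose two parts say exactly that $w$ on $V^{\otimes(r+t)}$ and $\bar{\text{flip}}(w)$ on $V^{r,t}$ have matching weights $\text{wt}(w_{\mathbf i,\mathbf j})$.

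I expect the only genuinely delicate point to be the sign and dualization bookkeeping in the $s_r\leftrightarrow e_1$ corner of the square: one must track that transposing-then-flipping a strand across the wall is the same operation, at the level of endomorphisms, as replacing $e_{i,j}\otimes e_{j,i}$ acting on $V\otimes V$ by $e_{i,j}\otimes (-e_{j,i})^{*\text{-adjusted}}$ acting on $V\otimes W$, and that the resulting endomorphism of $V\otimes W$ is $-\pi_{1,\bar1}(\Omega)$ with the correct sign coming from the action \eqref{naturalaction} of $e_{i,j}$ on $v_k^*$. Everything else is either the classical Schur--Weyl relations, a degree count using $e^k=0$, or a direct application of Lemmas~\ref{filpwall}, \ref{dual}, \ref{actionofdiagram}. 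Since $\widetilde{\text{flip}}$ is a linear bijection and both $\phi$ and $\varphi$ are algebra homomorphisms, agreement on generators gives agreement on all of ${\rm gr}\mathscr H_{k,r+t}$, completing the proof.
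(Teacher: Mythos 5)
Your first paragraph (that $\phi$ is a well-defined graded algebra homomorphism) is reasonable; the paper takes this essentially for granted from Brundan--Kleshchev, and your degree/relation checks are sound in substance. The genuine gap is in the final step of the commutativity argument. You conclude with ``Since $\widetilde{\text{flip}}$ is a linear bijection and both $\phi$ and $\varphi$ are algebra homomorphisms, agreement on generators gives agreement on all of ${\rm gr}\mathscr H_{k,r+t}$.'' This inference is invalid: to deduce $\text{flip}\circ\phi=\varphi\circ\widetilde{\text{flip}}$ on a product of generators from agreement on generators, you would need both composites to be algebra homomorphisms, and they are not, because neither $\text{flip}$ nor $\widetilde{\text{flip}}$ is multiplicative. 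Indeed, $\bar{\text{flip}}(s_r^2)=\bar{\text{flip}}(1)=1$ while $\bar{\text{flip}}(s_r)^2=e_1^2=ne_1$, so $\bar{\text{flip}}$ (hence $\widetilde{\text{flip}}$) is not an algebra map; and $\text{flip}$ reverses the order of products in the $\text{End}(V^{\otimes t})$ factor because $(g_1g_2)^*=g_2^*g_1^*$. Agreement on generators therefore does not propagate.

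The paper avoids this by verifying the commutativity directly on the \emph{entire basis} $\{x^\alpha\bar x^\beta w\mid (\alpha,\beta)\in\mathbb N_k^r\times\mathbb N_k^t,\ w\in\mathfrak S_{r+t}\}$ rather than just on generators. Concretely, it first invokes \cite[Lemma~7.7]{BS} to get $\text{flip}(\phi(w))=\varphi(\widetilde{\text{flip}}(w))$ for \emph{all} $w\in\mathfrak S_{r+t}$ at once (this is exactly where your per-generator check for $s_r$, $s_i$, $\bar s_j$ is subsumed and extended), and then, using the matrix-unit description from Lemma~\ref{actionofdiagram}(a) and the identity $(e_{\mathbf i^R,\mathbf j^R}e^\beta)^*=(e^*)^\beta f_{\mathbf j^R,\mathbf i^R}$ from Lemma~\ref{dual}, computes $\text{flip}\circ\phi$ and $\varphi\circ\widetilde{\text{flip}}$ on a general $x^\alpha\bar x^\beta w$ and matches the results term by term. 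To repair your proof you should replace the generator-level argument by this basis-level computation; your intermediate observations (the action of $e$ in the relevant slot, the dualization via Lemma~\ref{dual}, the role of Lemma~\ref{actionofdiagram}) are exactly the right ingredients, but they must be applied to arbitrary regular monomials, not just to generators.
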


\begin{proof}
For any $w\in\mathfrak S_{r+t}$, it follows from \cite[Lemma~7.7]{BS} that \begin{equation}\label{coom}
\text{flip}(\phi(w))=\sum_{{\bf i,j}\in I(n,r+t)}\text{wt}(w_{\bf i,j})e_{\bf i^L,j^L}\otimes f_{\bf j^R,i^R}=\varphi(\widetilde{\text{flip}}(w))
\end{equation}
where  $w$ acts on $V^{r, t}$ as in Lemma~\ref{actionofdiagram}(a).  So,
$$\begin{aligned} &
\text{flip} \circ \phi(x_1^{\alpha_1} x_2^{\alpha_2}  \cdots x_r^{\alpha_r}\bar x_1^{\beta_1}\bar x_2^{\beta_2}\cdots\bar x_t^{\beta_t}w)=\text{flip} (\sum_{{\bf i,j}\in I(n,r+t)} \text{wt}(w_{\bf i,j}) e_{\bf i^L,j^L}  e^{\alpha} \otimes e_{\bf i^R,j^R}e^\beta )\\
=&
\sum_{{\bf i,j}\in I(n,r+t)} \text{wt}(w_{\bf i,j}) e_{\bf i^L,j^L} e^{\alpha}    \otimes (e_{\bf i^R,j^R} e^\beta )^*
=\sum_{{\bf i,j}\in I(n,r+t)} \text{wt}(w_{\bf i,j}) e_{\bf i^L,j^L} e^{\alpha}  \otimes (e^\ast)^\beta  f_{\bf j^R,i^R} \\
=& \varphi( x_1^{\alpha_1} x_2^{\alpha_1} \cdots x_r^{\alpha_r}\bar{\text{flip}}(w)\bar x_1^{\beta_1} \bar x_2^{\beta_2}  \cdots \bar x_t^{\beta_t})  =\varphi\circ  \widetilde{\text{flip}} ( x_1^{\alpha_1} x_2^{\alpha_2}  \cdots x_r^{\alpha_r}\bar x_1^{\beta_1}\bar x_2^{\beta_2}\cdots\bar x_t^{\beta_t}w)
\end{aligned}$$ where the third (resp., forth) equality follows from   Lemma~\ref{dual} (resp., (\ref{coom})). This completes the proof.
\end{proof}

\begin{Theorem}\label{grschur} Let $\varphi$ be the graded algebra homomorphism in  Proposition~\ref{grarel}. Then
$\varphi$ is surjective, and it is  injective if   $r+t\leq   q_k$.
\end{Theorem}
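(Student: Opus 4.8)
The plan is to deduce Theorem~\ref{grschur} from the corresponding statement for the degenerate cyclotomic Hecke algebra, transported through the flip maps. Concretely, Brundan--Kleshchev's higher Schur--Weyl duality \cite{BK} provides that the graded algebra homomorphism $\phi:{\rm gr}\mathscr H_{k, r+t}\rightarrow {\rm End}_{\U(\mathfrak g_e)}(V^{\otimes(r+t)})^{op}$ of Lemma~\ref{comofw} is surjective, and is injective once $r+t\le q_k$ (this is the graded analogue of the ungraded dimension count $\dim{\rm End}_{\U}(V^{\otimes(r+t)})=k^{r+t}(r+t)!$ used in the proof of Theorem~\ref{isomorphism}). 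The commutative square \eqref{comuflip} then does the work: the two horizontal maps $\widetilde{\text{flip}}$ and $\text{flip}$ are linear isomorphisms (the first by \eqref{grabas}--\eqref{grabas1}, the second as the composite $c^{-1}\circ\text{flip}\circ b$ of canonical isomorphisms together with the $\mathfrak g_e$-module isomorphism of Lemma~\ref{filpwall}), so $\varphi=\text{flip}\circ\phi\circ\widetilde{\text{flip}}^{-1}$ inherits surjectivity from $\phi$ in general, and injectivity from $\phi$ when $r+t\le q_k$.

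The steps, in order, are as follows. First I would invoke \cite{BK} to record that $\phi$ is surjective for all $r,t$. Second, I would note that $\text{flip}$ restricts to a bijection ${\rm End}_{\U(\mathfrak g_e)}(V^{\otimes(r+t)})\to{\rm End}_{\U(\mathfrak g_e)}(V^{r,t})$: by Lemma~\ref{filpwall} the underlying linear map $\phi$ in \eqref{flipstar} is a $\mathfrak g_e$-module isomorphism $V^{\otimes(r+t)}\cong V^{r,t}$ after the identifications $b,c$, hence conjugation by it carries $\U(\mathfrak g_e)$-endomorphisms to $\U(\mathfrak g_e)$-endomorphisms; one must check this conjugation agrees with the $\text{flip}$ of \eqref{flipstar} on endomorphism algebras, which is exactly the content that makes the left column of \eqref{comuflip} well-defined. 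Third, I would use commutativity of \eqref{comuflip}: since $\widetilde{\text{flip}}$ is bijective, $\text{Im}\,\varphi=\text{flip}(\text{Im}\,\phi)=\text{flip}\big({\rm End}_{\U(\mathfrak g_e)}(V^{\otimes(r+t)})^{op}\big)={\rm End}_{\U(\mathfrak g_e)}(V^{r,t})^{op}$, giving surjectivity. Fourth, for injectivity when $r+t\le q_k$: $\ker\varphi=\widetilde{\text{flip}}(\ker\phi)$, so it suffices that $\ker\phi=0$ in that range. This follows by comparing dimensions of the homogeneous pieces, or more simply by the ungraded count: $\dim{\rm gr}\mathscr H_{k,r+t}=\dim\mathscr H_{k,r+t}=k^{r+t}(r+t)!$ while $\dim{\rm End}_{\U}(V^{\otimes(r+t)})=k^{r+t}(r+t)!$ by \cite{BK} when $r+t\le q_k$, and $\dim{\rm End}_{\U(\mathfrak g_e)}(V^{\otimes(r+t)})\le\dim{\rm End}_{\U}(V^{\otimes(r+t)})$ cannot exceed this since ${\rm gr}$ preserves dimension; surjectivity of $\phi$ then forces equality and injectivity.

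An alternative, more self-contained route for injectivity that avoids reproving the graded analogue of \cite{BK} is to argue directly with labeled diagrams, mimicking the linear-independence argument in the proof of Theorem~\ref{isomorphism} but now in the graded setting using Lemma~\ref{grof12} and Lemma~\ref{actionofdiagram}. One would take a supposed relation $\sum a_{\alpha,D,\beta}\,\varphi(x^\alpha D\bar x^\beta)=0$ among the basis \eqref{grabas1}, pick a triple of maximal degree, and show that evaluating on a suitable basis vector $v_{\mathbf b}\in V^{r,t}$ and reading off the coefficient of $v_{\mathbf w}$ (with $\mathbf b,\mathbf w$ the bottom/top label sequences of the consistently-labeled diagram) isolates $a_{\gamma,D,\delta}$; the condition $r+t\le q_k$ is what guarantees the labelings stay within $\underline n$, exactly as in Example~\ref{exam12}. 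In practice I would present the proof via the flip square since it reuses \cite{BK} cleanly.

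The main obstacle I anticipate is \textbf{verifying that the left vertical map in \eqref{comuflip} genuinely restricts to the $\text{flip}$ of \eqref{flipstar}}, i.e.\ that the isomorphism $V^{\otimes(r+t)}\cong V^{r,t}$ of Lemma~\ref{filpwall} intertwines the two endomorphism-algebra flip maps in a way compatible with the opposite-algebra structures and with the dualization $g\mapsto g^*$ appearing in Lemma~\ref{dual}; getting the transpose/opposite bookkeeping right (which is why $f_{\bf j^R,i^R}$ rather than $f_{\bf i^R,j^R}$ shows up in \eqref{coom}) is the delicate point. Once that compatibility is in hand — and Lemma~\ref{comofw} has essentially isolated it — the rest is formal.
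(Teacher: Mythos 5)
Your main argument---transporting surjectivity and injectivity of the Hecke-side map $\phi$ through the commutative flip square \eqref{comuflip}, with the two horizontal flips being bijections by Lemma~\ref{filpwall} and \eqref{grabas1}, and citing \cite{BK} for $\phi$---is precisely the paper's proof of Theorem~\ref{grschur}. One small caution on your auxiliary ``ungraded count'' for injectivity: the inequality $\dim\mathrm{End}_{\U(\mathfrak g_e)}(V^{\otimes(r+t)})\le\dim\mathrm{End}_{\U}(V^{\otimes(r+t)})$ goes the wrong way (since $\mathfrak g_e\subset\mathfrak g$, endomorphisms over the smaller algebra form a \emph{larger} space), but this side remark is inessential because the needed facts about $\phi$ are exactly \cite[Theorem~2.4]{BK}, which is what the paper cites.
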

\begin{proof} By Lemma~\ref{filpwall}, the flip map  in (\ref{comuflip}) is $\mathfrak g_e$-homomorphism. So, there is a $\mathbb C$-linear isomorphism
$\text{End}_{\U(\mathfrak g_e)}(V^{\otimes (r+t)})\cong \text{End}_{\U(\mathfrak g_e)}(V^{r,t})$ and hence  the results follow from  Lemma~\ref{comofw} and \cite[Theorem~2.4]{BK} which says that $\phi$ in \eqref{comuflip} is an epimorphism and moreover, $\phi$ is injective if $r+t\le q_k$.
\end{proof}

\section{Finite $W$-algebras and cyclotomic walled Brauer algebras}
Throughout this section, we go on assuming that  $q=(q_1,q_2,\cdots,q_k)$ is a partition of $n$  and   $d=(d_1, d_2, \cdots, d_k)\in\mathbb C^k$   in Assumption~\ref{qset}. So, $d_i-d_j\in \mathbb Z$ if and only if $d_i=d_j$.
Recall that  $\mathfrak p$ is  the parabolic subalgebra of $\mathfrak g$ whose Levi subalgebra is $\mathfrak {gl}_{q_1}\oplus \mathfrak {gl}_{q_2}\oplus \cdots\oplus \mathfrak{gl}_{q_k}$.  It follows from  \cite{BK}  that
there are two algebra  automorphisms of  $\U(\mathfrak p)$, say  $\eta_d$ and $\eta$ such that,  for each $e_{i,j}\in \mathfrak{p}$,
\begin{equation}\label{etac}
\begin{cases} & \eta_d(e_{i,j})=e_{i,j}+\delta_{i,j}d_{\text{col}(i)}, \\
 & \eta(e_{i,j})=e_{i,j}+\delta_{i,j}(q_1-q_{\text{col}(j)}-q_{\text{col}(j)+1}-\cdots-q_k).\end{cases}
\end{equation}
Recall that $\mathfrak m\subseteq \mathfrak g$ such that $\mathfrak g=\mathfrak p\oplus \mathfrak m$. Let $\chi: \U(\mathfrak m)\rightarrow\mathbb C$ be the homomorphism sending  $x$ to $(x,e)$
for all $x\in\mathfrak m$, where $(a,b)=\text{tr}(ab)$  for $a,b\in\mathfrak g$, and $\text{tr}(\ \ )$ is the usual trace function defined on $\mathfrak g$.
Following \cite{BK}, define  $I_\chi$ to  be the kernel of the homomorphism $\chi$.
Then the finite W-algebra \begin{equation}\label{fiW}\U(\mathfrak g, e)=\{u\in \U(\mathfrak p)\mid [x,\eta(u)]\in \U(\mathfrak g)I_\chi \text{ for all } x\in \mathfrak m\}.\end{equation} It  is a subalgebra of $\U(\mathfrak p)$.
The grading on $\U(\mathfrak p)$ induces a filtration on  $\U(\mathfrak g, e)$ as follows:
$$ 0\subseteq \U(\mathfrak g, e)^{(0)}\subseteq \U(\mathfrak g, e)^{(1)}\subseteq\cdots $$
with $ \U(\mathfrak g, e)^{(i)}= \U(\mathfrak g, e)\cap \oplus_{j=0}^i \U(\mathfrak p)^{(j)}$ and $\U(\mathfrak p)^{(j)}$ consists of all elements of $\U(\mathfrak p)$ with degree $j$.   Let ${\rm gr }( \U(\mathfrak g, e))$ be the associated graded algebra. By \cite[Lemma~3.1]{BK}, there is a  graded algebra isomorphism
\begin{equation} \label{graded1}
{\rm gr }( \U(\mathfrak g, e))\cong \U(\mathfrak g_e).\end{equation}
The $V^{r,t}$ in \eqref{vrt} can be considered as a left $\U(\mathfrak p)$-module  and the action of  $\U(\mathfrak p)$ is defined as
\begin{equation} \label{ugea} u\cdot v=\eta_d(u)v, \forall u\in \U(\mathfrak p), \text{and $v\in V^{r,t}$,}\end{equation}
where $\eta_d$ is defined in \eqref{etac}. In order to emphasis the $\eta_d$, the left $\U(\mathfrak p)$-module  $ V^{r,t}$ is denoted as  $V_d^{r,t}$.
 Since $V^{r, t}$ is a graded vector space, $V_d^{r, t}$  can be considered as a graded $\U(\mathfrak p)$-module.  Via restriction, it can be considered as a left graded $\U(\mathfrak g_e)$-module.
The corresponding representation $\psi_d$ is \begin{equation} \label{psic123}  \psi_d=\psi\circ \eta_d \end{equation}  where $\psi$ is defined in \eqref{psi12}.
We remark that all results in this section  can be found in \cite{BK} when $t=0$. In general, the proofs  still depend on Brundan-Kleshchev's idea in \cite{BK}.

Let $\mathbb C_d=\mathbb C 1_d$  be  the $1$-dimensional
$\mathfrak p$-module such that  each $e_{i,j}\in \mathfrak p$ acts  on $1_d$ via the scalar  $\delta_{i,j}d_{\text{col}(i)}$.  Since  $V^{r,t}$ can be considered as  a $\U(\mathfrak g_e)$-module with respect to $\psi$, it leads to  the left $\U(\mathfrak g_e)$ structure on
$\mathbb C_d\otimes V^{r,t}$. The following result can be verified easily.

\begin{lemma} As $\U(\mathfrak g_e)$-modules, $\mathbb C_d\otimes V^{r,t}\cong V_d^{r,t}$, and the  corresponding isomorphism sends  $1_d\otimes v$ to $v$,  for each $v\in V^{r,t}$.\end{lemma}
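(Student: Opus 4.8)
The plan is to write down the obvious linear map $\Phi\colon \mathbb C_d\otimes V^{r,t}\to V_d^{r,t}$, $1_d\otimes v\mapsto v$ — a bijection (even a graded one, once $1_d$ is placed in degree zero) because $\mathbb C_d$ is one-dimensional — and to check that it is $\U(\mathfrak g_e)$-equivariant. Both $\U(\mathfrak g_e)$-module structures in question are restrictions to $\mathfrak g_e\subseteq\mathfrak p$ (recall $\U(\mathfrak g_e)$ is a subalgebra of $\U(\mathfrak p)$, since $\mathfrak g_e\subseteq\bigoplus_{i\ge 0}\mathfrak g_i=\mathfrak p$ by $\mathfrak{sl}_2$-theory) of $\U(\mathfrak p)$-module structures: on the source, the tensor product of the $\mathfrak p$-module $\mathbb C_d$ with the $\mathfrak p$-module $V^{r,t}$; on the target, the module $V_d^{r,t}$ on which $\U(\mathfrak p)$ acts through $\eta_d$ as in \eqref{ugea}. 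Hence it suffices to show that $\Phi$ intertwines these two $\U(\mathfrak p)$-actions, and since $\U(\mathfrak p)$ is generated by $\mathfrak p$, it is enough to test Lie algebra generators $e_{i,j}\in\mathfrak p$.

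Fix such an $e_{i,j}$ and $v\in V^{r,t}$. On the source, using the coproduct and the definition of $\mathbb C_d$,
\[
e_{i,j}\cdot(1_d\otimes v)=(e_{i,j}\,1_d)\otimes v+1_d\otimes(e_{i,j}v)=\delta_{i,j}d_{\text{col}(i)}(1_d\otimes v)+1_d\otimes(e_{i,j}v),
\]
where $e_{i,j}v$ denotes the natural $\mathfrak g$-action, so that $\Phi\big(e_{i,j}\cdot(1_d\otimes v)\big)=\delta_{i,j}d_{\text{col}(i)}v+e_{i,j}v$. On the target, by \eqref{ugea} and the formula \eqref{etac} for $\eta_d$,
\[
e_{i,j}\cdot v=\eta_d(e_{i,j})v=\big(e_{i,j}+\delta_{i,j}d_{\text{col}(i)}\big)v=e_{i,j}v+\delta_{i,j}d_{\text{col}(i)}v,
\]
which is the same expression. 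Thus $\Phi$ is a $\U(\mathfrak p)$-module homomorphism; restricting along $\mathfrak g_e\subseteq\mathfrak p$ and recalling that $\psi_d=\psi\circ\eta_d$ from \eqref{psic123} yields the asserted isomorphism $\mathbb C_d\otimes V^{r,t}\cong V_d^{r,t}$ of $\U(\mathfrak g_e)$-modules sending $1_d\otimes v$ to $v$.

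I do not expect any real obstacle here: the single observation that makes everything work is that applying $\eta_d$ to an element of $\mathfrak p$ merely adds the scalar by which that element acts on $\mathbb C_d$, which is precisely the effect of passing to the coproduct on $\mathbb C_d\otimes V^{r,t}$. Gradedness of $\Phi$ is automatic, since $\mathfrak g_e$ acts on the degree-zero vector $1_d$ only through its degree-zero part and hence introduces no degree shift.
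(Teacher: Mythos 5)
Your proof is correct and is exactly the ``easy verification'' the paper has in mind: define the tautological linear bijection $1_d\otimes v\mapsto v$ and check on Lie algebra generators $e_{i,j}\in\mathfrak p$ that the coproduct action on $\mathbb C_d\otimes V^{r,t}$ matches the $\eta_d$-twisted action on $V_d^{r,t}$, then restrict along $\U(\mathfrak g_e)\subseteq\U(\mathfrak p)$.

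One small clarification worth recording: working at the level of $\U(\mathfrak p)$ and only restricting at the end, as you do, is cleanly correct and in fact sidesteps any concern about whether $\eta_d$ preserves the subalgebra $\U(\mathfrak g_e)$ (which your closing reference to $\psi_d=\psi\circ\eta_d$ might otherwise suggest is needed). Also, the inclusion $\mathfrak g_e\subseteq\mathfrak p$ here is a feature of the good grading $\deg e_{i,j}=\mathrm{col}(j)-\mathrm{col}(i)$ used in the paper rather than of the ad-$h$ grading from an $\mathfrak{sl}_2$-triple, so ``good grading theory'' would be the more precise citation than ``$\mathfrak{sl}_2$-theory''; but the paper already records the needed fact ($\U(\mathfrak g_e)$ is a graded subalgebra of $\U(\mathfrak p)$), so nothing in your argument is affected.
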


Via restriction, both $\mathbb C_d$ and $V_d^{r,t}$ are  $\U(\mathfrak g, e)$-modules. Let   \begin{equation}\label{Psi123} \Psi_d: \U(\mathfrak g, e)\rightarrow \text{End}(V_d^{r,t})\end{equation} be the corresponding   algebra homomorphism.
Via \eqref{graded1},       the associated graded homomorphism
\begin{equation}\text{gr} \Psi_d:\text{gr} (\U(\mathfrak g, e))\rightarrow \text{End}(V_d^{r,t})\end{equation}
coincides with $\psi_d $ in \eqref{psic123} (see \cite[Lemma~3.1]{BK} for the explicit description on the isomorphism in  \eqref{graded1}).
Let $\mathcal C$ be the category of all $\mathfrak g$-modules on which $x-\chi(x)$ acts locally nilpotently  for all $x\in\mathfrak m$.
Following \cite{BK},  let  \begin{equation} \label{QIx} Q_\chi=  \U/\U I_\chi.\end{equation}  Then  $ Q_\chi$ is a $(\U,\U(\mathfrak g, e))$-bimodule. By Skryabin's thoerem in \cite{Sk},
the functor
\begin{equation} \label{skequi} Q_\chi\otimes_{\U(\mathfrak g, e)}?: \U(\mathfrak g, e)\text{-mod}\rightarrow \mathcal C \end{equation}
is an equivalence of categories.  It follows from \cite[\S8.1]{BK2} that the inverse of $Q_\chi\otimes_{\U(\mathfrak g, e)}?$ is $\text{Wh}(?): \mathcal C\rightarrow \U(\mathfrak g, e)\text{-mod}$ such that, for each  object $M\in \mathcal C$,
$$\text{Wh}(M)=\{v\in M\mid xv=\chi(x)v \text{ for all } x\in\mathfrak m\}, $$ and the action of  $\U(\mathfrak g, e)$ on $\text{Wh}(M)$  is defined by  \begin{equation} \label{wact123} u\cdot v=\eta(u)v, \text{ for $u\in \U(\mathfrak g, e), v\in \text{Wh}(M)$.}\end{equation}
Suppose that $X$ is a  finite dimensional $\U$-module. It is well known that    $M\otimes X\in \mathcal C$ for any $M\in \mathcal C$ and the
 functor $?\otimes X: \mathcal C \rightarrow \mathcal C$ is exact.
 The endomorphisms $x_1, s_i, i\in \underline{r-1}$ in Definitions~\ref{func12}--\ref{wact} have been defined in \cite{BK}.

\begin{Defn} \label{func12}  Define the endomorphisms    $s_i$, $\bar s_j$, $e_1$, $x_1$ and $\bar x_1$ of functors $?\otimes V^{r,t}$ such that for any $\U$-module $M$,   $s_i(M), \bar s_j(M), e_1(M), x_1(M), \bar x_1(M)
$ are exactly the same as $s_i, \bar s_j, e_1, x_1$ and $\bar x_1$ in  Definition~\ref{casm}. \end{Defn}

Recall that $X$ is a finite dimensional $\U$-module. By \cite[\S8.2]{BK2},   there is an exact functor
\begin{equation} \label{ostar} ?\circledast X: \U(\mathfrak g, e)\text{-mod}\rightarrow \U(\mathfrak g, e)\text{-mod}, \quad M\mapsto \text{Wh}((Q_\chi\otimes _{\U(\mathfrak g, e)}M)\otimes X).\end{equation}

Given  two finite dimensional $\mathfrak g$-modules  $X, Y$ and a $\U(\mathfrak g, e)$-module $M$,
there is a natural associativity isomorphism (see \cite[(8.8)]{BK2}):
\begin{equation}\label{amxy}
a_{M,X,Y}: (M\circledast X)\circledast Y\cong M\circledast (X\otimes Y).
\end{equation}

\begin{Defn}\label{wact} There are   endomorphisms of functors  $?\circledast V^{\circledast r}\circledast W^{\circledast t}$, say $s_i$ and $\bar s_j$, $e_1$, $x_1$ and $\bar x_1$ such that   for any $\U(\mathfrak g, e)$-module $M$,   $s_i(M), \bar s_j(M), e_1(M), x_1(M), \bar x_1(M)
$ are  defined through the Skryabin's equivalence and \eqref{ostar}--\eqref{amxy} and Definition~\ref{func12}.
\end{Defn}

\begin{lemma}\label{mu}\cite{BK} Suppose $M$ is a   $\U(\mathfrak p)$-module $M$ and $X$ is a finite dimensional $\U$-module. As  $\U(\mathfrak g, e)$-modules, $ M\circledast X \cong  M\otimes X$ and the corresponding isomorphism
$\mu_{M,X}$  sends   $(\eta(u)1_\chi\otimes m)\otimes x $ to $ um\otimes x$ for all
 $u\in \U(\mathfrak p), m\in M$ and $x\in X$,  where $1_\chi=1+ \U I_\chi\in \U/\U I_\chi$.
\end{lemma}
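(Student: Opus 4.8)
The plan is to unwind the definition of $\circledast$ in \eqref{ostar} and use the concrete description of the Skryabin equivalence together with the PBW-type structure of $Q_\chi = \U/\U I_\chi$. By \eqref{ostar}, $M \circledast X = \text{Wh}\big((Q_\chi \otimes_{\U(\mathfrak g, e)} M) \otimes X\big)$. First I would identify $Q_\chi \otimes_{\U(\mathfrak g, e)} M$ for a $\U(\mathfrak p)$-module $M$: since $\U(\mathfrak g, e) \subseteq \U(\mathfrak p)$ and $M$ is already a $\U(\mathfrak p)$-module, there is a natural map $Q_\chi \otimes_{\U(\mathfrak g, e)} M \to Q_\chi \otimes_{\U(\mathfrak g, e)} \U(\mathfrak p) \otimes_{\U(\mathfrak p)} M$, and by the Kazhdan-filtration/PBW argument used in \cite{BK, BK2} one has $Q_\chi \cong \U \otimes_{\U(\mathfrak m)} \mathbb C_\chi$ as a left $\U$-module, with the key fact that $Q_\chi$ is free as a right $\U(\mathfrak g, e)$-module and that $\U(\mathfrak p) \cdot 1_\chi$ is a complement (via the triangular decomposition $\mathfrak g = \mathfrak m \oplus \mathfrak p$, which gives $\U \cong \U(\mathfrak m) \otimes \U(\mathfrak p)$ and hence $Q_\chi \cong \U(\mathfrak p)$ as a vector space, with $\U(\mathfrak p)$ acting through $\eta$ after twisting). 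This is exactly the content built into \cite[\S8.1--8.2]{BK2}.

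Next I would take the Whittaker vectors. The point is that $\text{Wh}\big((Q_\chi \otimes_{\U(\mathfrak g, e)} M) \otimes X\big)$ is computed by noting that, after the identification above, the element $1_\chi \otimes m \otimes x$ is a Whittaker vector modulo the $\U(\mathfrak m)$-action, and more precisely every Whittaker vector is uniquely of the form $\sum (\eta(u_i) 1_\chi \otimes m_i) \otimes x_i$ — this is where the specific normalization by $\eta$ in \eqref{wact123} and \eqref{etac} enters. Concretely, I would define $\mu_{M,X}$ on generators by $(\eta(u) 1_\chi \otimes m) \otimes x \mapsto u m \otimes x$ and check (i) it is well-defined, using that the relation in $Q_\chi \otimes_{\U(\mathfrak g, e)} M$ comes from $\U(\mathfrak g, e)$-linearity and that $\eta$ restricted to $\U(\mathfrak g, e)$ governs its module structure via \eqref{wact123}; (ii) it is $\U(\mathfrak g, e)$-equivariant, again by \eqref{wact123}; (iii) it is bijective, which follows from the vector-space identification $Q_\chi \cong \U(\mathfrak p)$ and the fact that $\text{Wh}$ picks out precisely the $\eta(\U(\mathfrak p)) 1_\chi$-span.

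The main obstacle — and really the only nontrivial point — is step (iii): verifying that $\text{Wh}\big((Q_\chi \otimes_{\U(\mathfrak g, e)} M) \otimes X\big)$ is spanned by (and freely so) the elements $(\eta(u)1_\chi \otimes m) \otimes x$, i.e. that taking Whittaker vectors of the tensor product with the finite-dimensional $X$ does not enlarge or shrink things unexpectedly. For this I would invoke the exactness and the tensor-identity-type results of \cite[\S8.1--8.2]{BK2}: the functor $\text{Wh}$ is inverse to $Q_\chi \otimes_{\U(\mathfrak g,e)} -$ on the category $\mathcal C$, and $(Q_\chi \otimes_{\U(\mathfrak g,e)} M) \otimes X \cong Q_\chi \otimes_{\U(\mathfrak g,e)} (M \otimes X)$ as objects of $\mathcal C$ when $M$ is a $\U(\mathfrak p)$-module (because then $M \otimes X$ is again naturally a $\U(\mathfrak p)$-module, compatibly), whence $\text{Wh}$ of it is just $M \otimes X$ with the $\U(\mathfrak g,e)$-action twisted by $\eta$. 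Since the statement asserts this is "verified easily" and since the $t=0$ case is \cite[Lemma~2.1 / \S8]{BK2}, the proof is a routine transcription: the walled-Brauer features ($W^{\otimes t}$ factors) play no role here because $X = V^{\otimes r} \otimes W^{\otimes t}$ is just \emph{some} finite-dimensional $\U$-module, so the argument is identical to \cite{BK}.
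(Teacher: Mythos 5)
The paper offers no proof here: Lemma \ref{mu} is simply cited to \cite{BK}, so there is no in-paper argument for you to reproduce. Judged on its own terms, your sketch identifies the right ingredients (PBW description of $Q_\chi$, Skryabin equivalence, $\eta$-twist on Whittaker vectors), but the pivotal step is circular. The ``tensor identity'' $(Q_\chi \otimes_{\U(\mathfrak g,e)} M) \otimes X \cong Q_\chi \otimes_{\U(\mathfrak g,e)} (M \otimes X)$, which you invoke on the grounds that $M\otimes X$ is again a $\U(\mathfrak p)$-module, is \emph{not} a formal tensor identity over $\U(\mathfrak g,e)$; applying $\text{Wh}$ to both sides of it produces exactly $M\circledast X \cong M\otimes X$, i.e.\ the lemma itself, so you have assumed what you set out to prove.

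The non-circular route factors through parabolic induction. Step one is the genuine Brundan--Kleshchev input: for a $\U(\mathfrak p)$-module $M$ there is a $\U$-module isomorphism $Q_\chi \otimes_{\U(\mathfrak g, e)} M \cong \U \otimes_{\U(\mathfrak p)} M$ sending $\eta(u)1_\chi \otimes m$ to $1 \otimes um$; this uses both the vector-space identification $Q_\chi \cong \U(\mathfrak p)\cdot 1_\chi$ that you mention and the freeness of $Q_\chi$ as a right $\U(\mathfrak g,e)$-module, and is not a mere change of rings. Step two is the honest Hopf-algebra tensor identity for the inclusion $\U(\mathfrak p)\subseteq \U$, giving $(\U \otimes_{\U(\mathfrak p)} M)\otimes X \cong \U \otimes_{\U(\mathfrak p)}(M\otimes X)$. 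Step three computes $\text{Wh}(\U \otimes_{\U(\mathfrak p)} N)$ via the PBW factorization $\U \cong \U(\mathfrak m)\otimes \U(\mathfrak p)$: the Whittaker vectors in $\U(\mathfrak m)\otimes N$ are exactly $1\otimes N$, with the $\U(\mathfrak g,e)$-action twisted by $\eta$ as in \eqref{wact123}. Composing these three identifications yields the stated formula $\mu_{M,X}\big((\eta(u)1_\chi\otimes m)\otimes x\big)= um\otimes x$. Your first paragraph gestures at step one but never states it, and your third paragraph replaces steps two and three by the circular identity; once step one is made explicit the rest goes through.
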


Using Lemma~\ref{mu} and (\ref{amxy}) repeatedly yields the following results.

\begin{cor}\label{muiso} As $\U(\mathfrak g, e)$-modules, $\mathbb C_d\circledast V^{\circledast r}\circledast W^{\circledast t}\cong  \mathbb C_d\circledast V^{r,t}\cong \mathbb C_d\otimes V^{r,t}\cong V_d^{r,t}$.
\end{cor}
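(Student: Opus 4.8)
The plan is to read the claimed chain of isomorphisms as three independent links and verify each as an isomorphism of $\U(\mathfrak g,e)$-modules. The first link, $\mathbb C_d\circledast V^{\circledast r}\circledast W^{\circledast t}\cong\mathbb C_d\circledast V^{r,t}$, collapses the iterated $\circledast$-product to a single one using the associativity isomorphisms $a_{M,X,Y}$ of \eqref{amxy}. The second link, $\mathbb C_d\circledast V^{r,t}\cong\mathbb C_d\otimes V^{r,t}$, is a direct application of Lemma~\ref{mu} with $M=\mathbb C_d$. The third link, $\mathbb C_d\otimes V^{r,t}\cong V_d^{r,t}$, is the explicit comultiplication identification already recorded in the lemma preceding Definition~\ref{func12}, read off at the level of $\U(\mathfrak p)$-modules and then restricted along $\U(\mathfrak g,e)\subseteq\U(\mathfrak p)$.

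For the first link I would induct on $r+t$, there being nothing to prove when $r+t=1$. Assuming $t\ge 1$, write, with the $\circledast$'s left-associated as in the notation of Definition~\ref{wact},
$$\mathbb C_d\circledast V^{\circledast r}\circledast W^{\circledast t}=\bigl(\mathbb C_d\circledast V^{\circledast r}\circledast W^{\circledast(t-1)}\bigr)\circledast W.$$
By the inductive hypothesis and functoriality of $?\circledast W$, this is isomorphic to $\bigl(\mathbb C_d\circledast(V^{\otimes r}\otimes W^{\otimes(t-1)})\bigr)\circledast W$, and then $a_{M,X,Y}$ of \eqref{amxy} with $M=\mathbb C_d$, $X=V^{\otimes r}\otimes W^{\otimes(t-1)}$ and $Y=W$ — admissible since $V$ and $W$ are finite-dimensional $\mathfrak g$-modules, hence so is $X$ — identifies it with $\mathbb C_d\circledast\bigl(V^{\otimes r}\otimes W^{\otimes(t-1)}\otimes W\bigr)=\mathbb C_d\circledast V^{r,t}$. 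The case $t=0$ is the same with the last tensor factor $V$ peeled off instead.

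For the second link, $\mathbb C_d$ is a $\U(\mathfrak p)$-module and $V^{r,t}=V^{\otimes r}\otimes W^{\otimes t}$ is a finite-dimensional $\U$-module, so Lemma~\ref{mu} applied with $M=\mathbb C_d$ and $X=V^{r,t}$ gives $\mathbb C_d\circledast V^{r,t}\cong\mathbb C_d\otimes V^{r,t}$, implemented by the map $\mu_{\mathbb C_d,V^{r,t}}$. For the third link, the map $1_d\otimes v\mapsto v$ intertwines the $\mathfrak p$-actions: for $e_{i,j}\in\mathfrak p$ one computes, using the coproduct $\Delta(e_{i,j})=e_{i,j}\otimes 1+1\otimes e_{i,j}$ and the definition of $\mathbb C_d$, that $e_{i,j}$ acts on $1_d\otimes v$ as $d_{\text{col}(i)}\delta_{i,j}(1_d\otimes v)+1_d\otimes e_{i,j}v=1_d\otimes\eta_d(e_{i,j})v$, which is exactly the action of $e_{i,j}$ on $V_d^{r,t}$ by \eqref{etac} and \eqref{ugea}; restricting to $\U(\mathfrak g,e)\subseteq\U(\mathfrak p)$ gives $\mathbb C_d\otimes V^{r,t}\cong V_d^{r,t}$ as $\U(\mathfrak g,e)$-modules. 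Composing the three links proves the corollary.

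The point that requires care — and the one I expect to be the main obstacle — is the bookkeeping of the several $\U(\mathfrak g,e)$-module structures in play: the one produced by the Skryabin equivalence and the functor $?\circledast X$ is defined through the Whittaker functor and the automorphism $\eta$, whereas the structures on $\mathbb C_d$, on $V_d^{r,t}$ and on $\mathbb C_d\otimes V^{r,t}$ as written come by restriction from $\U(\mathfrak p)$ (with the $\eta_d$-twist on $V_d^{r,t}$). One must check that the three isomorphisms above are compatible with exactly these structures so that they genuinely compose; this is why it is essential to use Lemma~\ref{mu} in its explicit form, tracking the map $\mu_{M,X}$ (which absorbs the $\eta$-twist) and the elementary map $1_d\otimes v\mapsto v$, rather than invoking an abstract equivalence. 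Once these explicit maps are followed through, all identifications are forced and nothing further is needed.
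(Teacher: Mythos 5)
Your proof is correct and takes essentially the same route as the paper, which justifies the corollary by a one-line remark that it follows from applying Lemma~\ref{mu} and the associativity isomorphism~\eqref{amxy} repeatedly. Your extra care with the final link — re-deriving $\mathbb C_d\otimes V^{r,t}\cong V_d^{r,t}$ at the level of $\U(\mathfrak p)$ and then restricting to $\U(\mathfrak g,e)$, rather than quoting the unnumbered lemma preceding Definition~\ref{func12}, which is only stated for $\U(\mathfrak g_e)$ — is a worthwhile clarification of a detail the paper leaves implicit.
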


 The corresponding isomorphism $\mathbb C_d\circledast V^{\circledast r}\circledast W^{\circledast t}\cong V_d^{r,t}$ will be denoted by $\mu_{r,t}$. We are going to determine the action of $\mathscr B_{r, t}^{\rm aff}$ on
 $\mathbb C_d\circledast V^{\circledast r}\circledast W^{\circledast t}$.
Via  $\mu_{r,t}$,   we lift the action of  $\mathscr {B}^{\rm aff}_{r,t}$ on $ \mathbb C_d\circledast V^{\circledast r}\circledast W^{\circledast t}$
 to $ V_d^{r,t}$ through $ \mathbb C_d\circledast V^{r,t}$. The actions of generators $x_1, \bar x_1, s_i$, $\bar s_j, e_1$,  $i\in \underline{r-1}$, $j\in \underline{t-1}$ of $ \mathscr{B}^{\rm aff}_{r,t} $ on $ \mathbb C_d\circledast V^{r,t}\subseteq (Q_\chi\otimes _{\U(\mathfrak g, e)}\mathbb C_d)\otimes V^{r,t}$
are defined as   $-\pi_{0, 1}(\Omega), -\pi_{0,\bar 1}(\Omega), \pi_{i+1,i}(\Omega), \pi_{\bar j, \overline{j+1}}( \Omega)$ and $-\pi_{1,\bar 1}(\Omega)$, respectively (cf. Definition~\ref{casm}). Note that  the positions of tensor factors of $(Q_\chi\otimes _{\U(\mathfrak g, e)}\mathbb C_d)\otimes V^{r,t}$ are ordered as  $0, r, r-1, \cdots, 1, \bar 1, \bar 2,\cdots, \bar t$ and the tensor factor at the $0$-position is
$Q_\chi\otimes _{\U(\mathfrak g, e)}\mathbb C_d $.
So the actions of $e_1,s_i$ and $\bar s_j$ are the same as those in Definition~\ref{casm}. However, the actions of $x_1$ and $\bar x_1$ are different. They will be   described in Lemma~\ref{actofxbarx}. The following result  follows from Theorem~8.1 and Corollary~8.2 in  \cite{BK1}. See \cite[Lemma~3.2]{BK} for $t=0$.  Recall that $\mathbf i^L$ and $\mathbf i^R$ in Definition~\ref{ilr} for any $\mathbf i\in I(n, r+t)$.
\begin{lemma}\label{xij}
For all ${\bf i,j}\in I(n,r+t)$, there exist elements $x_{\bf i,j}\in \U(\mathfrak p)$ such that
\begin{enumerate}
\item  $[e_{i,j},\eta(x_{\bf i,j})]+\sum_{\bf k}\eta(x_{\bf k,j})-\sum_{\bf h}\eta(x_{\bf h,j})\in \U I_\chi$ for each $e_{i,j}\in\mathfrak m$,
where  ${\bf k}\in I(n,r+t)$ such that ${\mathbf k^R}={\mathbf i^R}$, each ${\mathbf k^L} $ is obtained by using $j$ instead of an $i$  in ${\mathbf i^L}$, and ${\mathbf h}\in I(n,r+t)$ such that ${\mathbf h^L}={\mathbf i^L}$, and each  ${\mathbf h^R} $ is obtained by using $i$ instead of  a $j$  in  ${\mathbf i^R}$,
\item $x_{\mathbf i,\mathbf j}$ acts on $\mathbb C_d$ as $\delta_{\mathbf i,\mathbf j}$,
\item  $\mu_{\mathbb C_d,V^{r,t}}^{-1} (v_{\mathbf j})=\sum_{{\mathbf i}\in I(n,r+t)}(\eta(x_{\mathbf i,\mathbf j})1_\chi\otimes 1_d)\otimes v_{\mathbf i}\in \mathbb C_d\circledast V^{r,t}$.\end{enumerate}
\end{lemma}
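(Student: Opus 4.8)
The plan is to recognize this as the mixed tensor version of \cite[Lemma~3.2]{BK} (which treats the case $t=0$, where only $V$-factors occur) and to obtain it by carrying out the construction of \cite[Theorem~8.1, Corollary~8.2]{BK1} for the single finite dimensional $\U$-module $X:=V^{r,t}=V^{\otimes r}\otimes W^{\otimes t}$, equipped with the basis $\{v_{\mathbf i}\mid\mathbf i\in I(n,r+t)\}$, $v_{\mathbf i}=v_{\mathbf i^L}\otimes v^*_{\mathbf i^R}$, of Lemma~\ref{basismix}. Here $?\circledast X=\text{Wh}\big((Q_\chi\otimes_{\U(\mathfrak g,e)}?)\otimes X\big)$ as in \eqref{ostar}, and by the associativity isomorphisms \eqref{amxy} (cf.\ Corollary~\ref{muiso}) this functor agrees with $?\circledast V^{\circledast r}\circledast W^{\circledast t}$. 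First I would invoke \cite[Theorem~8.1, Corollary~8.2]{BK1} to produce a family $\{x_{\mathbf i,\mathbf j}\in\U(\mathfrak p)\}$ satisfying (a) and (b). The only point requiring attention, and the only way this differs from the $t=0$ situation, is the identification of the index sets on the right hand side of (a): they encode nothing but the matrix of the $\mathfrak m$-action on the basis $\{v_{\mathbf i}\}$. Indeed, by \eqref{naturalaction} a matrix unit $e_{i,j}$ acts on a tensor factor $V$ by $v_k\mapsto\delta_{j,k}v_i$ and on a tensor factor $W$ by $v^*_k\mapsto-\delta_{i,k}v^*_j$; so expanding $e_{i,j}v_{\mathbf i}=e_{i,j}\big(v_{\mathbf i^L}\otimes v^*_{\mathbf i^R}\big)$ and collecting the coefficient of each basis vector, the $V$-slots contribute, with coefficient $+1$, the terms indexed by the $\mathbf k$ ($\mathbf k^R=\mathbf i^R$ and $\mathbf k^L$ obtained from $\mathbf i^L$ by replacing an $i$ with $j$), while the $W$-slots contribute, with coefficient $-1$ (the sign in \eqref{naturalaction}), the terms indexed by the $\mathbf h$ ($\mathbf h^L=\mathbf i^L$ and $\mathbf h^R$ obtained from $\mathbf i^R$ by replacing a $j$ with $i$). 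Thus the shape of (a) is forced, and the presence of the dual factors $W$ influences nothing beyond this bookkeeping.

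Granting such $x_{\mathbf i,\mathbf j}$, property (c) follows formally from (a) and (b). Put $\zeta:=\sum_{\mathbf i\in I(n,r+t)}\big(\eta(x_{\mathbf i,\mathbf j})1_\chi\otimes 1_d\big)\otimes v_{\mathbf i}\in\big(Q_\chi\otimes_{\U(\mathfrak g,e)}\mathbb C_d\big)\otimes V^{r,t}$. For $e_{i,j}\in\mathfrak m$, using $e_{i,j}\cdot\big(\eta(x_{\mathbf i,\mathbf j})1_\chi\big)=\chi(e_{i,j})\eta(x_{\mathbf i,\mathbf j})1_\chi+[e_{i,j},\eta(x_{\mathbf i,\mathbf j})]1_\chi$ in $Q_\chi=\U/\U I_\chi$ (valid since $e_{i,j}-\chi(e_{i,j})\in I_\chi$), together with the coproduct on the tensor product and the re-indexing of $\sum_{\mathbf i}\big(\eta(x_{\mathbf i,\mathbf j})1_\chi\otimes 1_d\big)\otimes e_{i,j}v_{\mathbf i}$ carried out above, one computes
\begin{align*}
\big(e_{i,j}-\chi(e_{i,j})\big)\cdot\zeta
&=\sum_{\mathbf i}\Big(\big([e_{i,j},\eta(x_{\mathbf i,\mathbf j})]\\
&\qquad{}+{\textstyle\sum_{\mathbf k}}\,\eta(x_{\mathbf k,\mathbf j})-{\textstyle\sum_{\mathbf h}}\,\eta(x_{\mathbf h,\mathbf j})\big)1_\chi\otimes 1_d\Big)\otimes v_{\mathbf i}.
\end{align*}
By (a) each inner bracket lies in $\U I_\chi$ and therefore annihilates $1_\chi$, so $\zeta$ is a Whittaker vector: $\zeta\in\text{Wh}\big((Q_\chi\otimes_{\U(\mathfrak g,e)}\mathbb C_d)\otimes V^{r,t}\big)=\mathbb C_d\circledast V^{r,t}$. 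Applying the isomorphism $\mu_{\mathbb C_d,V^{r,t}}$ of Lemma~\ref{mu}, which sends $\big(\eta(x_{\mathbf i,\mathbf j})1_\chi\otimes 1_d\big)\otimes v_{\mathbf i}$ to $(x_{\mathbf i,\mathbf j}\cdot 1_d)\otimes v_{\mathbf i}$, and using (b) together with the identification $\mathbb C_d\otimes V^{r,t}\cong V_d^{r,t}$, $1_d\otimes v\mapsto v$, we get $\mu_{\mathbb C_d,V^{r,t}}(\zeta)=\sum_{\mathbf i}\delta_{\mathbf i,\mathbf j}\,v_{\mathbf i}=v_{\mathbf j}$. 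Hence $\zeta=\mu_{\mathbb C_d,V^{r,t}}^{-1}(v_{\mathbf j})$, which is (c). Running the same computation in reverse shows conversely that any family realizing (c) with the normalization (b) satisfies a congruence like (a) only modulo the relations of $Q_\chi\otimes_{\U(\mathfrak g,e)}\mathbb C_d$; this is weaker than what is actually needed.

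The main obstacle is precisely the gap just mentioned: the \emph{strong} form of (a), namely membership in $\U I_\chi$ --- equivalently, vanishing already in $Q_\chi$ --- rather than the weaker vanishing in $Q_\chi\otimes_{\U(\mathfrak g,e)}\mathbb C_d$ that mere Whittaker invariance of $\mu^{-1}(v_{\mathbf j})$ would give. Producing a compatible family $\{x_{\mathbf i,\mathbf j}\}$ with this stronger property is not formal; it is the substantive content of \cite[Theorem~8.1]{BK1}, proved by passing to associated graded algebras --- where $\text{gr}\,\U(\mathfrak g,e)\cong\U(\mathfrak g_e)$ by \eqref{graded1} and $\text{gr}\,\Psi_d$ coincides with $\psi_d$ of \eqref{psic123} --- solving the corresponding graded identities there and lifting through the filtration degree by degree. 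Since $W$ is again a finite dimensional $\U$-module and $V^{r,t}$ is handled at the graded level by exactly the same lifting procedure, that argument requires no modification beyond the combinatorial bookkeeping recorded in the first paragraph, and all three assertions follow.
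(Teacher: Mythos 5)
Your proposal is correct and follows the same route as the paper, which simply invokes \cite[Theorem~8.1, Corollary~8.2]{BK1} (and points to \cite[Lemma~3.2]{BK} for the case $t=0$) without further elaboration. You add useful detail that the paper omits: the translation of the index sets $\mathbf k$, $\mathbf h$ in part (a) to the matrix of the $\mathfrak m$-action on the basis $\{v_{\mathbf i}\}$ of $V^{r,t}$, using that $e_{i,j}$ acts on a $V$-slot by $v_k\mapsto\delta_{j,k}v_i$ and on a $W$-slot by $v_k^*\mapsto-\delta_{i,k}v_j^*$ (this is exactly where the $W$-factors change the bookkeeping relative to the $t=0$ case), and the formal derivation of (c) from (a) and (b) via the Whittaker computation in $(Q_\chi\otimes_{\U(\mathfrak g,e)}\mathbb C_d)\otimes V^{r,t}$. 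Your closing remark correctly locates the nontrivial content — the strong $\U I_\chi$-membership in (a), rather than mere vanishing after tensoring with $\mathbb C_d$ — in the graded-lifting argument of \cite[Theorem~8.1]{BK1}, which applies verbatim since $V^{r,t}$ is still a finite-dimensional $\U$-module.
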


\begin{Defn}\label{vij1212}  Suppose $\mathbf i\in I(n,r+t)$.
\begin{enumerate}
\item If there is a $k\in \nb$ such that  $\text{row}(k)=\text{row}(i_1)$ and $\text{col}(k)=\text{col}(i_1)-1$, then  $k$ is unique. In this case,
 define  ${\mathbf i_1^L}=(i_r,\cdots,i_2, k)$. If there is no such a $k$,  define $\mathbf i^L_1=\emptyset$ and $v_{\mathbf i^L_1}=0$.
 \item If there is a $k\in \nb$ such that  $\text{row}(k)=\text{row}(i_{\bar 1})$ and $\text{col}(k)=\text{col}(i_{\bar 1})+1$,  then  $k$ is unique. In this case,
define  ${\mathbf i^R_1}=(k,i_{\bar 2}, \cdots, i_{\bar t})$.
 If there is no such a $k$,  define $\mathbf i^R_1=\emptyset$ and  $v^*_{\mathbf i^R_1}=0$.\end{enumerate}
\end{Defn}

 Assume $\mathbf i\in I(n, r+t)$. Recall that $\mathfrak S_r$ (resp., $\mathfrak S_{t}$)  acts on $\mathbf i^L$ (resp., $\mathbf i^R$ ) via
place permutations. If we assume  $t=0$, then the following result has been given in \cite[Lemma~3.3]{BK}.

\begin{lemma}\label{actofxbarx}
 Suppose $\mathbf i\in  I(n,r+t)$.
\begin{enumerate}
\item $v_{\mathbf i}x_1=-v_{{\mathbf i^L_1}}\otimes v_{\mathbf i^R}^*- (d_{{\rm col}(i_1)}+q_{{\rm col}(i_1)}-q_1)v_{\bf i}+\sum_{h}v_{{\bf i^L}(1,h)}\otimes v^*_{\mathbf i^R}
-\sum_{\bf k}v_{\bf k}$,
where  $ h\in \underline {r}$ with ${{\rm col}(i_h)<{\rm col}(i_1)}$ and  ${\mathbf k}\in I(n,r+t)$ such that, if there is an $i_1$, which  appears in $\mathbf i^R$, then  $\mathbf k^R$ is obtained by using $j$ instead of an $i_1$ in $\mathbf i^R$  such that $j\in \underline{n}$ and ${{\rm col}(i_1)>{\rm col}(j)}$ and moreover, $\mathbf k^L=(i_r, \cdots, i_3, i_2, j)$.

\item $v_{\mathbf i}\bar x_1=v_{{\mathbf i^L}}\otimes v^*_{\mathbf i^R_1}+(d_{{\rm col}(i_{\bar 1})}+n-q_1)v_{\bf i}+\sum_{h}v_{{\bf i^L}}\otimes v^*_{\mathbf i^R(\bar 1,\bar h)}
-\sum_{\bf k}v_{\bf k}$,
where ${h\in \underline{t}}$ with ${\rm col}(i_{\bar h})>{\rm col}(i_{\bar 1})$ and ${\mathbf k}\in I(n,r+t)$ such that,  if there is an $i_{\bar 1}$, which  appears in ${\mathbf i^L}$, then    $\mathbf k^L$ is obtained by using some  $j$ instead of an $i_{\bar 1}$ in $\mathbf i^L$ such that $j\in \underline{n}$ and ${\rm col}(i_{\bar 1})<{\rm col}(j)$, and moreover,   $\mathbf k^R=(j, i_{\bar 2}, i_{\bar 3}, \cdots, i_{\bar t})$.
\end{enumerate}
\end{lemma}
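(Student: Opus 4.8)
The plan is to compute $v_{\mathbf i}x_1$ and $v_{\mathbf i}\bar x_1$ directly, transporting the defining actions through the isomorphism $\mathbb C_d\circledast V^{r,t}\cong V_d^{r,t}$ from Corollary~\ref{muiso}. Recall that on $\mathbb C_d\circledast V^{r,t}\subseteq(Q_\chi\otimes_{\U(\mathfrak g,e)}\mathbb C_d)\otimes V^{r,t}$ the generator $x_1$ (resp.\ $\bar x_1$) acts as $-\pi_{0,1}(\Omega)$ (resp.\ $-\pi_{0,\bar1}(\Omega)$), with the $0$-th tensor slot carrying $Q_\chi\otimes_{\U(\mathfrak g,e)}\mathbb C_d$, the slot $1$ carrying the last copy of $V$ and the slot $\bar1$ the first copy of $W$; and recall the explicit Whittaker vectors of Lemma~\ref{xij}(c),
\[
\mu_{\mathbb C_d,V^{r,t}}^{-1}(v_{\mathbf i})=\sum_{\mathbf k\in I(n,r+t)}\bigl(\eta(x_{\mathbf k,\mathbf i})1_\chi\otimes1_d\bigr)\otimes v_{\mathbf k}.
\]
First I would apply $-\pi_{0,1}(\Omega)=-\sum_{a,b=1}^{n}e_{a,b}^{(0)}\otimes e_{b,a}^{(1)}$ to this sum; since $e_{b,a}$ acts on the factor $v_{k_1}$ at slot $1$ by $\delta_{a,k_1}v_b$, this produces $-\sum_{\mathbf k}\sum_{b}\bigl(e_{k_1,b}\,\eta(x_{\mathbf k,\mathbf i})1_\chi\otimes1_d\bigr)\otimes v_{\mathbf k(1\mapsto b)}$, where $\mathbf k(1\mapsto b)$ denotes $\mathbf k$ with its slot-$1$ entry replaced by $b$.

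The substantive step is to re-expand this element in the Whittaker basis $\{\mu_{\mathbb C_d,V^{r,t}}^{-1}(v_{\mathbf l})\}$, which amounts to pushing each $e_{k_1,b}$ past $\eta(x_{\mathbf k,\mathbf i})$ and evaluating on $1_\chi$. I would split the sum over $b$ using the grading $\deg e_{i,j}=\operatorname{col}(j)-\operatorname{col}(i)$. For $b$ with $e_{k_1,b}\in\mathfrak m$, i.e.\ $\operatorname{col}(b)<\operatorname{col}(k_1)$, the commutation identity Lemma~\ref{xij}(a) expresses $e_{k_1,b}\eta(x_{\mathbf k,\mathbf i})$ as $\eta(x_{\mathbf k,\mathbf i})e_{k_1,b}$ plus the two correction sums of that lemma, modulo $\U I_\chi$, while $e_{k_1,b}\cdot1_\chi=\chi(e_{k_1,b})1_\chi$ equals $1_\chi$ if $(b,k_1)\in K$, i.e.\ $\operatorname{row}(b)=\operatorname{row}(k_1)$ and $\operatorname{col}(b)=\operatorname{col}(k_1)-1$, and $0$ otherwise. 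After re-summing over $b$ and applying the isomorphism of Corollary~\ref{muiso} together with the normalization Lemma~\ref{xij}(b), the $\chi$-contribution collapses to the term $-v_{\mathbf i^L_1}\otimes v^*_{\mathbf i^R}$ with $\mathbf i^L_1$ as in Definition~\ref{vij1212}(a), and the two correction sums of Lemma~\ref{xij}(a) assemble into the remaining summands $\sum_h v_{\mathbf i^L(1,h)}\otimes v^*_{\mathbf i^R}$ (the part permuting entries of $\mathbf i^L$) and $-\sum_{\mathbf k}v_{\mathbf k}$ (the part moving an occurrence of $i_1$ out of the $W$-part). The terms with $e_{k_1,b}\in\mathfrak u$ are killed on applying $\chi$, after one further use of Lemma~\ref{xij}(a), and the single diagonal term $b=k_1$ contributes a scalar multiple of $v_{\mathbf i}$; comparing the $\U(\mathfrak g,e)$-action, which is built from $\eta$, with the twist $\eta_d$ defining the $\U(\mathfrak p)$-module $\mathbb C_d$ in \eqref{etac} and \eqref{ugea} pins this scalar to $-(d_{\operatorname{col}(i_1)}+q_{\operatorname{col}(i_1)}-q_1)$. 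This gives formula (a).

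Part (b) is handled in the same way, with $\bar x_1$ acting as $-\pi_{0,\bar1}(\Omega)$ on the factor $v^*_{i_{\bar1}}$ at slot $\bar1$ via the dual formula $e_{b,a}v^*_k=-\delta_{b,k}v^*_a$ of \eqref{naturalaction}; the extra sign reverses the overall sign, the relevant matrix unit is now $e_{a,i_{\bar1}}$, so that $e_{a,i_{\bar1}}\in\mathfrak m$ with $\chi(e_{a,i_{\bar1}})\neq0$ forces $\operatorname{col}(a)=\operatorname{col}(i_{\bar1})+1$ (the condition defining $\mathbf i^R_1$ in Definition~\ref{vij1212}(b)), the roles of $\mathbf i^L$ and $\mathbf i^R$ in the correction sums are interchanged, and the diagonal scalar becomes $d_{\operatorname{col}(i_{\bar1})}+n-q_1$. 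Since the case $t=0$ of (a) is \cite[Lemma~3.3]{BK}, the only genuinely new content is the presence of the $W$-factors, which are inert under $\pi_{0,1}(\Omega)$ and enter (a) only through the correction sums of Lemma~\ref{xij}(a), and symmetrically for the $V$-factors in (b). I expect the main obstacle to be precisely this bookkeeping: matching the four families of terms arising from the $\chi$-evaluation and the two correction sums of Lemma~\ref{xij}(a) against the four displayed summands with the correct signs and multiplicities, and in particular nailing down the diagonal scalars, where the interaction between $\eta$, $\eta_d$ and the Kazhdan grading has to be tracked carefully.
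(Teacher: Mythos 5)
Your overall route is the paper's own: pull the Whittaker vector $\mu_{\mathbb C_d,V^{r,t}}^{-1}(v_{\mathbf i})$ from Lemma~\ref{xij}(c), hit it with $-\pi_{0,1}(\Omega)$ (resp.\ $-\pi_{0,\bar1}(\Omega)$), split the resulting sum over $b$ by the Kazhdan grading, and then evaluate with Lemma~\ref{mu}, Lemma~\ref{xij}(a)--(b) and \eqref{etac}. This is exactly the decomposition in the paper, which records the $\mathfrak p$-part as \eqref{res1} and the $\mathfrak m$-part via the commutation identity as \eqref{res2}.

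There are two places where your sketch is not quite right, though. First, you claim the terms with $e_{k_1,b}\in\mathfrak u$ ``are killed on applying $\chi$, after one further use of Lemma~\ref{xij}(a).'' But $\mathfrak u\subseteq\mathfrak p$, not $\mathfrak m$: $\chi$ is only defined on $\mathfrak m$, and Lemma~\ref{xij}(a) is stated for $e_{i,j}\in\mathfrak m$, so neither applies here. The correct reason these terms vanish is the one the paper uses for the whole $\mathfrak p$-range: apply $\mu$ directly, use Lemma~\ref{xij}(b) to force $\mathbf k=\mathbf i$, and then $e_{i_1,b}1_d=0$ because $b\neq i_1$. Second, you assert the diagonal scalar $-(d_{\operatorname{col}(i_1)}+q_{\operatorname{col}(i_1)}-q_1)$ without actually computing it; carrying out the computation $\mu\big((e_{i_1,i_1}\eta(x_{\mathbf i,\mathbf i})1_\chi\otimes1_d)\otimes v_{\mathbf i}\big)=\eta^{-1}(e_{i_1,i_1})1_d\otimes v_{\mathbf i}$ via \eqref{etac} in fact yields $(d_{\operatorname{col}(i_1)}+q_{\operatorname{col}(i_1)}+\cdots+q_k-q_1)v_{\mathbf i}$, which is what the paper records in \eqref{res1} (and does not literally match the coefficient displayed in the lemma statement unless the $\mathbf h$-sum of \eqref{res2} contributes a compensating diagonal term, which you would need to track). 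Since you never carry this computation out, your proof as written neither confirms the stated scalar nor flags the tension with \eqref{res1}.
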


\begin{proof} For the simplification of notation, write  $\mu=\mu_{\mathbb C_d,V^{r,t}}$. By Lemma~\ref{xij}(c),
$$v_{\bf i}x_1=-\mu (\pi_{0,1}(\Omega)\sum _{{\mathbf j}\in I(n,r+t)}(\eta(x_{\mathbf j,\mathbf i})1_\chi\otimes 1_d )\otimes v_{\bf j})=-\sum_{\bf j,k}\mu((e_{j_1,k_1}\eta (x_{\bf j, i})1_\chi\otimes 1_d)\otimes v_{\bf k} )$$
where  $ {\bf j,k}\in I(n,r+t)$ with ${\mathbf j^R=\mathbf k^R}$, and  $j_i=k_i$, $2\le i\le r$.
Recall that we always write $\mathbf i=(i_r, i_{r-1}, \cdots, i_1, i_{\bar 1}, i_{\bar 2},  \cdots, i_{\bar t})$ for any $\mathbf i\in I(n, r+t)$.
If  $\text{col}(j_1)\leq \text{col}(k_1)$, then $ e_{j_1,k_1}\in \mathfrak p$. By  Lemmas~~\ref{mu},~\ref{xij}(b), and \eqref{etac},   $ \mu((e_{j_1,k_1}\eta (x_{\bf j, i})1_\chi\otimes 1_d)\otimes v_{\bf k} )=0$
unless ${\mathbf i= \mathbf j=\mathbf k}$. In the later case,
\begin{equation} \label{res1} \mu((e_{i_1,i_1}\eta (x_{\mathbf i, \mathbf i})1_\chi\otimes 1_d)\otimes v_{\mathbf i} )=(d_{\text{col}(i_1)}+q_{\text{col}(i_1)}+\cdots+q_k-q_1)v_{\bf i}.\end{equation}
If $\text{col}(j_1)> \text{col}(k_1)$. By Lemma~\ref{xij}(a),
\begin{equation} \label{res2} e_{j_1,k_1}\eta(x_{\bf j,i})1_\chi=\eta(x_{\mathbf j,\mathbf i})e_{j_1,k_1}1_\chi -\sum_{\bf h}\eta (x_{\bf h,i})1_\chi +\sum_{\bf s}\eta(x_{\bf s,i})1_\chi \end{equation}
where each   ${\mathbf h}$ is  obtained from ${\mathbf j}$ by using $k_1$ instead of  some $j_1$  in ${\mathbf j^L} $,  and
  each ${\bf s}$ is   obtained from ${\mathbf j}$ by using $j_1$ instead of  some $k_1$  in  ${\mathbf j^R}$.
 Note that $\chi(e_{j_1,k_1})=0$ unless $k_1$ is equal to the entry in the $1$th position of $\mathbf j^L_1$. In the later case, $\chi(e_{j_1,k_1})=  1$.  So, (a) follows from \eqref{res1}-\eqref{res2} and    Lemma~\ref{xij}(b), immediately.  Finally, (b) can be verified similarly.
\end{proof}

\begin{Prop}\label{affmodule}  There is an algebra homomorphism  $\Phi: \mathscr B_{r,t}^{\rm aff}\rightarrow  {\rm End}_{\U(\mathfrak g, e)} (V^{r,t}_d)^{\rm op}$ for some affine walled Brauer algebra $\mathscr B_{r,t}^{\rm aff}$,  sending  generators $e_1, x_1, \bar x_1$,  $s_i$, and $\bar s_j$   to
$e_1(\mathbb C_d), x_1(\mathbb C_d   ), \bar x_1(\mathbb C_d )$,  $s_i( \mathbb C_d)$, and $\bar s_j(\mathbb C_d)$ in Definition~\ref{wact} for all $ i\in \underline{ r-1}$ and $ j\in \underline{t-1}$.
\end{Prop}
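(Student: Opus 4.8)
The plan is to reduce the statement to Proposition~\ref{affinere}, applied to a suitable $\mathfrak g$-module, the only new ingredient being that this module admits a central character. This is the walled analogue of the argument in \cite[\S3]{BK}, where the corresponding statement for $t=0$ (the degenerate affine Hecke algebra $\mathscr H_r^{\rm aff}$ acting on $V_d^{\otimes r}$) is established. Concretely, I would set $M:=Q_\chi\otimes_{\U(\mathfrak g,e)}\mathbb C_d$, a $\mathfrak g$-module lying in the Skryabin category $\mathcal C$, so that $\mathbb C_d\circledast V^{\circledast r}\circledast W^{\circledast t}=\text{Wh}\bigl(M\otimes V^{\otimes r}\otimes W^{\otimes t}\bigr)$. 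By Corollary~\ref{muiso} and the $\U(\mathfrak g,e)$-isomorphism $\mu_{r,t}$, proving the existence of $\Phi$ amounts to checking that the operators $e_1(\mathbb C_d),s_i(\mathbb C_d),\bar s_j(\mathbb C_d),x_1(\mathbb C_d),\bar x_1(\mathbb C_d)$ of Definition~\ref{wact} satisfy the relations of Definition~\ref{wbmw} for an appropriate choice of parameters $\omega_a$. By the discussion preceding Lemma~\ref{xij}, these operators are precisely the restrictions to the $\mathfrak m$-invariants of the $\U$-endomorphisms $e_1=-\pi_{1,\bar 1}(\Omega)$, $s_i=\pi_{i+1,i}(\Omega)$, $\bar s_j=\pi_{\bar j,\overline{j+1}}(\Omega)$, $x_1=-\pi_{0,1}(\Omega)$, $\bar x_1=-\pi_{0,\bar 1}(\Omega)$ of Definition~\ref{casm} acting on $M\otimes V^{\otimes r}\otimes W^{\otimes t}$ (with $0$-th tensor slot $M$); so it suffices to verify the relations of Definition~\ref{wbmw} for these five $\U$-endomorphisms on $M\otimes V^{\otimes r}\otimes W^{\otimes t}$.

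I would then run through the proof of Proposition~\ref{affinere}. Relations (1)--(7) and (14)--(20) are the defining relations of the walled Brauer algebra $\mathscr B_{r,t}(n)$ and hold on $M'\otimes V^{\otimes r}\otimes W^{\otimes t}$ for an arbitrary $\mathfrak g$-module $M'$ by \cite{BCHLLJ, Koi, Tur}; in particular $\omega_0=\mathrm{tr}(\mathrm{id}_V)=n$. Relations (8)--(11), (13), (21)--(24) and (26) involve only $x_1,\bar x_1,e_1,s_1,\bar s_1$ and are identities local to a bounded number of tensor slots near the wall; they are verified exactly as in \cite{Sat} (cf.\ the proof of Proposition~\ref{affinere}) and also hold for every $\mathfrak g$-module. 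The only module-dependent relations are (12) and (25): $e_1x_1^ae_1=\omega_ae_1$ and $e_1\bar x_1^ae_1=\bar\omega_ae_1$ for $a\in\mathbb Z^{\ge0}$. By the computation in the proof of Lemma~\ref{ghom123}, $e_1x_1^ae_1$ acts on $M\otimes V\otimes W$ as $(-1)^a(\mathrm{tr}E^a)\otimes e_1$, where $\mathrm{tr}E^a\in Z(\U)$ is the $a$-th Gelfand invariant. Here I would invoke the standard fact that, since $\mathbb C_d$ is one-dimensional over $\U(\mathfrak g,e)$, the centre $Z(\U)$ acts on $M=Q_\chi\otimes_{\U(\mathfrak g,e)}\mathbb C_d$ by scalars (through the canonical map $Z(\U)\to Z(\U(\mathfrak g,e))$); hence $\mathrm{tr}E^a$ acts on $M$ as a scalar $\theta_a$, and putting $\omega_a:=(-1)^a\theta_a$ (so that $\omega_0=n$) makes (12) hold, while \cite[Corollary~4.3]{RSu} forces the matching $\bar\omega_a$ and hence (25). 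This fixes the affine walled Brauer algebra $\mathscr B_{r,t}^{\rm aff}$, and all the relations of Definition~\ref{wbmw} are satisfied.

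Finally, the five operators so obtained on $V_d^{r,t}$ lie in ${\rm End}_{\U(\mathfrak g,e)}(V_d^{r,t})$, being evaluations at $\mathbb C_d$ of endomorphisms of the functor $?\circledast V^{\circledast r}\circledast W^{\circledast t}$ (Definition~\ref{wact}); since such functor endomorphisms compose in the order opposite to the multiplication of $\mathscr B_{r,t}^{\rm aff}$, the assignment extends to an algebra homomorphism $\Phi\colon\mathscr B_{r,t}^{\rm aff}\to{\rm End}_{\U(\mathfrak g,e)}(V_d^{r,t})^{\rm op}$. I expect the main obstacle to be a bookkeeping one: confirming that the verification in Proposition~\ref{affinere} (equivalently, the arguments of \cite{Sat}) uses the hypothesis ``highest weight'' only through the existence of a central character, so that it carries over verbatim to $M=Q_\chi\otimes_{\U(\mathfrak g,e)}\mathbb C_d$. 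This is indeed the case, because a central character is exactly what guarantees that the Gelfand invariants appearing in relations (12) and (25) act as scalars; all the other relations are module-independent.
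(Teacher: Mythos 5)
Your proof is correct and follows the same overall reduction as the paper: use Corollary~\ref{muiso} and the identification of $\mathbb C_d\circledast V^{r,t}$ with the Whittaker subspace of $(Q_\chi\otimes_{\U(\mathfrak g,e)}\mathbb C_d)\otimes V^{r,t}$ to recast the operators of Definition~\ref{wact} as restrictions of the $\pi_{a,b}(\Omega)$-operators of Definition~\ref{casm}, and then check the relations of Definition~\ref{wbmw}. Where you genuinely diverge is in the module-dependent relations (12) and (25). The paper verifies these directly on $V_d^{r,t}$ by feeding the explicit formulae of Lemma~\ref{actofxbarx} (together with the fact that $e_1$ acts as $-\pi_{1,\bar 1}(\Omega)$) into $e_1x_1^ae_1$ and $e_1\bar x_1^ae_1$ and reading off the scalars $\omega_a$, $\bar\omega_a$. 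You instead argue upstairs: since $e_1x_1^ae_1$ acts on $M\otimes V\otimes W$ as $(-1)^a(\mathrm{tr}\,E^a)\otimes e_1$ for any $\mathfrak g$-module $M$ (the observation already in the proof of Lemma~\ref{ghom123}), and since $Z(\U)$ acts by scalars on $Q_\chi\otimes_{\U(\mathfrak g,e)}\mathbb C_d$ via the canonical map $Z(\U)\to Z(\U(\mathfrak g,e))$ (because $\mathbb C_d$ is one-dimensional), the Gelfand invariants immediately give scalars $\omega_a$. This is a cleaner, more conceptual route to (12), (25), at the cost of invoking the standard Kostant--Premet fact that $Z(\U)$ maps into the center of the finite $W$-algebra. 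The paper's direct computation avoids that external input but is more hands-on. You also note that relation (8), $e_1(x_1+\bar x_1)=(x_1+\bar x_1)e_1=0$, is a module-independent identity on $M\otimes V\otimes W$, which in fact follows by a short calculation (the element $\sum_i v_i\otimes v_i^*$ is $\mathfrak g$-invariant in $V\otimes W$); the paper's proof calls this out separately and appeals to cyclicity of $Q_\chi\otimes\mathbb C_d$, but your universal argument renders that unnecessary. Both routes are sound.
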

\begin{proof}   It follows from Skryabin equivalence and Proposition~\ref{affinere} that all relations in Definition~\ref{wbmw} hold except
\begin{enumerate}
\item $e_1(x_1+\bar x_1)=(x_1+\bar x_1)e_1=0$,
\item  $e_1 x_1^ae_1=\omega_a e_1$, $ e_1 \bar x_1^ae_1=\bar\omega_a e_1$,  for  any $a\in \mathbb N$, where  $\omega_a, \bar\omega_a$ are some scalars in $\mathbb C$.
\end{enumerate}
Since $Q_\chi\otimes_{\U(\mathfrak g, e)}\mathbb C_d$ is cyclic module, (a) follows from  Skryabin equivalence and arguments similar to those for $e_1(x_1+\bar x_1)=(x_1+\bar x_1)e_1=0$ in  the proof of Proposition~\ref{affinere}.
Finally, (b) follows  from the formulae on both   $v_{\mathbf i} x_1$ and  $v_{\mathbf i} \bar x_1$ in  Lemma~\ref{actofxbarx} together with the fact that $e_1$ acts on $V^{r, t}$ via $-\pi_{1, \bar 1}(\Omega)$.
\end{proof}

 The following result can be proved by arguments similar to those in the proof of \cite[Lemma~3.4]{BK}. The only difference is that we need to use  Lemma~\ref{actofxbarx}
instead of \cite[Lemma~3.3]{BK}.

\begin{lemma}
The minimal polynomial of the endomorphism of $V_d^{r,t}$ defined  by $x_1$ (resp., $\bar x_1$) is
 $f(x)$ (resp., $g(x)$) in Definition~\ref{fgpolyv}.\end{lemma}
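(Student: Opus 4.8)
The plan is to treat $x_1$ in detail; the statement for $\bar x_1$ and $g$ follows by the parallel argument with Lemma~\ref{actofxbarx}(b) in place of (a). First I would observe that, substituting $c_\ell=d_\ell+p_\ell-q_1$ from Assumption~\ref{qset} into Definition~\ref{fgpolyv}, the roots of $f$ are
$u_\ell=-c_\ell+p_{\ell-1}=-(d_\ell+q_\ell-q_1)$, $\ell\in\underline{k}$,
which is exactly the scalar in the diagonal term $-(d_{\text{col}(i_1)}+q_{\text{col}(i_1)}-q_1)\,v_{\mathbf i}$ of Lemma~\ref{actofxbarx}(a); similarly, $\bar u_\ell=c_\ell+n-p_\ell=d_\ell+n-q_1$ is the diagonal scalar in Lemma~\ref{actofxbarx}(b).

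Next I would prove $f(x_1)=0$ on $V_d^{r,t}$ by triangularity. For $0\le\ell\le k$ set $F_\ell=\mathrm{span}\{\,v_{\mathbf i}\mid \mathbf i\in I(n,r+t),\ \text{col}(i_1)\le\ell\,\}$, so that $0=F_0\subseteq F_1\subseteq\cdots\subseteq F_k=V_d^{r,t}$, the last equality because every column of $\mathfrak t$ is nonempty, hence $\text{col}$ is onto $\underline{k}$. In Lemma~\ref{actofxbarx}(a) the three non-diagonal terms all have their first $V$-entry in a column strictly below $\text{col}(i_1)$: for $v_{\mathbf i^L_1}\otimes v^*_{\mathbf i^R}$ it is $\text{col}(i_1)-1$, for $v_{\mathbf i^L(1,h)}\otimes v^*_{\mathbf i^R}$ it is $\text{col}(i_h)<\text{col}(i_1)$, and for the $v_{\mathbf k}$ it is $\text{col}(j)<\text{col}(i_1)$. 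Hence $x_1F_\ell\subseteq F_\ell$ and $(x_1-u_\ell)F_\ell\subseteq F_{\ell-1}$ for every $\ell$, so $f(x_1)V_d^{r,t}=(x_1-u_1)\cdots(x_1-u_k)F_k\subseteq F_0=0$. In particular the minimal polynomial of $x_1$ divides $f$; the analogous filtration by increasing $\text{col}(i_{\bar 1})$ gives $g(\bar x_1)=0$.

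For the reverse divisibility I would, as in the proof of \cite[Lemma~3.4]{BK}, exhibit an $x_1$-stable subspace on which $x_1$ acts non-derogatorily with characteristic polynomial $f$. Let $b_m=p_{m-1}+1$ be the entry in row $1$, column $m$ of $\mathfrak t$ ($m\in\underline{k}$), and fix an auxiliary entry $c=p_{k-1}+2$ of column $k$ (available once $q_k\ge 2$), noting $c\neq b_m$ for all $m$. For $m\in\underline{k}$ put $w_m=v_{\mathbf i^{(m)}}$, where $\mathbf i^{(m)}$ has first entry $b_m$ and all remaining $r+t-1$ entries equal to $c$. Since $\text{col}(c)=k\ge\text{col}(b_m)$, no index $h$ with $\text{col}(i_h)<\text{col}(b_m)$ occurs, so the ``$\sum_h$'' term of Lemma~\ref{actofxbarx}(a) is empty; since $c\neq b_m$ the entry $b_m$ does not appear in $\mathbf i^R$, so the ``$\sum_{\mathbf k}$'' term is empty; and $(\mathbf i^{(m)})^L_1$ is $\mathbf i^{(m)}$ with $b_m$ replaced by the box immediately to its left in row $1$, namely $b_{m-1}$ for $m\ge 2$ and $\emptyset$ for $m=1$. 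Thus $w_mx_1=u_mw_m-w_{m-1}$ with $w_0:=0$. As $w_1,\cdots,w_k$ are distinct basis vectors, $U=\langle w_1,\cdots,w_k\rangle$ is $x_1$-stable, $w_k$ generates $U$ as a $\mathbb C[x_1]$-module, and $x_1|_U$ has minimal polynomial $\prod_{m=1}^{k}(x-u_m)=f(x)$. Together with the previous paragraph, the minimal polynomial of $x_1$ on $V_d^{r,t}$ is exactly $f(x)$. The same construction with $\bar x_1$, Lemma~\ref{actofxbarx}(b), and the chain built from the row-$1$ boxes read in the opposite direction (all passive entries equal to a fixed column-$1$ index) shows the minimal polynomial of $\bar x_1$ is $g(x)$.

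The step I expect to require the most care is the verification that, along the chosen chains $(w_m)$, the lower-order correction terms of Lemma~\ref{actofxbarx} vanish \emph{identically} rather than merely landing in a lower filtration layer, so that $x_1$ (resp.\ $\bar x_1$) genuinely acts by a bidiagonal matrix there. This is a purely combinatorial check about columns of $\mathfrak t$, together with a few degenerate cases — most notably $q_k=1$ (resp.\ $q_1=1$), when the auxiliary index $c$ cannot be chosen distinct from $b_k$ — which are dealt with by the small modifications of the chain used in \cite{BK}.
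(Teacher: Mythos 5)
Your proposal is correct in structure and follows exactly the route the paper intends: it is the argument of \cite[Lemma~3.4]{BK} with Lemma~\ref{actofxbarx} playing the role of \cite[Lemma~3.3]{BK}. The identification $u_\ell=-(d_\ell+q_\ell-q_1)$ and $\bar u_\ell=d_\ell+n-q_1$ with the diagonal scalars in Lemma~\ref{actofxbarx}, and the triangularity argument $(x_1-u_\ell)F_\ell\subseteq F_{\ell-1}$ giving $f(x_1)=0$, are both right (the factors of $f$ commute, so the order in which you apply them along the filtration is immaterial).

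On the chain construction, however, the degenerate case is not quite a matter of copying a modification from \cite{BK}: in \cite{BK} one has $t=0$, so $\mathbf i^R=\emptyset$ and the ``$\sum_{\mathbf k}$'' term of Lemma~\ref{actofxbarx}(a) is vacuous; the constraint $c\neq b_m$ therefore never arises there, and the single filler $c$ in column $k$ (which may be $b_k$ itself when $q_k=1$) already works. The obstruction you point out is new to the mixed setting $t\ge 1$. The clean fix is to use \emph{different} fillers on the two sides of the wall: take $\mathbf i^{(m)}$ with $i_1=b_m$, with all of $i_2,\dots,i_r$ equal to a fixed column-$k$ index (e.g.\ $b_k$), and with all of $i_{\bar 1},\dots,i_{\bar t}$ equal to a fixed column-$1$ index (e.g.\ $b_1$). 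Then for every $m\in\underline{k}$ the ``$\sum_h$'' term vanishes because $\mathrm{col}(i_h)=k\ge m$ for $h\ge 2$, and the ``$\sum_{\mathbf k}$'' term vanishes because either $b_m\neq b_1$ (for $m\ge 2$) or, for $m=1$, there is no $j$ with $\mathrm{col}(j)<1$; no hypothesis $q_k\ge 2$ is needed, and one still gets $w_mx_1=u_mw_m-w_{m-1}$. The analogous mixed-filler choice (column-$1$ index on the $V$-side, column-$k$ index on the $W$-side) gives the chain for $\bar x_1$ without the hypothesis $q_1\ge 2$. With this adjustment your proof is complete; otherwise it agrees with the paper's intended argument.

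One tiny wording point: for $\bar x_1$, all off-diagonal terms in Lemma~\ref{actofxbarx}(b) \emph{increase} $\mathrm{col}(i_{\bar 1})$, so the filtration you want is by $\mathrm{col}(i_{\bar 1})\ge\ell$ (decreasing spaces as $\ell$ grows), with $(\bar x_1-\bar u_\ell)$ pushing into $\mathrm{col}(i_{\bar 1})\ge\ell+1$.
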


\begin{Theorem}~\label{main321} Let  $\mathscr  B_{k, r,t}=\mathscr B_{r, t}^{\rm aff}/J$ where $J$ is the two-sided ideal generated by
$f(x_1)$ and $g(\bar x_1)$. Then $\mathscr B_{k, r, t}$ is admissible.
The algebra homomorphism $\Phi$ in Proposition~\ref{affmodule} factors through $\mathscr  B_{k, r,t}$. The corresponding algebra homomorphism, which will be denoted by $\Phi$ again,
is always surjective. It is injective if $r+t\leq  q_k$.
\end{Theorem}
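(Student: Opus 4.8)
The plan is to deduce the theorem from the graded duality of Theorem~\ref{grschur} (which itself rests on \cite[Theorem~2.4]{BK}) by passing to associated graded algebras, in the spirit of \cite{BK}. The lemma just preceding the statement identifies $f(x)$ and $g(x)$ as the minimal polynomials of the endomorphisms $x_1$ and $\bar x_1$ of $V_d^{r,t}$, so $f(x_1),g(\bar x_1)\in\ker\Phi$ and $\Phi$ descends to $\mathscr B_{k,r,t}=\mathscr B_{r,t}^{\rm aff}/J$. Since $e_1$ acts on $V_d^{r,t}$ nontrivially (indeed as $-\pi_{1,\bar1}(\Omega)\neq0$), $e_1\neq0$ in $\mathscr B_{k,r,t}$; expanding $f$ in the relation $e_1x_1^af(x_1)e_1=0$ and using $e_1x_1^be_1=\omega_be_1$ then forces the recursion \eqref{adda} on the $\omega_a$'s, so $\mathscr B_{k,r,t}$ is admissible, exactly as in the proof of Proposition~\ref{alghom1} (this also identifies the parameters with those of \S\ref{iso}; cf.~\eqref{omegaa} and Lemma~\ref{efe122}).

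Next I would set up the filtered/graded comparison. Give $\mathscr B_{k,r,t}$ the filtration \eqref{filtr}, and give the finite-dimensional space $V_d^{r,t}$ the bounded filtration induced by its grading $\deg v_i=-\deg v_i^*=k-{\rm col}(i)$; the latter induces a filtration on ${\rm End}_{\U(\mathfrak g,e)}(V_d^{r,t})$ together with a canonical injection ${\rm gr}\,{\rm End}_{\U(\mathfrak g,e)}(V_d^{r,t})\hookrightarrow{\rm End}_{\U(\mathfrak g_e)}(V^{r,t})$ coming from \eqref{graded1} (taking symbols of $\U(\mathfrak g,e)$-endomorphisms yields $\U(\mathfrak g_e)$-endomorphisms of the graded module $V^{r,t}$; this is the same mechanism behind ${\rm gr}\,\Psi_d=\psi_d$). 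The key claim is that $\Phi$ is a filtered homomorphism whose induced map, composed with this injection, equals the graded homomorphism $\varphi$ of Proposition~\ref{grarel}. Indeed, $e_1,s_i,\bar s_j$ act on $V_d^{r,t}$ by the homogeneous degree-$0$ operators $-\pi_{1,\bar1}(\Omega),\pi_{i+1,i}(\Omega),\pi_{\bar j,\overline{j+1}}(\Omega)$, hence have filtration degree $0$ with unchanged symbols; and a glance at Lemma~\ref{actofxbarx} shows that $v_{\mathbf i}x_1$ and $v_{\mathbf i}\bar x_1$ lie in filtration degree $\deg v_{\mathbf i}+1$, with top-degree component equal to the result of applying $-1^{\otimes r-1}\otimes e\otimes 1^{\otimes t}$, respectively $-1^{\otimes r}\otimes e\otimes 1^{\otimes t-1}$, to $v_{\mathbf i}$, since all the remaining terms there merely rearrange tensor factors or trade an entry of $\mathbf i^L$ against one of $\mathbf i^R$ of the same total degree. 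Thus $\Phi(x_1)$ and $\Phi(\bar x_1)$ have filtration degree $\le 1$ with symbols $\varphi(x_1)$ and $\varphi(\bar x_1)$. Since ${\rm gr}\,\mathscr B_{k,r,t}$ is generated by $e_1,x_1,\bar x_1,s_i,\bar s_j$ (Lemma~\ref{grof12}), two algebra homomorphisms out of it agreeing on these generators coincide, which settles the claim.

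It remains to transport the graded statement back. By Theorem~\ref{grschur}, $\varphi$ is surjective; writing $\varphi=\iota\circ{\rm gr}\,\Phi$ with $\iota$ the injective comparison map above, surjectivity of $\varphi$ forces $\iota$ bijective and hence ${\rm gr}\,\Phi$ surjective, whence $\Phi$ is surjective (the filtrations are bounded below, so a preimage is produced one graded piece at a time). When $r+t\le q_k$, Theorem~\ref{grschur} further gives that $\varphi$, and hence ${\rm gr}\,\Phi$, is injective; as the filtration on $\mathscr B_{k,r,t}$ is bounded below, $\Phi$ is then injective, since the symbol of a nonzero element of $\ker\Phi$ would be a nonzero element of $\ker{\rm gr}\,\Phi$. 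The real input here is Theorem~\ref{grschur}, i.e.\ the higher Schur--Weyl duality of \cite{BK}, which is assumed; the one point demanding genuine care is the bookkeeping of the previous paragraph, namely checking that no term of $v_{\mathbf i}x_1$ or $v_{\mathbf i}\bar x_1$ in Lemma~\ref{actofxbarx} raises the grading by more than one, so that $\Phi$ truly respects the filtrations, and that the resulting symbols are exactly $\varphi(x_1)$ and $\varphi(\bar x_1)$. Once that is in place, surjectivity and injectivity are formal consequences of the graded case.
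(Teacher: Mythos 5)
Your proposal is correct and follows essentially the same route as the paper: compare $\mathrm{gr}\,\Phi$ with the graded homomorphism $\varphi$ of Proposition~\ref{grarel} (via $\mathrm{gr}\,\Psi_d=\psi_d$), then transfer surjectivity/injectivity from the associated graded to the filtered setting. The only cosmetic difference is that where the paper simply invokes \cite[Lemma~3.6]{BK} for the filtered-to-graded transfer, you spell out that mechanism explicitly (the injection $\iota$, boundedness of the filtrations, and the symbol computation from Lemma~\ref{actofxbarx}).
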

\begin{proof} By Lemma~\ref{actofxbarx} and the description of the action of $e_1$,  it is easy to see that $\{e_1, e_1{\bar x_1}, \cdots, e_1\bar x_1^{k-1}\}$ is $\mathbb C$-linear independent.
So, the first assertion follows from arguments similar to those in the proof of  Lemma~\ref{efe122}.   Recall that two graded algebra homomorphisms $\psi_d$ in \eqref{psic123} and $\varphi $ in
\eqref{comuflip}. By Proposition~\ref{grarel},   ${\rm gr} (\Phi)(x)=\varphi(x)$ if $x\in \{e_1, s_i, \bar s_j\}$. Using Lemma~\ref{actofxbarx} yields ${\rm gr} (\Phi)(x)=\varphi(x)$ for  $x\in \{x_1, \bar x_1\}$.   So,  ${\rm gr} (\Phi)=\varphi$.  On the other hand, it follows from \cite[Lemma~3.1]{BK} that     $\psi_d={\rm gr }(\Psi_d)$, where $\Psi_d$ is given in \eqref{Psi123}.
Now, the result follows from  \cite[Lemma~3.6]{BK} and Theorem~\ref{grschur}.
\end{proof}

\section{Epimorphisms in \eqref{main0}}
Throughout this section, we go on assume that $(q_1\ge q_2\ge\cdots\ge q_k)$ is a partition of $n$. We also  keep Assumption~\ref{qset}. Following \cite[Section~4]{BK}, let \begin{equation}  \Lambda_d=\{\mu_1\varepsilon_1+\cdots+ \mu_n \varepsilon_n\in \Lambda^{\mathfrak p}  \mid \mu_i-d_j\in \mathbb Z, \text{ for any } i\in \mathbf p_j, j\in \underline{k}\},\end{equation} where $\Lambda^{\mathfrak{p}}$ is the set of $\mathfrak p$-dominant weights with respect to the parabolic subalgebra $\mathfrak p$ of
$\mathfrak g$  and  $\mathbf p_j$'s are  defined in Definition~\ref{B}. Following \cite{BK}, an element  $\mu\in \Lambda_d$ is called {\it standard} if the entries in each row  of $\s$  increase weakly  from left to right, where $\s$ is obtained from $\t$ in \eqref{tla1} by using  $\mu_i-i+1$ instead of $i$ for all $i\in \nb$. Let \begin{equation} \label{stad} \bar \Lambda_d=\{\mu\in \Lambda_d\mid \mu \text{ is standard}  \}. \end{equation}

\begin{Defn}Fix two positive integers $r$ and $t$. Define  $\bar \Lambda_d^{r-t}=\Lambda_d^{r-t}\cap \bar \Lambda_d$, where     $ \Lambda_d^{r-t}$ is  the subset of $\Lambda_d$ such that each $\mu\in \Lambda_d^{r-t}$ satisfies  $\sum_{i\in \nb} \nu_i=r-t$ if $\mu-\delta_c=\sum_{i\in \underline n} \nu_i \varepsilon_i$. Let $\Lambda_{d}^{r,t}=\{\mu \in \Lambda_d^{r-t}\mid \sum_{\nu_i>0}\nu_i \leq r\}$, $\bar\Lambda_{d}^{r,t}= \Lambda_{d}^{r,t}\cap\bar \Lambda_d^{r-t}$.
 \end{Defn}

Let $\mathcal O_d$ be the Serre subcategory of $\mathcal O^{\mathfrak p}$ generated by the irreducible modules $\{L(\mu)\mid \mu\in \Lambda_d\}$.
Let $\mathcal O^{r-t}_d$ be the Serre subcategory of $\mathcal O_d$ generated by the irreducible modules $\{L(\mu)\mid \mu\in \Lambda^{r-t}_d\}$.
Recall that $M_c^{r, t}=M_c\otimes V^{\otimes r}\otimes W^{\otimes t}$. Then  $M_c^{r, t}$ is an object in $\mathcal O_d$.
\begin{lemma}\label{key012} If $\mu\in \Lambda^{\mathfrak p}$, then  $[  M_c^{r, t} :L(\mu)]\neq 0$  only if
$\mu\in \Lambda_d^{r-t}$. In particular, $M_c^{r, t}$ is an object in $\mathcal O_d^{r-t}$.
\end{lemma}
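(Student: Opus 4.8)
The plan is to recognise both conditions defining $\Lambda_d^{r-t}$ as invariants of weights that are unaffected by tensoring with $V$ and $W$. Since $M_c=M^{\mathfrak p}(\delta_c)$ is irreducible with highest weight $\delta_c$, every weight of $M_c$ is $\delta_c$ minus a nonnegative integral combination of the simple roots $\varepsilon_i-\varepsilon_{i+1}$, so each of its coordinates is congruent modulo $\mathbb Z$ to the corresponding coordinate of $\delta_c$. As the weights of $V$ are $\varepsilon_1,\dots,\varepsilon_n$ and those of $W$ are $-\varepsilon_1,\dots,-\varepsilon_n$, all with integral coordinates, it follows that every weight $\mu$ of $M_c^{r,t}=M_c\otimes V^{\otimes r}\otimes W^{\otimes t}$ satisfies $\mu_i\equiv(\delta_c)_i\pmod{\mathbb Z}$ for all $i\in\nb$. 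Now if $[M_c^{r,t}:L(\mu)]\neq 0$ then $\mu$, being the highest weight of the composition factor $L(\mu)$, is in particular a weight of $M_c^{r,t}$, so this congruence applies; and by Assumption~\ref{qset}, for $i\in\mathbf p_\ell$ we have $(\delta_c)_i=c_\ell=d_\ell+p_\ell-q_1\equiv d_\ell\pmod{\mathbb Z}$, whence $\mu_i-d_\ell\in\mathbb Z$ for all $i\in\mathbf p_\ell$, $\ell\in\underline{k}$. Together with the standing hypothesis $\mu\in\Lambda^{\mathfrak p}$, this is exactly $\mu\in\Lambda_d$.

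Next I would pin down the ``degree''. The element $z=\sum_{i=1}^n e_{i,i}$ is central in $\mathfrak g$, so it acts on the highest weight module $M_c$ by the scalar $\sum_{i=1}^n(\delta_c)_i$, while it acts on $V$ (resp.\ $W$) by $+1$ (resp.\ $-1$); hence $z$ acts on $M_c^{r,t}$, and therefore on every composition factor $L(\mu)$ of $M_c^{r,t}$, by the scalar $\sum_{i=1}^n(\delta_c)_i+(r-t)$. Since $z$ also acts on $L(\mu)$ by $\sum_{i=1}^n\mu_i$, we get $\sum_{i\in\nb}(\mu_i-(\delta_c)_i)=r-t$; writing $\mu-\delta_c=\sum_{i\in\nb}\nu_i\varepsilon_i$ this says $\sum_i\nu_i=r-t$. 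Combined with $\mu\in\Lambda_d$ from the previous paragraph, we conclude $\mu\in\Lambda_d^{r-t}$, which is the first assertion.

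For the ``in particular'' statement, recall that $M_c\in\mathcal O^{\mathfrak p}$ and that $\mathcal O^{\mathfrak p}$ is closed under tensoring with the finite-dimensional modules $V$ and $W$, so $M_c^{r,t}\in\mathcal O^{\mathfrak p}$; moreover $M_c$ is irreducible, so $M_c^{r,t}$ has finite length. Since all composition factors $L(\mu)$ of $M_c^{r,t}$ have $\mu\in\Lambda_d^{r-t}$, the module $M_c^{r,t}$ lies in the Serre subcategory $\mathcal O_d^{r-t}$ generated by these, as claimed. There is no genuine obstacle in this argument: the only thing to keep track of is that both the residue class modulo $\mathbb Z$ of each coordinate of a weight and the scalar by which the central element $z$ acts are preserved under tensoring with $V$ and $W$, and that these two pieces of data together characterise membership in $\Lambda_d^{r-t}$.
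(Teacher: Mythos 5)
Your proof is correct, and it takes a genuinely different and more elementary route than the paper's. The paper appeals to the parabolic Verma filtration of $M_c^{r,t}$ (via \cite[Theorem~3.6]{Hum}), which shows every section is $M^{\mathfrak p}(\nu)$ with $\nu\in\Lambda_d^{r,t}$, and then invokes the linkage principle: a composition factor $L(\mu)$ of some $M^{\mathfrak p}(\nu)$ lies in the same block as $\nu$, so $\nu=w\cdot\mu$ for some $w\in\mathfrak S_n$, and since the dot action preserves both the $\bmod\ \mathbb Z$ residue data and the coordinate sum, one deduces $\mu\in\Lambda_d^{r-t}$. Your argument bypasses the Verma flag and block theory entirely: you observe that $\mu$ must occur as a weight of $M_c^{r,t}$ (because $[M_c^{r,t}:L(\mu)]\neq 0$ forces $\dim(M_c^{r,t})_\mu>0$), and that the two invariants defining $\Lambda_d^{r-t}$ — the congruence class of each coordinate modulo $\mathbb Z$, and the total coordinate sum (read off from the action of the central element $z=\sum_i e_{i,i}$) — are both rigid under tensoring with $V$ and $W$. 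Both proofs are correct; the paper's establishes more structural information (the explicit Verma flag, which is used elsewhere), while yours is shorter, avoids the linkage principle, and makes transparent exactly which pieces of data constrain $\mu$. One could argue the paper's last step (that dot-conjugate weights share membership in $\Lambda_d^{r-t}$) implicitly needs the same two rigidity observations you make explicit, so your argument can be viewed as isolating the essential content of the linkage step.
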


\begin{proof} It follows from \cite[Theorem~3.6]{Hum} that  $M_c^{r, t}$ has a parabolic Verma flag such that each  section is  of form  $M^{\mathfrak p}(\mu)$ for some  $\mu\in\Lambda_d^{r,t}$.
On the other hand, if  $[M_c^{r,t}:L(\mu)]\neq 0$, then $[M^{\mathfrak p}(\nu):L(\mu)]\neq 0$ for some $\nu\in \Lambda_d^{r-t}$. This implies that $\nu$ and $\mu$ are in the same block and hence
$\nu=w\cdot \mu$ for some  $w\in \mathfrak S_n$ and ``$ \ \cdot \ $'' is the usual  dot action. So, $\mu\in \Lambda_d^{r- t}$.
\end{proof}

\begin{lemma}\label{projsum} Let $P(\mu)$ be the projective cover of simple $\mathfrak g$-module $L(\mu)\in \mathcal O^{\mathfrak p}$.
Then $P(\mu)$ is a direct summand of $M_c^{r, t}$ if and only if $\mu\in \bar\Lambda_{d}^{r,t}$.
\end{lemma}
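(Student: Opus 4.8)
The plan is to use that $M_c^{r,t}$ is projective in $\mathcal O^{\mathfrak p}$ and to reduce the statement to a composition-multiplicity computation. Since $M_c=M^{\mathfrak p}(\delta_c)$ is projective and tensoring with the finite dimensional modules $V$ and $W$ preserves projectivity, $M_c^{r,t}$ is projective; hence $M_c^{r,t}\cong\bigoplus_\mu P(\mu)^{\oplus a_\mu}$ with $a_\mu=\dim\Hom_{\mathcal O^{\mathfrak p}}(M_c^{r,t},L(\mu))$, so $P(\mu)$ is a summand of $M_c^{r,t}$ exactly when $\Hom_{\mathcal O^{\mathfrak p}}(M_c^{r,t},L(\mu))\neq0$. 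By Lemma~\ref{key012} this already forces $\mu\in\Lambda_d^{r-t}$, and inspecting the parabolic Verma flag of $M_c^{r,t}$, whose sections are the $M^{\mathfrak p}(\nu)$ with $\nu\in\Lambda_d^{r,t}$ (cf.\ the proof of Lemma~\ref{key012}), refines this to $\mu\in\Lambda_d^{r,t}$. Since $\bar\Lambda_d^{r,t}=\Lambda_d^{r,t}\cap\bar\Lambda_d$, the task then reduces to showing that, for $\mu\in\Lambda_d^{r,t}$, the space $\Hom_{\mathcal O^{\mathfrak p}}(M_c^{r,t},L(\mu))$ is nonzero \emph{precisely} when $\mu$ is standard.

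Next I would rewrite that Hom space. As $W=V^{*}$ and $V\cong W^{*}$ as $\mathfrak g$-modules, the functors $-\otimes V$ and $-\otimes W$ are biadjoint, so peeling off tensor factors gives
\[
\Hom_{\mathcal O^{\mathfrak p}}(M_c\otimes V^{\otimes r}\otimes W^{\otimes t},\,L(\mu))\;\cong\;\Hom_{\mathcal O^{\mathfrak p}}(M_c,\,L(\mu)\otimes V^{\otimes t}\otimes W^{\otimes r}).
\]
Because $M_c=M^{\mathfrak p}(\delta_c)=L(\delta_c)$ is simple and projective, $\dim\Hom_{\mathcal O^{\mathfrak p}}(M_c,N)=[\,N:L(\delta_c)\,]$ for every $N\in\mathcal O^{\mathfrak p}$, so
\[
P(\mu)\text{ is a summand of }M_c^{r,t}\quad\Longleftrightarrow\quad [\,L(\mu)\otimes V^{\otimes t}\otimes W^{\otimes r}:L(\delta_c)\,]\neq0 ,
\]
and by the duality of $\mathcal O^{\mathfrak p}$ one may equally use $[\,L(\mu)\otimes V^{\otimes r}\otimes W^{\otimes t}:L(\delta_c)\,]$.

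The remaining, and main, point is the combinatorial identification of these $\mu$ with $\bar\Lambda_d^{r,t}$. I would argue by induction on $r+t$: tensoring with $V$ (resp.\ $W$) decomposes, after projecting onto blocks --- equivalently onto the generalized eigenspaces of the operator $x_1$ (resp.\ $\bar x_1$) of Definition~\ref{casm}, whose eigenvalues are the roots of $f$ (resp.\ $g$) in Definition~\ref{fgpolyv} --- into a sum of translation functors, each carrying an indecomposable projective of parabolic category $\mathcal O$ for $\mathfrak{gl}_n$ to a direct sum of indecomposable projectives with multiplicities controlled by the combinatorics of adding (resp.\ removing) one box subject to $\mathfrak p$-dominance. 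Matching this recursion with the recursive description of standard weights in $\bar\Lambda_d$ (the entries of $\s$ increasing weakly along rows), while tracking the content constraint that cuts $\Lambda_d^{r-t}$ down to $\Lambda_d^{r,t}$ (the positive part of $\mu-\delta_c$ has size $\le r$, the negative part size $\le t$), should yield exactly the weights in $\bar\Lambda_d^{r,t}$. Alternatively, one can transport Brundan--Kleshchev's classification in the $t=0$ case through the duality with the finite $W$-algebra from Section~5 and Theorem~\ref{main321} (Skryabin's equivalence together with the flip map of Lemma~\ref{filpwall}, which matches the mixed tensor situation with the straight one at the level of endomorphism algebras and of module categories).

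I expect this last step to be the main obstacle: one must check that the translation-functor recursion tracks \emph{precisely} standardness together with the content conditions --- i.e.\ that $P(\mu')$ occurs in $P(\mu)\otimes V$ iff $\mu'$ arises from $\mu$ by adding one box in a way preserving standardness (and dually for $-\otimes W$), and that performing all the $V$-tensorings and then all the $W$-tensorings produces $\bar\Lambda_d^{r,t}$ and nothing larger. This rests on the detailed structure of blocks of parabolic category $\mathcal O$ for $\mathfrak{gl}_n$, the special case of the Brundan--Kleshchev analysis already invoked for Theorems~\ref{grschur} and \ref{main321}, which I would cite rather than reprove.
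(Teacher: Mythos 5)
Your reduction in the first half is valid: since $M_c$ is projective and tensoring with finite-dimensional modules preserves projectivity, $M_c^{r,t}$ is projective and decomposes as $\bigoplus_\mu P(\mu)^{\oplus a_\mu}$ with $a_\mu=\dim\Hom_{\mathcal O}(M_c^{r,t},L(\mu))$; and since $M_c=L(\delta_c)=P(\delta_c)$, the adjunction argument correctly turns this into the composition multiplicity $[L(\mu)\otimes V^{\otimes t}\otimes W^{\otimes r}:L(\delta_c)]$. The content constraint $\mu\in\Lambda_d^{r,t}$ from the Verma flag is also fine. But there is a genuine gap at the step you yourself flag as ``the main obstacle'': nothing you cite or sketch actually establishes the equivalence between this Hom space being nonzero and $\mu$ being standard. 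In particular, the claim ``$P(\mu')$ occurs in $P(\mu)\otimes V$ iff $\mu'$ arises from $\mu$ by adding one box in a way preserving standardness'' is precisely the nontrivial assertion that needs proof, and deferring to an unnamed translation-functor analysis does not close the argument.

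What the paper does, and what you miss, is that $M_c=M^{\mathfrak p}(\delta_c)$ is both projective and injective, hence \emph{tilting}, and therefore $M_c^{r,t}$ is tilting. Consequently every indecomposable direct summand of $M_c^{r,t}$ is tilting, so if $P(\mu)$ is a summand it is tilting, hence self-dual, and then \cite[Theorem~4.6]{BK} gives immediately that $\mu$ is standard. Combined with the Verma-flag observation (which you also make) that $\mu\in\Lambda_d^{r,t}$, this settles necessity cleanly and without any translation-functor combinatorics. For sufficiency the paper runs the same kind of induction on $r+t$ that you envisage, but anchored on a specific result, \cite[Theorem~4.5]{BK2}, which furnishes a $\nu\in\bar\Lambda_d^{r-1,t}$ or $\bar\Lambda_d^{r,t-1}$ with $\tilde f_i\nu=\mu$ or $\tilde e_i\nu=\mu$, so that $L(\mu)$ is a quotient of $L(\nu)\otimes V$ or $L(\nu)\otimes W$ and hence $P(\mu)$ is a summand of $P(\nu)\otimes V$ or $P(\nu)\otimes W$. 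Your proposal never identifies these two ingredients (the tilting/self-dual characterization for necessity, the Kashiwara-operator result for sufficiency), so it remains a plan rather than a proof; in particular the ``iff'' you assert for $P(\mu)\otimes V$ is exactly what the tilting argument is there to prove, and cannot simply be ``cited rather than reproved'' without knowing it.
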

\begin{proof}  It is well known that  $M(\delta_c)$ projective and injective and hence  tilting, where $\delta_c$ is in Assumption~\ref{qset}. So is $M_c^{r, t}$ and hence each indecomposable direct summand of $M_c^{r, t}$ is tilting. If  $P(\mu)$ is a direct summand of
$M_c^{r, t}$, then $P(\mu)$ is tilting and hence self-dual. By  \cite[Theorem~4.6]{BK},  $\mu $ is standard.
Since $P(\mu)$ has a parabolic Verma flag with top section $M^{\mathfrak p}(\mu)$, $M^{\mathfrak p}(\mu)$ appears in a parabolic Verma flag of $M_c^{r, t}$, forcing $\mu \in \bar\Lambda_{d}^{r,t}$.
Conversely,  for every $\mu\in \bar\Lambda_{d}^{r,t} $, by \cite[Theorem~4.5]{BK2}, there is  a $\nu$   either in  $ \bar\Lambda_{d}^{r-1,t}$ or   $ \bar\Lambda_{d}^{r, t-1}$  such that  $\tilde f_i \nu=\mu$
or  $\tilde e_i \nu=\mu$  for some $i\in \nb$
where $\tilde e_i, \tilde f_i$ are  known as {\it{ Kashiwara  operators}}.
So, the simple module  $L(\mu)$ is a quotient of either  $L(\nu)\otimes V$ or $L(\nu)\otimes W$.  This implies that $P(\mu)$ is a direct summand of $P(\nu)\otimes V$ or $P(\nu)\otimes W$.
By induction assumption, $P(\nu)$ is an indecomposable  direct
summand of either $M_c^{r-1, t}$ or  $M_c^{r, t-1}$. Therefore, $P(\mu)$ is an indecomposable  direct summand of  $M_c^{r, t}$.
\end{proof}

Following \cite{BK}, let $R_d(e)$ (denoted by $R_d(\lambda)$ in \cite{BK}) be the category of rational representations of $\U(\mathfrak g, e)$ associated to $d\in \mathbb C^k$  in Assumption~\ref{qset}.
In \cite[Corollary~5.4]{BK}, Brundan-Kleshchev have  proved that the simple objects  in $R_d(e)$ are indexed by $\bar \Lambda_d$.
Let  $D(\mu)$  be the irreducible module in  $R_d(e)$ with respect to  $\mu\in\bar \Lambda_d$.

\begin{lemma}\label{nuisom}\cite[Lemma~8.18]{BK1} Let $\mathbb V: \mathcal O_d\rightarrow R_d(e)$ be the Whittaker functor defined in \cite[Lemma~8.20]{BK1}.
For any $M\in\mathcal O_d$ and any finite dimensional $\mathfrak g$-module $X$, there is a natural isomorphism
\begin{equation} \label {numx} \nu_{M,X}:  \mathbb V(M\otimes X)\rightarrow \mathbb V(M)\circledast V\end{equation}
of $\U(\mathfrak g, e)$-modules. Moreover, given another finite dimensional module $\mathfrak g$-module $Y$, the following
diagram commutes:
\begin{equation}\label{vcommute}
\begin{array}[c]{ccc}
\mathbb V(M\otimes X\otimes Y) &\stackrel{\nu_{M\otimes X,Y}}{\longrightarrow}&\mathbb V(M\otimes X)\circledast Y\\
\downarrow\scriptstyle{\nu_{M,X\otimes Y}}&&\downarrow\scriptstyle{\nu_{M,X\circledast id_{Y}}}\\
\mathbb V(M)\circledast(X\otimes Y)&\stackrel{a_{\mathbb V(M),X,Y}}{\longleftarrow}&(\mathbb V(M)\circledast X)\circledast Y
\end{array}\end{equation}
where $a_{\mathbb V(M),X,Y}$ is given in (\ref{amxy}).
\end{lemma}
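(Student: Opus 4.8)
The statement is \cite[Lemma~8.18]{BK1}, so the plan is to recall why it holds in the present set-up rather than to reprove it from scratch; only the passage from $t=0$ to general finite-dimensional $X$ needs any comment, and that is purely formal. The key input is Skryabin's equivalence \eqref{skequi}: the functor $Q_\chi\otimes_{\U(\mathfrak g,e)}?$ is a quasi-inverse of $\text{Wh}(?)$, and, up to the identifications made in \cite[\S8]{BK1}, the Whittaker functor $\mathbb V\colon\mathcal O_d\to R_d(e)$ is the restriction of $\text{Wh}$ to $\mathcal O_d$. One first records the (standard, cf.\ \cite[Lemma~8.20]{BK1}) facts that $\mathcal O_d\subseteq\mathcal C$, that $\text{Wh}$ carries $\mathcal O_d$ into $R_d(e)$, and that $?\otimes X$ preserves $\mathcal O_d$ for the finite-dimensional modules $X$ relevant here (in our application $X=V$ or $X=W$). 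Let $\epsilon_M\colon Q_\chi\otimes_{\U(\mathfrak g,e)}\mathbb V(M)\xrightarrow{\sim}M$ denote the natural isomorphism furnished by Skryabin's equivalence.

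First I would construct $\nu_{M,X}$ in \eqref{numx}. By \eqref{ostar} we have $?\circledast X=\text{Wh}\bigl((Q_\chi\otimes_{\U(\mathfrak g,e)}?)\otimes X\bigr)$, hence $\mathbb V(M)\circledast X=\text{Wh}\bigl((Q_\chi\otimes_{\U(\mathfrak g,e)}\mathbb V(M))\otimes X\bigr)$; applying the exact functor $\text{Wh}\circ(?\otimes X)$ to $\epsilon_M$ produces an isomorphism $\mathbb V(M)\circledast X\xrightarrow{\sim}\text{Wh}(M\otimes X)=\mathbb V(M\otimes X)$, and $\nu_{M,X}$ is its inverse. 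Here one uses that $?\otimes X$ is exact on $\mathcal C$, so it preserves objects of $\mathcal C$ and sends isomorphisms to isomorphisms, and that $\text{Wh}$ is an equivalence. Naturality of $\nu_{M,X}$ in $M$ is immediate from naturality of $\epsilon$ together with functoriality of $\text{Wh}$ and of $?\otimes X$; naturality in $X$ is equally routine.

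It then remains to verify the coherence square \eqref{vcommute}. Recall that the associativity isomorphism $a_{N,X,Y}\colon(N\circledast X)\circledast Y\cong N\circledast(X\otimes Y)$ of \eqref{amxy} is, by its construction in \cite[(8.8)]{BK2}, assembled from $\epsilon$, from the exact functors $?\otimes X$, $?\otimes Y$, $?\otimes(X\otimes Y)$, and from the strict associativity constraint of the ordinary tensor product of $\mathfrak g$-modules. Unwinding both composites in \eqref{vcommute} through these definitions, each arrow becomes $\text{Wh}$ applied to a composite of instances of $\epsilon$ and of tensoring with $X$, $Y$, $X\otimes Y$; the square then collapses to the triangle identities of the equivalence $\bigl(Q_\chi\otimes_{\U(\mathfrak g,e)}?,\ \text{Wh}\bigr)$ together with the strict coherence of $\otimes$, and so commutes. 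The only genuinely laborious step is this last bookkeeping against the precise shape of $a$ in \cite[(8.8)]{BK2}; I expect it to be the main obstacle, but it is exactly parallel to the argument of \cite[Lemma~8.18]{BK1} and presents no new mathematical difficulty beyond the $t=0$ case treated there.
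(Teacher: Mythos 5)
The paper does not prove this lemma; it simply cites it from \cite[Lemma~8.18]{BK1}, so there is no in-paper argument to compare against. Judged on its own, your sketch contains a genuine error at the very first step.

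You assert that $\mathcal O_d\subseteq\mathcal C$ and that $\mathbb V$ is therefore the restriction of $\text{Wh}$ to $\mathcal O_d$, citing \cite[Lemma~8.20]{BK1} for this. That inclusion is false. Objects of $\mathcal O_d$ lie in parabolic category $\mathcal O$, hence are locally finite over $\mathfrak n^+$ and integrable over $\mathfrak l$; by contrast, $\mathcal C$ consists of modules on which $x-\chi(x)$ acts locally nilpotently for all $x\in\mathfrak m$, with $\mathfrak m\subseteq\mathfrak n^-$ and $\chi\ne 0$. These two conditions are mutually exclusive for any nonzero highest weight module: if $v$ is a weight vector and $\chi(x)\ne 0$, no power of $x-\chi(x)$ kills $v$, because each application keeps a nonzero multiple of $v$ among the weight components. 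So the Verma modules, simple modules, and tilting modules in $\mathcal O_d$ do not lie in $\mathcal C$, and $\text{Wh}$ cannot simply be restricted. This is precisely why \cite[Lemma~8.20]{BK1} constructs $\mathbb V$ by first passing from $M\in\mathcal O_d$ to a completion (equivalently, a suitable dualization) that does land in $\mathcal C$, and only then extracting Whittaker vectors.

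Because your $\nu_{M,X}$ is obtained by applying $\text{Wh}\circ(?\otimes X)$ to the Skryabin counit $\epsilon_M\colon Q_\chi\otimes_{\U(\mathfrak g,e)}\mathbb V(M)\to M$ with $M\in\mathcal O_d$, and $M$ is not in $\mathcal C$, the map $\epsilon_M$ is not even defined in your framework. The content of \cite[Lemma~8.18]{BK1} is precisely that the completion/dualization built into $\mathbb V$ interacts compatibly with $-\otimes X$ (this uses finite-dimensionality of $X$ in an essential way, since $(M\otimes X)^\wedge$ must be compared with $\widehat M\otimes X$), and the subsequent coherence square \eqref{vcommute} must be verified against that comparison, not against the triangle identities of Skryabin's equivalence alone. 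Your sketch trivializes exactly the point that makes the cited lemma nontrivial. To repair it you would need to (i) recall the actual definition of $\mathbb V$ via completion, (ii) exhibit the natural isomorphism $\widehat{M\otimes X}\cong\widehat M\otimes X$ for $X$ finite dimensional, and (iii) check coherence for these isomorphisms; this is what \cite{BK1} does, and the present paper deliberately outsources it.
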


\begin{lemma}\cite[Theorem~8.21]{BK1}. \label{simp-simp} Suppose  $\mu\in \Lambda_d$.  Then $\mathbb V(L(\mu))=0$ unless  $\mu\in\bar \Lambda_d$. In the later case, $\mathbb V(L(\mu))\cong D(\mu)$.\end{lemma}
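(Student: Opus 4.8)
The plan is to deduce the statement directly from the structural properties of the Whittaker functor $\mathbb{V}$, exactly as in Brundan--Kleshchev's analogous result for the degenerate cyclotomic Hecke algebra case. First I would recall from \cite[Lemma~8.20]{BK1} that $\mathbb{V}$ is exact and that it kills precisely the objects of $\mathcal{O}_d$ supported outside the "Whittaker-relevant" weights; combined with \cite[Corollary~5.4]{BK}, the simple objects of $R_d(e)$ are indexed by $\bar\Lambda_d$, and the functor $\mathbb{V}$ sends the parabolic Verma module $M^{\mathfrak p}(\mu)$ to a standard object of $R_d(e)$ when $\mu$ is standard and to $0$ otherwise. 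The key point is a highest-weight/linkage argument: $\mathbb{V}(L(\mu))$, being a quotient (via exactness applied to $M^{\mathfrak p}(\mu)\twoheadrightarrow L(\mu)$) of $\mathbb{V}(M^{\mathfrak p}(\mu))$, is either $0$ or a nonzero quotient of a standard object, hence has a well-defined "highest weight." One then checks that this highest weight is $\mu$ itself, so that $\mathbb{V}(L(\mu))$ has simple head $D(\mu)$.

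The second and main step is to upgrade "$\mathbb{V}(L(\mu))$ has head $D(\mu)$" to "$\mathbb{V}(L(\mu)) \cong D(\mu)$," i.e.\ to rule out extra composition factors. Here I would argue by induction on the ordering of $\bar\Lambda_d$, using exactness of $\mathbb{V}$ together with a character/multiplicity count: from the BGG-reciprocity-type identities in $\mathcal{O}_d$ and the corresponding identities for standard and projective objects in $R_d(e)$ (available from \cite[Corollary~5.4]{BK} and the tilting theory of $R_d(e)$), one sees that $\sum_{\mu}[\mathbb V(L(\mu))]$ accounts exactly for $\sum_\mu[D(\mu)]$ in the Grothendieck group, with all the transition matrices unitriangular with respect to the standard order. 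Since each $\mathbb{V}(L(\mu))$ surjects onto $D(\mu)$ and the total "sizes" match, no room is left for additional factors, forcing $\mathbb{V}(L(\mu)) \cong D(\mu)$ and also $\mathbb{V}(L(\mu)) = 0$ whenever $\mu \notin \bar\Lambda_d$ (since for non-standard $\mu$ there is simply no target simple object it could surject onto, and the multiplicity bookkeeping has already been saturated by the standard $\mu$'s).

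Alternatively, and perhaps more cleanly, I would invoke the fact that $\mathbb{V}$ is a quotient functor onto $R_d(e)$ whose right adjoint is fully faithful on injectives: then $\mathbb{V}$ being exact and essentially surjective, together with the classification of simples on both sides by the same set $\bar\Lambda_d$, forces $\mathbb{V}$ to send simples to simples or zero, and the labelling is pinned down by the compatibility of $\mathbb{V}$ with translation functors established in Lemma~\ref{nuisom} (the natural isomorphism $\nu_{M,X}\colon \mathbb V(M\otimes X) \cong \mathbb V(M)\circledast V$ and its associativity coherence). Concretely: one starts from $\mathbb{V}(M_c) \cong \mathbb{C}_d$ (the defining rank-one case), and then $\mathbb{V}$ of the layers $L(\mu)$ appearing in $M_c^{r,t}$ is computed inductively by applying $\nu$ repeatedly, matching the Kashiwara-operator recursion of Lemma~\ref{projsum} on the $\mathcal{O}$-side with the $\circledast V$ / $\circledast W$ recursion on the $R_d(e)$-side. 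The main obstacle I anticipate is the first, "highest weight" identification step — making precise that the head of $\mathbb{V}(L(\mu))$ is exactly $D(\mu)$ rather than some $D(\nu)$ with $\nu \prec \mu$ — which requires carefully tracking how $\mathbb V$ interacts with the weight filtration; everything after that is essentially Grothendieck-group bookkeeping citing \cite{BK,BK1}.
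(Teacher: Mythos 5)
The paper gives no proof of this lemma at all: it is stated verbatim as a citation of \cite[Theorem~8.21]{BK1} and used as a black box, so there is no in-paper argument to compare your sketch against. Evaluating the sketch on its own terms, the load-bearing premise in your first paragraph is incorrect: the Whittaker functor $\mathbb V$ does \emph{not} send $M^{\mathfrak p}(\mu)$ to zero for non-standard $\mu\in\Lambda_d$. In the framework of \cite{BK1}, $\mathbb V$ is exact and sends every parabolic Verma module of $\mathcal O_d$ to a nonzero standard object of $R_d(e)$; the subtlety is that the induced labeling of standard modules is not injective but passes through a ``row-straightening'' (row-equivalence) operation, so that a non-standard $\mu$ and its straightened representative in $\bar\Lambda_d$ are sent to the \emph{same} standard object of $R_d(e)$. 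This collapse of labels, not the vanishing of $\mathbb V$ on a Verma, is what ultimately forces $\mathbb V(L(\mu))=0$ when $\mu\notin\bar\Lambda_d$, and it has to be extracted by comparing decomposition multiplicities on both sides after one knows how the standards match up.

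Because the behavior of $\mathbb V$ on Vermas is misstated, the Grothendieck-group bookkeeping in your second paragraph cannot close the argument as written: you would be counting against a transition matrix that has the wrong zero pattern. You do flag a related difficulty at the end (identifying the head of $\mathbb V(L(\mu))$), but you locate it in the wrong place — the hard part is precisely the unequal-cardinality issue between $\Lambda_d$ and $\bar\Lambda_d$ together with the straightening map, which your sketch assumes away by positing that non-standard Vermas are killed. Your ``alternative'' third-paragraph argument via translation functors and Lemma~\ref{nuisom} is a reasonable heuristic for why the labels should line up, but it also needs the straightening phenomenon as an input. If you want a self-contained proof rather than the citation the paper uses, the place to start is the precise statement of $\mathbb V$ on (dual) Verma modules in \cite[\S8]{BK1}, where the row-equivalence relabeling is made explicit; the rest of your bookkeeping strategy should then go through.
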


Recall that $s_i(M), \bar s_j(M), e_1(M)$, $x_1(M)$ and  $\bar x_1(M)$ for any $\mathfrak g$-module $M$ and any $\U(\mathfrak g, e)$-module $M$ in Definitions~\ref{func12}--\ref{wact}.
The following results are motivated by \cite{BK}.
\begin{Prop}\label{properofv} Suppose  $M\in \mathcal O_d$.   We have:\begin{enumerate}
\item [(1)] $x_1({\mathbb V(M)}) \circ \nu_{M,V}= \nu_{M,V}\circ   \mathbb V(x_1(M))$,
\item [(2)] $\bar x_1({\mathbb V(M)})\circ \nu_{M,W}= \nu_{M,W}\circ   \mathbb V(\bar x_1(M))$,
\item [(3)] $(\nu_{M,V}\circledast 1_V)\circ  \nu_{M\otimes V,V} \circ \mathbb V(s_1(M))=s_1({\mathbb V(M)}) \circ  (\nu_{M,V}\circledast 1_V) \circ  \nu_{M\otimes V,V} $,
\item  [(4)] $(\nu_{M,W}\circledast 1_W)\circ  \nu_{M\otimes W,W} \circ \mathbb V(\bar s_1(M))=\bar s_1({\mathbb V(M)}) \circ  (\nu_{M,W}\circledast 1_W) \circ  \nu_{M\otimes W,W} $,
\item  [(5)] $(\nu_{M,V}\circledast 1_W)\circ  \nu_{M\otimes V,W} \circ \mathbb V(e_1(M))=e_1({\mathbb V(M)}) \circ  (\nu_{M,V}\circledast 1_W) \circ  \nu_{M\otimes V,W} $,
\end{enumerate}
Similarly, we have the equalities for $s_i(M)$ and $\bar s_j(M)$. In particular, when $M=M^{\mathfrak p}(\delta_c)$,  $\mathbb V(M_c^{r,t})$ is  an  $(\U(\mathfrak g, e), \mathscr B^{\rm aff}_{r,t})$-bimodule, where $M_c^{r, t}$ is defined in \eqref{mrt12} and
$\omega_a$'s satisfy \eqref{omegaa}.
\end{Prop}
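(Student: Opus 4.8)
The plan is to prove the five identities (1)--(5) of the statement one at a time, following the template of Brundan--Kleshchev's treatment of the level-$k$ degenerate Hecke algebra (the case $t=0$) in \cite[Lemmas~3.3--3.4]{BK}; the bimodule assertion then follows formally. All of the operators $s_i,\bar s_j,e_1,x_1,\bar x_1$ on $\mathcal O_d$ (Definition~\ref{func12}) and their counterparts on $\U(\mathfrak g,e)\text{-mod}$ (Definition~\ref{wact}) are assembled from copies of $\Omega$ inserted at prescribed tensor positions, so in each case the point is to see that the Whittaker functor $\mathbb V$, transported through the natural isomorphisms $\nu$ of Lemma~\ref{nuisom}, sends a given copy of $\Omega$ to the corresponding copy on the other side. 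I expect the essential work to lie in (1)--(2), since $x_1$ and $\bar x_1$ are built from the copy of $\Omega$ straddling the variable tensor factor; the identities (3)--(5) are internal to the fixed finite-dimensional factors and amount to coherence bookkeeping.

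For (3)--(5), observe that $s_1(M)=\mathrm{id}_M\otimes P$, $\bar s_1(M)=\mathrm{id}_M\otimes\bar P$ and $e_1(M)=\mathrm{id}_M\otimes c$, where $P$ and $\bar P$ are the flip operators on $V\otimes V$ and on $W\otimes W$ and $c=-\pi_{1,\bar1}(\Omega)|_{V\otimes W}$, each a $\U$-module endomorphism of two consecutive finite-dimensional factors. By Definition~\ref{wact} (via Definition~\ref{func12}) the matching operator on $\U(\mathfrak g,e)\text{-mod}$ is $\mathrm{id}_{\mathbb V(M)}\circledast P$ (resp.\ $\circledast\bar P$, $\circledast c$). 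The coherence square \eqref{vcommute}, together with the associativity isomorphism $a_{M,X,Y}$ of \eqref{amxy}, then reduces (3)--(5) to the compatibility of $\nu$ with a $\U$-module map between two of the finite-dimensional factors, which is built into the construction of $\nu$ in \cite[Lemma~8.20]{BK1}; I would record this as a short diagram chase. The same computation gives the asserted identities for $s_i(M)$ and $\bar s_j(M)$ with $i,j\ge2$.

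For (1)--(2), unravel Definition~\ref{wact} through the Skryabin equivalence \eqref{skequi}: the operator $x_1(\mathbb V(M))$ acts on $\mathbb V(M)\circledast V=\mathrm{Wh}\bigl((Q_\chi\otimes_{\U(\mathfrak g,e)}\mathbb V(M))\otimes V\bigr)$ as $-\pi_{0,1}(\Omega)$, where the $0$-th tensor position is now the factor $Q_\chi\otimes_{\U(\mathfrak g,e)}\mathbb V(M)$. The crucial observation is that $\nu_{M,V}$ is, by its construction in \cite[Lemma~8.20]{BK1}, the image under the exact functor $\mathrm{Wh}(?)$ of the $\U$-module isomorphism $(Q_\chi\otimes_{\U(\mathfrak g,e)}\mathbb V(M))\otimes V\cong M\otimes V$ coming from the Skryabin counit (an isomorphism because $\mathbb V$ and $Q_\chi\otimes_{\U(\mathfrak g,e)}?$ are mutually inverse, up to the standard twist and the block projection onto $\mathcal O_d$); this isomorphism is $\U$-linear and hence intertwines $\pi_{0,1}(\Omega)$ on both sides, so applying $\mathrm{Wh}(?)$ yields (1). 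Identity (2) is the same with $V$ replaced by $W$ and $\pi_{0,1}(\Omega)$ by $\pi_{0,\bar1}(\Omega)$. As a concrete check one may instead argue in coordinates when $M=M_c$: here $\mathbb V(M_c)\cong D(\delta_c)$ (Lemma~\ref{simp-simp}) and both sides of (1)--(2) are explicit --- the $\mathcal O_d$-side from Lemma~\ref{xh1} and Definition~\ref{casm}, the $\U(\mathfrak g,e)$-side from Lemmas~\ref{xij} and \ref{actofxbarx} --- and one compares them term by term. I anticipate the main obstacle to be pinning down precisely the identification of $\nu_{M,V}$ with the Skryabin counit (there is some twisting by $\eta$ and $\eta_d$ to keep straight), and in the $\bar x_1$ case carrying through the new computation of Lemma~\ref{actofxbarx}(b), which has no counterpart in the $t=0$ setting of \cite{BK}.

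Finally, for the bimodule statement put $M=M_c=M^{\mathfrak p}(\delta_c)$. Iterating $\nu$ and using the coherence square \eqref{vcommute} produces a $\U(\mathfrak g,e)$-module isomorphism $\mathbb V(M_c^{r,t})\cong\mathbb V(M_c)\circledast V^{\circledast r}\circledast W^{\circledast t}$; the target carries a right action of $\mathscr B_{r,t}^{\rm aff}$ by the endomorphisms of functors of Definition~\ref{wact}, all relations of Definition~\ref{wbmw} holding by Proposition~\ref{affmodule} (equivalently, transported from Proposition~\ref{affinere} through the identities (1)--(5) just established), and this right action consists of $\U(\mathfrak g,e)$-module maps because the $\nu$'s are $\U(\mathfrak g,e)$-linear and the operators of Definition~\ref{wact} are natural transformations of endofunctors of $\U(\mathfrak g,e)\text{-mod}$. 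Hence $\mathbb V(M_c^{r,t})$ is a $(\U(\mathfrak g,e),\mathscr B_{r,t}^{\rm aff})$-bimodule. That the parameters $\omega_a$ satisfy \eqref{omegaa} is inherited from $M_c^{r,t}$, on which $e_1x_1^ae_1=\omega_ae_1$ with $\omega_a$ given by \eqref{omegaa} (Lemma~\ref{ghom123}), since $e_1$ acts nontrivially on $\mathbb V(M_c^{r,t})$ and the right action above restricts, under $\nu$, to the one obtained by applying $\mathbb V$ to the right $\mathscr B_{r,t}^{\rm aff}$-action on $M_c^{r,t}$.
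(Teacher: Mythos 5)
Your proof is correct and takes essentially the same route as the paper: the paper cites (8.44)--(8.45) of \cite[Lemma~8.19]{BK1} for (1), (3) and invokes naturality of $\nu$ plus the coherence square \eqref{vcommute} for (5), while you unpack these citations via the Skryabin counit, $\U$-linearity of $\pi_{0,1}(\Omega)$, and the identifications $s_1(M)=\mathrm{id}_M\otimes P$, $\bar s_1(M)=\mathrm{id}_M\otimes\bar P$, $e_1(M)=\mathrm{id}_M\otimes c$. The caution you flag about the $\eta/\eta_d$ twisting and the $W$-dual computation in (2), (4) is warranted but is exactly what the paper glosses with ``one can verify (2) and (4) similarly,'' and both arguments obtain the bimodule assertion from (1)--(5) together with the right $\mathscr B^{\rm aff}_{r,t}$-action on $M_c^{r,t}$ with parameters from \eqref{omegaa} established in Proposition~\ref{alghom1}.
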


\begin{proof} (1) and (3)  are (8.44)--(8.45)  in the proof of \cite[Lemma~8.19]{BK1}. One can verify (2) and (4) similarly.  Finally, (5) follows from the naturality of $\nu_{M,V\otimes W}$ and  (\ref{vcommute}) and the definitions of $e_1(M)$ and $e_1({\mathbb V(M)})$. The last assertion follows from relations in (1)--(5) and Proposition~\ref{alghom1}.
\end{proof}

\begin{lemma}\label{vp} Recall that    $\mathbb C_d=\mathbb C 1_d$  is  the $1$-dimensional
$\mathfrak p$-module  such that $e_{i,j} 1_d = \delta_{i,j}d_{{\rm col}(i)} 1_d$,  $\forall e_{i,j}\in \mathfrak p$.\begin{enumerate} \item [(a)]  The $\U(\mathfrak g, e)$-module   $ \mathbb C_d$   is  projective  in $R_d(e)$ and  moveover,  $\mathbb C_d\cong \mathbb V(M_c)$,  where $M_c$ is the parabolic Verma module with repect to $\delta_c$ in Assumption~\ref{qset}.
\item [(b)]  As $\U(\mathfrak g, e)$-modules, $ \mathbb V(M_c^{r,t})\cong \mathbb C_d\circledast V^{\circledast r}\circledast W^{\circledast t} $.
\item [(c)] As $(\U(\mathfrak g, e), \mathscr B^{\rm aff}_{r,t})$-bimodules, $\mathbb V(M_c^{r,t})\cong V_d^{r,t}$.
\end{enumerate}
\end{lemma}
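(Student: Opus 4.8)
The plan is to prove the three parts in order, treating (a) as the base case and then ``bootstrapping'' up the tensor factors, with Lemma~\ref{nuisom}, Proposition~\ref{properofv} and Corollary~\ref{muiso} doing the assembly. Part~(a) involves neither $r$ nor $t$; it is the mixed-case avatar of the corresponding assertion of Brundan--Kleshchev \cite{BK}, and I would invoke it from there, recording the argument as follows. First, $M_c=L(\delta_c)$ lies in $\mathcal O_d$ (since $\delta_c\in\Lambda_d$) and is projective there, because it is projective in $\mathcal O^{\mathfrak p}$ and short exact sequences in the Serre subcategory $\mathcal O_d$ are short exact in $\mathcal O^{\mathfrak p}$. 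One checks that $\delta_c$ is standard, so $\delta_c\in\bar\Lambda_d$, and Lemma~\ref{simp-simp} gives $\mathbb V(M_c)\cong D(\delta_c)\neq 0$. To see $D(\delta_c)\cong\mathbb C_d$ I would compute $\mathbb V(M_c)$ directly: by PBW $M_c$ is free of rank one over $\U(\mathfrak m)$, so (after the completion built into $\mathbb V$) its space of Whittaker vectors is one-dimensional, and reading off the action of $\U(\mathfrak g,e)\subseteq\U(\mathfrak p)$ on this line through the twist $\eta$ of \eqref{wact123} together with \eqref{etac} and the weight $\delta_c$ recovers precisely the scalars defining $\mathbb C_d$. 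Projectivity of $\mathbb C_d$ in $R_d(e)$ then follows from projectivity of $M_c$ in $\mathcal O_d$ and the fact that $\mathbb V$ carries projectives to projectives; all of this is \cite{BK}.

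For part~(b) I would peel off one tensor factor at a time. All intermediate modules $M_c\otimes V^{\otimes i}\otimes W^{\otimes j}$ lie in $\mathcal O_d$ (their composition factors are linked to $\delta_c$, cf.\ Lemma~\ref{key012}), so Lemma~\ref{nuisom} applies at every stage: iterating the isomorphisms $\nu_{-,V}$ over the $V$-factors and then $\nu_{-,W}$ over the $W$-factors, and using the coherence square \eqref{vcommute} to match the associativity isomorphisms $a$, one obtains a $\U(\mathfrak g,e)$-module isomorphism $\mathbb V(M_c^{r,t})\cong\mathbb V(M_c)\circledast V^{\circledast r}\circledast W^{\circledast t}$. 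Composing with the identification of part~(a) gives $\mathbb V(M_c^{r,t})\cong\mathbb C_d\circledast V^{\circledast r}\circledast W^{\circledast t}$, which is~(b).

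For part~(c) I would upgrade the isomorphism of~(b), composed with $\mu_{r,t}$ from Corollary~\ref{muiso} (which supplies $\mathbb C_d\circledast V^{\circledast r}\circledast W^{\circledast t}\cong V_d^{r,t}$), to an isomorphism of $(\U(\mathfrak g,e),\mathscr B^{\rm aff}_{r,t})$-bimodules. Here one tracks the right $\mathscr B^{\rm aff}_{r,t}$-action through the three models: on $\mathbb V(M_c^{r,t})$ a generator $g\in\{s_i,\bar s_j,e_1,x_1,\bar x_1\}$ acts as $\mathbb V(g(M_c))$, the image under $\mathbb V$ of the endomorphism of $M_c^{r,t}$ from Definition~\ref{casm}; on $\mathbb C_d\circledast V^{\circledast r}\circledast W^{\circledast t}$ it acts as $g(\mathbb C_d)$ in the sense of Definition~\ref{wact}; and on $V_d^{r,t}$ it acts as $\Phi(g)$ of Proposition~\ref{affmodule}, which by construction is exactly $g(\mathbb C_d)$ transported along $\mu_{r,t}$. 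Parts~(1)--(5) of Proposition~\ref{properofv}, together with the analogous statements for the remaining $s_i,\bar s_j$, say precisely that the $\nu$-composites used in~(b) intertwine $\mathbb V(g(M_c))$ with $g(\mathbb C_d)$; transporting along $\mu_{r,t}$ then matches $g(\mathbb C_d)$ with $\Phi(g)$. Hence the composite isomorphism is simultaneously $\U(\mathfrak g,e)$-linear (by~(b) and Corollary~\ref{muiso}) and right $\mathscr B^{\rm aff}_{r,t}$-linear, giving~(c).

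The only genuinely external input is part~(a): the identification $\mathbb V(M_c)\cong\mathbb C_d$ and the projectivity of $\mathbb C_d$ rest on Brundan--Kleshchev's analysis of the Whittaker functor on parabolic Verma modules, and I would cite \cite{BK} rather than reprove it. After that the lemma is assembly; the only care required is to ensure that the specific composites of the structural isomorphisms $\nu$, $a$ and $\mu_{r,t}$ arising in the iteration are the ones for which Lemma~\ref{nuisom} and Proposition~\ref{properofv} supply the coherence and intertwining identities --- in particular the twisted actions of $x_1$ and $\bar x_1$, the only endomorphisms whose $R_d(e)$-side description (Lemma~\ref{actofxbarx}) differs from the naive $\Omega$-formula, are handled by parts~(1)--(2) of Proposition~\ref{properofv}. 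With \eqref{vcommute} in hand this bookkeeping is routine, so I expect no serious obstacle beyond~(a).
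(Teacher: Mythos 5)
Your proof follows exactly the same route as the paper's (which is just a one-line citation for each part): (a) is deferred to Brundan--Kleshchev, (b) is obtained by iterating Lemma~\ref{nuisom} starting from (a), and (c) is assembled from (b), Proposition~\ref{properofv}, and Corollary~\ref{muiso}. Your write-up simply makes explicit the peeling/bookkeeping the paper leaves to the reader, so there is no substantive difference in approach.
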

\begin{proof} (a) was proved in \cite{BK} and (b) follows from Lemma~\ref{nuisom} and (a).  Finally, (c) follows from (b), Proposition~\ref{properofv} and Corollary~\ref{muiso}.  \end{proof}

\begin{lemma}\label{vproj} (cf. \cite[Lemma~5.7]{BK}) For any $\mu\in\bar \Lambda_d$, let $Q(\mu)=\mathbb V(P(\mu))$.
If $\mu\in \bar\Lambda_d^{r,t}$, then $Q(\mu)$ is the projective cover of the simple object $D(\mu)$ in $R_d(e)$.
\end{lemma}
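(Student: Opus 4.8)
The plan is to adapt the argument of \cite[Lemma~5.7]{BK} to the mixed tensor setting, feeding in Lemmas~\ref{projsum}, \ref{vp}, \ref{simp-simp} and \ref{nuisom}. First I would record that $M_c=M^{\mathfrak p}(\delta_c)$ is projective and injective in $\mathcal O_d$, and that tensoring with the finite dimensional modules $V$ and $W$ preserves both properties (exactly as in the proof of Lemma~\ref{projsum}); hence $M_c^{r,t}$ is projective--injective in $\mathcal O_d$, and so is each of its indecomposable summands. By Lemma~\ref{projsum}, for $\mu\in\bar\Lambda_d^{r,t}$ the module $P(\mu)$ is such a summand, so applying the additive functor $\mathbb V$ and Lemma~\ref{vp}(b) exhibits $Q(\mu)=\mathbb V(P(\mu))$ as a direct summand of $\mathbb V(M_c^{r,t})\cong \mathbb C_d\circledast V^{\circledast r}\circledast W^{\circledast t}$.

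Next I would show this last module, hence $Q(\mu)$, is projective in $R_d(e)$. By Lemma~\ref{vp}(a) the object $\mathbb C_d$ is projective in $R_d(e)$, so it is enough to know that the endofunctors $?\circledast V$ and $?\circledast W$ of $R_d(e)$ preserve projectivity. Each of these functors is exact, being the composite of the Skryabin equivalence, an exact tensoring by a finite dimensional $\mathfrak g$-module, and the inverse equivalence $\mathrm{Wh}$; moreover, since $V^{\ast}\cong W$ as $\mathfrak g$-modules, the biadjunction between $?\otimes V$ and $?\otimes W$ transports along these equivalences to a biadjunction between $?\circledast V$ and $?\circledast W$. An exact functor with an exact adjoint preserves projectives, so $\mathbb C_d\circledast V^{\circledast r}\circledast W^{\circledast t}$ and its summand $Q(\mu)$ are projective.

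For the top of $Q(\mu)$ I would use that $\mathbb V$ is exact (it is a quotient functor, with a right adjoint). Applying it to the surjection $P(\mu)\twoheadrightarrow L(\mu)$ and invoking Lemma~\ref{simp-simp} yields a surjection $Q(\mu)\twoheadrightarrow \mathbb V(L(\mu))=D(\mu)$. Hence the projective cover of the simple object $D(\mu)$ is a direct summand of the projective module $Q(\mu)$; write $Q(\mu)\cong Q'(\mu)\oplus R$ with $Q'(\mu)$ this projective cover and $R$ projective. It remains to prove $R=0$, equivalently that $Q(\mu)$ is indecomposable.

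This indecomposability is the step I expect to be the main obstacle. I would deduce it from the fact that $\mathbb V$ induces an isomorphism $\mathrm{End}_{\mathcal O_d}(P(\mu))\xrightarrow{\ \sim\ }\mathrm{End}_{R_d(e)}(Q(\mu))$: the Whittaker functor of \cite{BK} is fully faithful on the projective--injective objects of $\mathcal O_d$, and the compatibility isomorphisms of Lemma~\ref{nuisom} together with Lemma~\ref{vp} show that this persists verbatim once $V^{\otimes r}\otimes W^{\otimes t}$ replaces $V^{\otimes(r+t)}$. Since $P(\mu)$ is indecomposable its endomorphism ring is local, hence so is $\mathrm{End}_{R_d(e)}(Q(\mu))$, whence $Q(\mu)$ is indecomposable and $R=0$; thus $Q(\mu)=Q'(\mu)$ is the projective cover of $D(\mu)$. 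Alternatively one can bypass full faithfulness by computing $[Q(\mu):D(\nu)]=[P(\mu):L(\nu)]$ from the exactness of $\mathbb V$ and $\mathbb V(L(\nu))\in\{D(\nu),0\}$, and comparing with the $\nabla$-flag multiplicities of $Q'(\mu)$ via the highest weight structure of $R_d(e)$; but this relies on the same input from \cite{BK}.
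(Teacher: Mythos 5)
Your overall skeleton (isolate $Q(\mu)$ as a projective summand, identify its head, reduce to indecomposability) is sound, and your biadjunction argument for projectivity of $?\circledast V$ and $?\circledast W$ is a clean alternative to the paper's citation of the proof of \cite[Lemma~5.7]{BK}. But you correctly flagged indecomposability as the crux, and the route you chose to get through it has a real gap.

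The paper's proof does \emph{not} invoke any general full faithfulness of $\mathbb V$ on projective--injectives. Instead it argues by induction on $t$: by \cite[Theorem~4.5]{BK2}, $P(\mu)$ is a summand of $P(\nu)\otimes W$ for some $\nu\in\bar\Lambda_d^{r,t-1}$; writing $P(\nu)\otimes W=\bigoplus_{\mu}P(\mu)^{m_\mu}$, the dimension count from the end of the proof of \cite[Lemma~5.7]{BK} (with the roles of $V$ and $W$ swapped) gives $\dim\mathrm{Hom}_{\U(\mathfrak g,e)}(Q(\nu)\circledast W, D(\mu))=m_\mu$, which forces $Q(\mu)$ to be indecomposable. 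Your argument instead asserts that ``the Whittaker functor of \cite{BK} is fully faithful on the projective--injective objects of $\mathcal O_d$.'' That is precisely what this paper has to work to establish in the mixed setting: Lemma~\ref{keylemma} and Corollary~\ref{krt}, which give the isomorphism $\mathrm{End}_{\mathcal O}(M_c^{r,t})\cong\mathrm{End}_{\U(\mathfrak g,e)}(V_d^{r,t})$, both cite the present Lemma~\ref{vproj} in their proofs. So if you source full faithfulness from the paper's own machinery you are arguing in a circle. Sourcing it instead from \cite{BK} does not work either: the full faithfulness statement proved there concerns the projective generator $M_c\otimes V^{\otimes r}$ and its summands, i.e.\ $P(\mu)$ with $\mu\in\bar\Lambda_d^{r,0}$. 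For $t>0$ a typical $\mu\in\bar\Lambda_d^{r,t}$ has $\mu-\delta_c$ with negative entries, so $P(\mu)$ is not a summand of any $M_c\otimes V^{\otimes r'}$ and falls outside the scope of the result you want to quote. Your sketched fallback via comparing $[Q(\mu):D(\nu)]$ with $\nabla$-flag multiplicities is too vague to rescue the argument as stated; in particular, matching composition multiplicities does not by itself show $Q(\mu)$ is indecomposable. The paper's induction on $t$ together with the explicit $m_\mu$-count is the step you are missing.
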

\begin{proof} If $t=0$, this is  \cite[Lemma~5.7]{BK}. Suppose $t>0$ and $\mu \in\bar\Lambda_d^{r,t} $.
By \cite[Theorem~4.5]{BK2}, $P(\mu)$ is a direct summand of $P(\nu)\otimes W$ for some $\nu\in \bar\Lambda_d^{r,t-1}$. Since $\mathbb V(?)$ is exact,
 $Q(\mu)$ is a direct summand of $\mathbb V(P(\nu)\otimes W)\cong Q(\nu)\circledast W$. It follows from the proof of \cite[Lemma~5.7]{BK} that
the functor $?\circledast W$ sends projective objects in $R_d(e)$ to projective objects in $R_d(e)$. By inductive assumption, $Q(\nu)$ is projective and so is
$Q(\mu)$.  Write $$P(\nu)\otimes W=\oplus_{\mu} P(\mu)^{m_\mu}$$
Then  $\dim \rm Hom_{\U(\mathfrak g, e)} (Q(\nu)\circledast W, D(\mu))=m_\mu$ (see the display at the end of the   proof of \cite[Lemma~5.7]{BK} where we switch the role between $V$ and $W$).
So, $Q(\mu)$ is indecomposable. By Lemma~\ref{simp-simp}  and the exactness of $\mathbb V(?)$,   $Q(\mu)$ has to be  the projective cover of $D(\mu)$.
\end{proof}

For convenience, let $i_{r, t}$ be the isomorphism in Lemma~\ref{vp}(b), which is obtained by composing isomorphisms in   Lemma~\ref{vp}(a) and Lemma~\ref{nuisom}. Recall that $\mu_{r, t}$ is the isomorphism
in Corollary~\ref{muiso}. Let $j_{r, t}=\mu_{r, t}\circ i_{r, t}$. Then $j_{r, t}$ is the isomorphism in Lemma~\ref{vp}(c).

\begin{lemma}\label{keylemma}  For any object $M\in\mathcal O^{r-t}_d$, let
$$\gamma^{r,t}_M: {\rm Hom}_{\mathcal O}(M_c^{r,t},M)\rightarrow {\rm Hom}_{\U(\mathfrak g, e)}(V_d^{r,t},\mathbb V(M))$$ be the map sending $f$ to $\mathbb V(f)\circ j^{-1}_{r,t}$. Then $\gamma^{r,t}_M $ is a  $\mathscr B^{\rm aff}_{r,t}$-module isomorphism.
\end{lemma}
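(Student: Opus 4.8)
The statement asserts that the map $\gamma^{r,t}_M\colon \mathrm{Hom}_{\mathcal O}(M_c^{r,t},M)\to \mathrm{Hom}_{\U(\mathfrak g,e)}(V_d^{r,t},\mathbb V(M))$, $f\mapsto \mathbb V(f)\circ j_{r,t}^{-1}$, is a $\mathscr B^{\mathrm{aff}}_{r,t}$-module isomorphism. The plan is to separate this into two independent claims: first, that $\gamma^{r,t}_M$ is a $\mathbb C$-linear isomorphism, and second, that it intertwines the two right $\mathscr B^{\mathrm{aff}}_{r,t}$-actions. For the first claim, I would argue that on the full subcategory $\mathcal O^{r-t}_d$ the Whittaker functor $\mathbb V$ restricts to an equivalence of categories onto a suitable quotient/block of $R_d(e)$ — this is precisely the kind of statement underlying Lemma~\ref{simp-simp} and Lemma~\ref{vproj}, and in the $t=0$ case it is \cite[Lemma~5.8 or thereabouts]{BK}. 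Since $M_c^{r,t}$ lies in $\mathcal O^{r-t}_d$ by Lemma~\ref{key012}, and since $\mathbb V(M_c^{r,t})\cong V_d^{r,t}$ by Lemma~\ref{vp}(c), the functor $\mathbb V$ induces a bijection $\mathrm{Hom}_{\mathcal O}(M_c^{r,t},M)\xrightarrow{\sim}\mathrm{Hom}_{R_d(e)}(\mathbb V(M_c^{r,t}),\mathbb V(M))$; post-composing with the fixed isomorphism $j_{r,t}^{-1}$ (which is an isomorphism of $\U(\mathfrak g,e)$-modules, hence induces a bijection on Hom-spaces) yields exactly $\gamma^{r,t}_M$. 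So the bijectivity reduces to faithfulness-plus-fullness of $\mathbb V$ on the relevant subcategory.

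Establishing that $\mathbb V$ is fully faithful on $\mathcal O^{r-t}_d$ is where the real work lies. The strategy I would follow, mirroring Brundan–Kleshchev, is: $M_c^{r,t}$ is a projective-injective (indeed tilting) object in $\mathcal O_d$, and by Lemma~\ref{projsum} its indecomposable summands are exactly the $P(\mu)$ with $\mu\in\bar\Lambda_d^{r,t}$, which by construction is a complete list of the indecomposable projectives whose image under $\mathbb V$ can be nonzero on $\mathcal O^{r-t}_d$. Then one shows that $\mathbb V$ sends $P(\mu)$ to the projective cover $Q(\mu)$ of $D(\mu)$ (this is Lemma~\ref{vproj}), and that $\mathbb V$ induces isomorphisms $\mathrm{Hom}_{\mathcal O}(P(\mu),P(\nu))\cong \mathrm{Hom}_{R_d(e)}(Q(\mu),Q(\nu))$ — both sides compute, via Lemma~\ref{simp-simp}, the same multiplicity data $[P(\nu):L(\mu)]=[Q(\nu):D(\mu)]$. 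Because every object of $\mathcal O^{r-t}_d$ is a quotient of a finite direct sum of such $P(\mu)$'s, a standard five-lemma / projective-presentation argument upgrades fully-faithfulness on projectives to fully-faithfulness on all of $\mathcal O^{r-t}_d$, giving bijectivity of $\gamma^{r,t}_M$.

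For the second claim, that $\gamma^{r,t}_M$ is $\mathscr B^{\mathrm{aff}}_{r,t}$-equivariant, the point is that the right $\mathscr B^{\mathrm{aff}}_{r,t}$-action on $\mathrm{Hom}_{\mathcal O}(M_c^{r,t},M)$ is by precomposition with the action of $\mathscr B^{\mathrm{aff}}_{r,t}$ on $M_c^{r,t}$ (Proposition~\ref{alghom1}), while the action on $\mathrm{Hom}_{\U(\mathfrak g,e)}(V_d^{r,t},\mathbb V(M))$ is by precomposition with the action on $V_d^{r,t}$ (Proposition~\ref{affmodule}, transported through Lemma~\ref{vp}(c)). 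So one must check, for each generator $b\in\{e_1,x_1,\bar x_1,s_i,\bar s_j\}$, that $j_{r,t}\circ b_{V_d^{r,t}} = \mathbb V(b_{M_c^{r,t}})\circ j_{r,t}$ as endomorphisms, where $b_{M_c^{r,t}}$ denotes the endomorphism of $M_c^{r,t}$ from Definition~\ref{casm} and $b_{V_d^{r,t}}$ the corresponding endomorphism of $V_d^{r,t}$. But $j_{r,t}=\mu_{r,t}\circ i_{r,t}$, and $i_{r,t}$ is built from the natural isomorphisms $\nu_{M,X}$ of Lemma~\ref{nuisom} together with $\mathbb C_d\cong\mathbb V(M_c)$; the compatibility of each $\nu$ with $x_1,\bar x_1,s_i,\bar s_j,e_1$ is exactly the content of Proposition~\ref{properofv}(1)--(5), and $\mu_{r,t}$ is precisely the isomorphism through which the $\mathscr B^{\mathrm{aff}}_{r,t}$-action on $\mathbb C_d\circledast V^{\circledast r}\circledast W^{\circledast t}$ was \emph{defined} to match the one on $V_d^{r,t}$ (the discussion preceding Lemma~\ref{actofxbarx} and Proposition~\ref{affmodule}). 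Chasing these commuting squares once, generator by generator, gives the equivariance.

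The main obstacle is the first claim — the full faithfulness of $\mathbb V$ on $\mathcal O^{r-t}_d$ — since the equivariance is a formal diagram chase once Proposition~\ref{properofv} is in hand. The subtlety in the full-faithfulness argument is ensuring that the indexing set $\bar\Lambda_d^{r,t}$ really exhausts all indecomposable projectives relevant to $\mathcal O^{r-t}_d$ (so that $M_c^{r,t}$ is a projective generator of the right subcategory) and that the Whittaker functor $\mathbb V$ behaves as the expected quotient functor there; this is where one leans hardest on \cite{BK1} (Lemma~8.18--8.21) and on \cite{BK} (the $t=0$ analogue, \cite[Lemma~5.7]{BK} and the lemma immediately following it), adapting those arguments by allowing tensoring with $W$ as well as $V$ — exactly as was already done in the proof of Lemma~\ref{vproj}.
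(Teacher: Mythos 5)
The equivariance half of your argument matches the paper's: once Proposition~\ref{properofv} and the construction $j_{r,t}=\mu_{r,t}\circ i_{r,t}$ are in hand, the $\mathscr B^{\rm aff}_{r,t}$-module structure transports formally, and the paper does exactly this in one line. The trouble is in your treatment of bijectivity.

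You propose to prove something considerably stronger than what is needed, namely that $\mathbb V$ is fully faithful on all of $\mathcal O^{r-t}_d$, and the route you sketch does not work. The crux is your assertion that the $P(\mu)$ with $\mu\in\bar\Lambda_d^{r,t}$ exhaust the relevant indecomposable projectives, so that ``every object of $\mathcal O^{r-t}_d$ is a quotient of a finite direct sum of such $P(\mu)$'s.'' This is false in general: $\Lambda_d^{r,t}$ is a \emph{proper} subset of $\Lambda_d^{r-t}$, so there are weights $\mu\in\bar\Lambda_d^{r-t}\setminus\bar\Lambda_d^{r,t}$ for which $L(\mu)$ lies in $\mathcal O^{r-t}_d$ and $\mathbb V(L(\mu))=D(\mu)\neq 0$, yet $P(\mu)$ is \emph{not} a summand of $M_c^{r,t}$ (Lemma~\ref{projsum} is an ``if and only if''). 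Since $\operatorname{Hom}_{\mathcal O}(P(\nu),L(\mu))=\delta_{\nu,\mu}\mathbb C$, such an $L(\mu)$ is not a quotient of any direct sum of the $P(\nu)$ with $\nu\in\bar\Lambda_d^{r,t}$; the projective-presentation/five-lemma upgrade you invoke therefore has no starting point. You also leave the key step $\operatorname{Hom}_{\mathcal O}(P(\mu),P(\nu))\cong\operatorname{Hom}_{R_d(e)}(Q(\mu),Q(\nu))$ at the level of ``both sides compute the same multiplicity,'' which establishes equality of dimensions but not that $\mathbb V$ realizes the isomorphism.

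The paper's argument avoids all of this by not attempting full faithfulness. It observes that $\operatorname{Hom}_{\mathcal O}(M_c^{r,t},?)$ is exact (as $M_c^{r,t}$ is projective) and that $\operatorname{Hom}_{\U(\mathfrak g,e)}(V_d^{r,t},\mathbb V(?))$ is exact (as $\mathbb V$ is exact and $V_d^{r,t}\cong\mathbb V(M_c^{r,t})$ is projective by Lemma~\ref{vproj}), so by naturality and the five lemma one only needs $\gamma^{r,t}_{L(\mu)}$ to be an isomorphism for $\mu\in\Lambda_d^{r-t}$. Decomposing $M_c^{r,t}$ into its summands $P(\nu)$, $\nu\in\bar\Lambda_d^{r,t}$, both sides of $\operatorname{Hom}_{\mathcal O}(P(\nu),L(\mu))\to\operatorname{Hom}_{\U(\mathfrak g,e)}(Q(\nu),\mathbb V(L(\mu)))$ have dimension $\delta_{\nu,\mu}$ by Lemmas~\ref{simp-simp} and~\ref{vproj}, and when $\mu=\nu$ the nonzero map $P(\nu)\twoheadrightarrow L(\nu)$ goes to the nonzero map $Q(\nu)\twoheadrightarrow D(\nu)$ by exactness of $\mathbb V$. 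This reduction to simples plus dimension count is both simpler and, unlike your route, does not require $M_c^{r,t}$ to be a projective generator of $\mathcal O^{r-t}_d$.
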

\begin{proof}  By  Proposition~\ref{properofv} and Lemma~\ref{vp}(c),   $\gamma^{r,t}_M$ is a $\mathscr  B_{r,t}^{\rm aff}$-homomorphism. So, it suffices to prove that  $\gamma^{r,t}_M$ is a linear isomorphism. Note that $M_c^{r, t}$ is projective, and the exact functor $\mathbb V$ sends a  projective module  to a projective module (see Lemma~\ref{vproj}). So, both
 $ \Hom_{\mathcal O }(M_c^{r,t}, ?)$ and  $\Hom_{\U(\mathfrak g, e)}(V_d^{r,t}, ?)$   are exact functors. It suffices to check that $\gamma^{r,t}_{L(\mu)}$ a linear isomorphism  for any $\mu\in \Lambda_d^{r-t}$.
By Lemma~\ref{vp}(c), we  need to show
$$\Hom_{\mathcal O}(M_c^{r,t},L(\mu))\rightarrow \Hom_{\U(\mathfrak g, e)}(\mathbb V (M_c^{r,t}),\mathbb V(L(\mu))), f\mapsto \mathbb V(f) $$
is a  linear isomorphism.
By Lemma~\ref{projsum}, each  indecomposable summands of $M_c^{r,t}$ is  of form $P(\nu)$ for some  $\nu\in\bar \Lambda_d^{r,t}$.
Therefore it is enough to show
\begin{equation}\label{munuiso}
\Hom_{\mathcal O}(P(\nu),L(\mu))\rightarrow \Hom_{\U(\mathfrak g, e)}(\mathbb V (P(\nu)),\mathbb V(L(\mu))), f\mapsto \mathbb V(f) \end{equation}
is a linear  isomorphism for each $\mu\in \Lambda_d^{r-t},\nu\in\bar \Lambda_d^{r-t}$.
By Lemmas~\ref{simp-simp} and ~\ref{vproj},   RHS (resp., LHS)  of   \eqref{munuiso} is of dimension $\delta_{\mu, \nu}$. So,  it is enough to prove that the linear map in \eqref{munuiso} is a linear isomorphism if  $\mu=\nu$.     Since   $\mathbb V$ is exact, by Lemma~\ref{simp-simp},     $\mathbb V(f)\neq 0$ for any  $0\neq f\in \Hom_{\mathcal O}(P(\nu),L(\nu))$.    This implies that the map from (\ref{munuiso}) is a linear  isomorphism.
\end{proof}

\vskip.3cm

\begin{cor}~\label{krt}  There is a  $\mathscr B^{\rm aff}_{r, t}$-isomorphism $k_{r, t}: \End_{\mathcal O}(M_c^{r,t})\rightarrow \End_{\U(\mathfrak g, e)}(V_d^{r,t})$ sending $f\in \End_{\mathcal O}(M_c^{r,t}) $ to $j_{r,t}\circ \mathbb V(f)\circ j^{-1}_{r,t}$.
\end{cor}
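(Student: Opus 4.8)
The plan is to deduce the corollary immediately from Lemma~\ref{keylemma}, specialized to $M=M_c^{r,t}$, after correcting the target of that isomorphism by the bimodule isomorphism $j_{r,t}$ of Lemma~\ref{vp}(c). First I would observe that $M_c^{r,t}$ is an object of $\mathcal O_d^{r-t}$ by Lemma~\ref{key012}, so Lemma~\ref{keylemma} applies with $M=M_c^{r,t}$ and yields a $\mathscr B^{\rm aff}_{r,t}$-module isomorphism
\[
\gamma^{r,t}_{M_c^{r,t}}:\ \End_{\mathcal O}(M_c^{r,t})\ \longrightarrow\ \Hom_{\U(\mathfrak g, e)}\big(V_d^{r,t},\,\mathbb V(M_c^{r,t})\big),\qquad f\longmapsto \mathbb V(f)\circ j^{-1}_{r,t},
\]
where both Hom-spaces carry the left $\mathscr B^{\rm aff}_{r,t}$-action induced by the right action of $\mathscr B^{\rm aff}_{r,t}$ on the common source $M_c^{r,t}$, respectively $V_d^{r,t}$ (the latter via Proposition~\ref{affmodule}).

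Next I would post-compose with the linear isomorphism
\[
\Theta:\ \Hom_{\U(\mathfrak g, e)}\big(V_d^{r,t},\,\mathbb V(M_c^{r,t})\big)\ \longrightarrow\ \End_{\U(\mathfrak g, e)}(V_d^{r,t}),\qquad g\longmapsto j_{r,t}\circ g,
\]
which is bijective because $j_{r,t}\colon \mathbb V(M_c^{r,t})\to V_d^{r,t}$ is an isomorphism by Lemma~\ref{vp}(c). Since post-composition with a fixed map does not interfere with the left $\mathscr B^{\rm aff}_{r,t}$-action coming from the source, $\Theta$ is a $\mathscr B^{\rm aff}_{r,t}$-module isomorphism: for $b\in\mathscr B^{\rm aff}_{r,t}$, $g$ as above and $v\in V_d^{r,t}$ one computes $\Theta(b\cdot g)(v)=j_{r,t}\big(g(vb)\big)=\big(b\cdot\Theta(g)\big)(v)$. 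Setting $k_{r,t}=\Theta\circ\gamma^{r,t}_{M_c^{r,t}}$ then gives a $\mathscr B^{\rm aff}_{r,t}$-module isomorphism with $k_{r,t}(f)=j_{r,t}\circ\mathbb V(f)\circ j^{-1}_{r,t}$, which is the asserted map. Since $\mathbb V$ is a functor and conjugation by the isomorphism $j_{r,t}$ preserves composition, $k_{r,t}$ is moreover an isomorphism of algebras under composition of endomorphisms, and by the naturality underlying Lemma~\ref{keylemma} together with Lemma~\ref{vp}(c) it also intertwines the right $\mathscr B^{\rm aff}_{r,t}$-actions, so it is in fact a bimodule isomorphism.

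There is no real obstacle in this argument: all of the substance is already contained in Lemma~\ref{keylemma} (and, through it, in Theorem~\ref{main321} and the results of \cite{BK,BK1}). The only point requiring a moment's care is the bookkeeping of the $\mathscr B^{\rm aff}_{r,t}$-module structures, namely checking that the left action on $\End_{\mathcal O}(M_c^{r,t})$ relevant here is the one induced from the source copy of $M_c^{r,t}$, so that Lemma~\ref{keylemma} applies verbatim, and that replacing the target $\mathbb V(M_c^{r,t})$ by $V_d^{r,t}$ along $j_{r,t}$ leaves that action undisturbed.
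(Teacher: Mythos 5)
Your proof is correct and matches the paper's intent: the corollary is obtained by specializing Lemma~\ref{keylemma} to $M=M_c^{r,t}$ (permissible by Lemma~\ref{key012}) and transporting the target $\mathbb V(M_c^{r,t})$ to $V_d^{r,t}$ along the bimodule isomorphism $j_{r,t}$ of Lemma~\ref{vp}(c). Your additional checks — that post-composition with $j_{r,t}$ respects the $\mathscr B^{\rm aff}_{r,t}$-module structure and that the resulting conjugation map is also an algebra isomorphism — are exactly the bookkeeping the paper leaves implicit, and they are what is needed for the identity $\Phi=k_{r,t}\circ\varphi$ in Theorem~\ref{main11111}.
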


\begin{Theorem} \label{main11111} Let $\varphi : \mathscr B_{k, r,t}\rightarrow \End_{\mathcal O}(M_c^{r,t})^{op}$ be the algebra homomorphism in Proposition~\ref{alghom1}. Then $\Phi= k_{r,t}\circ \varphi$, where $\Phi$ (resp.,  $k_{r, t}$)  is given in Theorem~\ref{main321} (resp., Corollary~\ref{krt}). So,  $\varphi $ is always surjective, and it is injective if $r+t\leq  q_k$.
\end{Theorem}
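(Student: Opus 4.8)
The plan is to first establish the factorization identity $\Phi=k_{r,t}\circ\varphi$, and then read off the surjectivity and injectivity of $\varphi$ from Theorem~\ref{main321} together with the fact (Corollary~\ref{krt}) that $k_{r,t}$ is a linear isomorphism. Since $\varphi$ factors through $\mathscr B_{k,r,t}=\mathscr B_{r,t}^{\rm aff}/J$ by Proposition~\ref{alghom1}, $\Phi$ factors through $\mathscr B_{k,r,t}$ by Theorem~\ref{main321}, and $k_{r,t}$ is the map obtained by conjugating with $j_{r,t}$ after applying the exact functor $\mathbb V$, all the homomorphisms in sight factor through $\mathscr B_{k,r,t}$; hence it suffices to prove $\Phi=k_{r,t}\circ\varphi$ as algebra homomorphisms $\mathscr B_{r,t}^{\rm aff}\to{\rm End}_{\U(\mathfrak g,e)}(V_d^{r,t})^{\rm op}$, and for this, since $\mathscr B_{r,t}^{\rm aff}$ is generated by $e_1,x_1,\bar x_1,s_i,\bar s_j$, it is enough to check the equality on these generators.

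\textbf{The generator check.} Unwinding definitions, for a generator $g$ one has $k_{r,t}(\varphi(g))=j_{r,t}\circ\mathbb V(\varphi(g))\circ j_{r,t}^{-1}$, where $j_{r,t}=\mu_{r,t}\circ i_{r,t}$ is the composite of the isomorphism $\mathbb C_d\cong\mathbb V(M_c)$ of Lemma~\ref{vp}(a), the iterated natural isomorphisms $\nu_{-,V}$ and $\nu_{-,W}$ of Lemma~\ref{nuisom} that assemble $i_{r,t}$, and the isomorphism $\mu_{r,t}$ of Corollary~\ref{muiso}. So what must be checked is that $j_{r,t}$ intertwines the right $\mathscr B_{r,t}^{\rm aff}$-action on $\mathbb V(M_c^{r,t})$ obtained by applying $\mathbb V$ to $\varphi$ with the right $\mathscr B_{r,t}^{\rm aff}$-action on $V_d^{r,t}$ given by $\Phi$ (Proposition~\ref{affmodule}). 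For $e_1,x_1,\bar x_1,s_i,\bar s_j$ this is precisely the content of Proposition~\ref{properofv}, items (1)--(5) together with its analogues for the remaining $s_i$ and $\bar s_j$: the commuting squares there state exactly that $\mathbb V$ of the endomorphisms $g(M_c)$ of Definition~\ref{func12}, transported through the $\nu$-isomorphisms, coincide with the endomorphisms $g(\mathbb C_d)$ of Definition~\ref{wact}, which under $\mu_{r,t}$ are the defining operators of $\Phi$. Equivalently, this intertwining is already packaged as Lemma~\ref{vp}(c): after unwinding, the statement that $j_{r,t}$ is an isomorphism of $(\U(\mathfrak g,e),\mathscr B_{r,t}^{\rm aff})$-bimodules is exactly the identity $j_{r,t}\circ\mathbb V(\varphi(\xi))\circ j_{r,t}^{-1}=\Phi(\xi)$ for all $\xi$. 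Hence $\Phi=k_{r,t}\circ\varphi$ on $\mathscr B_{r,t}^{\rm aff}$, and therefore on $\mathscr B_{k,r,t}$.

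\textbf{Conclusion and the main obstacle.} Granting $\Phi=k_{r,t}\circ\varphi$: by Corollary~\ref{krt} the map $k_{r,t}$ is a linear isomorphism, so $\varphi$ and $\Phi$ have the same kernel, and $\varphi$ is surjective if and only if $\Phi$ is surjective. By Theorem~\ref{main321}, $\Phi$ is always surjective and is injective when $r+t\le q_k$, and the same conclusions then transfer to $\varphi$. The only genuine work lies in the bookkeeping behind $\Phi=k_{r,t}\circ\varphi$: one must track the ordering of the tensor factors $0,r,r-1,\dots,1,\bar1,\dots,\bar t$, the precise placement of the associativity isomorphisms $a_{M,X,Y}$ of \eqref{amxy}, and the various ``op'' conventions, so that the commuting squares of Lemmas~\ref{nuisom} and~\ref{keylemma} and of Proposition~\ref{properofv} glue together without sign or normalization discrepancies. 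This is delicate precisely for $x_1$ and $\bar x_1$, whose operators on $V_d^{r,t}$ (Lemma~\ref{actofxbarx}) differ from the naive $-\pi_{0,1}(\Omega)$ and $-\pi_{0,\bar1}(\Omega)$ of Definition~\ref{casm}.
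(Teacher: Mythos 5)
Your proposal is correct and essentially matches the paper's proof. The paper disposes of the factorization $\Phi=k_{r,t}\circ\varphi$ in a single sentence citing Corollary~\ref{krt} (that $k_{r,t}$ is a $\mathscr B^{\rm aff}_{r,t}$-isomorphism, evaluated at the identity endomorphism, gives $k_{r,t}(\varphi(\xi))=\Phi(\xi)$), and then concludes via Theorem~\ref{main321}; you spell out the same chain explicitly by tracing the bimodule isomorphism of Lemma~\ref{vp}(c) back to the commuting squares in Proposition~\ref{properofv}, which is precisely what underlies Corollary~\ref{krt}.
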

\begin{proof} By Corollary~\ref{krt}, $\Phi= k_{r,t}\circ \varphi$.   The second  result follows from Theorem~\ref{main321} and the first assertion. \end{proof}
\begin{rem}\label{general} Let  $\mathscr B_{k, r, t}$ be the  cyclotomic walled Brauer algebras $\mathscr B^{\rm aff}_{r, t}/I$ where $I$ is the two-sided ideal generated by $f(x_1)=\prod_{i=1}^k (x_1-u_k)$ and $g(\bar x_1)=\prod_{i=1}^k (\bar x_1-\bar u_k)$ satisfying  $e_1f(x_1)=(-1)^k e_1 g(\bar x_1)$, and $\omega_a$'s are determined by \eqref{omegaa}.  By Brundan-Kleshchev's arguments  in  \cite{BK}, one can  choose a partition $(q_1, q_2, \cdots, q_k)$ of $n$  such that $\omega_0=n$ and $u_i$'s are determined by Definition~\ref{fgpolyv}. By Lemma~\ref{polyofx}, there are some  $\bar v_i, 1\le i\le k$  such that $g_1(\bar x_1)=\prod_{i=1}^k (\bar x_1-v_i)$ acts trivially on $M_c\otimes V\otimes W$. By arguments in the proof of Lemma~\ref{efe122}, we have $g(\bar x_1)=g_1(\bar x_1)$.
So, it is enough for us to assume that $(q_1, q_2, \cdots, q_k)$ is a  partition of $n$ when we study the representations of $\mathscr B_{k,r,t} $ whose parameters are arisen  from mixed Schur-Weyl duality. \end{rem}

\section{Decomposition numbers of  $\mathscr B_{k, r, t}$ arising from mixed Schur-Weyl duality }
In this section, we work over the ground field $\mathbb C$. The aim of this section is to classify highest weight vectors of   $M_c^{r,t}$  under the assumption $r+t\le \min\{q_1, q_2, \cdots, q_k\}$, where $q=(q_1, q_2, \cdots, q_k)$ is given in Assumption~\ref{qset}. This in turn gives an efficient way to compute decomposition numbers of $\mathscr B_{k, r, t}$ arising from mixed Schur-Weyl duality.  Since we use Theorem~\ref{isomorphism}.  we do not assume that $(q_1, q_2, \cdots, q_k)$ is a partition.  First, we consider the case  $t=0$.

Recall that the degenerate affine Hecke algebra $\mathscr H^{\rm aff}_r$ generated by $x_1$ and $s_i, i\in \underline{r-1}$  in section~2 and  $x_{i+1}=s_ix_is_i-s_i$,  $i\in \underline{r-1}$.
The current $x_i$'s  are the usual $-x_i$'s in \cite{BK}  since we use $-\pi_{1,0}(\Omega)|_{M^{r, 0}}$ instead of $\pi_{1,0}(\Omega)|_{M^{r, 0}}$ in \cite{BK}.
 Thus, our current eigenvalues of $x_1$ are the same as  those in \cite{BK} by multiplying $-1$. The cyclotomic (or level $k$) degenerate  Hecke algebra
$\mathscr H_{k, r}:=\mathscr H^{\rm aff}_r/I$,  where $I$ is the two-sided ideal generated by $f(x_1)=\prod_{i=1}^k (x_1-u_i)$ in Definition~\ref{fgpolyv}.

For each composition $\lambda=(\lambda_1, \lambda_2, \cdots)$, let $|\lambda|=\sum_i \lambda_i$.
 A $k$-partition (resp., composition)  $\lambda$  of $r$ is  of form $(\lambda^{(1)}, \lambda^{(2)},  \cdots, \lambda^{(k)})$ where  each $\lambda^{(i)}$ is a partition (resp., composition) such that   $|\lambda|=\sum_{i=1}^k |\lambda^{(i)}|=r$.
Let $\Lambda_k^+(r)$ be the set of all $k$-partitions of $r$. For each $\lambda\in \Lambda_1^+(r)$,  the {\it Young diagram} $Y(\lambda)$ is a
collection of boxes arranged in left-justified rows with $\lambda_i$
boxes in the $i$th row of $Y(\lambda)$.  A {\it $\lambda$-tableau} $\s$ is
obtained by inserting elements $i,\, 1\le i\le r$ into $Y(\lambda)$ without
repetition. A $\lambda$-tableau $\s$   is said to be {\it standard} if the
entries in  $\s$   increase both from left to right in each row
and from top to bottom in each column. Let $\Std(\lambda)$ be the
set of all standard $\lambda$-tableaux.
Let $\t^\lambda\in \Std(\lambda)$  be obtained from  $Y(\lambda)$ by adding $1, 2, \cdots, r$ from left to right
along the rows of $[\lambda]$.
Let $\t_{\lambda}\in \Std(\lambda)$  be obtained from
$Y(\lambda)$ by adding $1, 2, \cdots, r$ from top to bottom along the columns of $Y(\lambda)$.
For example, if $\lambda=(3,2)$, then
\begin{equation}\label{tla0}
\t^{\lambda}= \ \ \young(123,45), \quad \text{ and \ }
 \t_{\lambda}=\ \  \young(135,24) .\end{equation}
 If  $\lambda\in \Lambda_k^+(r)$, then the corresponding Young diagram $Y(\lambda)$ is $(Y(\lambda^{(1)}), Y(\lambda^{(2)}), \cdots, Y(\lambda^{(k)}))$. In this case,  a {\it $\lambda$-tableau} $\s=(\s_1, \s_2, \cdots, \s_k)$ is
obtained by inserting elements $i\in \underline{r}$ into $Y(\lambda)$ without
repetition. A $\lambda$-tableau $\s$   is said to be {\it standard} if the
entries in  each $\s_i$, $ i\in \underline{k}$,    increase both from left to right in each row
and from top to bottom in each column. Let $\Std(\lambda)$ be the
set of all standard $\lambda$-tableaux.
Let $\t^\lambda\in \Std(\lambda)$  be obtained from  $Y(\lambda)$ by adding $1, 2, \cdots, r$ from left to right
along the rows of $Y(\lambda^{(1)})$ and then $Y(\lambda^{(2)})$ and so on.
Let $\t_{\lambda}\in \Std(\lambda)$  be obtained from
$[\lambda]$ by adding $1, 2, \cdots, r$ from top to bottom along the columns of $[\lambda^{(k)}]$ and then
$[\lambda^{(k-1)}]$, and so on.
For
example, if $\lambda=((3,2), (3,1))\in \Lambda_2^+(9)$, then
\begin{equation}\label{tla}
\t^{\lambda}=\left( \ \ \young(123,45),\  \young(678,9)\ \ \right) \quad \text{ and \ }
 \t_{\lambda}=\left(\ \ \young(579,68), \ \young(134,2)\ \ \right).\end{equation}
Recall that  $\mathfrak S_r$ acts on the right of the set  $\{1, 2, \cdots, r\}$ (i.e., the right action).  Then $\mathfrak S_r$  acts on the right of a $\lambda$-tableau
$\s$ by permuting its entries. For example, if $\lambda=((3,2), (3,1))\in \Lambda_2^+(9)$, and $w=s_1s_2$,  then
\begin{equation}\label{tlaw}
\t^{\lambda}w=\left( \ \ \young(312,45),\  \young(678,9)\ \ \right).\end{equation}
Write $d(\s)=w$ for $w\in\mathfrak S_r $ if $\t^\lambda w=\s$.
 Then  $d(\s)$ is
uniquely determined by $\s$. Let  $w_\lambda=d(\t_\lambda)$.
Following \cite{DR}, we define $[\lambda]=[a_0, a_1, \cdots, a_k]$ for any $\lambda\in \Lambda_k^+(r)$ such that $a_0=0$ and $a_i=\sum_{j=1}^i |\lambda^{(j)}|$.
Denote $[\lambda]\preceq [\mu]$ if $a_i\le b_i$ for $1\le i\le k$,  provided that  $[\mu]=[b_0, b_1, \cdots, b_k]$.
Let $w_{[\lambda]}\in \mathfrak S_r$  be defined by
\begin{equation}\label{wll}(a_{i-1}+l)w_{[\lambda]}=r-a_i+l, \text{ for all $i$ with $a_{i-1}<a_i$, $1\le l\le a_i-a_{i-1}$.}\end{equation}
For example, $$w_{[\lambda]}=\begin{pmatrix}  1 & 2 &3 &4 &5 &6 &7 &8 &  9\\
6& 7& 8& 9&  2& 3& 4& 5&  1\\ \end{pmatrix}\text{ if $[\lambda]=[0, 4, 8, 9]$.} $$ Let
$\bft^i$ (resp., $\bft_i$) be the $i$th subtableau of $\bft^\lambda$ (resp., $\bft^\lambda w_{[\lambda]}^{-1}$), and define $w_{(i)}$ by $\bft^i  w_{(i)}= \bft_i$. Likewise, if we define   $\tilde \bft^i$ (resp., $\tilde \bft_i$) the $i$th subtableau of $\bft^\lambda w_{[\lambda]}$ (resp., $\bft_\lambda$), and  $\tilde w_{(i)}$ with $\tilde \bft^i \tilde w_{(i)}=\tilde \bft_i$,  then
 \begin{equation}\label{wlaex} w_\lambda=w_{(1)} w_{(2)}\cdots w_{(k)} w_{[\lambda]}=w_{[\lambda]} \tilde w_{(k)} \tilde w_{(k-1)}\cdots \tilde w_{(1)}, \  \ w_{[\lambda]}^{-1} w_{(i)}w_{[\lambda]}=\tilde w_{k-i+1}.\end{equation}
The row stabilizer $\mathfrak S_\lambda$ of $\t^\lambda$ for $\lambda\in \Lambda_{k}^+(r)$ is known as the Young subgroup of $\mathfrak S_r$ with respect to $\lambda$. It is the same as the
  Young  subgroup  $\mathfrak S_{\bar \lambda} $ with respect to the composition $\bar \lambda$, which is
obtained from $\lambda$ by concatenation.
For example, if $\lambda=((3,2), (3,1))$ then $ \bar \lambda=(3,2,3,1)$.
The following definition follows from \cite{AMR}.

\begin{Defn} \label{xy}
Suppose $\lambda\in \Lambda_k^+(r)$ and  $u_1, u_2, \cdots, u_k\in \mathbb C$.
Let    $x_\lambda=\pi_{[\lambda]} x_{\bar \lambda}$, $y_\lambda=\tilde{\pi}_{[\lambda]} y_{\bar \lambda}$, where
  \begin{enumerate} \item $\pi_{[\lambda]}=\prod_{i=1}^{k-1}\pi_{a_i}(u_{i+1})$, and   $ \tilde{\pi}_{[\lambda]}=\prod_{i=1}^{k-1}\pi_{a_i}(u_{k-i})$, and $\pi_a(u)=\prod_{i=1}^a (x_i-u)$ for  $u\in \mathbb C$ and $a\in \mathbb Z^{>0}$ and $\pi_0(u)=1$,
  \item
$x_{\bar \lambda}=\sum_{w\in \mathfrak S_{\bar \lambda}} w$ and   $y_{\bar \lambda}=\sum_{w\in \mathfrak S_{\bar \lambda}}(-1)^{\ell(w)} w$, where  $\ell(w)$ is the length of $w$.
\end{enumerate}
\end{Defn}
For any $\lambda\in  \Lambda_k^+(r)$, the conjugate $\lambda'$ of $\lambda$ is of form $(\mu^{(k)}, \mu^{(k-1)},\cdots, \mu^{(1)} ) $ where $\mu^{(i)}$ is the conjugate of $\lambda^{(i)}$.
For $\s,\t\in \mathscr{T}^{s}(\lambda)$, let $x_{\s\t}=d(\s)^{-1}x_\lambda d(\t)$ and $y_{\s\t}=d(\s)^{-1}y_\lambda d(\t)$.

\begin{Theorem}\label{basisofhecke}\cite{AMR}  $\mathscr H_{k, r}$ is a cellular algebra in the sense of \cite{GL} with
both  $S_1$ and $S_2$ being its  cellular bases, where $S_1=\{x_{\s\t}\mid \s,\t\in\Std(\lambda),\lambda\in \Lambda_k^+(r)\}$ and $S_2=\{y_{\s\t}\mid \s,\t\in\Std(\lambda),\lambda\in \Lambda_k^+(r)\}$. The required anti-involution is the $\mathbb C$-linear anti-involution fixing generators $x_1$ and $s_i, i\in \underline{r-1}$.
\end{Theorem}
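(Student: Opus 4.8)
The plan is to verify the three Graham--Lehrer axioms for cellularity of $\mathscr H_{k,r}$ directly, taking the poset to be the set $\Lambda_k^+(r)$ of $k$-partitions of $r$ with the dominance order $\gdom$, and taking $\Std(\lambda)$ as the index set attached to $\lambda$. For the anti-involution axiom, let $*$ be the $\mathbb C$-linear anti-involution of $\mathscr H_{k,r}$ fixing $x_1$ and each $s_i$. An induction on $i$ using $x_{i+1}=s_ix_is_i-s_i$ shows $x_i^*=x_i$ for all $i$, so every symmetric polynomial in $x_1,\dots,x_r$ is $*$-fixed; since the Jucys--Murphy elements $x_1,\dots,x_r$ commute and $\pi_a(u)=\prod_{j=1}^a(x_j-u)$ (Definition~\ref{xy}) is central in the subalgebra generated by $x_1,\dots,x_a$ and $\mathfrak S_a$, hence commutes with $\mathfrak S_{\bar\lambda}$, one gets $\pi_{[\lambda]}x_{\bar\lambda}=x_{\bar\lambda}\pi_{[\lambda]}$ and therefore $x_\lambda^*=x_\lambda$. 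As $x_{\s\t}=d(\s)^{-1}x_\lambda d(\t)$, this yields $x_{\s\t}^*=x_{\t\s}$, and the same computation gives $y_{\s\t}^*=y_{\t\s}$.

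Next I would show that $S_1$ is a basis, the argument for $S_2$ being identical after replacing each $\lambda$ by its conjugate and $x_{\bar\lambda}$ by the alternating sum $y_{\bar\lambda}$. It is classical (the case $t=0$ of Theorem~\ref{level-k-walled}, or \cite{BK}) that $\mathscr H_{k,r}$ is free of rank $k^r r!$ with $\mathbb C$-basis $\{x^\alpha w:\alpha\in\mathbb N_k^r,\ w\in\mathfrak S_r\}$. Adapting Murphy's straightening algorithm for $\mathfrak S_r$ and its cyclotomic refinement of Dipper--James--Mathas and \cite{AMR}, one rewrites $x_\lambda\, d(\t)\, h$ for arbitrary $h$ by applying Garnir relations inside $\mathfrak S_r$ to move towards standard tableaux and the cyclotomic relation $f(x_1)=0$, commuted around via the Jucys--Murphy elements, to control the exponents of the $x_i$; this shows the $x_{\s\t}$ span $\mathscr H_{k,r}$. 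Finally the multinomial theorem gives $\sum_{\lambda\in\Lambda_k^+(r)}|\Std(\lambda)|^2=\sum_{r_1+\cdots+r_k=r}\binom{r}{r_1,\dots,r_k}^2\,r_1!\cdots r_k!=k^r\,r!$, so $|S_1|=\dim_{\mathbb C}\mathscr H_{k,r}$ and $S_1$ is a basis.

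The substantive step, and the main obstacle, is the cellular straightening axiom: for every $h\in\mathscr H_{k,r}$ one needs $x_{\s\t}h\equiv\sum_{\v\in\Std(\lambda)}r_{\t\v}(h)\,x_{\s\v}\pmod{\mathscr H_{k,r}^{\gdom\lambda}}$ with scalars $r_{\t\v}(h)$ independent of $\s$, where $\mathscr H_{k,r}^{\gdom\lambda}$ is the span of those $x_{\u\v}$ whose shape strictly dominates $\lambda$. I would obtain this by the deformation (``separation'') technique of Graham--Lehrer and Ariki--Mathas--Rui rather than by a direct combinatorial computation: pass to a suitable localisation $\widehat R$ of a polynomial ring over $\mathbb C$ in deformed parameters $\widehat u_1,\dots,\widehat u_k$ over which $\widehat{\mathscr H}_{k,r}=\mathscr H_{k,r}\otimes_{\mathbb C}\widehat R$ is split semisimple with separably-acting Jucys--Murphy elements. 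Over $\widehat R$ one constructs the seminormal basis, for which the straightening of $x_{\s\t}h$ and the diagonality of the Gram matrix of each cell module are immediate from the eigenvalue combinatorics, so $\widehat{\mathscr H}_{k,r}$ is cellular with cellular basis $\{x_{\s\t}\}$. Since the $x_{\s\t}$ are defined over $\mathbb Z[u_1,\dots,u_k]$, specialising $\widehat R\to\mathbb C$ (sending $\widehat u_i\mapsto u_i$) transports this cellular structure down to $\mathscr H_{k,r}$, and the $y$-basis statement follows from the same argument with conjugate shapes and the opposite order. The delicate point throughout is keeping track of the dominance filtration — verifying that the error terms produced by Garnir relations and by commuting $f(x_1)$ past the Jucys--Murphy elements lie in $\mathscr H_{k,r}^{\gdom\lambda}$ — and it is exactly this bookkeeping that the semisimple-deformation argument organises cleanly.
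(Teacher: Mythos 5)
The paper does not prove this statement at all: Theorem~\ref{basisofhecke} is quoted directly from \cite{AMR}, whose argument is the direct Murphy-type one (explicit construction of the $x_{\s\t}$, spanning by a straightening/Garnir induction, and a hands-on verification that multiplication respects the dominance filtration). Your first two paragraphs — the anti-involution computation, including the observation that $\pi_{[\lambda]}$ commutes with $\mathfrak S_{\bar\lambda}$ because each $a_i$ is a boundary of the concatenated composition, and the rank count $\sum_{\lambda\in\Lambda_k^+(r)}|\Std(\lambda)|^2=k^r r!$ — are correct and match the standard preliminaries. Your third paragraph takes a genuinely different route (semisimple deformation and seminormal forms instead of a direct Murphy argument), and this is where there is a real gap.

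The problem is the specialization. You build the cellular structure over a localization $\widehat R$ of $\mathbb C[\hat u_1,\ldots,\hat u_k]$ in which the separation conditions hold, and then claim to ``specialise $\widehat R\to\mathbb C$ by $\hat u_i\mapsto u_i$.'' But no such ring homomorphism exists in the cases of interest: $\widehat R$ inverts exactly the discriminants $\hat u_i-\hat u_j+m$ whose vanishing is what makes $\mathscr H_{k,r}$ non-semisimple, so evaluation at $(u_1,\ldots,u_k)$ is undefined on $\widehat R$ precisely when the theorem has content. To close the argument you must first prove the structure constants $r_{\t\v}(h)$ obtained over $\widehat R$ in fact lie in the unlocalized ring $\mathbb C[\hat u_1,\ldots,\hat u_k]$ — this follows from the $x_{\s\t}$ being a basis over that ring together with the fact that the dominance ideals are defined over it, but the step is essential and you omit it. You also elide the passage from the seminormal basis $\{f_{\s\t}\}$, for which cellularity over the fraction field is cheap, to the basis $\{x_{\s\t}\}$ you actually want: that requires the unitriangularity of the transition matrix in a dominance-compatible order on $\Std(\lambda)$, without which cellularity of the former does not transfer to the latter. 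With those two points supplied your approach does work — it is essentially the integrality-of-seminormal-forms strategy — but it is a different argument from the one the paper cites, and as written the specialization step fails.
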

For each $\lambda\in \Lambda_k^+(r)$, following \cite{GL}, define  $C(\lambda)$ to be the cell module with respect to the cellular basis  $S_2$   in Theorem~\ref{basisofhecke}.
The classical Specht module $S^\lambda=x_\lambda w_\lambda y_{\lambda'} \mathscr H_{k, r}$. It is well-known that
 \begin{equation}\label{subcell}  C(\lambda')\cong S^\lambda, \forall \lambda\in \Lambda_k^+(r).\end{equation}

\begin{Defn} For any $\lambda\in\Lambda_k^+(r)$, define  $\tilde{\lambda}=\sum_{1\leq i\leq k}\sum_{p_{i-1}< j\leq p_i}\lambda^{(i)}_{j-p_{i-1}}\varepsilon_j$ and $\hat \lambda=\delta_c+\tilde{\lambda}$, where $p_i$'s  and $\delta_c$ are in Assumption~\ref{qset}.\end{Defn}

  Recall that $V$ is the natural $\mathfrak g$-module with a basis $\{v_1, v_2, \cdots, v_n\}$. Then its linear dual $W$ has a basis  $\{v_1^*, v_2^*, \cdots, v_n^*\}$ such that $(v_i, v_j^*)=\delta_{i, j}$.  Recall that any element in $I(n, r)$ is of form $\mathbf i=(i_r, i_{r-1}, \cdots, i_1)$ and
 $v_{\mathbf i}=v_{i_r}\otimes v_{i_{r-1}}\otimes \cdots\otimes v_{i_1}$.

\begin{Defn}\label{xilambda} Suppose $\lambda\in\Lambda_k^+(r)$. Define  \begin{enumerate}\item   $\mathbf i_\lambda=(\mathbf i_{\lambda^{(k)}}, \mathbf i_{\lambda^{(k-1)}},\cdots, \mathbf i_{\lambda^{(1)}})\in I(n,r)$,
where $ \mathbf i_{\lambda^{(j)}}=(( p_{j})^{\lambda^{(j)}_{q_j}} , \cdots,  ( p_{j-1}+1)^{\lambda^{(j)}_1})$, and  \item
 $v_{\t}=m\otimes v_{\mathbf i_\lambda} w_\lambda y_{\lambda'}d(\t)$, for any $\t\in\Std(\lambda')$,  where $m$ is the highest weight vector of $M_c$.\end{enumerate}
\end{Defn}

Recall that $\mathfrak p$ is the parabolic subalgebra of $\mathfrak g$ whose Levi subalgebra   $\mathfrak l=\mathfrak{gl}_{q_1}\oplus \mathfrak{gl}_{q_2} \oplus \cdots \oplus \mathfrak{gl}_{q_k}$.  Let $M_c\in \mathcal O^{\mathfrak p}$ be the parabolic Verma module with respect to the  highest weight $\delta_c$ in Assumption~\ref{qset}.  The following result, which will be used to classify highest weight vectors of $M_c^{r, t}$,  may be well-known for experts. We leave the proof to the reader.
\begin{lemma}\label{highestweightc} Suppose that  $N$ is  a finite dimensional $\mathfrak {g}$-module.
 For any $\mathfrak {g}$-highest weight vector $v_\mu\in M_c\otimes N$, there is a unique $\mathfrak l$-highest
weight vector $w\in N$ with  weight $\mu-\delta_c$ such that $ v_\mu-m\otimes w\in  M_c^-\otimes N$,
where $m$ is the highest weight vector of $M_c$ and  $M_c^-$ is the direct sum of  weight spaces  $(M_c)_{\nu}$ such that $\nu<\delta_c$.

\end{lemma}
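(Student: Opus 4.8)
The plan is to reduce the statement to the weight-space decomposition $M_c=\mathbb{C}m\oplus M_c^-$ together with the observation that this decomposition is in fact a decomposition of $\U(\mathfrak l)$-modules. Throughout, write $N_\tau$ for the $\tau$-weight space of $N$, and for $\alpha\in\Delta_{\mathfrak l}$ let $e_\alpha\in\mathfrak n^+\cap\mathfrak l$ be a corresponding simple root vector.

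First I would fix the candidate $w$. Since $\mathbb{C}m=(M_c)_{\delta_c}$ and $M_c^-$ are both $\mathfrak h$-submodules of $M_c$, the splitting $M_c\otimes N=(\mathbb{C}m\otimes N)\oplus(M_c^-\otimes N)$ is one of $\mathfrak h$-modules; writing $v_\mu=m\otimes w+v'$ accordingly with $w\in N$ and $v'\in M_c^-\otimes N$, comparison of $\mathfrak h$-weights forces $w\in N_{\mu-\delta_c}$. This $w$ is the unique element of $N$ with $v_\mu-m\otimes w\in M_c^-\otimes N$, so the uniqueness in the statement is automatic once $w$ is shown to be an $\mathfrak l$-highest weight vector.

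The key input is that $(M_c)_{\delta_c-\alpha}=0$ for every $\alpha\in\Delta_{\mathfrak l}$. Indeed, the restriction of $\delta_c$ to each block $\mathfrak{gl}_{q_i}$ of $\mathfrak l$ is the weight $(c_i,\dots,c_i)$, on which every simple coroot of $\mathfrak{gl}_{q_i}$ vanishes, so the irreducible $\mathfrak l$-module $L_{\mathfrak l}(\delta_c)$ is one-dimensional; since $M_c$ is $\U(\mathfrak l)$-semisimple and $m$ generates a highest weight $\mathfrak l$-module of this type, $\U(\mathfrak l)m=\mathbb{C}m$, i.e.\ $(\mathfrak n^-\cap\mathfrak l)m=0$. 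As $M_c$ is generated by $m$ over $\U(\mathfrak n^-)$ and $\mathfrak n^-=(\mathfrak n^-\cap\mathfrak l)\oplus\mathfrak u^-$ with $\mathfrak u^-$ the nilradical of the parabolic opposite to $\mathfrak p$, it follows that $M_c=\U(\mathfrak u^-)m$, so every weight of $M_c$ is $\delta_c$ minus a (possibly empty) sum of roots of $\mathfrak u$. Any nonempty such sum has a strictly positive coefficient on at least one simple root outside $\Delta_{\mathfrak l}$, whereas $\alpha\in\Delta_{\mathfrak l}$ has none; hence $\delta_c-\alpha$ is not a weight of $M_c$. Consequently $e_\alpha$ preserves $M_c^-\otimes N$: on a summand $(M_c)_\nu\otimes N_\tau$ with $\nu<\delta_c$ one has $e_\alpha\bigl((M_c)_\nu\otimes N_\tau\bigr)\subseteq(M_c)_{\nu+\alpha}\otimes N_\tau+(M_c)_\nu\otimes N_{\tau+\alpha}$, where the second term lies in $M_c^-\otimes N$ and the first one either lies in $M_c^-\otimes N$ as well or else $\nu+\alpha=\delta_c$, forcing $\nu=\delta_c-\alpha$ so that $(M_c)_\nu=0$ and the original summand already vanishes.

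Finally, applying $e_\alpha$ to $v_\mu=m\otimes w+v'$ and using $e_\alpha m=0$ (since $m$ is a $\mathfrak g$-highest weight vector of $M_c$) together with $e_\alpha v_\mu=0$ yields $0=m\otimes(e_\alpha w)+e_\alpha v'$ with $m\otimes(e_\alpha w)\in\mathbb{C}m\otimes N$ and $e_\alpha v'\in M_c^-\otimes N$; hence $e_\alpha w=0$. Since the $e_\alpha$ with $\alpha\in\Delta_{\mathfrak l}$ generate $\mathfrak n^+\cap\mathfrak l$ as a Lie algebra, $w$ is an $\mathfrak l$-highest weight vector, necessarily of weight $\mu-\delta_c$, and together with the uniqueness already noted this proves the lemma. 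The only step that is not a routine weight bookkeeping is the vanishing $(M_c)_{\delta_c-\alpha}=0$, equivalently the one-dimensionality of $L_{\mathfrak l}(\delta_c)$, which is precisely what turns the coarse splitting $M_c=\mathbb{C}m\oplus M_c^-$ into an $\mathfrak l$-module splitting; once that is in place I expect no real obstacle.
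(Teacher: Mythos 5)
The paper does not include a proof of this lemma; it states only that the result ``may be well-known for experts'' and leaves the proof to the reader, so there is nothing to compare against directly. Your strategy is the natural one and the main steps are correct: because $\delta_c$ pairs to zero against every simple coroot of $\mathfrak{l}$, the module $L_{\mathfrak{l}}(\delta_c)$ is one-dimensional, $M_c=\U(\mathfrak{u}^-)m$, and $\delta_c-\alpha$ is not a weight of $M_c$ for $\alpha\in\Delta_{\mathfrak{l}}$; consequently the splitting $M_c=\mathbb{C}m\oplus M_c^-$ is $\mathfrak{l}$-stable, $e_\alpha$ preserves $M_c^-\otimes N$, and applying $e_\alpha$ to $v_\mu=m\otimes w+v'$ yields $e_\alpha w=0$.

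There is, however, one genuine gap: you never verify that $w\neq 0$. In this paper a ``highest weight vector'' is nonzero by definition (Section 3), and the lemma is used later precisely to conclude that the $m$-component of a $\mathfrak{g}$-highest weight vector in $M_c^{r,t}$ is a nonzero multiple of a prescribed vector (see the proof of Theorem~\ref{hiofcyche}); so nonvanishing of $w$ is part of the content. Your argument only shows that the unique $w\in N$ with $v_\mu-m\otimes w\in M_c^-\otimes N$ is annihilated by $\mathfrak{n}^+\cap\mathfrak{l}$, which holds vacuously if $w=0$. To close the gap, use the irreducibility of $M_c$ (recorded in Section 3): if $w=0$ then $v_\mu\in M_c^-\otimes N$ is a nonzero $\mathfrak{g}$-highest weight vector; writing $v_\mu=\sum_\nu\sum_i x_{\nu,i}\otimes y_{\nu,i}$ with $\{x_{\nu,i}\}_i$ a basis of $(M_c)_\nu$, take $\nu_0<\delta_c$ maximal among the weights with some $y_{\nu_0,i}\neq 0$ and a linear functional $\phi$ on $N$ with $\phi(y_{\nu_0,i_0})\neq 0$; then for every positive root $\beta$, projecting $0=e_\beta v_\mu$ onto $(M_c)_{\nu_0+\beta}\otimes N$ and applying $\mathrm{id}\otimes\phi$ gives $e_\beta\big(\sum_i\phi(y_{\nu_0,i})x_{\nu_0,i}\big)=0$, producing a nonzero $\mathfrak{g}$-highest weight vector of $M_c$ of weight $\nu_0<\delta_c$, which is impossible. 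With this supplement your proof is complete.
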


We need the following well-known results (see, e.g. \cite{RSu1}).

\begin{lemma}\label{ine123}Suppose $\lambda$ and $\mu$ are two compositions of $r$ and $\mu'$ is the conjugate of $\mu$. Then  $x_\lambda \mathbb C \mathfrak S_r y_{\mu'}=0$ unless $\lambda\unlhd \mu$.
\end{lemma}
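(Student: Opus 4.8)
The plan is to reduce this to the classical combinatorial lemma governing the dominance order, via the ``transposition trick''. Since $\mathbb C\mathfrak S_r=\sum_{w\in\mathfrak S_r}\mathbb C\,w$, it is enough to show that $x_\lambda\,w\,y_{\mu'}=0$ for each individual $w\in\mathfrak S_r$ whenever $\lambda\not\unlhd\mu$; equivalently, I will establish the contrapositive: if $x_\lambda\,w\,y_{\mu'}\neq0$ for some $w$, then $\lambda\unlhd\mu$. Here $x_\lambda=\sum_{w\in\mathfrak S_\lambda}w$ and $y_{\mu'}=\sum_{w\in\mathfrak S_{\mu'}}(-1)^{\ell(w)}w$, with $\mathfrak S_\lambda$ and $\mathfrak S_{\mu'}$ the row stabilisers of the initial tableaux $\t^\lambda$ and $\t^{\mu'}$. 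Replacing $\lambda$ and $\mu$ by their decreasing rearrangements conjugates $\mathfrak S_\lambda$ by a fixed permutation and leaves $\mathfrak S_{\mu'}$ (hence $y_{\mu'}$) unchanged, so it does not affect whether $x_\lambda\,\mathbb C\mathfrak S_r\,y_{\mu'}$ vanishes; thus I may and will assume $\lambda$ and $\mu$ are partitions.

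The first step is the transposition trick. Fix $i,j$ lying in a common row of $\t^\lambda$, so that the transposition $(i,j)$ lies in $\mathfrak S_\lambda$ and $x_\lambda\,(i,j)=x_\lambda$. Writing $(i,j)\,w=w\,\tau$, where $\tau$ is the transposition interchanging the two entries that $w$ carries onto $i$ and $j$, gives
$$x_\lambda\,w\,y_{\mu'}=x_\lambda\,(i,j)\,w\,y_{\mu'}=x_\lambda\,w\,\tau\,y_{\mu'}.$$
If those two entries lay in a common row of $\t^{\mu'}$, then $\tau\in\mathfrak S_{\mu'}$, so $\tau\,y_{\mu'}=-y_{\mu'}$ and $x_\lambda\,w\,y_{\mu'}=-x_\lambda\,w\,y_{\mu'}=0$, contradicting the hypothesis. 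Hence, viewing $w^{-1}$ as a bijection from the boxes of the Young diagram $[\lambda]$ (labelled by $\t^\lambda$) onto the boxes of $[\mu']$ (labelled by $\t^{\mu'}$), any two boxes lying in one row of $[\lambda]$ are sent to boxes lying in distinct rows of $[\mu']$.

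The second step is a box count. Fix $j\ge1$ and consider the $\lambda_1+\cdots+\lambda_j$ boxes of $[\lambda]$ in its first $j$ rows. By the previous step, each row of $[\mu']$ receives at most one of them from each of those $j$ rows, hence at most $\min(j,\mu'_c)$ of them in total, where $\mu'_c$ denotes the length of the $c$-th row of $[\mu']$. Summing over $c$,
$$\lambda_1+\cdots+\lambda_j\ \le\ \sum_{c}\min(j,\mu'_c)\ =\ (\mu')'_1+\cdots+(\mu')'_j\ =\ \mu_1+\cdots+\mu_j,$$
because $\sum_c\min(j,\mu'_c)$ counts exactly the boxes of $[\mu']$ lying in its first $j$ columns and $(\mu')'=\mu$. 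As $j$ was arbitrary this is precisely $\lambda\unlhd\mu$, completing the argument.

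I do not expect a real obstacle: this is the standard proof of the vanishing statement for $x_\lambda\,\mathbb C\mathfrak S_r\,y_{\mu'}$ (equivalently for $a_\lambda\,\mathbb C\mathfrak S_r\,b_\mu$). The two points that want a moment's care are the left/right-action bookkeeping in the identity $(i,j)\,w=w\,\tau$ (the paper lets $\mathfrak S_r$ act on the right of tableaux), and the fact that $y_{\mu'}$ is supported on the row stabiliser of $\t^{\mu'}$, which is a conjugate of the \emph{column} stabiliser of $\t^\mu$; it is exactly this transposition of shapes that makes the conclusion come out as $\lambda\unlhd\mu$ rather than $\lambda\unrhd\mu$.
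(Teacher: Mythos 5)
The paper does not prove this lemma; it invokes it as well-known and refers to \cite{RSu1}. Your argument is the classical combinatorial proof (the ``transposition trick'' plus a box count) and it is correct in substance, so there is nothing in the paper to compare against route for route.

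Two small remarks worth recording. First, since the paper lets $\mathfrak S_r$ act on the \emph{right}, from $(i,j)w=w\tau$ one gets $\tau=w^{-1}(i,j)w$, which is the transposition swapping $iw$ and $jw$ (the images of $i,j$ under $w$), not the preimages; so the bijection you should use to transport boxes of $[\lambda]$ to boxes of $[\mu']$ is $w$ itself (box of $[\lambda]$ labelled $a$ in $\t^\lambda$ goes to the box of $[\mu']$ labelled $aw$ in $\t^{\mu'}$), rather than $w^{-1}$. This is only a naming slip; the counting step is unaffected because all you need is \emph{some} bijection of boxes taking each row of $[\lambda]$ injectively into the rows of $[\mu']$. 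Second, the lemma is stated for compositions, and your reduction to partitions shows precisely that nonvanishing forces $\lambda^{+}\unlhd\mu^{+}$ (decreasing rearrangements). With partial-sum dominance on arbitrary compositions the conclusion $\lambda\unlhd\mu$ would actually be false — e.g.\ $\lambda=(2,1)$, $\mu=(1,2)$, $w=s_2$ gives $x_\lambda w y_{\mu'}=(1+s_1)s_2(1-s_1)\neq0$ while $\lambda\not\unlhd\mu$ as a composition — so the dominance order here must be read as the one on underlying partitions (which is also how it is used in the paper). Making that interpretation explicit closes the only possible objection to the reduction step.
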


\begin{lemma}\label{liehwv} Suppose $n\ge r$. There is a bijection between the set of  dominant weights of $V^{\otimes r}$ and $\Lambda^+(r, n)$, the set of partitions of $r$ with at most $n$ parts.
Further, the $\mathbb C$-space of $\mathfrak{g}$-highest weight vectors with highest weight $\lambda$ has a basis
$\{ v_{\mathbf i_\lambda} w_\lambda y_{\lambda'} d(\t)\mid \t\in \Std(\lambda')\}$.
 \end{lemma}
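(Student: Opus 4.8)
The plan is to deduce this from classical Schur--Weyl duality between $\mathfrak g=\mathfrak{gl}_n$ and $\mathfrak S_r$ acting on $V^{\otimes r}$, together with the cellular structure of $\mathbb C\mathfrak S_r$ (the $k=1$ case of Theorem~\ref{basisofhecke}, so $\mathscr H_{1,r}=\mathbb C\mathfrak S_r$ and $u_1$ plays no role). First I would recall that as a $(\mathfrak g,\mathfrak S_r)$-bimodule, $V^{\otimes r}\cong\bigoplus_{\lambda\in\Lambda^+(r,n)}\nabla(\lambda)\otimes S^\lambda$, where $\nabla(\lambda)$ is the irreducible $\mathfrak g$-module of highest weight $\lambda$ (identified with the partition $\lambda_1\varepsilon_1+\cdots+\lambda_n\varepsilon_n$) and $S^\lambda$ is the Specht module; the weights occurring in $\nabla(\lambda)$ are $\le\lambda$ in the dominance order, and $\lambda$ occurs with multiplicity one. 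Consequently the space of $\mathfrak g$-highest weight vectors of weight $\lambda$ in $V^{\otimes r}$ is isomorphic, as a right $\mathfrak S_r$-module, to $S^\lambda$, and in particular has dimension $|\Std(\lambda)|=|\Std(\lambda')|$. This already gives the bijection between dominant weights of $V^{\otimes r}$ and $\Lambda^+(r,n)$, and pins down the dimension of the highest-weight space.

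Next I would exhibit the explicit vectors. Consider $v_{\mathbf i_\lambda}\in V^{\otimes r}$ as in Definition~\ref{xilambda}(a) specialized to $k=1$: this is the tensor in which the value $j$ (i.e.\ $v_j$) appears $\lambda_j$ times, arranged so that $v_{\mathbf i_\lambda}$ is the ``highest'' weight-$\lambda$ tensor with respect to the place-permutation action and the chosen ordering of tensor positions. A direct computation with \eqref{naturalaction} shows $v_{\mathbf i_\lambda}$ is killed by all $e_{i,i+1}$ with $i$ ranging over the relevant indices except where forced otherwise; more to the point, I would show that $v_{\mathbf i_\lambda}$ spans the $\lambda$-weight space of the $\mathfrak S_\lambda$-submodule it generates and that $v_{\mathbf i_\lambda}x_\lambda$ (symmetrizing over the Young subgroup, which only permutes equal tensor factors) is a nonzero multiple of $v_{\mathbf i_\lambda}$. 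Then, using that the Specht module is $S^\lambda=x_\lambda w_\lambda y_{\lambda'}\mathbb C\mathfrak S_r$ and that the map $\mathbb C\mathfrak S_r\to V^{\otimes r}$, $w\mapsto v_{\mathbf i_\lambda}w$, has image the cyclic $\mathfrak S_r$-module generated by $v_{\mathbf i_\lambda}$ with kernel the annihilator of $v_{\mathbf i_\lambda}$, I would argue that the submodule $v_{\mathbf i_\lambda}w_\lambda y_{\lambda'}\mathbb C\mathfrak S_r$ is isomorphic to $S^\lambda$: surjectivity is clear, and injectivity follows because a weight-$\lambda$ vector cannot lie in $\nabla(\mu)$ for $\mu\rhd\lambda$ (no such $\mu$ occurs with weight $\lambda$), so $v_{\mathbf i_\lambda}w_\lambda y_{\lambda'}\ne 0$, hence generates a nonzero and therefore full copy of the irreducible-in-the-relevant-quotient $S^\lambda$. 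Actually $S^\lambda$ need not be irreducible over $\mathbb C$ when we are not in the semisimple case, but over $\mathbb C$ the group algebra $\mathbb C\mathfrak S_r$ is semisimple, so $S^\lambda$ is irreducible and this step is clean. Finally, that $v_{\mathbf i_\lambda}w_\lambda y_{\lambda'}$ is itself a $\mathfrak g$-highest weight vector follows by a weight argument: it has weight $\lambda$, it lies in $\bigoplus_{\mu\unrhd\lambda}\nabla(\mu)\otimes S^\mu$ by Lemma~\ref{ine123} (since $x_\lambda\mathbb C\mathfrak S_r y_{\lambda'}=0$ unless $\lambda\unrhd\lambda$, trivially, but more precisely the image lands in the $\lambda$-isotypic-and-above part), and the weight-$\lambda$ space of $\bigoplus_{\mu\unrhd\lambda}\nabla(\mu)$ where $\mu\rhd\lambda$ is zero, so $\mathfrak n^+$ kills it.

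The basis claim then follows: $\{d(\t):\t\in\Std(\lambda')\}$ gives a basis of the right $\mathbb C\mathfrak S_r$-action on $S^\lambda$ (standard basis of the Specht module via the cellular basis $S_2$ of Theorem~\ref{basisofhecke} and \eqref{subcell}, with $C(\lambda')\cong S^\lambda$), and applying the $\mathfrak S_r$-module isomorphism $S^\lambda\xrightarrow{\sim}\{$highest weight vectors of weight $\lambda\}$, $w\mapsto v_{\mathbf i_\lambda}w_\lambda y_{\lambda'}w$, carries this to $\{v_{\mathbf i_\lambda}w_\lambda y_{\lambda'}d(\t):\t\in\Std(\lambda')\}$. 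The main obstacle I anticipate is purely bookkeeping: verifying that the particular tensor $v_{\mathbf i_\lambda}$ and the particular element $w_\lambda y_{\lambda'}$ (with the conventions on orderings of tensor positions from \eqref{tot12} and on $w_\lambda$, $y_{\lambda'}$ from Definition~\ref{xy}) really do produce a nonzero highest weight vector rather than $0$ — i.e.\ matching the combinatorics of the paper's tableau conventions to the statement ``$v_{\mathbf i_\lambda}w_\lambda y_{\lambda'}\ne0$'' — and checking that the induced map $S^\lambda\to V^{\otimes r}$ is injective rather than merely having the right target dimension; both reduce to the dominance-order weight argument above, which is the conceptual heart.
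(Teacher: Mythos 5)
The paper does not actually prove Lemma~\ref{liehwv}; immediately before stating it, the authors write ``We need the following well-known results (see, e.g.\ \cite{RSu1}),'' so there is no proof here to compare against. Your overall architecture --- classical Schur--Weyl duality $V^{\otimes r}\cong\bigoplus_{\mu\in\Lambda^+(r,n)}\nabla(\mu)\otimes S^\mu$, semisimplicity of $\mathbb C\mathfrak S_r$, and the identification of the highest-weight space with the multiplicity space $S^\lambda$ equipped with its standard basis --- is the right one.

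There is, however, a genuine error at the crux, and you lean on it twice. You assert that ``a weight-$\lambda$ vector cannot lie in $\nabla(\mu)$ for $\mu\rhd\lambda$ (no such $\mu$ occurs with weight $\lambda$)'' and later that ``the weight-$\lambda$ space of $\bigoplus_{\mu\unrhd\lambda}\nabla(\mu)$ where $\mu\rhd\lambda$ is zero.'' This is false: for partitions $\mu,\lambda$ of $r$, the weight-$\lambda$ multiplicity of $\nabla(\mu)$ is the Kostka number $K_{\mu\lambda}$, which is \emph{positive} precisely when $\mu\unrhd\lambda$ --- e.g.\ $v_1v_2\in\nabla((2,0))=\mathrm{Sym}^2V$ has weight $\varepsilon_1+\varepsilon_2$. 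Since you use this claim both to conclude $v_{\mathbf i_\lambda}w_\lambda y_{\lambda'}\neq 0$ and to conclude $\mathfrak n^+$-invariance, neither step is established as written. The correct mechanism is the two-sided squeeze that you gesture at with Lemma~\ref{ine123} but never complete: writing $v_{\mathbf i_\lambda}w_\lambda=\sum_\mu v_\mu\otimes s_\mu$ with $v_\mu\in\nabla(\mu)_\lambda$ and $s_\mu\in S^\mu$, the weight constraint forces $\mu\unrhd\lambda$, while applying $y_{\lambda'}$ on the right and using $S^\mu y_{\lambda'}\subseteq x_\mu\mathbb C\mathfrak S_r y_{\lambda'}=0$ unless $\mu\unlhd\lambda$ (that is what Lemma~\ref{ine123} gives) forces $\mu\unlhd\lambda$; hence only $\mu=\lambda$ survives, and since $\nabla(\lambda)_\lambda$ is one-dimensional and spanned by the $\mathfrak g$-highest weight vector, $v_{\mathbf i_\lambda}w_\lambda y_{\lambda'}$ is annihilated by $\mathfrak n^+$. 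Nonvanishing still needs its own argument: because $v_{\mathbf i_\lambda}$ is $\mathfrak S_\lambda$-invariant under place permutation, $v_{\mathbf i_\lambda}x_\lambda=|\mathfrak S_\lambda|\,v_{\mathbf i_\lambda}$, so the right-$\mathfrak S_r$-module map $x_\lambda\mathbb C\mathfrak S_r\to v_{\mathbf i_\lambda}\mathbb C\mathfrak S_r$, $h\mapsto v_{\mathbf i_\lambda}h$, is a surjection between modules of equal dimension $r!/\prod_i\lambda_i!$ and hence an isomorphism, which therefore sends the nonzero element $x_\lambda w_\lambda y_{\lambda'}$ to a nonzero vector. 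Once these two repairs are made, your dimension count via Schur--Weyl and semisimplicity does finish the proof.
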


\begin{Theorem}\label{hiofcyche}
Suppose $r\leq \min\{q_1, q_2, \cdots, q_k\}$. \begin{enumerate} \item  There is a bijection between $\Lambda_k^+(r)$ and the set of $\mathfrak p$-dominant weights $\mu$ such that $M_c^{r, 0}$ contains at least a highest weight vector with highest weight $\mu$. \item Let  $V_{\hat \lambda}$ be the $\mathbb C$-space  which consists of all $\mathfrak{g}$-highest weight vectors of $M_c^{r,0}$ with highest weight $\hat \lambda=\delta_c+\tilde\lambda$.
Then $\{v_\t\mid \t\in \Std(\lambda')\}$ is a basis of $V_{\hat \lambda}$.\end{enumerate}
\end{Theorem}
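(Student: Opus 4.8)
The plan is to prove the two statements together, with part (b) carrying the content and part (a) following formally. Throughout write $M_c^{r,0}=M_c\otimes V^{\otimes r}$; by Theorem~\ref{isomorphism} (the case $t=0$, using $r\le\min\{q_1,\dots,q_k\}$) the map $\varphi\colon \mathscr H_{k,r}=\mathscr B_{k,r,0}\to\End_{\mathcal O}(M_c^{r,0})^{\mathrm{op}}$ is an isomorphism, so the right $\mathscr H_{k,r}$-action on $M_c^{r,0}$ commutes with the left $\mathfrak g$-action (Proposition~\ref{alghom1}) and is weight-preserving, the generators acting by the weight-zero operators of Definition~\ref{casm}. Since a $\mathfrak g$-highest weight vector of weight $\mu$ in an object of $\mathcal O^{\mathfrak p}$ is the same datum as a homomorphism $M^{\mathfrak p}(\mu)\to M_c^{r,0}$ sending the canonical generator to it, we have $V_{\hat\lambda}\cong\Hom_{\mathcal O}(M^{\mathfrak p}(\hat\lambda),M_c^{r,0})$.

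First I would compute $\dim V_{\hat\lambda}$. Since $M_c$ is projective and injective, $M_c^{r,0}$ is tilting, hence has both a parabolic Verma flag and a dual one with equal section multiplicities; the orthogonality $\dim\Hom(M^{\mathfrak p}(\mu),\nabla^{\mathfrak p}(\nu))=\delta_{\mu\nu}$ and $\mathrm{Ext}^1(M^{\mathfrak p}(\mu),\nabla^{\mathfrak p}(\nu))=0$ then give $\dim V_{\hat\lambda}=(M_c^{r,0}:M^{\mathfrak p}(\hat\lambda))$. Using the tensor identity $M^{\mathfrak p}(\delta_c)=U(\mathfrak g)\otimes_{U(\mathfrak p)}\mathbb C_{\delta_c}$ (here $\mathbb C_{\delta_c}$ is the one-dimensional $\mathfrak p$-module of highest weight $\delta_c$, which exists because $\delta_c$ restricts to a scalar on each $\mathfrak{gl}_{q_i}$) together with the $\mathfrak p$-module filtration of $V^{\otimes r}$ whose sections are $\mathfrak l$-modules inflated along $\mathfrak p\twoheadrightarrow\mathfrak l$, one identifies $(M_c^{r,0}:M^{\mathfrak p}(\hat\lambda))$ with the multiplicity of the $\mathfrak l$-irreducible of highest weight $\tilde\lambda$ in $V^{\otimes r}|_{\mathfrak l}$; the genericity of $\delta_c$ encoded in Assumption~\ref{qset} together with $r\le q_k$ is exactly what prevents cancellation among the naive branching terms. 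Decomposing $V|_{\mathfrak l}=\bigoplus_{i=1}^k V_i$ into the natural $\mathrm{GL}_{q_i}$-modules (trivial on the other blocks) and applying classical $\mathrm{GL}_{q_i}$--Schur--Weyl in each block, legitimate since $q_i\ge r\ge|\lambda^{(i)}|$, yields
\[
\dim V_{\hat\lambda}=\binom{r}{|\lambda^{(1)}|,\dots,|\lambda^{(k)}|}\prod_{i=1}^k\dim S^{\lambda^{(i)}}=|\Std(\lambda')|.
\]

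Next I would show that each $v_{\mathfrak t}=m\otimes v_{\mathbf i_\lambda}w_\lambda y_{\lambda'}d(\mathfrak t)$ with $\mathfrak t\in\Std(\lambda')$ lies in $V_{\hat\lambda}$ and that these vectors are linearly independent; by the dimension count this forces them to be a basis. The weight of $v_{\mathfrak t}$ is $\delta_c+\tilde\lambda=\hat\lambda$ since $v_{\mathbf i_\lambda}$ has weight $\tilde\lambda$ and $\mathscr H_{k,r}$ acts by weight-zero maps. For $\mathfrak n^+v_{\mathfrak t}=0$, the $\mathfrak g$--$\mathscr H_{k,r}$ commutation and $\mathfrak n^+m=0$ reduce this to $(m\otimes e_{a,a+1}v_{\mathbf i_\lambda})\,w_\lambda y_{\lambda'}=0$ for the simple raising operators $e_{a,a+1}$: when $a,a+1$ lie in the same Levi block $\ell$ this is the classical vanishing of a Young-symmetrizer product (Lemma~\ref{ine123}), i.e.\ Lemma~\ref{liehwv} applied to $\mathfrak{gl}_{q_\ell}$ with the content vector $\mathbf i_{\lambda^{(\ell)}}$; when they lie in different blocks the mismatched tensor entry forces a factor $(x_j-u_m)$ occurring in the cyclotomic part $\tilde\pi_{[\lambda']}$ of $y_{\lambda'}$ to act with the ``wrong'' eigenvalue, so by Lemma~\ref{xh1} together with the degree filtration of Lemma~\ref{basismct} that factor annihilates the vector. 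Linear independence I would read off the associated graded: by Lemma~\ref{xh1} (cf.\ also Lemma~\ref{actionofh}) the degree-zero component of $v_{\mathfrak t}$ is $\kappa_\lambda\,m\otimes\bigl(v_{\mathbf i_\lambda}w_\lambda y_{\bar{\lambda'}}d(\mathfrak t)\bigr)$ with $\kappa_\lambda\ne0$ --- the reason for taking $\mathbf i_\lambda$ to be the content tableau (following \cite{AMR}) is precisely that the cyclotomic polynomials act invertibly on the leading term of $m\otimes v_{\mathbf i_\lambda}$ --- and the vectors $v_{\mathbf i_\lambda}w_\lambda y_{\bar{\lambda'}}d(\mathfrak t)$, $\mathfrak t\in\Std(\lambda')$, are linearly independent in $V^{\otimes r}$ by Lemma~\ref{liehwv} applied block by block. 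Hence $\{v_{\mathfrak t}:\mathfrak t\in\Std(\lambda')\}$ is a basis of $V_{\hat\lambda}$, which is part (b).

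Part (a) then follows: $M_c^{r,0}$ has a parabolic Verma flag whose sections are among the $M^{\mathfrak p}(\hat\mu)$, $\mu\in\Lambda_k^+(r)$ (each nonzero, since $\hat\mu$ is $\mathfrak p$-dominant by Assumption~\ref{qset} and Definition~\ref{xilambda}), so every highest weight occurring in $M_c^{r,0}$ is some $\hat\mu$; the assignment $\mu\mapsto\hat\mu$ is injective because $\tilde\mu$ has its $i$-th block supported on $\mathbf p_i$ (Definition~\ref{B}) and there recovers the partition $\mu^{(i)}$, which has at most $q_i$ parts as $r\le q_i$; and $V_{\hat\mu}\neq0$ for every such $\mu$ by (b), since $\Std(\mu')\neq\varnothing$. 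The step I expect to be the real obstacle is the cross-block highest-weight verification in (b): making rigorous that the leading-order computation over $V^{\otimes r}$, governed by classical Schur--Weyl and the separation of Levi blocks through the $x$-eigenvalues, propagates through the degree filtration of $M_c^{r,0}$. This is where the cyclotomic structure and the explicit formulas of Lemmas~\ref{xh1}--\ref{actionofh} must be handled with care, and where the interaction of the symmetrizer $y_{\lambda'}=\tilde\pi_{[\lambda']}y_{\bar{\lambda'}}$ with $w_\lambda$ is most delicate.
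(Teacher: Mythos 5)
Your outline is essentially the paper's argument: verify that each $v_{\t}$ is a highest weight vector (within-block annihilation by the Young antisymmetrizer via Lemma~\ref{liehwv}, cross-block annihilation via the cyclotomic factor $\tilde\pi_{[\lambda']}$), establish linear independence from a leading/graded term, and then convert to part~(a). Where you diverge is the dimension count: you go through the tilting structure of $M_c^{r,0}$, Verma-flag multiplicities via the tensor identity, and block-by-block classical $\mathrm{GL}_{q_i}$ Schur--Weyl, whereas the paper's route is more elementary --- Lemma~\ref{highestweightc} gives a linear injection from $V_{\hat\lambda}$ into the space of $\mathfrak l$-highest weight vectors in $V^{\otimes r}$ of weight $\tilde\lambda$, whose dimension is read off from Lemma~\ref{liehwv}, so no tilting or parabolic Verma filtration is invoked at all. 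Your two arguments count the same space, but the paper's avoids the additional machinery; conversely, your computation makes the representation-theoretic meaning of $|\Std(\lambda')|$ more visible. On the cross-block step you correctly flag this as the crux but stop short of the actual computation: what the paper does (and what you must do) is conjugate the raising operator's contribution past $w_{[\lambda]}$, showing $(1,b_{k-i-1}+1)\tilde\pi_{[\lambda']}=\prod_{j=1}^i(x_1-u_j)\cdot h$, and then kill the result with Lemma~\ref{polyofx}(a), which says $\prod_{j\le i}(x_1-u_j)$ annihilates $M_i\subset M_c\otimes V$ --- note it is Lemma~\ref{polyofx}(a), not Lemma~\ref{xh1}, that delivers the vanishing. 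Finally, your linear-independence step (degree-zero leading term equals $\kappa_\lambda\,m\otimes v_{\mathbf i_\lambda}w_\lambda y_{\bar\lambda'}d(\t)$, $\kappa_\lambda\ne0$) matches the paper's $m$-component argument via Lemma~\ref{highestweightc}, though the paper separately establishes $v_\t\neq 0$ via the \emph{highest}-degree term before appealing to the degree-zero $m$-component for independence.
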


\begin{proof} (a) follows from Lemma~\ref{highestweightc} and (b). We claim that $v_\t$  is a $\mathfrak{g}$-highest weight vector of $M_c^{r, 0}$. Since  $d(\t)$ is invertible, it is enough for us to consider the case  $d(\t)=1$.

 By Lemma~\ref{liehwv}, $e_{i,i+1}v_{\t}=0$  for  any $e_{i,i+1}\in\mathfrak n^+\cap \mathfrak l$, where $\mathfrak n^+$ is the positive part of $\mathfrak g$. It remains to
show that $e_{p_i,p_i+1}v_{\t}=0$ for any  $1\leq i\leq k-1$. If $v_{p_i+1}$ does not occur in $v_{\mathbf i_\lambda}$, then  $e_{p_i,p_i+1}v_{\t}=0$.
  Otherwise,
$v_{p_i+1}$  occurs in $v_{\mathbf i_\lambda}$, forcing $\lambda^{(i+1)}\neq\emptyset$. Recall that $[\lambda]=[a_0, a_1, \cdots, a_{k-1}, a_k]$.  So,
$$e_{p_i,p_i+1}v_\t=\sum_{1\leq a\leq \lambda^{(i+1)}_1} m\otimes  v_{\mathbf i_{\lambda^{(k)}}}\otimes \cdots\otimes v_{\mathbf i_{\lambda^{(i+2)}}}\otimes v_{\mathbf i_a}\otimes v
_{\mathbf i_{\lambda^{(i)}}}\otimes \cdots\otimes v_{\mathbf i_{\lambda^{(1)}}} w_\lambda y_{\lambda'},$$
where $\mathbf i_a$ is obtained from $\mathbf i_{\lambda^{(i+1)}}$ by using   $p_i$  instead of  $p_i+1$ at $(a_i+a)$th position.
Let $\mathbf j=(\mathbf i_{\lambda^{(k)}}, \cdots \mathbf i_{\lambda^{(i+2)}}, \mathbf i_{1}, \mathbf i_{\lambda^{(i)}}\cdots, {\mathbf i_{\lambda^{(1)}}})\in I(n, r)$. Then \begin{equation}
e_{p_i,p_i+1}v_\t=m \otimes v_{\mathbf j} hw_{[\lambda]}\tilde{\pi}_{[\lambda']}.
\end{equation}
where  $h=\sum_{1\le a\le \lambda^{(i+1)}_1}  (a_i+1, a_i+a)$, and $(i, j)$ is the permutation which switches $i$ and $j$ and fixes others.
So,  $hw_{[\lambda]}=w_{[\lambda]}h_1$ for some $h_1$ in the group algebra of the Young subgroup  $\mathfrak S_{[\lambda']}$ of $\mathfrak S_r$ with respect to the composition  $(r-a_{k-1}, \cdots, a_2-a_1, a_1-a_0)$, and hence
$ h_1\tilde{\pi}_{[\lambda']}=\tilde{\pi}_{[\lambda']}h_1$, and
$$ m\otimes v_{\mathbf j} h w_{[\lambda]}\tilde{\pi}_{[\lambda']}
= m\otimes  v_{\mathbf i_{\lambda^{(1)}}}\otimes \cdots \otimes v
_{\mathbf i_{\lambda^{(i)}}}\otimes v_{\mathbf i_1}\otimes v_{\mathbf i_{\lambda^{(i+2)}}}\otimes\cdots\otimes v_{\mathbf i_{\lambda^{(k)}}} \tilde{\pi}_{[\lambda']}h_1.
$$
For the simplification of notation, write  $b_i=r-a_{k-i}, 1\le i\le k$. Then the tensor factor  of  $m\otimes  v_{\mathbf i_{\lambda^{(1)}}}\otimes \cdots \otimes v
_{\mathbf i_{\lambda^{(i)}}}\otimes v_{\mathbf i_1}\otimes v_{\mathbf i_{\lambda^{(i+2)}}}\otimes\cdots\otimes v_{\mathbf i_{\lambda^{(k)}}}$       at $(b_{k-i-1}+1)$th position is $v_{p_i}$.
So, it suffices to  verify \begin{equation}\label{key1}m\otimes  v_{\mathbf i_{\lambda^{(1)}}}\otimes \cdots \otimes v
_{\mathbf i_{\lambda^{(i)}}}\otimes v_{\mathbf i_1}\otimes v_{\mathbf i_{\lambda^{(i+2)}}}\otimes\cdots\otimes v_{\mathbf i_{\lambda^{(k)}}}
(1, b_{k-i-1}+1)^2  \tilde{\pi}_{[\lambda']}=0.\end{equation}
However, the tensor factor of  $m\otimes v_{\mathbf i_{\lambda^{(1)}}}\otimes \cdots \otimes v
_{\mathbf i_{\lambda^{(i)}}}\otimes v_{\mathbf i_1}\otimes v_{\mathbf i_{\lambda^{(i+2)}}}\otimes\cdots\otimes v_{\mathbf i_{\lambda^{(k)}}}
(1, b_{k-i-1}+1)$ at $1$-st position is $v_{p_i}$. Since we are assuming that $\lambda^{(i+1)}\neq \emptyset$, $b_{k-i}>b_{k-i-1}$.  Note that $\pi_a(x)$ commutes with $s_j$ for any $j\neq a$. We have
$$(1, b_{k-i-1}+1) \tilde{\pi}_{[\lambda']} =\prod_{j=1}^i (x_1-u_j) h, \text{  for some $h\in \mathscr H_{k, r}$.}$$
Since $m\otimes v_{p_i}\in M_i$, where $M_i$ is given in  Lemma~\ref{polyofx}, \eqref{key1} follows from  Lemma~\ref{polyofx}(a).

Next we verify  $v_{\t}\neq0$. Write
\begin{equation}\label{vlambda}
v=v_{\mathbf i_\lambda}w_\lambda y_{\overline{\lambda'}} =\sum_{{\bf i}\in I(n,r)}b_{\bf i}v_{\bf i}.
\end{equation} Since $w_{[\lambda]}$  is invertible, by Lemma~\ref{liehwv}, $v\neq 0$.
Let $V_\lambda$ be the set of  all $v_{\bf i}$ in \eqref{vlambda} such that $ b_{\bf i}\neq 0 $.
Obviously,  for any $v_{\bf i}\in V_{\lambda}$,
\begin{equation}\label{vai}
\{i_{b_{j-1}+1}, i_{b_{j-1}+2}, \cdots, i_{b_j}\}\subseteq \mathbf p_{k-j+1}, 1\leq j\leq k,
\end{equation}
where  $\mathbf p_{k-j+1}$'s are defined in  Definition~\ref{B}.
 On the other hand, $ \tilde{\pi}_{[\lambda']}$ contains a unique term $x_1^{\alpha_1}x_2^{\alpha_2}\cdots x_r^{\alpha_r}$ such that  $\alpha\in \mathbb N_k^r$ and $\sum_i \alpha_i$ is maximal.
By Lemma~\ref{actionofh} and Remark~\ref{xih}, $
m \otimes  v_{\bf i}  x_1^{\alpha_1}x_2^{\alpha_2}\cdots x_r^{\alpha_r}$ contains a unique tensor with highest degree  $\sum_i \alpha_i$. Also, it is the unique term of $ m\otimes v_{\bf i}\tilde{\pi}_{[\lambda']}$ with highest degree $\sum_i \alpha_i$. On the other hand, via arguments similar to those in the proof of Theorem~\ref{isomorphism}, one can easily see that $\{m\otimes v_{\bf i}  x_1^{\alpha_1}x_2^{\alpha_2}\cdots x_r^{\alpha_r}\mid v_{\bf i}\in V_{\lambda}\}$ is linearly independent. So $v_{\t}\neq0$. By Lemma~\ref{highestweightc}, the $m$-component of $v_{\t}$ is  $a_\t v  d(\t)$ for some $a_\t\in\mathbb C^*$, where $v$ is given in \eqref{vlambda}. By Lemma~\ref{liehwv},  $\{ v d(\t)\mid \t\in \Std(\lambda')\}$ is  linear independent. Using  Lemma~\ref{highestweightc} shows that   $\{ v_\t\mid \t\in \Std(\lambda')\}$  is linear independent, too.  This completes the proof of (b).
\end{proof}

\begin{Theorem}\label{moduleisomofhe}
For any $\lambda\in\Lambda_k^+(r)$, let $\hat \lambda$ be defined in Theorem~\ref{hiofcyche}. As right $\mathscr H_{k,r}$-modules, ${\rm Hom}_{\mathcal O} (M^{\mathfrak p}(\hat \lambda),M_c^{r,0})\cong {C}(\lambda')$.
\end{Theorem}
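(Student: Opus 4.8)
The plan is to identify both sides with explicit subquotients of familiar objects and match them. The first step is to replace the Hom-space by a space of highest weight vectors. Since $M^{\mathfrak p}(\hat\lambda)$ is a parabolic Verma module and $M_c^{r,0}\in\mathcal O^{\mathfrak p}$, evaluation at a highest weight vector $\bar m$ of $M^{\mathfrak p}(\hat\lambda)$ gives a $\mathbb C$-linear isomorphism $\mathrm{Hom}_{\mathcal O}(M^{\mathfrak p}(\hat\lambda),M_c^{r,0})\xrightarrow{\sim}V_{\hat\lambda}$, $f\mapsto f(\bar m)$, where $V_{\hat\lambda}$ is the space of $\mathfrak g$-highest weight vectors of $M_c^{r,0}$ of weight $\hat\lambda$ from Theorem~\ref{hiofcyche}. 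The Hom-space carries a right $\mathscr H_{k,r}$-module structure via $f\cdot h:=\varphi(h)\circ f$, where $\varphi\colon\mathscr H_{k,r}\xrightarrow{\sim}\mathrm{End}_{\mathcal O}(M_c^{r,0})^{\mathrm{op}}$ is the isomorphism of Theorem~\ref{isomorphism} (available because $r\le\min\{q_1,\dots,q_k\}$); since $\varphi(h)$ is $\mathfrak g$-linear it preserves $V_{\hat\lambda}$, and under $f\mapsto f(\bar m)$ this action becomes exactly the restriction to $V_{\hat\lambda}$ of the right $\mathscr H_{k,r}$-action on $M_c^{r,0}$. Thus it suffices to prove $V_{\hat\lambda}\cong C(\lambda')$ as right $\mathscr H_{k,r}$-modules.

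Next I would observe that both modules are cyclic of the same dimension. By Theorem~\ref{hiofcyche}(b), $V_{\hat\lambda}$ has basis $\{v_\t=v_0\cdot d(\t):\t\in\Std(\lambda')\}$ with $v_0:=m\otimes v_{\mathbf i_\lambda}\,w_\lambda y_{\lambda'}$, so $V_{\hat\lambda}=v_0\cdot\mathscr H_{k,r}$ is cyclic and $\dim V_{\hat\lambda}=|\Std(\lambda')|$. On the other hand $C(\lambda')\cong S^\lambda=x_\lambda w_\lambda y_{\lambda'}\mathscr H_{k,r}$ by \eqref{subcell}, which is cyclic generated by $\zeta:=x_\lambda w_\lambda y_{\lambda'}$, and $\dim C(\lambda')=|\Std(\lambda')|$ from the cellular structure of Theorem~\ref{basisofhecke}. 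Hence the theorem follows as soon as there is a surjective right $\mathscr H_{k,r}$-homomorphism $S^\lambda\to V_{\hat\lambda}$, since equality of dimensions then upgrades it to an isomorphism. In turn such a surjection exists once $v_0\in M_c^{r,0}\cdot\zeta$: if $v_0=c\,u\cdot\zeta$ with $c\in\mathbb C^{\times}$ and $u\in M_c^{r,0}$, then the map $S^\lambda\to M_c^{r,0}$, $\zeta h\mapsto c\,u\cdot h$ (i.e. the module action restricted to the right ideal $S^\lambda\subseteq\mathscr H_{k,r}$) is a well-defined right-module homomorphism sending the generator $\zeta$ to $v_0$, hence sending $S^\lambda=\zeta\mathscr H_{k,r}$ onto $v_0\mathscr H_{k,r}=V_{\hat\lambda}$.

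So the core of the argument is the factorisation $v_0\in M_c^{r,0}\cdot\zeta$, where $\zeta=\pi_{[\lambda]}x_{\bar\lambda}w_\lambda y_{\lambda'}$. The index $\mathbf i_\lambda$ of Definition~\ref{xilambda} is chosen so that the tensor $v_{\mathbf i_\lambda}$ is constant along the rows of $\t^\lambda$, i.e. is fixed by the Young subgroup $\mathfrak S_{\bar\lambda}$ acting by place permutations; consequently $(m\otimes v_{\mathbf i_\lambda})\cdot x_{\bar\lambda}=|\mathfrak S_{\bar\lambda}|\,(m\otimes v_{\mathbf i_\lambda})$. Moreover the positions $1,\dots,a_i$ of $\mathbf i_\lambda$ carry values lying in the first $i$ Levi blocks, so by Lemma~\ref{xh1}, Lemma~\ref{polyofx} and Remark~\ref{xih} the commuting operators $\varphi(x_l)$ have, on the finite-dimensional $\varphi(x_\bullet)$-stable subspace generated by $m\otimes v_{\mathbf i_\lambda}$, all their generalised eigenvalues among $\{u_1,\dots,u_j\}$ whenever position $l$ lies in block $j$; since $\pi_{[\lambda]}$ multiplies position $l$ only by factors $(x_l-u_i)$ with $i>j$ (which are distinct from $u_1,\dots,u_j$), the operator $\varphi(\pi_{[\lambda]})$ acts invertibly on that subspace. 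Letting $u$ be the preimage of $m\otimes v_{\mathbf i_\lambda}$ and using $x_\lambda=\pi_{[\lambda]}x_{\bar\lambda}$, one gets $v_0=(m\otimes v_{\mathbf i_\lambda})\cdot w_\lambda y_{\lambda'}=|\mathfrak S_{\bar\lambda}|^{-1}\,u\cdot\pi_{[\lambda]}x_{\bar\lambda}w_\lambda y_{\lambda'}=|\mathfrak S_{\bar\lambda}|^{-1}\,u\cdot\zeta$, which is the needed factorisation; composing the resulting isomorphisms $C(\lambda')\cong S^\lambda\cong V_{\hat\lambda}\cong\mathrm{Hom}_{\mathcal O}(M^{\mathfrak p}(\hat\lambda),M_c^{r,0})$ finishes the proof.

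The main obstacle is this last step: because $M_c$ is singular inside each Levi block, one cannot simply invoke symmetric-group Schur--Weyl duality, and controlling the action of the cyclotomic part $\pi_{[\lambda]}$ on $m\otimes v_{\mathbf i_\lambda}$ requires the degree-filtration and leading-term bookkeeping already used in the proof of Theorem~\ref{isomorphism}, together with the verification that the relevant differences $u_j-u_i$ of cyclotomic parameters do not vanish, which is read off from the genericity hypothesis on $d$ in Assumption~\ref{qset}.
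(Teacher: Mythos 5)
Your overall strategy---construct a right-module surjection $S^\lambda\to V_{\hat\lambda}$ from a factorisation $v_0=c\,u\cdot\zeta$ with $\zeta=x_\lambda w_\lambda y_{\lambda'}$ and upgrade it to an isomorphism by dimension count---is genuinely different from the paper's, which instead defines $\phi(v_\t)=x_\lambda w_\lambda y_{\lambda'}d(\t)$ and directly verifies it is an $\mathscr H_{k,r}$-module map using the cellular expansion, Lemma~\ref{ine123}, and the vanishing from Lemma~\ref{polyofx}(a). Unfortunately the pivotal step in your version, the factorisation, rests on a false eigenvalue claim.

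You assert that on the $\varphi(x_\bullet)$-stable subspace generated by $m\otimes v_{\mathbf i_\lambda}$, the generalised eigenvalues of $\varphi(x_l)$ lie in $\{u_1,\dots,u_j\}$ when position $l$ sits in Levi block $j$. That is correct for $x_1$ (by Lemma~\ref{polyofx}(a)), and for the conjugated elements $x_l'=s_{l-1}\cdots s_1x_1s_1\cdots s_{l-1}$, but it is false for the elements $x_l=s_{l-1}x_{l-1}s_{l-1}-s_{l-1}$ that actually appear in $\pi_{[\lambda]}$. For instance take $k=2$, $\lambda=((2),\emptyset)$, so $\mathbf i_\lambda=(1,1)$ and both positions lie in block $1$; $m\otimes v_1\otimes v_1$ is fixed by $s_1$ and has $x_1$-eigenvalue $-c_1=u_1$, hence
\begin{equation*}
(m\otimes v_1\otimes v_1)\,x_2=(m\otimes v_1\otimes v_1)(s_1x_1s_1-s_1)=(u_1-1)(m\otimes v_1\otimes v_1),
\end{equation*}
so the eigenvalue of $x_2$ is $u_1-1$, not $u_1$. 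Remark~\ref{xih} says only that $x_l$ and $x_l'$ agree in the associated graded, i.e.\ up to the lower-degree correction $-s_{l-1}$, which is exactly what shifted the eigenvalue. Consequently the invertibility of $\pi_{[\lambda]}=\prod_i\pi_{a_i}(u_{i+1})$ on that subspace, which you need in order to produce $u$, requires in addition that every shifted eigenvalue $u_m+c$ (with $c\in\mathbb Z$, $m\le j$) avoid $u_i$ for $i>j$. Since Assumption~\ref{qset} allows $d_i=d_j$ and Section~7 explicitly drops the assumption that $q$ is a partition, one can have e.g.\ $u_1-1=u_2$ (take $d_1=d_2$, $q_2=q_1+1$), and then $(x_2-u_2)$ kills $m\otimes v_1\otimes v_1$; the factorisation breaks. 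The paper never inverts anything; it only uses the exact annihilation of Lemma~\ref{polyofx}(a) plus the dominance-order case analysis ($[\nu]=[\lambda']$ via Lemma~\ref{ine123}; $[\lambda']\prec[\nu]$ via the polynomial vanishing on $M_i$), which is robust against such coincidences among the $u_i$. To save your approach you would need to carry out the filtered eigenvalue analysis explicitly, pin down exactly which integer shifts actually occur, and rule out the bad coincidences under the stated hypotheses---at which point you are doing essentially the same bookkeeping as the paper, but less directly.
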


\begin{proof}
For any $\t\in\Std(\lambda')$, by the universal property of parabolic Verma modules, we define   $f_{\t}\in {\rm Hom}_{\mathcal O^{\mathfrak p} }(M^{\mathfrak p}(\hat \lambda),M_c^{r,0})$ such that  $f_\t(m_{\hat\lambda})=v_\t$, where $m_{\hat \lambda}$ is the  highest weight vector of $M^{\mathfrak p}(\hat \lambda)$.
By Theorem~\ref{hiofcyche}, $ \{f_\t\mid\t\in  \Std(\lambda')\}$ is a basis of $\Hom_{\mathcal O }(M^{\mathfrak p}(\tilde\lambda),M_c^{r,0})$.

Let $\phi: V_{\hat \lambda}\rightarrow S^\lambda$ be the linear isomorphism sending $v_\t$ to $ x_\lambda w_\lambda y_{\lambda'}d(\t)$, where $S^\lambda$ is the classical Specht module for $\mathscr H_{k, r}$ (see \eqref{subcell}).
We claim that  $ \phi$ is an  $\mathscr H_{k,r}$-homomorphism.
In fact, by Theorem~\ref{basisofhecke},
\begin{equation}\label{yhaction}y_{\lambda'}d(\t)h=\sum_{\s\in\Std(\lambda') }a_\s y_{\lambda'}d(\s)+\sum_{\s_1,\s_2\in\Std(\nu),\nu\rhd \lambda'} a_{\s_2, \s_1} d(\s_2)^{-1}y_\nu d(\s_1).
\end{equation}
for any $h\in\mathscr H_{k,r}$ and some $a_\s, a_{\s_2, \s_1}\in \mathbb C$. It is well known that    $x_\lambda \mathscr  H_{k, r} y_{\nu}=0$ if $\lambda, \nu\in \Lambda_k^+(r)$ and  $\lambda\rhd \nu'$. Since $\lambda\rhd \nu'$ if and only if $\lambda'\lhd \nu$, we have  $$\phi(v_\t)h= \sum_{\s\in\Std(\lambda') }a_\s x_\lambda w_\lambda y_{\lambda'}d(\s)=\sum_{\s\in\Std(\lambda') }a_\s\phi(v_\s).$$
In order to complete the proof of  our claim, we need to verify
\begin{equation}\label{equal0} m\otimes  v_{\mathbf i_\lambda} w_\lambda d(\s_2)^{-1}y_\nu=0\end{equation}
Since we are assuming $\nu\rhd \lambda'$, either $ [\nu]=[\lambda']$ or $ [\lambda'] \prec[\nu]$. In the first case, \eqref{equal0} follows from Lemma~\ref{ine123}.
 Write $[\nu]=[0,b_1,b_2,\cdots,b_k]$ and $[\lambda']=[0,a_1,a_2,\cdots,a_k]$.
In the second case, there is an $i$ such that $a_j=b_j$ for $j<i$ and $a_i<b_i$. So,
$$m\otimes v_{\mathbf i_\lambda} w_\lambda d(\s_2)^{-1}\tilde\pi_{[\nu]}=m\otimes v_{\bf j}\pi_{b_i}(u_{k-i})\cdots
\pi_{b_{k-1}}(u_1)(a_i+1,1) \pi_{b_1}(u_{k-1})\cdots \pi_{b_{i-1}}(u_{k-i+1})
$$
where $v_{\mathbf j}= v_{\mathbf i_\lambda} w_\lambda d(\s_2)^{-1} (a_i+1,1)$ and
 $j_1\in \mathbf p_{k-i}$.
 Since $(x_1-u_1)\cdots (x_1-u_{k-i})$ is a factor of $\pi_{b_i}(u_{k-i})\cdots
\pi_{b_{k-1}}(u_1)$,  by Lemma~\ref{polyofx}(a),
$m\otimes v_{\bf j} \pi_{b_i}(u_{k-i})\cdots \pi_{b_{k-1}}(u_1)   =0$.
So $v_{\t}h =\sum_{\s\in\Std(\lambda') }a_\s v_\s$ and $ \phi(v_\t)h=\phi(v_\t h)$. This proves our claim and hence  $V_{\hat \lambda}\cong S^\lambda$ as right $\mathscr H_{k, r}$-modules. Via it, it is routine to check that there is an $\H_{k, r}$-isomorphism
${\rm Hom}_{\mathcal O} (M^{\mathfrak p}(\hat \lambda),M_c^{r,0})\cong S^\lambda$. By \eqref{subcell}, the result follows.\end{proof}

By similar arguments as Theorem~\ref{hiofcyche}, we  can also give a classification of highest weight vectors of $M_c^{0,t}$.
In this case, the parameters $u_i$ of cyclotomic Hecke algebra $\mathscr H_{k,t}$ should be replaced by $\bar u_{k-i+1}$'s in Definition~\ref{fgpolyv} for any $i\in \underline{k}$. In this case, we define
\begin{equation}\label{lambdas} {\lambda}^*=\sum_{1\leq i\leq k}\sum_{p_{i-1}< j\leq p_i}-\lambda^{(i)}_{p_i-j+1}\varepsilon_j, \text{ and $\hat \lambda^*=\delta_c+ {\lambda}^*$ for any $\lambda\in\Lambda^+_k(t)$}.\end{equation}

\begin{Defn}\label{xilambdad} Suppose $\lambda\in\Lambda_k^+(t)$. Define \begin{enumerate}\item  $\lambda^{o}=(\lambda^{(k)},  \lambda^{(k-1)},\cdots, \lambda^{(1)})$, \item   $\mathbf i_{\lambda^{o}}=(\mathbf i_{\lambda^{(k)}},\cdots,\mathbf i_{\lambda^{(2)}}, \mathbf i_{\lambda^{(1)}})\in I(n,t)$,
where $\mathbf i_{\lambda^{(j)}}=((p_j)^{\lambda^{(j)}_1}, \cdots,  (p_{j-1}+1)^{\lambda^{(j)}_{q_j}})$,
 \item $ v^*_{\t}=m\otimes v^*_{\mathbf i_{\lambda^{o}}} w_{\lambda^o} y_{(\lambda^{o})'}d(\t)$  for any $\t\in\Std((\lambda^o)')$.\end{enumerate}
\end{Defn}

\begin{cor}\label{hiofcyched}Suppose that $t\leq \min\{q_1, q_2, \cdots, q_k\}$.
\begin{enumerate} \item   There is a bijection between $\Lambda_k^+(t)$ and the set of $\mathfrak p$-dominant weights $\lambda$ such that $M_c^{0, t}$ contains at least a highest weight vector with highest weight $\lambda$.
 \item
 The $\mathbb C$-space $V_{ \hat \lambda^*}$ of $\mathfrak{g}$-highest weight vectors of $M_c^{0,t}$ with highest weight $ \hat \lambda^*$ has a basis $\{v^*_\t\mid \t\in \Std((\lambda^o)')\}$.\end{enumerate}
\end{cor}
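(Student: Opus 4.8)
The plan is to mirror the proof of Theorem~\ref{hiofcyche} almost verbatim, transporting every step from $M_c^{r,0}$ to $M_c^{0,t}$ via the obvious left--right symmetry of the construction. Concretely, the natural module $V$ gets replaced by its dual $W$, the polynomial $f(x)=\prod_{i=1}^k(x-u_i)$ gets replaced by $g(x)=\prod_{i=1}^k(x-\bar u_i)$ (with the parameters ordered as $\bar u_k,\bar u_{k-1},\dots,\bar u_1$ to match the degenerate cyclotomic Hecke algebra $\mathscr H_{k,t}$ acting on the right on $M_c^{0,t}$), the parabolic Verma flag of $M_c\otimes V$ from Lemma~\ref{polyofx}(a) gets replaced by the flag of $M_c\otimes W$ from Lemma~\ref{polyofx}(b), and Lemma~\ref{actionofh}(b),(d) and the corresponding part of Remark~\ref{xih} replace Lemma~\ref{actionofh}(a),(c). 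The reversal $\lambda^o=(\lambda^{(k)},\dots,\lambda^{(1)})$ and the sign-reversal $\lambda^*$ in \eqref{lambdas} are precisely the bookkeeping needed so that the highest weight $\hat\lambda^*=\delta_c+\lambda^*$ is $\mathfrak p$-dominant and so that the combinatorics of tableaux, the element $w_{\lambda^o}y_{(\lambda^o)'}$, and the map $d(\t)$ all make sense.

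First I would establish the dual analogue of Lemma~\ref{liehwv}: for $t\le n$, the $\mathfrak g$-highest weight vectors in $W^{\otimes t}$ of a given dominant highest weight are parametrized by partitions with at most $n$ parts, with an explicit basis built from $v^*_{\mathbf i_{\lambda^o}}w_{\lambda^o}y_{(\lambda^o)'}d(\t)$; this follows from Lemma~\ref{liehwv} applied to $V^{\otimes t}$ together with the fact that $W\cong V^*$ as $\mathfrak g$-modules and the compatibility of the $\mathscr H_{k,t}$-action with the duality (one checks that the relevant $e_{i,j}$ act on $v^*$ with the opposite sign, which is exactly what the passage $\lambda\mapsto\lambda^*$ records). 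Next, for each $\t\in\Std((\lambda^o)')$ I would verify that $v^*_\t$ is killed by $\mathfrak n^+$: the action of $e_{i,i+1}$ with $e_{i,i+1}\in\mathfrak n^+\cap\mathfrak l$ vanishes by the dual Lemma~\ref{liehwv}, and for $e_{p_i,p_i+1}$ one repeats the computation in the proof of Theorem~\ref{hiofcyche}, pushing the resulting extra terms through $w_{[\lambda^o]}$ and using that a factor of the form $\prod_{j=i}^{k}(\bar x_1-\bar u_j)$ (which arises after commuting past $\tilde\pi$) annihilates the submodule $N_i$ from Lemma~\ref{polyofx}(b). Finally, to see $v^*_\t\ne 0$ and that $\{v^*_\t\mid\t\in\Std((\lambda^o)')\}$ is linearly independent, I would run the same argument as in Theorem~\ref{hiofcyche}: isolate the top-degree term of $\bar x^\alpha$ in $\tilde\pi_{[(\lambda^o)']}$ acting on $m\otimes v^*_{\mathbf i}$, use Lemma~\ref{actionofh}(d) and Remark~\ref{xih} to see these top-degree vectors are linearly independent, then invoke Lemma~\ref{highestweightc} (the $m$-component of $v^*_\t$ is $a_\t\, v^* d(\t)$ with $a_\t\ne 0$) together with the dual Lemma~\ref{liehwv} to conclude.

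Part (a) then follows from part (b) together with Lemma~\ref{highestweightc}, exactly as in Theorem~\ref{hiofcyche}(a): a $\mathfrak p$-dominant weight $\mu$ occurs as the highest weight of some highest weight vector of $M_c^{0,t}$ if and only if there is an $\mathfrak l$-highest weight vector of weight $\mu-\delta_c$ in $W^{\otimes t}$, and by the dual Lemma~\ref{liehwv} such weights are in bijection with $\Lambda_k^+(t)$ via $\lambda\mapsto\hat\lambda^*=\delta_c+\lambda^*$; the hypothesis $t\le\min\{q_1,\dots,q_k\}$ guarantees that each $\lambda^{(i)}$ (a partition fitting in the $i$-th Levi block, hence with at most $q_i$ rows and total size at most $t$) actually contributes, so the bijection is onto the stated set.

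The main obstacle I expect is purely one of sign- and order-conventions: the duality $W\cong V^*$ introduces a $-1$ in the $\mathfrak g$-action (see \eqref{naturalaction}), the parabolic Verma flag of $M_c\otimes W$ in Lemma~\ref{polyofx}(b) is indexed in the \emph{reverse} order to that of $M_c\otimes V$, and the parameters $u_i$ of $\mathscr H_{k,t}$ must be taken to be $\bar u_{k-i+1}$; keeping all three reversals consistent with the definitions $\lambda^o$, $\lambda^*$, $\mathbf i_{\lambda^o}$ in Definition~\ref{xilambdad} is where an error would most easily creep in. Once the conventions are pinned down, every analytic step --- the $\mathfrak n^+$-annihilation via Lemma~\ref{polyofx}(b), and the nonvanishing via the top-degree argument --- is a mechanical transcription of the corresponding step for $M_c^{r,0}$, so I would present the proof by stating the dictionary explicitly and then writing ``the proof is identical to that of Theorem~\ref{hiofcyche} after replacing $V$, $f(x)$, $u_i$, $M_i$, $\lambda$, $w_\lambda y_{\lambda'}$ by $W$, $g(x)$, $\bar u_{k-i+1}$, $N_i$, $\lambda^o$, $w_{\lambda^o}y_{(\lambda^o)'}$ respectively,'' which is in fact how the excerpt already signals the argument should go.
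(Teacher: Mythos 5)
Your proof proposal matches what the paper intends: the paper's ``proof'' of Corollary~\ref{hiofcyched} is precisely the sentence preceding it, which declares the argument to be the same as Theorem~\ref{hiofcyche} after replacing the parameters $u_i$ of the cyclotomic Hecke algebra $\mathscr H_{k,t}$ by $\bar u_{k-i+1}$, and your proposal carries out exactly that transcription --- replacing $V$ by $W$, $f(x)$ by $g(x)$, $M_i$ by $N_i$, Lemma~\ref{actionofh}(a),(c) by (b),(d), and $\lambda$ by $\lambda^o$ with the sign-reversal $\lambda^*$. Your identification of the sign and reversal conventions as the only genuine pitfalls is also correct; these are precisely what the definitions of $\lambda^o$, $\lambda^*$, and $\mathbf i_{\lambda^o}$ in \eqref{lambdas} and Definition~\ref{xilambdad} are designed to absorb.
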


We  are going to  classify highest weight vectors of $M_c^{r, t}$ under the assumption $r+t\le \min\{q_1, q_2, \cdots, q_k\}$.  We need some of results on a cellular basis of a cyclotomic walled Brauer
algebra as follows.   Fix $r, t, f\in \mathbb Z^{>0}$ with $f\le \min \{r, t\}$. Define
\begin{eqnarray} \label{rcs}\mathcal D_{r, t}^f=\{\! s_{r-f+1,i_{r-f+1}}{\sc\!} \bar s_{t-f+1, j_{t-f+1}}{\sc\!}\! \cdots\!
s_{r,i_r}{\sc\!}\bar s_{t,{j_t}}\mid
 r  {\!}\ge{\!} i_r{\!}>{\!}{\sc\!} \cdots{\sc\!}{\!} >{\!}i_{r-f+1}, j_k\ge k+f-t\}.  \end{eqnarray}
For each $c\in \mathcal D^f_{r, t}$ as in  \eqref{rcs}, let $\kappa_c$ be the
$r$-tuple \begin{equation}\label{k-ccc}
\kappa_c\!=\!(k_1,\dots,k_r)\!\in \mathbb N_k^r  \text{ and $k_i=0$ unless $i\!=\!i_r,i_{r-1},\dots,i_{r-f+1}$.}\end{equation}
Note that $\kappa_c$ may have more than one choice for a fixed $c$, and it may
be equal to $\kappa_d$ although $c\neq d$ for $c, d\in \mathcal D^f_{r,
t}$. Let $\mathbf N_f\! =\!\{\kappa_c \,|\, c\!\in \!\mathcal D^f_{r,
  t}\}$. If  $\kappa_c\in \mathbf N_f$,  define  $x^{\kappa_c}=\prod_{i=1}^r x_i^{k_i}$.
In \cite{RSu}, we consider poset $(\Lambda_{k, r,t}, \unrhd)$, where
\begin{equation}\label{poset} \Lambda_{k, r,t} = \left\{ (f,\lambda, \mu )\,|\, (\lambda, \mu) \in \Lambda_k^+(r\!-\!f)\times \Lambda_k^+(t\!-\!f),\,  0\!\le\! f\! \le\! \min \{r,t \} \right\},\end{equation}
such that $(f,\lambda, \mu)\unrhd (\ell, \alpha, \beta)$ for $ (f, \lambda, \mu), (\ell, \alpha, \beta) \in  \Lambda_{k, r, t} $ if either $f>\ell$  or
$f=\ell$ and $\lambda\unrhd_1 \alpha$, and $ \mu\unrhd_2 \beta$, and in case $f=\ell$, the orders $\unrhd_1$ and $\unrhd_2$  are dominant orders on $\Lambda_k^+(r\!-\!f)$ and $\Lambda_k^+(t\!-\!f)$ respectively. Define
\begin{equation}\label{e-ij}\mathfrak e_{i, j}= \bar s_{j, 1} s_{i, 1} e_1 s_{1, i} \bar s_{1, j}
\mbox{ \ for  $i, j$ with  $1\le i\le r $ and $1\le j\le t$.}
\end{equation}

\begin{Defn} For $(f, \mu, \nu)\in \Lambda_{k, r, t}$, define $$\delta(f,\mu, \nu)=(\Std(\mu)\times \Std(\nu))\times   \mathcal D^f_{r, t}\times \mathbf N_f.$$
For any  $(\s,d,\kappa_d),(\t,c,\kappa_c)\in \delta(f,\mu, \nu)$, define  $$C_{(\s,d,k_d),(\t,c,k_c)}=x^{k_d}d^{-1}e^f y_{\s\t}cx^{k_c},$$
where $e^f=\mathfrak e_{r,t}\mathfrak e_{r-1,t-1}\cdots \mathfrak e_{r-f+1,t-f+1}$ if $f\geq 1$ and $e^0=1$,
$y_{\s\t}=y_{\s^{(1)}\t^{(1)}}y_{\s^{(2)}\t^{(2)}}$, $\s,\t\in \Std(\mu)\times\Std(\nu)$.\end{Defn}

\begin{Theorem}\label{cellular-1}
 The set
$$\mathscr C=\{C_{(\s, c, \kappa_c), (\t,d, \kappa_d)}\,|\,
            (\s, c, \kappa_c),(\t,d, \kappa_d)\in\delta(f,\lambda),
              \forall (f,\lambda)\in\Lambda_{k, r, t}\},$$
is a weakly  cellular basis of $\mathscr B_{k, r,t}$ over  $\mathbb C$ in the sense of \cite{G09}. The required anti-involution is $\sigma$ in Lemma~\ref{inv}.
\end{Theorem}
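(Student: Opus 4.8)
The plan is to follow the now-standard strategy for producing cellular-type bases of cyclotomic diagram algebras (as in \cite{RSu,RSu1} for level two, and in the iterated-inflation philosophy of \cite{GL}). First I would record that by Theorem~\ref{level-k-walled}(2) the algebra $\mathscr B_{k,r,t}$ is free over $\mathbb C$ of rank $k^{r+t}(r+t)!$, since it is admissible; and that $|\mathscr C|$ equals this number by a direct count—$|\mathcal D^f_{r,t}|\cdot|\mathbf N_f|$ contributes the walled-Brauer-diagram count together with the $k$-power bookkeeping on the $f$ ``capped'' strands, while $|\Std(\mu)|\,|\Std(\nu)|$ summed against $x^{\kappa_c},x^{\kappa_d}$ over the Young data of $(\mu,\nu)$ accounts, via Theorem~\ref{basisofhecke}, for the two degenerate cyclotomic Hecke algebra factors $\mathscr H_{k,r-f}\otimes\mathscr H_{k,t-f}$ sitting on the ``through'' strands. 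So it suffices to prove that $\mathscr C$ \emph{spans} $\mathscr B_{k,r,t}$ and that it satisfies the weak cellularity axioms of \cite{G09}.

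For spanning, I would argue in two stages. Stage one: every regular monomial $x^\alpha D\bar x^\beta$ (these span $\mathscr B_{k,r,t}$ by Theorem~\ref{level-k-walled}(1)) can be rewritten, using relations (1)--(26) of Definition~\ref{wbmw} together with the cyclotomic relation $f(x_1)=0$ and $g(\bar x_1)=0$, in the form $x^{\kappa_d}d^{-1}e^f z\, c\, x^{\kappa_c}$ where $f$ is the number of horizontal edges of $D$, the elements $c,d\in\mathcal D^f_{r,t}$ move the endpoints of those horizontal edges to the rightmost $f$ positions (this is the walled analogue of the Brauer-algebra normal form, and $\mathfrak e_{i,j}$ in \eqref{e-ij} is exactly the conjugate of $e_1$ realizing this), and $z$ lies in the subalgebra generated by the ``middle'' symmetric groups and the $x_i,\bar x_i$ attached to through-strands, i.e. in (an image of) $\mathscr H_{k,r-f}^{\mathrm{aff}}\otimes \mathscr H_{k,t-f}^{\mathrm{aff}}$. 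One has to be careful that powers of $x_i$ on strands that get capped by $e^f$ are absorbed into the $x^{\kappa_c},x^{\kappa_d}$ part using $e_1 x_1^a = (\text{lower})\, e_1 + \omega_a$-type relations (12), (25); admissibility, \eqref{adda}, is what guarantees one never needs exponents $\ge k$ there. Stage two: apply Theorem~\ref{basisofhecke} to rewrite $z$ in terms of the cellular basis $\{y_{\s\t}\}$ of $\mathscr H_{k,r-f}\otimes\mathscr H_{k,t-f}$; the higher-order ``error'' terms supported on $\nu\rhd$ in that cellular straightening are, after being sandwiched by $e^f$, absorbed into basis elements indexed by strictly larger $(f',\lambda',\mu')$ in $(\Lambda_{k,r,t},\unrhd)$—this is where the ordering $f>\ell$ clause of \eqref{poset} is used, via identities like $e_1 e_1 = \omega_0 e_1$ and $e_1 s_1 e_1 = e_1$ that collapse configurations to fewer horizontal edges. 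Combining the count with spanning forces $\mathscr C$ to be a basis.

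For the weak cellularity axioms of \cite{G09}: the anti-involution is $\sigma$ from Lemma~\ref{inv}, and one checks directly from the definition $C_{(\s,c,\kappa_c),(\t,d,\kappa_d)}=x^{\kappa_c}c^{-1}e^f y_{\s\t}d x^{\kappa_d}$ that $\sigma$ fixes $e^f$ (each $\mathfrak e_{i,j}$ is visibly $\sigma$-stable), sends $y_{\s\t}$ to $y_{\t\s}$ (by the cellular property of $\mathscr H_{k,r-f}\otimes\mathscr H_{k,t-f}$), and interchanges the $c$-data with the $d$-data, so $\sigma(C_{(\s,c,\kappa_c),(\t,d,\kappa_d)})=C_{(\t,d,\kappa_d),(\s,c,\kappa_c)}$. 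The multiplication axiom—that $C_{(\s,c,\kappa_c),(\t,d,\kappa_d)}\, h$ is, modulo the span of basis elements with strictly larger index in $\Lambda_{k,r,t}$, a linear combination $\sum_{\t'} r_{\t'}\,C_{(\s,c,\kappa_c),(\t',d',\kappa_{d'})}$ with coefficients independent of the left data $(\s,c,\kappa_c)$—reduces, by the same normal-form manipulations as in the spanning argument, to the corresponding statement for the Hecke factor $y_{\s\t}$, which is Theorem~\ref{basisofhecke}; the ``weak'' qualifier (allowing $d,d'$ to differ, not just $\t,\t'$) is precisely what makes this go through without demanding compatibility of the cap-data with the Hecke action.

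The main obstacle I anticipate is Stage one of spanning: making the normal-form rewriting of an arbitrary regular monomial genuinely rigorous, in particular controlling the interaction between the polynomial generators $x_i,\bar x_i$ and the horizontal edges. One must show that whenever an $x_i^a$ (or $\bar x_j^a$) sits on a strand that a later $e_1$-type factor caps, it can be pushed through and either killed, turned into a scalar $\omega$-parameter, or converted into a bead-count $\kappa$ on a through-strand—and that this process terminates with exponents in $\mathbb N_k$. The relations one leans on are (8), (9), (12), (13), (19)--(22), (25), (26) of Definition~\ref{wbmw} plus admissibility; the bookkeeping is essentially the walled, two-sided generalization of the argument already carried out in \cite{RSu1} for $k=2$, and I would structure it as an induction on $f$ (number of horizontal edges) with an inner induction on total polynomial degree, exactly as in the proof of Theorem~\ref{isomorphism} but now purely algebraically rather than via the module $M_c^{r,t}$.
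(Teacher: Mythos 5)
The paper's own ``proof'' of Theorem~\ref{cellular-1} is a bare citation: it is proved in \cite{RSu} for $k=2$, and the general case is referred to Remark~3.8 of \cite{RSu1}. Your proposal fills in an argument along the lines of the iterated-inflation strategy that those references use (normal form for regular monomials, Hecke cellular bases sandwiched by $e^f$, dimension count), so it is the same route in spirit rather than a genuinely different one.

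There is, however, a concrete gap in your treatment of the anti-involution axiom, and a related misreading of what ``weakly cellular'' means. You assert that $\sigma$ fixes $e^f$, sends $y_{\s\t}$ to $y_{\t\s}$, and hence that $\sigma(C_{(\s,c,\kappa_c),(\t,d,\kappa_d)})=C_{(\t,d,\kappa_d),(\s,c,\kappa_c)}$ \emph{exactly}. But applying $\sigma$ (with the paper's normalization $C_{(\s,d,\kappa_d),(\t,c,\kappa_c)}=x^{\kappa_d}d^{-1}e^f y_{\s\t}c\,x^{\kappa_c}$) produces $x^{\kappa_c}c^{-1}\,y_{\t\s}\,e^f\,d\,x^{\kappa_d}$, with $y_{\t\s}$ now on the \emph{left} of $e^f$; to land back in $\mathscr C$ one must commute $y_{\t\s}$ past $e^f$. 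These do not commute on the nose in $\mathscr B_{k,r,t}$: although the symmetric-group parts of $y_{\t\s}$ commute with $\mathfrak e_{i,j}$, the polynomial piece $\tilde\pi_{[\mu']}\tilde\pi_{[(\nu^o)']}$ does not, because $x_i$ fails to commute with the conjugated $e_1$ inside $\mathfrak e_{r-f+1,t-f+1}$ (cf.\ relations (8), (9), (21) of Definition~\ref{wbmw}: already $e_1x_2-x_2e_1=s_1e_1-e_1s_1\neq 0$). The commutativity only holds in ${\rm gr}(\mathscr B_{k,r,t})$, i.e.\ modulo strictly lower polynomial degree, which in turn contributes terms of strictly larger index in $(\Lambda_{k,r,t},\unrhd)$ after one absorbs them into the basis. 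So the correct statement is $\sigma(C_{\ss\t})\equiv C_{\t\ss}\pmod{\mathscr B_{k,r,t}^{\rhd(f,\mu',(\nu^o)')}}$, and \emph{this} is the weakening that Goodman's notion in \cite{G09} captures. Your parenthetical gloss that ``weak'' means ``allowing $d,d'$ to differ, not just $\t,\t'$'' is not right: the full index $(\t,d,\kappa_d)$ already varies under right multiplication in a genuinely cellular algebra, as it does in Graham--Lehrer. The weakening is purely in the involution axiom, and your argument for that axiom as stated is where the proof has a hole.
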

\begin{proof} This result has been proved in \cite{RSu} for $k=2$. In general, see Remark~3.8 in \cite{RSu1}.\end{proof}

For each  $(f, \mu, \nu)\in \Lambda_{k, r, t}$, let $C(f, \mu, \nu)$ be the celll module with respect to the weakly cellular basis of $\mathscr B_{k, r, t}$ in Theorem~\ref{cellular-1}. The following result can be proved by arguments similar to those in the proof of  Proposition~3.9 in \cite{RSu}.
\begin{lemma}\label{wallbcell} Let
$\tilde{C}(f,\mu,\nu)=e^f x_{\mu}x_{\nu}w_{\mu}w_\nu y_{\mu'} y_{\nu'} \mathscr B_{k, r,t}(\text{mod } \mathscr B^{f+1}_{k, r,t})$, where $\mathscr B^{f+1}_{k, r,t}$ is the two-sided ideal generated by $e^{f+1}$.
 \begin{enumerate} \item The set $\{e^f x_{\mu}x_{\nu}w_{\mu}w_{\nu} y_{\mu'} y_{\nu'}d(\t)dx^{\kappa_d}(\text{ mod } \mathscr B^{f+1}_{k, r,t})\mid (\t, d, \kappa_d)\in \delta(f, \mu', \nu')\}$ is a basis of $\tilde{C}(f,\mu,\nu)$.  \item  As right $\mathscr B_{k, r,t}$-modules, $\tilde{C}(f,\mu,\nu)\cong C(f,\mu',\nu')$.\end{enumerate}
\end{lemma}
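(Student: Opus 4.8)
The plan is to run the level-$k$ analogue of the argument in \cite[Proposition~3.9]{RSu}, deducing (a) and (b) simultaneously from a dimension count. Write $z=e^f x_{\mu}x_{\nu}w_{\mu}w_\nu y_{\mu'} y_{\nu'}$ and regard $z$ as an element of the algebra $\mathscr B_{k,r,t}/\mathscr B^{f+1}_{k,r,t}$, so that $\tilde C(f,\mu,\nu)=z\,\mathscr B_{k,r,t}$ is a cyclic right $\mathscr B_{k,r,t}$-module. The candidate spanning set in~(a), namely $X=\{\,z\,d(\t)\,d\,x^{\kappa_d}\mid (\t,d,\kappa_d)\in \delta(f,\mu',\nu')\,\}$, has exactly $\#\delta(f,\mu',\nu')=\dim_{\mathbb C}C(f,\mu',\nu')$ elements. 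Hence it is enough to prove (i) that $X$ spans $\tilde C(f,\mu,\nu)$, and (ii) that there is a surjection of right $\mathscr B_{k,r,t}$-modules $\tilde C(f,\mu,\nu)\twoheadrightarrow C(f,\mu',\nu')$ under which $z\,d(\t)\,d\,x^{\kappa_d}$ maps to the cell-module basis vector indexed by $(\t,d,\kappa_d)$: then $\dim_{\mathbb C}\tilde C(f,\mu,\nu)=\dim_{\mathbb C}C(f,\mu',\nu')$, the set $X$ is a basis, and the surjection is an isomorphism, which is precisely (a) and (b).

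For (i) I would expand an arbitrary $b\in\mathscr B_{k,r,t}$ in the weakly cellular basis $\mathscr C$ of Theorem~\ref{cellular-1} and analyse $z\cdot C_{(\s,c,\kappa_c),(\t,d,\kappa_d)}$ modulo $\mathscr B^{f+1}_{k,r,t}$. When the two $e^f$'s collide, either the number $f$ of horizontal pairs drops, so the product acquires a factor $e^{f+1}$ and lies in $\mathscr B^{f+1}_{k,r,t}$; or it is preserved, in which case the $x_1$- and $\bar x_1$-reductions and the Murphy-type straightening rules for the level-$k$ degenerate Hecke algebras $\mathscr H_{k,r-f}$ and $\mathscr H_{k,t-f}$ (available from \cite{AMR,RSu1}, cf.\ Theorem~\ref{basisofhecke}) rewrite the result as a $\mathbb C$-combination of elements of $X$. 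Here one uses, exactly as in the proof of Theorem~\ref{moduleisomofhe}, that $y_{\mu'}h$ is a combination of the $y_{\mu'}d(\t)$'s together with terms $y_\nu$ with $\nu\rhd\mu'$, which are annihilated on the left by the prefix $x_{\mu}w_{\mu}$ by Lemma~\ref{ine123}, and likewise for $y_{\nu'}$. This gives $\dim_{\mathbb C}\tilde C(f,\mu,\nu)\le\#\delta(f,\mu',\nu')$. For (ii), note first that by the construction of $\mathscr C$ the ideal $\mathscr B^{f+1}_{k,r,t}$ is the span of those $C_{(\s,c,\kappa_c),(\t,d,\kappa_d)}$ whose first index exceeds $f$, hence is contained in the two-sided ideal $\mathscr B^{\rhd(f,\mu',\nu')}_{k,r,t}$ spanned by all weakly cellular basis elements with label strictly above $(f,\mu',\nu')$ in the order of \eqref{poset}; thus there is a canonical right-module surjection $\tilde C(f,\mu,\nu)=z\,\mathscr B_{k,r,t}\pmod{\mathscr B^{f+1}_{k,r,t}}\twoheadrightarrow z\,\mathscr B_{k,r,t}\pmod{\mathscr B^{\rhd(f,\mu',\nu')}_{k,r,t}}$. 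Since $x_{\mu}x_{\nu}w_{\mu}w_\nu y_{\mu'} y_{\nu'}$ involves no factor $e_1$, the element $z$ lies in the cell ideal for the label $(f,\mu',\nu')$, and the isomorphism $S^{\mu}\cong C(\mu')$ (resp.\ $S^\nu\cong C(\nu')$) of \eqref{subcell} applied to the two Hecke factors, together with the shape of $\mathscr C$, shows that $z$ generates, modulo $\mathscr B^{\rhd(f,\mu',\nu')}_{k,r,t}$, a right submodule isomorphic to $C(f,\mu',\nu')$, with $z\,d(\t)\,d\,x^{\kappa_d}$ corresponding to its basis vector indexed by $(\t,d,\kappa_d)$. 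This gives the required surjection, and combined with (i) completes the proof.

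The main obstacle is the computation in (i): one must check that after multiplying $z$ on the right by an arbitrary element of $\mathscr C$, the only cell layer surviving modulo $\mathscr B^{f+1}_{k,r,t}$ is the one labelled $(f,\mu',\nu')$, and that on that layer the answer lands in $\mathrm{span}(X)$. This requires controlling the $e^f$--$e^f$ and $e^f$--$e_1$ collisions (each must either preserve $f$ or produce $e^{f+1}$) and the interaction of the head $x_{\mu}x_{\nu}w_{\mu}w_\nu$ and the tail $y_{\mu'}y_{\nu'}$ with the symmetric-group part of the basis element, where Lemma~\ref{ine123} and the standard straightening for $\mathscr H_{k,r-f}$ and $\mathscr H_{k,t-f}$ do the work. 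Since every step is parallel to \cite[Proposition~3.9]{RSu}, with the group algebras there replaced by these Hecke algebras, I would indicate only the necessary modifications rather than reproduce the bookkeeping in full.
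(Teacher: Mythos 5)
Your proposal follows the same route the paper intends: the paper's own proof is a one-line citation to the analogue \cite[Proposition~3.9]{RSu}, and your argument is precisely the level-$k$ adaptation of that proof — a dimension count against $\#\delta(f,\mu',\nu')=\dim C(f,\mu',\nu')$, a spanning argument controlled by the $e^f$-collisions and Murphy-type straightening in $\mathscr H_{k,r-f}$ and $\mathscr H_{k,t-f}$ via Lemma~\ref{ine123}, and the Specht-module identification \eqref{subcell} to build the surjection onto $C(f,\mu',\nu')$. This matches the intended argument, and in fact supplies more of the bookkeeping than the paper records.
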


\begin{Defn}\label{rat} Assume $r+t\le \min \{q_1, q_2, \cdots, q_k\}$ and $(f,\mu,\nu)\in \Lambda_{k,r,t}$. Define
\begin{enumerate} \item [(1)] ${\mathbf i}=( 1,\cdots,1, \mathbf i_\mu)\in I(n,r)$, where $\mathbf i_\mu$ is defined in Definition~\ref{xilambda},
\item [(2)]  ${\mathbf j}=(\mathbf i_{\nu^o}, 1,\cdots,1)\in I(n,t)$, where $\mathbf i_{\nu^o}$ is defined in Definition~\ref{xilambdad},
\item [(3)] $\lambda^{i}=(\mu^{(i)}_1,\cdots,\mu^{(i)}_r,0\cdots,0
,-\nu^{(i)}_{t},\cdots, -\nu^{(i)}_1)\in\mathbb Z^{q_i}$,
\item [(4)] $\lambda_{\mu, \nu}=(\lambda^1,\lambda^2,\cdots,\lambda^k)=(\lambda_1, \lambda_2, \cdots, \lambda_n)\in\mathbb Z^n$,
\item [(5)] $v_{\hat \lambda_{\mu, \nu}}=m\otimes v_{\bf i}\otimes  v^*_{\bf j}$,
where $\hat \lambda_{\mu, \nu} =\sum_{i=1}^n\lambda_i\varepsilon_i+\delta_c$ and $\delta_c$ is in Assumption~\ref{qset},
\item  [(6)] $v_{\t,d,\kappa_d}=v_{\hat \lambda_{\mu, \nu}} e^f w_{\mu,\nu}y_{\mu'}y_{(\nu^o)'}d(\t)dx^{k_d}$, $(\t, d, \kappa_d)\in  \delta(f,\mu', (\nu^o)')$,  and  $w_{\mu,\nu}=w_{\mu}w_{\nu^o}$.
\end{enumerate}\end{Defn}

\begin{Theorem}\label{highestwall}Suppose $r+t\leq \min\{q_1, q_2, \cdots, q_k\}$.
\begin{enumerate}
\item [(1)] There is a bijection between  $\Lambda_{k, r, t} $ and the set of $\mathfrak p$-dominant weights $\lambda$ such that $M_c^{r, t}$ contains at least a highest weight vector with highest weight $\lambda$. Moreover, the map sends $(f, \mu, \nu)$ to $\hat \lambda_{\mu, \nu}$ which is defined in Definition~\ref{rat}(5).
\item [(2)]  The $\mathbb C$-space $V_{\hat \lambda_{\mu, \nu}}$ of $\mathfrak{g}$-highest weight vectors of $M_c^{r,t}$
with highest weight $\hat \lambda_{\mu, \nu}$ has a basis $S=\{v_{\t,d,k_d}\mid (\t, d, \kappa_d)\in \delta(f, \mu', (\nu^o)')\}$.\end{enumerate}

\end{Theorem}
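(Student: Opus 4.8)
The approach is to run, in the mixed setting, the arguments behind Theorem~\ref{hiofcyche} (the case $t=0$, due in essence to \cite{BK}) and Corollary~\ref{hiofcyched} (the case $r=0$) simultaneously, the genuinely new feature being the presence of the contraction $e^f$ and of both $V$- and $W$-tensor slots. Assertion (1) is deduced from (2): by Lemma~\ref{highestweightc} a $\mathfrak p$-dominant $\lambda$ carries a highest weight vector of $M_c^{r,t}$ precisely when $\lambda-\delta_c$ is the weight of an $\mathfrak l$-highest weight vector of $V^{\otimes r}\otimes W^{\otimes t}$; restricting to $\mathfrak l=\bigoplus_i\mathfrak{gl}_{q_i}$ and using ordinary (mixed) Schur--Weyl duality within each block (recall $q_i\ge r+t$), together with the standard count of standard $k$-tableaux, one identifies this set of weights with $\{\hat\lambda_{\mu,\nu}\mid(f,\mu,\nu)\in\Lambda_{k,r,t}\}$ (the count can alternatively be matched against the cellular basis of Theorem~\ref{cellular-1}), and the assignment $(f,\mu,\nu)\mapsto\hat\lambda_{\mu,\nu}$ is injective since the positive, zero and negative parts of the $i$-th block of $\lambda_{\mu,\nu}$ recover $\mu^{(i)}$, $f$ and $\nu^{(i)}$ respectively. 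So the content is in (2).

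The first step of (2) is to check that $v_{\t,d,\kappa_d}$ of Definition~\ref{rat} is a $\mathfrak g$-highest weight vector of weight $\hat\lambda_{\mu,\nu}$. Since $e^f$, $w_{\mu,\nu}$, $y_{\mu'}$, $y_{(\nu^o)'}$, $d(\t)$, $d$ and $x^{\kappa_d}$ all act on $M_c^{r,t}$ by $\mathfrak g$-endomorphisms (Proposition~\ref{alghom1}), which commute with $\mathfrak g$ and preserve weight spaces, it suffices to treat the seed vector $v_{\hat\lambda_{\mu,\nu}}=m\otimes v_{\mathbf i}\otimes v^*_{\mathbf j}$ and show that $e_{p,p+1}\,v_{\hat\lambda_{\mu,\nu}}$, after applying $e^f w_{\mu,\nu}y_{\mu'}y_{(\nu^o)'}$, vanishes for every simple root vector $e_{p,p+1}$. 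For $p$ with $\text{col}(p)=\text{col}(p+1)$ this is an intra-block statement: on the slots occupied by $\mathbf i_\mu$ (resp.\ $\mathbf i_{\nu^o}$) the vector is, after the symmetrizers $w_\mu y_{\mu'}$ (resp.\ $w_{\nu^o}y_{(\nu^o)'}$) and with the $e^f$-contracted ``$1$''-slots inert, a $\mathfrak{gl}_{\text{col}(p)}$-highest weight vector of a mixed tensor space, which follows from Lemma~\ref{liehwv}, Corollary~\ref{hiofcyched} and the classical walled-Brauer combinatorics underlying Theorem~\ref{cellular-1}. For the cross-block roots $p=p_i$ ($1\le i\le k-1$) one computes directly as in the proof of Theorem~\ref{hiofcyche}: $e_{p_i,p_i+1}m=0$ and the ``$1$''-slots are unaffected (as $1\notin\{p_i,p_i+1\}$), so the only contributions come from a $V$-slot of $\mathbf i_\mu$ carrying $v_{p_i+1}$, turned into $v_{p_i}$ with $m\otimes v_{p_i}\in M_i$, and from a $W$-slot of $\mathbf i_{\nu^o}$ carrying $v^*_{p_i}$, turned into $-v^*_{p_i+1}$ with $m\otimes v^*_{p_i+1}\in N_{i+1}$; in each case, commuting the relevant transposition of $w_{\mu,\nu}$ to the front, the polynomial part of $y_{\mu'}$ (resp.\ $y_{(\nu^o)'}$) produces a factor $\prod_{j=1}^{i}(x_1-u_j)$ (resp.\ $\prod_{j=i+1}^{k}(\bar x_1-\bar u_j)$) acting on a vector of $M_i$ (resp.\ $N_{i+1}$), which is $0$ by Lemma~\ref{polyofx}. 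Thus the $V$- and $W$-contributions vanish separately.

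The second step is non-vanishing and linear independence of $S$. Passing to the associated graded picture (Remark~\ref{xih}) and using Lemma~\ref{actionofh}, the top-degree part of $v_{\t,d,\kappa_d}$ is a combination of ``labeled walled Brauer diagram'' basis vectors of $M_c^{r,t}$ whose leading one is uniquely determined by $(\t,d,\kappa_d)\in\delta(f,\mu',(\nu^o)')$, exactly as in the faithfulness argument of the proof of Theorem~\ref{isomorphism}; hence distinct triples contribute independent leading terms, so $S$ is linearly independent and each $v_{\t,d,\kappa_d}\ne0$. (Equivalently, by Lemma~\ref{highestweightc} the $m$-component of $v_{\t,d,\kappa_d}$ is a nonzero multiple of the image in $V^{\otimes r}\otimes W^{\otimes t}$ of the corresponding basis element of the cell module $\tilde C(f,\mu,\nu)\cong C(f,\mu',(\nu^o)')$ of Lemma~\ref{wallbcell}, and these images are independent because $\mathscr B_{k,r,t}$ acts faithfully on $M_c^{r,t}$.) Finally, for spanning: Lemma~\ref{highestweightc} makes the ``$m$-component'' map from $V_{\hat\lambda_{\mu,\nu}}$ to the space of $\mathfrak l$-highest weight vectors of $V^{\otimes r}\otimes W^{\otimes t}$ of weight $\lambda_{\mu,\nu}$ injective; by the block-by-block analysis of (1) the target has dimension $|\delta(f,\mu',(\nu^o)')|=|S|$ and is spanned by the $m$-components of the elements of $S$, so $\dim V_{\hat\lambda_{\mu,\nu}}=|S|$ and $S$ is a basis.

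The step I expect to be the main obstacle is the vanishing of $e_{p_i,p_i+1}v_{\hat\lambda_{\mu,\nu}}$ (after the $\mathscr B_{k,r,t}$-action) at the cross-block simple roots. In the $t=0$ case of \cite{BK} one only pushed symmetrizers on $V$-slots past $e_{p_i,p_i+1}$; here the seed vector additionally carries the $W$-tensor $\mathbf i_{\nu^o}$, its antisymmetrizers $y_{(\nu^o)'}$, and the contraction $e^f$, so one must set up the bookkeeping so that $w_\mu y_{\mu'}$ (resp.\ $w_{\nu^o}y_{(\nu^o)'}$) genuinely acts on the $\mathbf i_\mu$- (resp.\ $\mathbf i_{\nu^o}$-) slots, that $e^f$ and the opposite-side symmetrizer remain inert for each contribution, and that the annihilating polynomials of Lemma~\ref{polyofx} surface on the $M_c$-factor for both the $V$- and the $W$-side. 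Organising this (and, relatedly, pinning down exactly which cell module of Theorem~\ref{cellular-1} occurs) is where the real work lies.
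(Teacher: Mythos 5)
Your proposal follows the same overall strategy as the paper: reduce (1) to (2), verify that $v_{\t,d,\kappa_d}$ is a highest weight vector by applying $e_{p,p+1}$ to the seed vector $m\otimes v_{\mathbf i}\otimes v^*_{\mathbf j}$ and invoking Lemma~\ref{polyofx} at the cross-block simple roots, and establish linear independence of $S$ by the labeled walled Brauer diagram / leading-term argument from the proof of Theorem~\ref{isomorphism} (via Lemma~\ref{actionofh} and Remark~\ref{xih}). You make explicit two points the paper compresses: (i) the separation of the $V$- and $W$-side contributions at a cross-block root $e_{p_i,p_i+1}$, with the $V$-side annihilated by $\prod_{j\le i}(x_1-u_j)$ via Lemma~\ref{polyofx}(a) and the $W$-side by $\prod_{j\ge i+1}(\bar x_1-\bar u_j)$ via Lemma~\ref{polyofx}(b) — the paper only says this ``follows from the $f=0$ case''; and (ii) a spanning step via the injectivity of the $m$-component map from Lemma~\ref{highestweightc} together with a block-by-block dimension count, which the paper does not record (its proof stops at linear independence). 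These additions are consistent with the paper's intent and strengthen the exposition.

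One caution: your parenthetical alternative for independence, identifying the ``$m$-component'' of $v_{\t,d,\kappa_d}$ with the image of a cell-module basis element of $\tilde C(f,\mu,\nu)$ and then invoking faithfulness on $M_c^{r,t}$, is shakier than the main leading-term argument. The operators $x_i,\bar x_j$ act through $\pi_{0,i}(\Omega)$, $\pi_{0,\bar j}(\Omega)$, which mix the $M_c$-factor with the tensor slots, so the $m$-component of $v_{\t,d,\kappa_d}$ is not simply $(v_{\mathbf i}\otimes v^*_{\mathbf j})$ acted on by an element of $\mathscr B_{k,r,t}$ inside $V^{\otimes r}\otimes W^{\otimes t}$; and faithfulness of $\mathscr B_{k,r,t}$ on $M_c^{r,t}$ does not directly give independence of those projected images. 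Your primary argument (tracking the unique highest-degree term $\mathcal Y m\otimes v_{\mathbf n}$ via Lemma~\ref{actionofh}) is the one the paper uses and is sound; I would drop or substantially rephrase the parenthetical.
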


\begin{proof} (a) follows from Lemma~\ref{highestweightc} and (b).  By  Theorem~\ref{hiofcyche} and Corollary~\ref{hiofcyched}, we have (b) when $f=0$.  Since
$(v_i\otimes v_j^*)e_1=\delta_{i, j} \sum_{l=1}^n v_l\otimes v_l^*$,
$e_{i,i+1}\sum_{j=1}^n v_j\otimes v_j^*=0$, for all possible $j$. So, it suffices to show that $e_{i,i+1}$ acts on  $m\otimes v_{\mathbf i_\mu}\otimes v^*_{\mathbf i_{\nu^o}}w_{\mu,\nu}y_{\mu'}
y_{(\nu^o)'}$ trivially if  $f>0$. This follows from our previous result on $f=0$. It remains to prove that $S $ is linearly independent.
Define \begin{equation}\label{vmn}v_{\mu}= v_{\mathbf i_\mu}w_\mu  y_{\bar{\mu'}}\text{ and }  v^*_{\nu^o}= v^*_{\mathbf i_{\nu^o}}w_{\nu^o} y_{\bar  {(\nu^o)'}},\end{equation}
where $\bar {\mu'}$ (resp., $\bar {(\nu^o)'}$) is the  composition of $r-f$ (resp., $t-f$) obtained from  $ \mu'$ (resp., $ {(\nu^o)'}$) by concatenation. So,
 $$v_{\t,d,k_d}=m\otimes v_1^{\otimes f}\otimes v_{\mu}  \otimes v^*_{\nu^o}\otimes (v^*_1)^{\otimes f}\tilde\pi_{[\mu']}\tilde \pi_{[(\nu^o)']}e^f d(\t)dx^{k_d}.$$
Let $D_{\t,d}$ be the walled Brauer diagram with respect to $e^f d(\t)d$.  Note that  $\tilde\pi_{[\mu']}$ (resp., $\tilde \pi_{[(\nu^o)']}$)  contains a unique term say, $F_\mu=x_1^{\alpha_1}x_2^{\alpha_2}\cdots x_{r-f}^{\alpha_{r-f}}$ (resp., $F_{\nu^o}= \bar x_1^{\beta_1} \bar x_2^{\beta_2} \cdots \bar x_{t-f}^{\beta_{t-f}}$) with highest degree $\sum_i\alpha_i$ (resp., $\sum_i \beta_i$), where $\alpha\in \mathbf N_k^{r-f}$ (resp.,  $\beta\in \mathbf N_k^{t-f}$).
So, $ \pi_{[\mu']}\pi_{[(\nu^o)']}= F_\mu F_{\nu^o}$ up to some terms of lower degrees.  Let \begin{enumerate}\item
 $\mathbf p_{\mu}=\{{\bf i}\mid {\bf i}\in I(n,r-f), v_{\bf i}\text{ appears in  $v_{\mu}$ with a non-zero coefficient}\}$,
 \item $\mathbf p_{\nu}=\{{\bf i}\mid {\bf i}\in I(n,t-f), v^*_{\bf i}\text{ appears in  $v^*_{\nu^o}$ with  a  non-zero coefficient}\}$.\end{enumerate}

 Suppose $({\bf l, m} )\in \mathbf p_{\mu}\times \mathbf p_\nu$ and  $d=s_{r-f+1,i_{r-f+1}}s^*_{t-f+1,j_{t-f+1}}\cdots s_{r,i_r}s^*_{t,j_t}\in D_{r,t}^f$. We define a labeled walled Brauer diagram
 $\tilde D_{\t,d}$ with respect to $D_{\t,d}$ so as to describe the action of $F_\mu F_\nu D_{\t,d} x^{\kappa_d}$ on $ m  \otimes v_1^{\otimes f}\otimes v_{\mathbf l}\otimes v^*_{\mathbf m}\otimes (v^*_1)^{\otimes f}$.
  Recall that $\kappa_i$ is called the $i$th component of $\kappa_d$ if $\kappa_d=(\kappa_1, \kappa_2, \cdots, \kappa_r)\in \mathbb N_k^r$. First, we insert some beads at some vertices of $D_{\t,d}$ as follows.
\begin{enumerate}\item There are  $\alpha_i$ (resp., $\beta_j$) beads at the $i$th (resp., $\bar j$th) vertex at the top row of $D_{\t,d}$, $1\le i\le r-f$ (resp., $1\le j\le t-f$),
\item There are  $a$ beads at the $i_{r-h+1}$th vertex at the bottom row  of $ D_{\t, d}$,  $1\leq h\leq f$, if the $i_{r-h+1}$th component of $\kappa_d$ is $a$.
\end{enumerate}
Secondly, we label some positive integers at all vertices of $D_{\t,d}$ as follows.
\begin{enumerate}
\item The vertices $(r, r-1, \cdots, 1; \bar 1, \bar 2\cdots, \bar t)$  at the top row of   $D_{\t,d}$ are labeled  with positive integers  according to the sequence $\mathbf b:=(\mathbf b_1;\mathbf b_2)$, where $$(\mathbf b_1;\mathbf b_2)=( \overset{f} {    \overbrace{1,1,\cdots,1}} ,l_{r-f},\cdots, l_{1};m_1,\cdots,m_{t-f}, \overset{f} {  \overbrace{1,1,\cdots,1})}. $$
\item  The vertices $(\bar j_{t-f+1}, \cdots, \bar j_{t-1}, \bar j_{ t} )$ at the bottom row of   $D_{\t, d}$ are labeled according to the sequence $(p_{k-1}+|\mu^{(k)}|+1, p_{k-1}+|\mu^{(k)}|+2, \cdots, p_{k-1}+|\mu^{(k)}|+f)$ of positive integers.  Moreover, if the labeling of $\bar j_{t-l}$ is $p$,  then the labeling of the vertex of $i_{r-l}$ is $p-\sum_{j=1}^h q_{k-j}$, provided that there are  $h$ beads at the vertex  $i_{r-l}$.
\item Suppose $[i, j]$ is an edge of $D_{\t, d}$ and  $i$ is a vertex at the top row. Suppose there  are $h$ beads at the vertex $i$, and the labeling of $i$ is $p\in \mathbf p_l$, then  the labeling of $j$ is $p-\sum_{m=1}^h q_{l-m}$,
 \item Suppose $[\bar i, \bar j]$ is an edge of $D_{\t, d}$ and  $\bar i$ is a vertex at the top row. Suppose there  are $h$ beads at the vertex $\bar i$, and the labeling of $\bar i$ is $p\in \mathbf p_l$, then  the labeling of $\bar j$ is $p+\sum_{m=1}^h q_{m+l}$.
    \end{enumerate}
    Since we are assuming $r+t\le \min\{q_1, q_2, \cdots, q_k\}$, the above is well-defined.  For  pairs  $(l_i, \alpha_i)$  and $(m_j, \beta_j)$ (determined by the labeled walled Brauer diagram defined above), define
     \begin{equation} \label{operator0}\begin{cases} \mathcal Y_{l_i, \alpha_i, i} & =e_{l_i, l_i-q_{l-1}} e_{l_i-q_{l-1}, l_i-q_{l-1}-q_{l-2}} \cdots e_{l_i-\sum_{j=1}^{\alpha_i-1}q_{l-j}, l_i-\sum_{j=1}^{\alpha_i}q_{l-j}}, \\
 \tilde {\mathcal Y}_{m_j, \beta_j, j} & =e_{m_j+q_{o+1}, m_j} e_{m_j+q_{o+1}+q_{o+2}, m_j+q_{o+1}} \cdots e_{m_j+\sum_{i=1}^{\beta_j}q_{o+i}, m_j+\sum_{i=1}^{\beta_j-1}q_{o+i}},\\
\end{cases}  \end{equation}
if $l_i\in \mathbf p_l$ and  $m_j\in \mathbf p_o$. For the vertex  $i_{r-l}$ at the bottom row of $D_{\t, d}$, we have the  pair say, $(p-\sum_{j=1}^h q_{k-j}, h) $ if there are  $h$ beads at the vertex  $i_{r-l}$.
Define \begin{equation} \mathcal Y^{l}_{p-\sum_{j=1}^h q_{k-j}, h}=e_{p, p-q_{k-1}} e_{p-q_{k-1}, p-q_{k-1}-q_{k-2}} \cdots e_{p-\sum_{j=1}^{h-1}q_{k-j}, p-\sum_{j=1}^{h}q_{k-j}}.\end{equation}
Consider the ordered product
$\mathcal Y=\prod_{i=1}^{r-f}  \mathcal Y_{l_i, \alpha_i, i} \prod_{j=1}^{t-f}  \tilde{\mathcal Y}_{m_j, \beta_j, j} \prod_{l=1}^f  \mathcal Y^{l}_{p-\sum_{j=1}^h q_{k-j}, h}$.
By Lemma~\ref{actionofh} and Remark~\ref{xih}, the coefficient of $  \mathcal Y m\otimes v_{\mathbf n_1}\otimes v^*_{\mathbf n_2}$
in $v_{\t,d,k_d}$ is $\delta_{(\t,d),(\t,d')}$ up to a non-zero multiple,  where $\mathbf n=(\mathbf n_1;\mathbf n_2)$ is the sequence of positive integers obtained by reading the labeling of vertices  at the bottom row
of the labeled walled Brauer diagram $\tilde D_{\t, d}$ from left to right. So, $S$ in (2) is $\mathbb C$-linear independent.
\end{proof}

\begin{example} Assume $(q_1, q_2, k, r, t, f )=(11, 12, 2, 5, 6, 1)$.  Suppose $\mu=((2), (1,1))$ and $\nu=((2), (2, 1))$, and
$d=\bar s_5$, and  $d(\t)=s_2\bar s_3\bar s_4\bar s_2\bar s_3$ and $\kappa_d=( 0^4, 1)$, and $(\mathbf l; \mathbf m)=(1,1, 13, 12; 11, 11, 23, 23, 22)$. Then $F_\mu=x_1x_2$ and $F_{\nu^o}=\bar x_1\bar x_2$ and $x^{\kappa_d}=x_5$. The following diagram is $\tilde D_{\t, d}$. In this case,
\begin{enumerate}\item $\mathbf b=(1, 1, 1, 13, 12; 11, 11, 23, 23, 22, 1)$,
\item $\mathbf n=(3, 1, 1, 1, 2; 22, 23, 22, 22, 14, 23)$,
\item $\mathcal Y_{12, 1, 1}=e_{12, 1}$, $\mathcal Y_{13, 1, 2}=e_{13, 2}$, $\mathcal Y^1_{3, 1}=e_{14, 3}$, $\tilde {\mathcal Y}_{11, 1,  1}=e_{22, 11}$ and  $\tilde {\mathcal Y}_{11, 1, 2 }=e_{22, 11}$.\end{enumerate}
\end{example}

\unitlength 1mm 
\linethickness{0.4pt}
\ifx\plotpoint\undefined\newsavebox{\plotpoint}\fi 
\begin{picture}(132.275,52.875)(20,70)
\put(80,116.75){\line(0,-1){37.75}}
\multiput(63.25,108.75)(.0307370242,-.0865051903){275}{\line(0,-1){.0865051903}}
\multiput(71.25,108.75)(-.0336734694,-.0979591837){245}{\line(0,-1){.0979591837}}
\put(54.75,108.75){\line(0,-1){23.5}}
\put(46.75,108.75){\line(0,-1){23.5}}
\put(88,108.75){\line(0,-1){23.25}}
\multiput(105,108.25)(.0390090199,-.0356906585){655}{\line(1,0){.0428790199}}
\multiput(115.25,107.75)(-.0330038262,-.0411275416){541}{\line(0,-1){.0411275416}}
\multiput(122.25,107.75)(-.0307078652,-.0411985019){534}{\line(0,-1){.0411985019}}
\multiput(97,107.75)(.0337301587,-.0446428571){504}{\line(0,-1){.0446428571}}
\qbezier(39.8,108.5)(86.125,132.875)(131.25,108.25)
\qbezier(39.25,85.25)(80,61.375)(121.5,85.25)
\put(40,108){\circle*{0.8}}
\put(46.75,108.2){\circle*{0.8}}
\put(54.75,108.2){\circle*{0.8}}
\put(63.5,108.5){\circle*{0.8}}
\put(71.25,108.5){\circle*{0.8}}
\put(87.95,108.5){\circle*{0.8}}
\put(96.75,108){\circle*{0.8}}
\put(105.5,108){\circle*{0.8}}
\put(115.5,108){\circle*{0.8}}
\put(122.25,108){\circle*{0.8}}
\put(46.75,85){\circle*{0.8}}
\put(54.75,85){\circle*{0.8}}
\put(63.5,85){\circle*{0.8}}
\put(71.55,85){\circle*{0.8}}
\put(87.95,85){\circle*{0.8}}
\put(97.5,85){\circle*{0.8}}
\put(105.5,85){\circle*{0.8}}
\put(114.25,85){\circle*{0.8}}
\put(130.5,85){\circle*{0.8}}
\put(39,85){\circle*{0.8}}
\put(121.25,85){\circle*{0.8}}
\put(131,108){\circle*{0.8}}
\put(40.75,84.25){\circle*{2}}
\put(63.75,107){\circle*{2}}
\put(70.75,107){\circle*{2}}
\put(87.95,107){\circle*{2}}
\put(97.75,107){\circle*{2}}
\put(38.75,110){1}
\put(46.05,110){1}
\put(54,110){1}
\put(61.75,110){13}
\put(69.5,110){12}
\put(86,110){11}
\put(95.25,110){11}
\put(103.5,110){23}
\put(114,110){23}
\put(120.75,110){22}
\put(129.75,110){1}
\put(38.25,81){3}
\put(46.05,81){1}
\put(54,81){1}
\put(62.75,81){1}
\put(70.5,81){2}
\put(86,81){22}
\put(95.25,81){23}
\put(103.5,81){22}
\put(113.25,81){22}
\put(120.75,81){14}
\put(129.5,81){23}
\end{picture}

\begin{Theorem}\label{isoofcell}
For any $(f,\mu,\nu)\in \Lambda_{k,r,t}$, ${\rm Hom}_{\mathcal {O}}(M^{\mathfrak p}(\hat \lambda_{\mu, \nu}),M_c^{r,t})\cong {C}(f,\mu',(\nu^o)')$ as right $\mathscr B_{k,r,t}$-modules, where $\hat  \lambda_{\mu, \nu}$ is defined in Definition~\ref{rat}(5).
\end{Theorem}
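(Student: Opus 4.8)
The plan is to combine the classification of highest weight vectors in Theorem~\ref{highestwall} with the weakly cellular structure of $\mathscr B_{k,r,t}$ from Theorem~\ref{cellular-1} and Lemma~\ref{wallbcell}, exactly parallel to the way Theorem~\ref{moduleisomofhe} was deduced from Theorem~\ref{hiofcyche}. First I would observe that, by the universal property of the parabolic Verma module $M^{\mathfrak p}(\hat\lambda_{\mu,\nu})$, a homomorphism $f\in{\rm Hom}_{\mathcal O}(M^{\mathfrak p}(\hat\lambda_{\mu,\nu}),M_c^{r,t})$ is determined by the image $f(m_{\hat\lambda_{\mu,\nu}})$ of the highest weight vector, which must be a $\mathfrak g$-highest weight vector of $M_c^{r,t}$ of weight $\hat\lambda_{\mu,\nu}$; conversely every such highest weight vector gives a homomorphism. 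Hence, as a $\mathbb C$-space, ${\rm Hom}_{\mathcal O}(M^{\mathfrak p}(\hat\lambda_{\mu,\nu}),M_c^{r,t})\cong V_{\hat\lambda_{\mu,\nu}}$, and by Theorem~\ref{highestwall}(b) this space has basis $\{v_{\t,d,\kappa_d}\mid (\t,d,\kappa_d)\in\delta(f,\mu',(\nu^o)')\}$, which is in bijection with the standard basis of the cell module $\tilde C(f,\mu,\nu)$ in Lemma~\ref{wallbcell}(a). For each $(\t,d,\kappa_d)$ I would let $f_{\t,d,\kappa_d}$ be the homomorphism sending $m_{\hat\lambda_{\mu,\nu}}$ to $v_{\t,d,\kappa_d}$, so that $\{f_{\t,d,\kappa_d}\}$ is a $\mathbb C$-basis of the Hom-space.

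The heart of the argument is to promote this linear isomorphism to a right $\mathscr B_{k,r,t}$-module isomorphism. The right action of $\mathscr B_{k,r,t}$ on ${\rm Hom}_{\mathcal O}(M^{\mathfrak p}(\hat\lambda_{\mu,\nu}),M_c^{r,t})$ is by precomposition—really by the right action of $\mathscr B_{k,r,t}$ on $M_c^{r,t}$ through $\varphi$ (Proposition~\ref{alghom1})—so under the identification with $V_{\hat\lambda_{\mu,\nu}}$ it is just the restriction of the right $\mathscr B_{k,r,t}$-action on $M_c^{r,t}$ to the subspace of highest weight vectors of weight $\hat\lambda_{\mu,\nu}$. (One must first check this subspace is stable: the right action commutes with the left $\mathfrak g$-action and preserves weights, hence preserves highest weight vectors of a fixed weight.) I would then define a linear map $\phi\colon V_{\hat\lambda_{\mu,\nu}}\to\tilde C(f,\mu,\nu)$ by $v_{\t,d,\kappa_d}\mapsto e^f x_\mu x_\nu w_\mu w_{\nu^o} y_{\mu'}y_{(\nu^o)'}d(\t)d x^{\kappa_d}\pmod{\mathscr B^{f+1}_{k,r,t}}$, and verify it is a right $\mathscr B_{k,r,t}$-homomorphism. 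As in the proof of Theorem~\ref{moduleisomofhe}, this amounts to checking that when one multiplies $v_{\hat\lambda_{\mu,\nu}}e^f w_{\mu,\nu}y_{\mu'}y_{(\nu^o)'}d(\t)d x^{\kappa_d}$ on the right by a basis element $h$ of $\mathscr B_{k,r,t}$ and re-expands using the straightening rules of the weakly cellular basis (Theorem~\ref{cellular-1}), the "higher" terms—those indexed by $(\ell,\alpha,\beta)\rhd(f,\mu',(\nu^o)')$ in the poset $\Lambda_{k,r,t}$—act as zero on the mixed tensor vector $v_{\hat\lambda_{\mu,\nu}}$. Once $\phi$ is shown to be an $\mathscr B_{k,r,t}$-homomorphism and is bijective (it sends a basis to a basis by Lemma~\ref{wallbcell}(a) and Theorem~\ref{highestwall}(b)), we get $V_{\hat\lambda_{\mu,\nu}}\cong\tilde C(f,\mu,\nu)\cong C(f,\mu',(\nu^o)')$ by Lemma~\ref{wallbcell}(b), which is the claim.

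The main obstacle, as in the level-two case and in Theorem~\ref{moduleisomofhe}, is the vanishing of the higher-order terms: one must show that for $(\ell,\alpha,\beta)\rhd(f,\mu',(\nu^o)')$, every term of the form $(\text{monomial in }x_i,\bar x_j)\,d(\s_2)^{-1}e^\ell y_{\alpha\beta}(\cdots)$ kills $v_{\hat\lambda_{\mu,\nu}}$. For the $\ell>f$ part this should follow because $e^{f+1}$ acts as zero on a vector built from $r+t$ copies of $V$ and $W$ after only $f$ contractions—more precisely one shows $v_{\hat\lambda_{\mu,\nu}}\mathscr B^{f+1}_{k,r,t}=0$, i.e. the mixed tensor vector lies in the annihilator of the ideal generated by $e^{f+1}$, using that a contraction $e_1$ applied beyond the available matching structure forces the weight to drop below $\hat\lambda_{\mu,\nu}$ or the vector to vanish. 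For the part with $\ell=f$ but $\alpha\rhd\mu'$ or $\beta\rhd(\nu^o)'$, the vanishing is the walled-Brauer analogue of the Specht-module argument in Theorem~\ref{moduleisomofhe}: one invokes Lemma~\ref{ine123} together with Lemma~\ref{polyofx}(a)–(b) to see that the relevant product of factors $(x_1-u_j)$ (or $(\bar x_1-\bar u_j)$) annihilates the corresponding tensor factor. Assembling these two vanishing statements is exactly what is referred to in the remark after Theorem~\ref{cellular-1} that "this result can be proved by arguments similar to those in the proof of Proposition~3.9 in \cite{RSu}", and I expect the bookkeeping with the labeled walled Brauer diagrams from the proof of Theorem~\ref{highestwall} to be the genuinely delicate part; the rest is a formal transport of structure.
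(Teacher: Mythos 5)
Your proposal follows the paper's own argument essentially line for line: you identify $\operatorname{Hom}_{\mathcal O}(M^{\mathfrak p}(\hat\lambda_{\mu,\nu}),M_c^{r,t})$ with $V_{\hat\lambda_{\mu,\nu}}$ via the universal property, define the same map $\phi$ onto $\tilde C(f,\mu,\nu)$ using the bases from Theorem~\ref{highestwall}(b) and Lemma~\ref{wallbcell}(a), reduce equivariance to the vanishing of the higher cell terms $(\ell,\alpha,\beta)\rhd(f,\mu',(\nu^o)')$, split that vanishing into the case $\ell>f$ (killed by $e^{f+1}$) and the case $\ell=f$ with $\alpha\rhd\mu'$ or $\beta\rhd(\nu^o)'$ (handled by the Specht-module argument of Theorem~\ref{moduleisomofhe} via Lemmas~\ref{ine123} and~\ref{polyofx}), and close with Lemma~\ref{wallbcell}(b). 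This matches the paper's proof; the only difference is that you spell out the universal-property reduction and the $\ell>f$ vanishing in a bit more detail than the paper, which the paper leaves as "easy to see."
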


\begin{proof} By Lemma~\ref{wallbcell}(a) and  Theorem~\ref{highestwall}(b), the linear map $\phi: V_{\hat \lambda_{\mu, \nu}}\rightarrow  \tilde {C}(f,\mu,\nu^o)$ satisfying
$$\phi (v_{\t,d,\kappa_d})=e^f x_{\mu}x_{\nu}w_{\mu}w_{\nu^o} y_{\mu'} y_{(\nu^o)'}d(\t)dx^{\kappa_d}(\text{ mod } \mathscr B^{f+1}_{k, r,t}), \text{ $\forall (\t,d,\kappa_d)\in \delta(f, \mu', (\nu^o)')$} $$
is a linear isomorphism. In order to show that $\phi$ is a $\mathscr B_{k, r,t}$-homomorphism, it remains to prove
\begin{equation}\label{vcal}
v_{\hat \lambda_{\mu, \nu}}  w_{\mu,\nu} C_{(\s_1,c_1,\kappa_{c_1}),(\s_2,c_2,\kappa_{c_2})}=0
\end{equation}
for any $C_{(\s_1,c_1,k_{c_1}),(\s_2,c_2,k_{c_2})}\in  \mathscr B^{\rhd (f, \mu', (\nu^{o})')}_{k, r,t}$ , where $ \mathscr B^{\rhd (f, \mu', (\nu^{o})')}_{k, r,t}$ is the $\mathbb C$-subspace of $ \mathscr B_{k, r, t}$ spanned by all $C_{(\s_1,c_1,k_{c_1}),(\s_2,c_2,k_{c_2})}$ with $(\s_1,c_1,k_{c_1}),(\s_2,c_2,k_{c_2})\in \delta(\ell, \alpha, \beta)$ and $(\ell, \alpha, \beta)\rhd  (f, \mu', (\nu^{o})')$
(cf. the proof of  Theorem~\ref{moduleisomofhe}).
Suppose $\s_1\in \Std(\alpha)\times \Std(\beta)$ such that $\alpha\in \Lambda_{k}^+(r-l)$ and  $\beta\in \Lambda_{k}^+(t-l)$.
So either $l>f$ or   $l=f$ and either $\alpha\rhd \mu' $ or $\beta\rhd (\nu^o)'$. In the first case, it's easy to see  that  (\ref{vcal}) holds.
The second case  follows from the arguments in the  proof of Lemma~\ref{moduleisomofhe}. Finally, the result follows from  Lemma~\ref{wallbcell}(b).
\end{proof}

Recall that $\Lambda_{k, r,t}$ is the poset in \eqref{poset}. Let $\bar\Lambda_{k, r,t}\subset \Lambda_{k, r,t}$ such that each cell module $C(f, \mu, \nu) $ has simple head $D^{f, \mu, \nu}$ for any $(f, \mu, \nu)\in \bar\Lambda_{k, r,t}$. See Proposition~3.7 and Remark~3.8 in \cite{RSu1}.
The following is the second main result of this paper.

\begin{Theorem}\label{main2}Suppose  $r+t\leq \min\{q_1, q_2,\cdots,q_k\}$.  For any $(f,\mu, \nu) \in \Lambda_{k, r,t}, (\ell,\alpha,\beta)\in\bar\Lambda_{k, r,t}$,
$(T(\hat \lambda_{\alpha, \beta}): M^{\mathfrak p}(\hat \lambda_{\mu, \nu} ))= [C(f,\mu',(\nu^o)'): D^{(\ell,\alpha',(\beta^o)')}]$,
where $T(\hat \lambda_{\alpha, \beta})$ is the indecomposable tilting module with respect to $\hat \lambda_{\alpha, \beta}$ in Definition~\ref{rat}.
\end{Theorem}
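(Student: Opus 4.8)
The plan is to deduce Theorem~\ref{main2} from the ``double centralizer'' mechanism, transporting multiplicities in $\mathcal O^{\mathfrak p}$ into multiplicities of $\mathscr B_{k,r,t}$-modules along the Schur functor $G:=\Hom_{\mathcal O}(-,M_c^{r,t})$ from $\mathcal O^{\mathfrak p}$ to the category of right $\mathscr B_{k,r,t}$-modules, the right module structure on $G(N)$ coming from the right action of $\mathscr B_{k,r,t}$ on $M_c^{r,t}$, i.e.\ from the isomorphism $\varphi$ of Theorem~\ref{isomorphism} (this is where $r+t\le\min\{q_1,\dots,q_k\}$ is used). I would first isolate three soft inputs. \emph{(i)} Since $\varphi$ is an isomorphism, $G(M_c^{r,t})\cong\mathscr B_{k,r,t}$ as right $\mathscr B_{k,r,t}$-modules, so $G$ restricts to a contravariant equivalence of $\operatorname{add}(M_c^{r,t})$ with the category of projective right $\mathscr B_{k,r,t}$-modules; in particular an indecomposable direct summand of $M_c^{r,t}$ goes to an indecomposable projective $\mathscr B_{k,r,t}$-module. \emph{(ii)} Because $M_c^{r,t}$ is a tilting module in $\mathcal O^{\mathfrak p}$ (see the proof of Lemma~\ref{projsum}), it has a $\nabla^{\mathfrak p}$-flag, hence $\operatorname{Ext}^1_{\mathcal O}(Z,M_c^{r,t})=0$ for every $Z$ admitting a parabolic Verma flag; thus $G$ is exact on the exact subcategory of $\Delta$-filtered modules and carries parabolic Verma flags to cell filtrations. \emph{(iii)} For every $X\in\mathcal O^{\mathfrak p}$ and every $Y\in\operatorname{add}(M_c^{r,t})$ there is a natural isomorphism $\Hom_{\mathscr B_{k,r,t}}(G(Y),G(X))\cong\Hom_{\mathcal O}(X,Y)$; by additivity and the idempotent splitting of an isomorphism $(M_c^{r,t})^{\oplus m}\cong Y\oplus Y'$ this reduces to the tautological case $Y=M_c^{r,t}$, where both sides are canonically $G(X)$.

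Next I would feed in the structure theorems of the preceding sections. By Theorem~\ref{isoofcell}, $G(M^{\mathfrak p}(\hat\lambda_{\mu,\nu}))\cong C(f,\mu',(\nu^o)')$ as right $\mathscr B_{k,r,t}$-modules. By Lemma~\ref{projsum} together with Theorem~\ref{highestwall}, for $(\ell,\alpha,\beta)\in\bar\Lambda_{k,r,t}$ the indecomposable tilting module $T(\hat\lambda_{\alpha,\beta})$ is a direct summand of $M_c^{r,t}$, so by \emph{(i)} the module $P:=G(T(\hat\lambda_{\alpha,\beta}))$ is an indecomposable projective right $\mathscr B_{k,r,t}$-module. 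Applying $G$ to the canonical embedding $M^{\mathfrak p}(\hat\lambda_{\alpha,\beta})\hookrightarrow T(\hat\lambda_{\alpha,\beta})$, whose cokernel is $\Delta$-filtered, and using \emph{(ii)}, one gets a surjection $P\twoheadrightarrow G(M^{\mathfrak p}(\hat\lambda_{\alpha,\beta}))\cong C(\ell,\alpha',(\beta^o)')$, so the (simple) head of $P$ coincides with the head of the cell module $C(\ell,\alpha',(\beta^o)')$, namely $D^{(\ell,\alpha',(\beta^o)')}$ --- here one invokes that $\bar\Lambda_{k,r,t}$ is stable under the conjugation $(\ell,\alpha,\beta)\mapsto(\ell,\alpha',(\beta^o)')$, a fact about the weakly cellular combinatorics underlying the definition of $\bar\Lambda_{k,r,t}$ (Proposition~3.7 and Remark~3.8 of \cite{RSu1}). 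Hence $P$ is the projective cover of $D^{(\ell,\alpha',(\beta^o)')}$, and consequently
\[
\dim\Hom_{\mathscr B_{k,r,t}}\!\big(P,\ C(f,\mu',(\nu^o)')\big)=\big[\,C(f,\mu',(\nu^o)'):D^{(\ell,\alpha',(\beta^o)')}\,\big].
\]

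It remains to compute the left-hand side of the asserted equality homologically. As an indecomposable tilting module, $T(\hat\lambda_{\alpha,\beta})$ is self-dual for the duality of $\mathcal O^{\mathfrak p}$, so $(T(\hat\lambda_{\alpha,\beta}):M^{\mathfrak p}(\hat\lambda_{\mu,\nu}))=(T(\hat\lambda_{\alpha,\beta}):\nabla^{\mathfrak p}(\hat\lambda_{\mu,\nu}))$, and the latter equals $\dim\Hom_{\mathcal O}(M^{\mathfrak p}(\hat\lambda_{\mu,\nu}),T(\hat\lambda_{\alpha,\beta}))$ since $T(\hat\lambda_{\alpha,\beta})$ has a $\nabla^{\mathfrak p}$-flag, $\operatorname{Ext}^1_{\mathcal O}(M^{\mathfrak p}(\lambda),\nabla^{\mathfrak p}(\mu))=0$ for all $\lambda,\mu$, and $\Hom_{\mathcal O}(M^{\mathfrak p}(\lambda),\nabla^{\mathfrak p}(\mu))$ is one-dimensional when $\lambda=\mu$ and zero otherwise (standard facts about $\mathcal O^{\mathfrak p}$, cf.~\cite{Hum}). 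Applying \emph{(iii)} with $X=M^{\mathfrak p}(\hat\lambda_{\mu,\nu})$ and $Y=T(\hat\lambda_{\alpha,\beta})$, then Theorem~\ref{isoofcell} and the previous paragraph, gives
\[
(T(\hat\lambda_{\alpha,\beta}):M^{\mathfrak p}(\hat\lambda_{\mu,\nu}))=\dim\Hom_{\mathscr B_{k,r,t}}\!\big(G(T(\hat\lambda_{\alpha,\beta})),\ C(f,\mu',(\nu^o)')\big)=\big[\,C(f,\mu',(\nu^o)'):D^{(\ell,\alpha',(\beta^o)')}\,\big],
\]
which is exactly the claim.

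The genuinely delicate part will be the bookkeeping in the second paragraph: checking that the indecomposable summands of $M_c^{r,t}$ are precisely the $T(\hat\lambda_{\alpha,\beta})$ with $(\ell,\alpha,\beta)\in\bar\Lambda_{k,r,t}$, and identifying exactly which indecomposable projective $\mathscr B_{k,r,t}$-module $G$ attaches to each of them. This is where the conjugations $\mu\mapsto\mu'$, $\nu\mapsto(\nu^o)'$ and the self-conjugacy of $\bar\Lambda_{k,r,t}$ intervene, and it rests on the explicit highest-weight-vector computations of Theorem~\ref{highestwall}, the cell-module identification of Theorem~\ref{isoofcell}, and the weakly cellular structure from \cite{RSu,RSu1}. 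By contrast, the functorial and homological ingredients \emph{(i)}--\emph{(iii)}, together with tilting self-duality, are routine once the isomorphism $\varphi$ of Theorem~\ref{isomorphism} is available.
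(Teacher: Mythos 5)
Your proof is correct and rests on the same two essential inputs as the paper's: the isomorphism $\varphi$ of Theorem~\ref{isomorphism} and the identification of the cell module in Theorem~\ref{isoofcell}. The implementation is, however, noticeably rearranged. The paper works with the covariant functor $\mathbf f=\Hom_{\mathcal O}(M_c^{r,t},-)$, the dual parabolic Verma modules $N^{\mathfrak p}(\hat\lambda_{\mu,\nu})$, and \emph{left} cell modules, and it leans on the cellular tilting framework of Andersen--Stroppel--Tubbenhauer --- in particular \cite[Proposition~5.4]{AST} --- both to identify $\End_{\mathcal O}(M_c^{r,t})$ as a cellular algebra with the standard modules $\Hom_{\mathcal O}(M_c^{r,t},N^{\mathfrak p}(\lambda))$ and to obtain the isomorphism $\mathbf f(T(\hat\lambda_{\alpha,\beta}))\cong P(\ell,\alpha',(\beta^o)')$. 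You instead work throughout with the contravariant functor $G=\Hom_{\mathcal O}(-,M_c^{r,t})$, the Verma modules $M^{\mathfrak p}$, and \emph{right} cell modules, and you derive the projective-cover identification directly: $G$ takes $\operatorname{add}(M_c^{r,t})$ to projective $\mathscr B_{k,r,t}$-modules by the double-centralizer property, the $\Delta$-flag $M^{\mathfrak p}(\hat\lambda_{\alpha,\beta})\hookrightarrow T(\hat\lambda_{\alpha,\beta})$ together with $\operatorname{Ext}^1$-vanishing yields a surjection $G(T)\twoheadrightarrow C(\ell,\alpha',(\beta^o)')$, whence the top of $G(T)$ is $D^{(\ell,\alpha',(\beta^o)')}$. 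You then replace the paper's use of $N^{\mathfrak p}$ by invoking tilting self-duality $(T:\Delta)=(T:\nabla)=\dim\Hom(\Delta,T)$ to land on the Hom-space that $G$ and assertion \emph{(iii)} directly convert into $\dim\Hom_{\mathscr B_{k,r,t}}(G(T),C(f,\mu',(\nu^o)'))$. Your route is thus more self-contained (it does not cite the general \cite{AST} machinery) at the cost of slightly more homological bookkeeping, and it has the additional merit of explicitly flagging the needed stability of $\bar\Lambda_{k,r,t}$ under the conjugation $(\ell,\alpha,\beta)\mapsto(\ell,\alpha',(\beta^o)')$ --- a point the paper uses implicitly when it asserts that $C(\ell,\alpha',(\beta^o)')$ has simple head $D^{\ell,\alpha',(\beta^o)'}$ starting from the hypothesis $(\ell,\alpha,\beta)\in\bar\Lambda_{k,r,t}$.
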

\begin{proof}Since $M_c^{r,t}$ is a tilting module in $\mathcal O^{\mathfrak p}$, it follows from the arguments in Section~5 in \cite{AST} that
$\End_{\mathcal O}(M_c^{r,t})$ is a cellular algebra and   for any $\lambda\in\Lambda_{d}^{r,t}$,   $ \Hom_{\mathcal O}(M^{\mathfrak p}(\lambda), M_c^{r,t})$
(resp., $\Hom_{\mathcal O}(M_{r,t},N^{\mathfrak p}(\lambda))$) is the corresponding right (resp., left ) cell module of $\End_{\mathcal O}(M_c^{r,t})$ where $N^{\mathfrak p}(\lambda)$ is the dual parabolic Verma module  with respect to $\lambda$.
By Theorem~\ref{isomorphism},
we have
\begin{equation}\label{isoend}
\End_{\mathcal O}(M_c^{r,t})\cong \mathscr B_{k, r,t}
\end{equation}
where $\mathscr B_{k, r,t}$ is defined in \eqref{liewall}. By Theorem~\ref{isoofcell}, $\Hom_{\mathcal O}(M^{\mathfrak p}(\hat \lambda_{\mu, \nu}), M_c^{r,t})\cong C(f,\mu',(\nu^o)')$.  So we have
\begin{equation}\label{nast}
\Hom_{\mathcal O}(M_c^{r,t},N^{\mathfrak p}(\hat \lambda_{\mu, \nu}))\cong C(f,\mu',(\nu^o)')
\end{equation}
as left cell modules of $\mathscr B_{k, r,t}$.
By \cite[Proposition~5.4]{AST} the indecomposable tilting module $T(\lambda)$ appears as an indecomposable  direct summand of $M_c^{r,t}$
if and only if $\lambda=\hat \lambda_{\alpha, \beta}$ and  $ D^{\ell,\alpha',(\beta^o)'}$ is a simple head of  $C(\ell,\alpha',(\beta^o)')$. Moreover, we can deduce from the proof of \cite[Proposition~5.4]{AST} that
\begin{equation}\label{TAST}
\Hom_{\mathcal O}(M_c^{r,t},T(\hat\lambda_{\alpha, \beta}))\cong P(\ell,\alpha',(\beta^o)'),
\end{equation}
where  $P(\ell,\alpha',(\beta^o)')$ is the projective cover of  $ D^{\ell,\alpha',(\beta^o)'}$.
Let $\mathbf f:= \Hom_{\mathcal O}(M_c^{r,t},?)$ and $ \mathbf g=M_c^{r,t}\otimes _{\mathscr B_{k, r,t}}?$.
Then by \eqref{isoend} and standard arguments (see, e.g.  \cite[Lemma~5.10]{RS}),
$\mathbf g \mathbf f(T(\hat \lambda_{\alpha, \beta}))\cong T(\hat \lambda_{\alpha, \beta})$ for any $ (\ell,\alpha, \beta)\in \bar \Lambda_{k, r,t}$.
By \eqref{nast}-\eqref{TAST},
\begin{equation} \label{last} \begin{aligned}
\Hom_{\mathcal O}(T(\hat\lambda_{\alpha, \beta}),N^{\mathfrak p}(\hat\lambda_{\mu, \nu}))\cong&
 \Hom_{\mathscr B_{k, r, t}}(\mathbf f (T(\hat\lambda_{\alpha, \beta})),\mathbf f(N^{\mathfrak p}(\hat\lambda_{\mu, \nu} )))\\
\cong& \Hom_{\mathscr B_{k, r,t}}(P(\ell,\alpha',(\beta^o)'),C(f,\mu',(\nu^o)')).
\end{aligned}
\end{equation}
Comparing the dimensions for both sides of \eqref{last} yields the result as required. \end{proof}

\providecommand{\bysame}{\leavevmode ---\ } \providecommand{\og}{``}
\providecommand{\fg}{''} \providecommand{\smfandname}{and}
\providecommand{\smfedsname}{\'eds.}
\providecommand{\smfedname}{\'ed.}
\providecommand{\smfmastersthesisname}{M\'emoire}
\providecommand{\smfphdthesisname}{Th\`ese}

\end{document}